%% file: ampleHamming5.tex
\documentclass[11pt,a4paper]{amsart}
\usepackage{amsmath,amsfonts,amsthm,amssymb}
\usepackage[T1]{fontenc}
\usepackage{color}
\usepackage{epsfig}
\usepackage{graphicx}
\usepackage{hyperref}
\usepackage[capitalise]{cleveref}
\usepackage{xspace}
\usepackage{enumitem}
\usepackage[left=1in,right=1in,top=1in,bottom=1in,foot=0.5in]{geometry}
\usepackage{thm-restate}
\usepackage{soul}
\usepackage[square,sort,comma,numbers]{natbib}
\usepackage{etoolbox}

\usepackage{tikz}
\usepackage{tkz-euclide}
\usetikzlibrary{positioning,calc}
\usetikzlibrary{decorations.pathmorphing, arrows.meta}
\usetikzlibrary{shapes.geometric}
\tikzset{harp/.style={arrows = {[harpoon]<->[harpoon]}}} 
\usepackage{color} 
\usepackage{standalone}

\usepackage{amsthm}
\newtheorem{theorem}{Theorem}
\newtheorem{proposition}{Proposition}
\newtheorem{lemma}{Lemma}
\newtheorem{corollary}{Corollary}
\newtheorem{claim}{Claim}
\theoremstyle{definition}
\newtheorem{definition}{Definition}
\newtheorem{remark}{Remark}
\newtheorem{case}{Case}

\AtBeginEnvironment{proof}{\setcounter{case}{0}}
\newcommand{\matthew}[1]{{\color{blue}[Matthew: #1]}}
\newcommand{\victor}[1]{{\color{magenta}[Victor: #1]}}
\newcommand{\toremove}[1]{{\color{gray}[Old text:#1]}}

\newtheorem{conjecture}[theorem]{Conjecture}
\newtheorem{example}{Example}

\numberwithin{equation}{section}

\DeclareMathOperator{\ext}{ex}
\DeclareMathOperator{\conv}{conv}

\DeclareMathOperator{\supp}{supp}

\DeclareMathOperator{\Prod}{Prod}

\DeclareMathOperator{\MP}{MProd}
\DeclareMathOperator{\Part}{Part}
\DeclareMathOperator{\GPart}{GPart}
\DeclareMathOperator{\BPart}{BPart}
\DeclareMathOperator{\Amp}{Amp}

\DeclareMathOperator{\oX}{\overline{X}}
\DeclareMathOperator{\uX}{\underline{X}}
\DeclareMathOperator{\BBox}{Box}

\DeclareMathOperator{\MMP}{\rm{M_{mix}Prod}}
\DeclareMathOperator{\EMP}{\rm{M_{ext}Prod}}
\DeclareMathOperator{\EEMP}{\rm{M_{el}Prod}}
\DeclareMathOperator{\CMMP}{\rm{M_{cel}Prod}}
\DeclareMathOperator{\BMP}{\rm{M_{bin}Prod}}
\DeclareMathOperator{\BcMP}{\rm{M^{\circ}_{bin}Prod}}

\newcommand{\da}{\negthickspace\downarrow}

\newcommand{\cM}{\ensuremath{\mathcal{M}\xspace}}

\newcommand{\cB}{\ensuremath{\mathcal{B}\xspace}}
\newcommand{\cA}{\ensuremath{\mathcal{A}\xspace}}

\renewcommand{\dim}{\operatorname{dim}} 
\DeclareMathOperator{\vcdim}{VC-dim}
\DeclareMathOperator{\dens}{dens}	      
\DeclareMathOperator{\vcdens}{VC-dens}
	
\title{Ample sets in Cartesian products}

\author[V.\ Chepoi and M. Maat]{Victor Chepoi$^{1,3}$ and  Matthew Maat$^2$}

\date{\today}

\begin{document}

\maketitle

	\bigskip
	\centerline{$^{1}$LIS, Aix-Marseille Universit\'e, CNRS, and Universit\'e
	de Toulon}
	\centerline{Facult\'e des Sciences de Luminy, F-13288 Marseille Cedex 9,
	France}
	\centerline{ \sf{victor.chepoi@lis-lab.fr}}

	\bigskip
	\centerline{$^{2}$University of Twente, The Netherlands}
    \centerline{ \sf{m.t.maat@utwente.nl}}
    
   \bigskip
    \centerline{$^{3}$ Institut Universitaire de France (IUF)}

\begin{abstract} Ample sets of hypercubes, introduced by A. Dress in 1995, constitute an interesting combinatorial structure with rich properties and important examples. 
They are the subsets $S$ of the hypercube $\{ 0,1\}^E$ such that any subhypercube $\{ 0,1\}^Y, Y\subseteq E$ shattered by $S$ is strongly shattered by $S$. Ample sets can be characterized in a multitude of combinatorial, graph-theoretical, recursive, and geometrical ways, and they are equivalent to lopsided sets introduced by J. Lawrence in 1983. 

In this paper, we define and investigate ample sets of Cartesian products of finite sets, i.e. of $U=U_1\times\ldots\times U_m$. 
This is done using \emph{minor-subproducts} of $U$, which 
correspond to products of partitions of the factors $U_1,\ldots,U_m$: each minor-subproduct $M$ is obtained by partitioning each  $U_i$ into blocks and contracting each block into a single element. 
For a minor-subproduct $M$ and a set $S\subseteq U$, we define the notions of \emph{shattering} of $M$ by $S$, of \emph{copy} of $M$ in $S$, of \emph{projection} $S_M$ of $S$ on $M$, and of  \emph{strong-projection} $S^M$ of $S$ on $M$. 
We call a set $S\subseteq U$ \emph{ample} if for any minor-subproduct $M$ that is shattered by $S$, there exists a copy of $M$ included in $S$. Using the lattice structure of minor-subproducts, we also define \emph{lopsided} sets. Differently from the binary case, ampleness is no longer equivalent to lopsidedness. 

We prove however that several characterizations of classical ample sets can be extended to ample sets of Cartesian products. In particular, we show that ampleness of $S$ is equivalent to any of the following: ampleness of the complement $S^*=U\setminus S$, isometricity of $S^M$ for any minor-subproduct $M$ (\emph{superisometricity}),  and \emph{commutativity} $(S^M)_{M'}=(S_{M'})^M$ for all minor-subproducts $M,M'$ with disjoint supports. We also provide more efficient characterizations of ampleness, in particular, by showing that $S$ is ample if and only of $S$ is isometric and both $S_e$ and $S^e$ are ample for some elementary minor-subproduct, if and only if the intersection of $S$ with any interval $[u,v]$ with $u,v\in S$ is ample in the classical sense. We also characterize ampleness by push downs and provide a decomposition theorem for ample sets, allowing us to prove that the prism complexes of ample sets are contractible. We provide new examples of ample sets arising from payoff games in graphs, prism-like polyhedra, and quasi-median graphs. Finally, we provide a unified treatment of  various notions of VC-dimension occurring in the literature on multiclass learning in terms of the dimension of shattered minor-subproducts of certain types. 
\end{abstract}

\bigskip

\section{Introduction}

\subsection{Avant-propos} 
Projection is a fundamental mathematical operation. For example, given a set system $S\subseteq 2^E=\{ 0,1\}^E$ and 
a subset $Y\subseteq E$, the \emph{trace} $S_{|Y}=\{ A\cap Y: A\in S\}$ of $S$ on $Y$ is  the projection of $S$ (viewed as a subset of vertices of the hypercube $\{ 0,1\}^E$) on the hypercube $\{ 0,1\}^Y$. If every element of  $\{ 0,1\}^Y$ is in the image of $S$ (i.e., $S_{|Y}=2^Y$), then $Y$ (or the hypercube $\{ 0,1\}^Y$) is said to be \emph{shattered} by $S$. The set $\oX(S)$ of all shattered sets  is a simplicial complex (a set system closed by taking subsets) and the dimension of this complex (the largest size of a set of $\oX(S)$) is the well-known \emph{Vapnik-Chervonenkis dimension} (\emph{VC-dimension} for short) of $S$. On the other hand, the hypercube $\{ 0,1\}^E$ is partitioned into \emph{copies} of $\{ 0,1\}^Y$, i.e., into ``parallel'' cubes of the form $s\times \{ 0,1\}^Y$ with $s\in \{ 0,1\}^{E\setminus Y}$. Then $Y$ (or $2^Y$)  is said to be \emph{strongly shattered} by $S$ if at least one such copy is included in $S$. The set $\uX(S)$ of all strongly shattered sets is again a simplicial complex and $\uX(S)\subseteq \oX(S)$. Dress \cite{Dr-ample} proved that for any set system $S\subseteq \{ 0,1\}^E$, the inequality $|\uX(S)|\le |S|\le |\oX(S)|$ holds, and suggested to called a set $S$ \emph{ample} if $|S|=|\oX(S)|.$ Amplenness of $S$ is equivalent to the equality  $\uX(S)=\oX(S)$ \cite{BaChDrKo}, which can be rephrased as the \emph{shattering$\rightarrow$strong-shattering principle}: whenever a set $Y\subseteq E$ is shattered by $S$, it is also strongly shattered by $S$. 

Ample sets are equivalent to \emph{lopsided sets} of  Lawrence \cite{La}, to \emph{simple sets} of  Wiedemann \cite{Wi}, and  to \emph{extremal sets}  of  Bollob\`as and Ratcliffe \cite{BoRa}. Notice also that the inequality $|S|\le |\oX(S)|$   was proved before by Pajor \cite{Pa} and that it implies the classical  Sauer-Shelah-Perles 
inequality $|S|\le \binom{m}{\le d}$ (where $d$ is the VC-dimension of $S$ and $m=|E|$). Lawrence's motivation for introducing lopsided sets was to investigate the subsets of $\{ 0,1\}^E$ that encode the intersection
pattern of a convex set $K$  with the orthants of  ${\mathbb R}^E$.  He identified a strong combinatorial condition  that he called lopsidedness that is necessary but not sufficient for the existence of such a convex set $K$. 
In our terms, a set $S\subseteq \{ 0,1\}^E$ is \emph{lopsided} if for any $Y\subseteq E$, either $Y$ is strongly shattered by $S$ or $E\setminus Y$ is strongly shattered by the complement $S^*=\{ 0,1\}^E\setminus S$ of $S$ (we refer to this as \emph{Lawrence's lopsidedness principle}). 

Ample/lopsided/extremal/simple sets have numerous characterizations and properties, established in 
\cite{BaChDrKo,BaChDrKo_geometry,BoRa,ChChMoWa,ChKnPh_CUOM,La,Wi}. 
They are of combinatorial, geometrical, recursive, graph-theoretical, metric, and topological nature. These characterizations show that ample sets can be equally dubbed \emph{commutative, superisometric, superconnected,}  or \emph{weakly convex}. Ample sets define cube complexes with strong topological and metric properties like contractibility, collapsibility,  and $\ell_1$-isometricity. Ampleness is preserved by taking complements, push-downs, restrictions, projections, and strong-projections. Ample sets include several important classes of sets arising from completely different research domains: median set systems, convex geometries and their bouquets \cite{BaChDrKo}, maximum classes of given VC-dimension \cite{Mo-thesis}, intersection
patterns of convex sets with the orthants of  ${\mathbb R}^E$ \cite{La}, and sets of strategies in mean payoff games in graphs \cite{maat_strategy_2025} are ample. Together with oriented matroids \cite{BjLVStWhZi}, ample sets were the motivating structures in the definition of \emph{Complexes of Oriented Matroids},  introduced in \cite{BaChKn} and investigated in subsequent papers. As a generalization of maximum classes, ample sets have also found applications in machine learning in relation with the \emph{sample compression conjecture} \cite{MoWa,ChChMoWa,ChChHaMoYe}.  

In this paper, we consider ample sets in Cartesian products. Cartesian products of graphs form a classical subject in graph theory \cite{ImKl}. Isometric subgraphs of hypercubes and Hamming graphs (Cartesian products of cliques), closely related to ampleness, have been investigated in \cite{Ch_Hamming,Dj,Wi-hamming,Wi-products}.  
More recently, subsets of Cartesian products of sets have been intensively studied in machine learning literature in relation with the theory of multiclass learning \cite{BDCeHaLo,RuBaRu,BrCaDiMoYe}. One relevant challenge in this theory is the appropriate  definition of  VC-dimension for subsets of Cartesian products.  Several variants have been suggested: Natarajan dimension \cite{Na}, Daniely and Shalev-Shwartz dimension (DS-dimension) \cite{DaSS}, Pollard dimension \cite{Po}, and graph-dimension \cite{Na}, just to name a few. It was believed that  Natarajan dimension  characterizes the multiclass learnability and only recently it was proved in \cite{BrCaDiMoYe} that this is not the case and that the appropriate notion of dimension is the DS-dimension.

\subsection{Our approach} The main goal of this paper is to define and investigate 
ample sets in Cartesian products, based on the \emph{shattering$\rightarrow$strong-shattering principle}. 
This generalizes the classical setting of ample sets in hypercubes, i.e., in products of two-element sets. Our main contribution is to show that most characterizations of binary ample sets generalize to ample sets in Cartesian products. Compared to the binary case, the required conceptual framework and the proof techniques are much more involved. To define ampleness in Cartesian products, we need to define patterns that can be shattered, and for which there is a notion of copy. While in the binary case, there is no ambiguity to how these concepts are defined, in general Cartesian products there are several ways to define shattered items. In this paper, we undertake a novel but general and systematic approach  by considering the equivalent \emph{generalized partitions}, \emph{box-partitions} and \emph{minor-subproducts} as patterns. The minor-subproducts are also used to define the operations of projection and strong-projection as an analogue to the binary case.

Let $U:=U_1\times\ldots\times U_m$  be the Cartesian product of nonempty finite sets $U_1,\ldots, U_m$. Let also $H(U)$ be the Hamming graph of $U$ defined as the Cartesian product of the cliques $U_1,\ldots,U_m$. For each factor $U_i$, we consider the lattice $\Part(U_i)$ of all partitions of $U_i$, endowed with the partition refinement operation as the partial order.\footnote{For example, the lattice of partitions of $\{ 1,2,3,4\}$ has 15 elements: the minimum, the maximum, 6 atoms and 6 co-atoms, see https://blogs.ams.org/visualinsight/2015/06/15/lattice-of-partitions/ for a picture.} Then $\GPart(U)=\Part(U_1)\times\ldots\times \Part(U_m)$ is a lattice, whose elements $\Lambda=(\alpha_1,\ldots,\alpha_m)$ are called \emph{generalized partitions} since each $\alpha_i=\{ P_1^i,\ldots,P_{\ell_i}^i\}$ is a partition of $U_i$ (see Figure \ref{f:example-gen-part-lattice} for the picture of the lattice of generalized partitions $\GPart(U)=\Part(\{ 1,2,3\})\times \Part(\{ 1,2\})$). 

There are two alternative interpretations of generalized partitions. First, each generalized partition $\Lambda$ defines a partition $\cB(\Lambda)$ of $U=U_1\times\ldots\times U_m$ into \emph{boxes}, where each box of $\cB(\Lambda)$ is the Cartesian product $P^1_{j_1}\times\ldots\times P^m_{j_m}$ of some blocks $P^i_{j_i}$ of the partitions $\alpha_i\in \Lambda, i=1,\ldots,m$. In general, sets of the form $V=V_1\times\ldots\times V_m$ with $\varnothing\ne V_i\subseteq U_i, i=1\ldots,m$ are called \emph{full-dimensional subproducts} (or  \emph{boxes}) of $U$ and sets $V=V_{i_1}\times\ldots V_{i_k}$ with $\varnothing\ne V_{i_j}\subseteq U_{i_j}, j=1,\ldots,k$ are called \emph{subproducts} of $U$.

Second, if we contract each block $P^i_j$ of the partition $\alpha_i$ of $U_i$ into a single vertex $w^i_j$, then $U_i$ (viewed as a complete graph) will be mapped to $M_i=\{ w^i_1,\ldots,w^i_{\ell_i}\}$. We call the Cartesian product $M(\Lambda)=M_1\times\ldots\times M_m$ a \emph{minor-subproduct} of $U$ associated to the generalized partition $\Lambda$. Notice that the Hamming graph of $M(\Lambda)$ can be obtained from the Hamming graph $H(U)$ of $U$ by contracting each box of $\cB(\Lambda)$ into a single vertex. Notice that this establishes a bijection between the boxes of  $\cB(\Lambda)$ and the vertices of $M(\Lambda)$. We denote the set of all  minor-subproducts of $U$ by $\MP(U)$. We also define the supersets  $\MP^*(U)$ and $\MP^{**}(U)$ of $\MP(U)$ consisting of minor-subproducts of all full-dimensional subproducts $V=V_1\times\ldots\times V_m$ and of  minor-subproducts of all subproducts $V=V_{i_1}\times \ldots\times V_{i_k}$ of $U$, respectively.

The generalized partitions $\Lambda\in \GPart(U)$, their associated box-partitions $\cB(\Lambda)$ of $U$, and their minor-subproducts $M(\Lambda)\in \MP(U)$ are our main ingredients in the implementation of the \emph{shattering$\rightarrow$strong-shattering principle} for subsets $S$ of $U$. We say that a minor-subproduct $M(\Lambda)$ is \emph{shattered} by $S$ if $S$ intersects each box of the box-partition $\cB(\Lambda)$. Selecting an element $v^i_j$ from each block $P_j^i$ of each partition $\alpha_i$ of $\Lambda$, we call the Cartesian product $\{ v^1_1,\ldots,v^1_{\ell_1}\}\times\ldots\times \{ v^m_1,\ldots,v^1_{\ell_m}\}$ 
a \emph{copy} of $M(\Lambda)$. Then we say that the minor-subproduct $M(\Lambda)$ is \emph{strongly shattered} by $S$ if $S$ contains at least one copy of $M(\Lambda)$. Given a set $\mathcal M\subseteq \MP(U)$ (or, more generally, $\mathcal M\subseteq \MP^{**}(U)$) of minor-subproducts of $U$, we say that a set $S\subseteq U$ is \emph{$\mathcal M$-ample} if whenever a minor-subproduct $M(\Lambda)\in {\mathcal M}$ is shattered by $S$, then $M(\Lambda)$ is strongly shattered by $S$. Finally, we call $S\subseteq U$ \emph{ample} if $S$ is $\MP(U)$-ample.  

This unified approach to shattering, strong-shattering, and ampleness is general, but at the same time natural and flexible. In particular, many of the definitions we need arise naturally from the lattice structure of $\GPart(U)$ and $\Part(U_i)$ for $i=1,\ldots,m$.
Namely, we define sets of generalized partitions (and the sets of minor-subproducts corresponding to them) by combining  minima, maxima, and atoms of partition lattices. 
This way we distinguish the following sets of generalized partitions $\Lambda$ and their minor-subproducts:  \emph{trivial} ($\Lambda$ is the minimum of $\GPart(U)$, i.e., all partitions of $\Lambda$ are trivial), \emph{co-trivial} ($\Lambda$ is the maximum of $\GPart(U)$), \emph{mixed} (all partitions of $\Lambda$ are minimal or maximal partitions of factors), \emph{elementary} ($\Lambda$ is an atom of $\GPart(U)$), \emph{one-dimensional} (all partitions are trivial except one), \emph{extended} (all partitions  of $\Lambda$  are quasi-trivial, i.e., all blocks of each partition, except at most one, are trivial).  Such classes of minor-subproducts are used to characterize ampleness or to define operations on ample sets. 

The lattices $\GPart(U)$ and $\Part(U_i), i=1,\ldots, m$ are complemented but not uniquely complemented. In the binary case, each $\Part(U_i)$ is the two-element Boolean lattice, and $\GPart(U)$ is their product (and is again a Boolean lattice). In that case, the complement in $\GPart(U)$ is unique, and related to the set-theoretical complement. Then Lawrence's lopsidedness principle can be rephrased as: for any minor-subproduct $M$, either $M$ is strongly shattered by $S$, or its complement $M^{\diamond}$ in $\GPart(U)$ is strongly shattered by $S^*$. The existence of multiple complements in $\GPart(U)$ in general seems to be an obstacle in defining lopsidedness in Cartesian products, using arbitrary minor-subproducts. However the complements of mixed minor-subproducts are unique and are also mixed minor-subproducts, and this allows us to define a notion of lopsidedness. We say that a set $S\subseteq U$ is \emph{lopsided} if for any mixed minor-subproduct $M$ either $M$ is strongly shattered by $S$ or the unique complement $M^\diamond$ of $M$ is strongly shattered by the complement $S^*=U\setminus S$ of $S$. Clearly, $S$ is lopsided if and only if its complement $S^*$ is lopsided. Finally, we say that a set $S$ is \emph{weakly ample} if $S$ is $\MMP(U)$-ample. 

\subsection{Our results}
Our first result establishes an equivalence between several possible notions of ampleness. It also characterizes lopsidedness and relates weak ampleness and lopsidedness. 

\begin{theorem}\label{thm:intro-amplemidample} For a set $S\subseteq U=U_1\times\ldots\times U_m$, the following conditions are equivalent:
\begin{itemize}
\item[(1)] $S$ is ample;
\item[(2)] $S$ is $\MP^*(U)$-ample;
\item[(3)] $S$ is $\MP^{**}(U)$-ample;
\item[(4)] $S$ is $\EMP(U)$-ample. 
\end{itemize}
A set $S$ is lopsided if and only if  $S$ is weakly ample. Ample sets are lopsided. 
\end{theorem}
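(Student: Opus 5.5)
The plan is to prove the easy implications by inclusion of pattern families, then do the hard direction $(4)\Rightarrow(3)$ by a reduction to coarser partitions, and finally obtain the lopsidedness statements from a direct duality between shattering by $S$ and strong-shattering by $S^*$.

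\textbf{Trivial implications.} Since $\EMP(U)\subseteq \MP(U)\subseteq \MP^*(U)\subseteq \MP^{**}(U)$, and $\mathcal M$-ampleness is a condition quantified over $\mathcal M$, we immediately get $(3)\Rightarrow(2)\Rightarrow(1)\Rightarrow(4)$. Likewise every mixed minor-subproduct lies in $\MP(U)$, so ample sets are weakly ample. Hence, once weak ampleness is shown equivalent to lopsidedness, ample sets are lopsided.

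\textbf{Lopsided $\Leftrightarrow$ weakly ample.} Let $M$ be mixed, with trivial partitions on the coordinates in $A$ and co-trivial partitions on $\bar A=\{1,\dots,m\}\setminus A$. Its unique complement $M^\diamond$ is co-trivial on $A$ and trivial on $\bar A$. A box of $\cB(M)$ has the form $\{x\}\times U_{\bar A}$ with $x\in U_A$, and a copy of $M^\diamond$ is exactly a fiber $\{x\}\times U_{\bar A}$ as well. So $M$ fails to be shattered by $S$ iff some fiber $\{x\}\times U_{\bar A}$ is contained in $S^*$, i.e.\ iff $M^\diamond$ is strongly shattered by $S^*$. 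Lopsidedness at $M$ reads ``$M$ strongly shattered by $S$ or $M^\diamond$ strongly shattered by $S^*$''. By the duality just noted, this is ``$M$ not shattered by $S$ or $M$ strongly shattered by $S$'', which is precisely weak ampleness at $M$. This gives the equivalence directly, with no further machinery.

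\textbf{The main step $(4)\Rightarrow(3)$.} Let $M\in\MP^{**}(U)$ be a minor-subproduct of a subproduct $V=V_{i_1}\times\dots\times V_{i_k}$, given by partitions $\alpha_{i_j}$ of $V_{i_j}$, and assume it is shattered by $S$. Missing coordinates behave like co-trivial partitions, so I would treat them as such. I would proceed in two stages, using induction on the total number $\sum_i(|V_i|-\ell_i)$ of non-singleton excess in the blocks.
\begin{enumerate}
\item \emph{Heredity.} Show that $\EMP$-ampleness passes from $S$ to suitable restrictions $S\cap(W_1\times\dots\times W_m)$ and to the relevant projections. For this I would translate an extended pattern of the smaller box into an extended pattern of $U$: merge the removed elements $U_i\setminus W_i$ into the single non-trivial block. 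A copy of the enlarged pattern restricts to a copy in the smaller box.
\item \emph{Induction.} Fix a non-singleton block $P$ of some $\alpha_i$ with $a\in P$. Choose witnesses of shattering in every box and split into the part of $S$ with $i$-th coordinate $a$ versus the rest. Then apply induction to the coarser/finer patterns obtained by separating $a$, and glue the resulting copies.
\end{enumerate}
The base case is when every partition is quasi-trivial on $V$. There one applies (4) to the extended pattern of $U$ obtained by adjoining $U_i\setminus V_i$ to the non-trivial block, after first passing to the restriction from stage 1 so that the enlarged boxes are still hit by $S$.

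I expect the gluing in stage 2 to be the main obstacle: the two partial copies must agree on the coordinates other than $i$. I would first try to force this by choosing the split so that one of the two pieces is itself an extended pattern, and by applying ampleness to a single pattern that records both pieces simultaneously.
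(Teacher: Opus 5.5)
Your chain $(3)\Rightarrow(2)\Rightarrow(1)\Rightarrow(4)$, the reduction of $\MP^{**}$ to $\MP^*$, and the lopsided/weakly-ample duality are fine. The duality is the paper's argument, stated more economically: the boxes of $\cB(M)$ are exactly the copies of $M^\diamond$. If ``either\ldots or'' is read exclusively, add one line: a copy $U_A\times\{y\}$ of $M$ in $S$ and a copy $\{x\}\times U_{\bar A}$ of $M^\diamond$ in $S^*$ would meet in $(x,y)$.

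The gap is in $(4)\Rightarrow(3)$, and it already appears in Stage~1. The claim that a copy of the enlarged pattern restricts to a copy in the smaller box is false. The representative chosen from the merged block $P\cup(U_i\setminus W_i)$ may be a deleted element, and then the restriction has no representative of $P$ at all (take $S=U$ and such a copy). The same flaw affects your base case: the enlarged boxes are automatically hit by $S$; the problem is that the copy supplied by (4) need not lie in $V$.

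Heredity is true, but it needs a second application of $\EMP(U)$-ampleness, which is the paper's $(2)\Rightarrow(3)$. Delete one element $x$ at a time. If the copy $W$ of the enlarged pattern uses $x$, split $\{x\}$ off as a new singleton block. This gives another extended pattern whose new boxes are hit by $W$ and whose old boxes are hit by $S\cap V$. A copy of it contains $x$ plus a representative of every original block, and deleting $x$ gives a copy of $M$ inside $V$. Heredity under projections, which you also invoke, is neither available at this point nor needed.

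Stage~2 is not yet an argument. You flag the gluing yourself, and splitting $S$ by whether the $i$-th coordinate equals $a$ gives no way to make two partial copies agree off coordinate $i$. Moreover, the finer pattern with $\{a\}$ separated need not be shattered. The missing idea is to shrink the universe instead of refining the pattern, so that no gluing is needed:
\begin{itemize}
\item Take a minimal counterexample in $|U|$ to ``$S\cap V$ is $\EMP(V)$-ample for every full-dimensional $V$ implies $S$ ample''. This hereditary hypothesis is why Stage~1 is needed.
\item Take a shattered, not strongly shattered $M$ whose partition $\alpha_1$ is neither trivial nor co-trivial. Pick $x\neq y$ in a non-trivial block $P$; since $\alpha_1$ has at least two blocks, $P\subsetneq U_1$.
\item If the pattern $M(x)$ on $U(x)=(U_1\setminus\{x\})\times U_2\times\cdots\times U_m$, with block $P\setminus\{x\}$, is shattered by $S\cap U(x)$, minimality gives a copy of $M(x)$. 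Any such copy is already a copy of $M$. The same holds for $y$.
\item If neither is shattered, restrict to $U'=P\times U_2\times\cdots\times U_m$, where $P$ becomes co-trivial. This pattern is shattered, since its boxes are boxes of $M$. By minimality, $S$ contains a copy of it with a single first coordinate $u\in P$, say $u\neq x$.
\item Every box of $M(x)$ with first factor $P\setminus\{x\}$ meets that copy. The remaining boxes are boxes of $M$ lying in $U(x)$ and are hit by $S$. So $M(x)$ was shattered after all, a contradiction.
\end{itemize}
Hence $M$ is mixed, thus extended, and (4) finishes. Note that the reduction target is mixed patterns, not merely quasi-trivial ones, and the induction is on $|U|$, not on the excess $\sum_i(|V_i|-\ell_i)$ of the pattern.
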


This theorem  shows that to deal with ample sets it suffices to consider only minor-subproducts of $\MP(U)$, and, in fact, only 
extended minor-subproducts. 
To draw a parallel between the last characterization (4) of ampleness and the binary case, consider that when shattering or strong-shattering a set $Y\subseteq E$, the elements of $E\setminus Y$ are neglected. This can also be achieved by using the partition $\alpha_i$ consisting of a single block $U_i$ in the relevant dimensions. Additionally, in extended minor-subproducts, the elements of each factor $U_i$ either are: included in singleton blocks of $\alpha_i$, and therefore are fully involved in shattering-strong-shattering; or belong to the unique non-trivial block of $\alpha_i$, and all elements of this block act as a single element. Therefore the elements from such blocks are not neglected but have a weaker impact than the elements from singleton blocks. 
The characterizations (1)-(4) allow us to establish the first structural property of ample sets, namely that ample sets $S$ are \emph{isometric}, i.e.,  any $u,v\in S$ can be connected  in $S$ by a shortest $(u,v)$-path of the Hamming graph $H(U)$. Finally, the last result of the theorem shows that in Cartesian products ampleness and lopsidedness are no longer equivalent. As we see in the following, our notion of ampleness maintains many characterizations from the binary case, suggesting that the notion of lopsidedness may be far too general.  

Shattering and strong-shattering are particular instances of projections and strong-projections. The \emph{projection} $S_M$ of a set $S\subseteq U$ on a minor-subproduct $M=M(\Lambda)$ consists of all $t\in M$ whose corresponding boxes in the box-partition $\cB(\Lambda)$ intersect $S$. Analogously, the \emph{strong-projection} 
$S^M$ consists of all $t\in M$ whose corresponding boxes are included in $S$. To combine several operations of projection and strong-projection, we define $S_M$ and $S^M$ not only for subsets of $U$ but also for sets $S\subseteq M'$, where $M$ and $M'$ are arbitrary minor-subproducts of $U$ (for this we use the meet operation in the lattice  $\GPart(U)$). This allows us to define 
commutativity, superisometricity, and superconnectivity of subsets of Cartesian products, which are the main characterizing features of ampleness in binary products \cite{BaChDrKo}. Our first main characterization of ample sets shows that this is also the case for general ampleness: 

\begin{theorem} \label{thm:intro-maincharacterization}
For a set $S\subseteq U=U_1\times\ldots\times U_m$, the following conditions are equivalent:
\begin{enumerate}
    \item \label{item:ample} $S$ is ample;
    \item \label{item:superisometric} $S^M$ is isometric for all $M\in\MP(U)$ (superisometricity);
    \item \label{item:box-superisometric} $S$ is box-superisometric;
    \item \label{item:superconnected}  $S^M$ is connected for all $M\in\MP(U)$ (superconnectivity);
    \item \label{item:commutative} $(S^M)_{M'}=(S_{M'})^M$ for all $M,M'\in \MP(U)$ with disjoint supports (commutativity); 
    \item \label{item:complement} $S^*$ is ample.
\end{enumerate}
\end{theorem}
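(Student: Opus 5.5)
We will prove the equivalences through the cycle (1)$\Rightarrow$(2)$\Rightarrow$(3)$\Rightarrow$(4)$\Rightarrow$(1), where (3) is the weaker box-version of (2). The commutativity condition (5) and the complement condition (6) are then attached to (1) separately. Throughout we use Theorem \ref{thm:intro-amplemidample}: ampleness equals $\MP^{**}(U)$-ampleness and $\EMP(U)$-ampleness, and ample sets are isometric.

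\textbf{Step 1: ampleness passes to strong-projections.} We first show that if $S$ is ample then $S^M$ is ample in $M$ for every $M\in\MP(U)$. Let a minor-subproduct $N$ of $M$ be shattered by $S^M$. Refining $N$ back to $U$ gives a generalized partition $\Lambda_N$ coarser than $\Lambda_M$ in $\GPart(U)$. Each box of $\Lambda_N$ contains a box of $\Lambda_M$ that is included in $S$. We choose one such $\Lambda_M$-box for each $\Lambda_N$-box. On each factor $U_i$ these choices define a subproduct $V$ together with a minor-subproduct $N'\in\MP^{**}(U)$ of $V$ that is shattered by $S$. By (3) of Theorem \ref{thm:intro-amplemidample}, $S$ contains a copy of $N'$, and this copy yields a copy of $N$ in $S^M$. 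The delicate point is to set up $V$ so that copies of $N'$ really project onto copies of $N$ using whole $\Lambda_M$-boxes. If this fails, we fall back on extended minor-subproducts via (4).

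\textbf{The implications (1)$\Rightarrow$(2)$\Rightarrow$(3)$\Rightarrow$(4).} Given Step 1, (1)$\Rightarrow$(2) is immediate: $S^M$ is ample, hence isometric by the isometricity of ample sets. The implication (2)$\Rightarrow$(3) holds by definition, since box-superisometricity is superisometricity restricted to strong-projections on boxes. The implication (3)$\Rightarrow$(4) is also immediate, because isometric subsets of Hamming graphs are connected; we only need to check that box-superisometricity implies connectivity of every $S^M$, which follows by restricting to boxes.

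\textbf{(4)$\Rightarrow$(1), the main obstacle.} We argue by contradiction on a minimal counterexample. By Theorem \ref{thm:intro-amplemidample}(4), it suffices to take an extended minor-subproduct $M$ shattered by $S$ but not strongly shattered, with $|U|$ minimal. The strategy is an induction that contracts one factor at a time, in the style of the binary proof in \cite{BaChDrKo}. Choose a coordinate $i$ and an elementary partition merging two elements $a,b$ of $U_i$, with corresponding minor-subproduct $e$. Both $S_e$ and $S^e$ inherit superconnectivity, since their strong-projections are strong-projections of $S$. Hence by induction both are ample. The shattering of $M$ by $S$ then lifts to a shattering in $S_e$ or in $S^e$, so we obtain a copy there. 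The crux is to pull this copy back to $S$. For this we use the connectivity of $S^{M'}$ for a suitable minor-subproduct $M'$ in which all coordinates except $i$ are contracted to the pattern of the copy. In such an $M'$, a path joining the two relevant fibers forces an $a$–$b$ edge, and that edge completes the copy. We expect this pull-back, in particular the choice of $M'$, to be the hardest step.

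\textbf{(5) and (6).} For (1)$\Leftrightarrow$(5), specializing $M'$ to the trivial and co-trivial cases shows that commutativity encodes exactly ``$M$ shattered $\Rightarrow$ $M$ strongly shattered'' on disjoint supports, which gives (5)$\Rightarrow$(1). Conversely, (1)$\Rightarrow$(5) follows from Step 1 together with the observation that projection onto the support of $M'$ commutes with strong-projection onto $M$ for ample sets. The inclusion $(S^M)_{M'}\subseteq(S_{M'})^M$ is always true. For the reverse inclusion, an element of $(S_{M'})^M$ gives a shattered minor-subproduct of some subproduct, and ampleness provides the needed copy. Finally, for (1)$\Leftrightarrow$(6) we use the duality $(S^*)^M=(S_M)^*$ and $(S^*)_M=(S^M)^*$. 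Under this duality commutativity of $S$ translates verbatim into commutativity of $S^*$. So (5) for $S$ is equivalent to (5) for $S^*$, and hence $S$ is ample if and only if $S^*$ is ample.
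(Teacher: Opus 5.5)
There are genuine gaps. The most serious is that the hard direction, superconnectivity $\Rightarrow$ ampleness, is not proved.

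\textbf{Superconnectivity $\Rightarrow$ ampleness.} Your inheritance claim is right for $S^e$, since $(S^e)^{M'}=S^{e\vee M'}$ by Lemma~\ref{lem:operatorsselfcommutative}. Its justification fails for $S_e$: $(S_e)^{M'}$ is a strong-projection of a projection, not of the form $S^N$. Identifying it with $(S^{M'})_e$ is exactly commutativity, which is not yet available. So the induction hypothesis cannot be applied to $S_e$, and the pull-back of a copy through an unspecified $S^{M'}$ (which you yourself flag as the crux) is left open.

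The paper argues differently. It first shows superconnectivity $\Rightarrow$ superisometricity by a minimal counterexample: take a shortest path $u,u_1,\ldots,v$ in $S$ with $[u,v]\cap S=\{u,v\}$, contract the edge $uu_1$ by an elementary $M$, and use that $S^M$ is isometric by minimality to shorten the path. It then gets superisometricity $\Rightarrow$ commutativity via Lemma~\ref{lem:isometricityclassified} and induction on $|\supp(M)|$, and finally commutativity $\Rightarrow$ ampleness.

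\textbf{The other links of your cycle.} Box-superisometricity is about geodesic galleries between \emph{parallel} boxes, not a restriction of superisometricity. So (2)$\Rightarrow$(3) needs the short argument applying superisometricity to the extended minor-subproduct whose nontrivial blocks are the factors $V_i$. The step (3)$\Rightarrow$(4) is not immediate at all: for $t,t'\in S^M$ the boxes $F(t),F(t')$ are in general not parallel (e.g.\ blocks $\{a,b\}$ and $\{c\}$ in the same coordinate). The paper only reaches box-superisometricity $\Rightarrow$ ampleness through intervals in Theorem~\ref{thm:ample-elementary}, after the other equivalences are established.

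\textbf{Step 1.} The chosen $\Lambda_M$-boxes depend on the whole $\Lambda_N$-box, not coordinatewise. Moreover, a copy of any minor-subproduct consists of single elements, never of whole $\Lambda_M$-blocks. So $N'$ cannot deliver a copy of $N$ in $S^M$. The paper instead reduces to elementary $M$ (Lemmas~\ref{lem:atomistic} and~\ref{lem:operatorsselfcommutative}) and handles the block $\{a,b\}$ by the two-case argument of Lemma~\ref{lem:superscriptample}.

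\textbf{Commutativity.} The inclusion $(S^M)_{M'}\subseteq(S_{M'})^M$ is indeed automatic. Your reverse-inclusion argument is correct and simpler than the paper's. For $t\in M\vee M'$, $t\in(S_{M'})^M$ iff the mixed minor-subproduct of the box $F(t)$ that is co-trivial on $\supp(M')$ and trivial elsewhere is shattered by $S\cap F(t)$, and $t\in(S^M)_{M'}$ iff it is strongly shattered. Hence (1)$\Rightarrow$(5) follows directly from $\MP^*(U)$-ampleness (Theorem~\ref{thm:ample=weaklyample}), with no need for Step~1 or superisometricity.

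Read backwards, however, this shows only that commutativity amounts to weak ampleness of $S\cap V$ for every box $V$. Pairing a mixed $M$ with its complement $M^\diamond$ says nothing about an extended $M$ with a block $P\subsetneq U_i$, $|P|\ge 2$. Weak ampleness is strictly weaker than ampleness (Example~\ref{weak-ample-nonample}).

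The missing step is the paper's minimal-counterexample reduction. Commutativity passes to full-dimensional subproducts (Lemma~\ref{lem:subproductcommutative}). Suppose an extended $M$ is shattered but not strongly shattered by $S$ and has nontrivial block $P^1_1\subsetneq U_1$. By minimality, $S\cap(P^1_1\times U_2\times\cdots\times U_m)$ is ample and yields a copy with first factor $\{u^1_1\}$. Then $S\cap(((U_1\setminus P^1_1)\cup\{u^1_1\})\times U_2\times\cdots\times U_m)$, also ample by minimality, contains a copy of $M$ itself. So only mixed $M$ remain, and there your complement argument applies. Since your (1)$\Leftrightarrow$(6) is derived from (1)$\Leftrightarrow$(5), it inherits this gap.
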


Box-superisometricity is a refinement of superisometricity and requires that any pair of parallel boxes $B',B''\subseteq S$ can be connected in $S$ by a geodesic gallery (a shortest path consisting of boxes parallel to $B'$ and $B''$). 

Our second main characterization of ampleness provides the most efficient and economical characterizations of ampleness  and generalizes a similar result of \cite{BaChDrKo}. Instead of considering projections and strong-projections with respect to all minor-subproducts, we consider these operations with respect to elementary minor-subproducts only. Elementary minor-subproducts are exactly the atoms of $\GPart(U)$. They are the generalized partitions $e=(\alpha_1,\ldots,\alpha_m)$, where all partitions $\alpha_i$ are trivial (consist only of singleton blocks), except one partition $\alpha_j$, which consists of singleton blocks and exactly one block $\{ a,b\}$ of size 2. Therefore, we can identify each such generalized partition $e$ with the block $\{ a,b\}$, and denote the projection and the strong-projection by $S_e$ and $S^e$. 

\begin{theorem}\label{thm:intro-efficientcharacterization}
\setcounter{claim}{0} 
For a set $S\subseteq U=U_1\times\ldots\times U_m$,  the following conditions are equivalent:
\begin{enumerate}
    \item \label{item:ample} $S$ is ample;
    \item \label{item:contraction-strongcontr} $S$ is isometric and both $S^e$ and $S_e$ are ample for some 
    elementary minor-subproduct $e$; 
    \item \label{item:strongcontractions} $S$ is connected and $S^e$ is ample for every elementary minor-subproduct $e$;
    \item $S\cap[u,v]$ is ample in $[u,v]$ for all $u,v\in S$.
  \end{enumerate}
\end{theorem}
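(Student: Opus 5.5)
We prove the implications (1)$\Rightarrow$(3)$\Rightarrow$(2)$\Rightarrow$(1) and (1)$\Leftrightarrow$(4). Throughout we rely on Theorems \ref{thm:intro-amplemidample} and \ref{thm:intro-maincharacterization}.

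\textbf{(1)$\Rightarrow$(3) and (1)$\Rightarrow$(2).} Suppose $S$ is ample.

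\begin{itemize}
\item \emph{Connectivity and isometricity.} By Theorem \ref{thm:intro-amplemidample}, ample sets are isometric, hence connected.
\item \emph{Ampleness of $S^e$.} For an elementary minor-subproduct $e$, the set $S^e$ is a strong-projection of $S$. By superisometricity (Theorem \ref{thm:intro-maincharacterization}), $(S^e)^{M'} = S^{e\vee M'}$ is isometric for every $M'\in\MP(e)$. The minor-subproducts of $e$ correspond to elements of $\GPart(U)$ above $e$, and the composition law $(S^{M})^{M'}=S^{M\vee M'}$ is what I would check from the definitions of strong-projection via the meet and join in the lattice. It follows that $S^e$ is superisometric, hence ample.
\item \emph{Ampleness of $S_e$.} Use complementation: $(S_e)^* = (S^*)^e$ inside $M(e)$. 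Since $S^*$ is ample by Theorem \ref{thm:intro-maincharacterization}, the previous step shows $(S^*)^e$ is ample. Complementing again, $S_e$ is ample.
\end{itemize}

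\textbf{(3)$\Rightarrow$(2).} Let $u,v\in S$. The goal is a geodesic of $H(U)$ joining them inside $S$, and I would induct on $d(u,v)$. Connectivity gives some path from $u$ to $v$ in $S$. Choose a coordinate $i$ with $u_i\neq v_i$ and consider $e=\{u_i,v_i\}$. Ampleness of $S^e$, via its own isometricity and its structure under strong-projection, should let us shorten paths. The argument mirrors the binary proof in \cite{BaChDrKo}: take a shortest path in $S$ and look at a coordinate it traverses non-monotonically, i.e.\ one it visits twice or one on which it leaves $[u,v]$. Each such detour produces a pair of vertices differing only in that coordinate, and these pairs lift to vertices of $S^e$. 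Isometricity of $S^e$ then lets us reroute the path and eliminate the detour.

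\textbf{(2)$\Rightarrow$(1).} Let $M$ be shattered by $S$; we must produce a copy of $M$ in $S$. If $M$ refines $e$, i.e.\ $e\le\Lambda(M)$, then $M$ is shattered by $S_e$ as well. Since $S_e$ is ample, $S_e$ contains a copy of $M$ viewed in $M(e)$. Lifting a copy from $M(e)$ back to $U$ requires the points sitting in the merged block $\{a,b\}$ to be realized. We use that $S^e$ is ample and shatters the relevant sub-minor: the elements of $M$ lying in the merged block on the $S^e$ side are realized by both $a$ and $b$. Isometricity of $S$ is what lets us glue these two pieces together.

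If $e\not\le\Lambda(M)$, I would compare $M$ with $M\vee e$ and with the restriction of $M$ to a box avoiding one of $a,b$, and use induction on $|U|$ through the restriction operation.

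This lifting step is the main obstacle. My first attempt would be to reduce, via Theorem \ref{thm:intro-amplemidample}(4), to extended $M$ only, since copies of extended minors have few free choices.

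\textbf{(1)$\Leftrightarrow$(4).} For (1)$\Rightarrow$(4), the interval $[u,v]$ is a subproduct of $U$. So $S\cap[u,v]$ is ample, using $\MP^*$-ampleness from Theorem \ref{thm:intro-amplemidample}: any minor-subproduct of the box $[u,v]$ lies in $\MP^*(U)$.

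For (4)$\Rightarrow$(1), first note that $S$ is isometric: taking the interval for $u,v$, the set $S\cap[u,v]$ is ample, hence isometric in $[u,v]$, which gives a $(u,v)$-geodesic inside $S$. Now let $M$ be shattered by $S$. By Theorem \ref{thm:intro-amplemidample} we may assume $M$ is extended. I would choose $u,v\in S$ such that $[u,v]$ meets all boxes of $M$ in points of $S$, for instance by taking $u,v$ at maximal distance among the witnesses. Then $M$ restricted to $[u,v]$ is shattered by $S\cap[u,v]$, and ampleness there yields a copy of $M$ inside $S$.

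The delicate point is whether a single interval can capture witnesses in every box. If it cannot, I would instead prove (4)$\Rightarrow$(3): each $S^e$ satisfies (4) because its intervals are strong-projections of intervals of $S$, and then apply induction on $|U|$.
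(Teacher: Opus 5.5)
\textbf{Forward directions.} Your proofs of (1)$\Rightarrow$(2), (1)$\Rightarrow$(3) and (1)$\Rightarrow$(4) are fine. Deriving ampleness of $S^e$ from superisometricity (using $(S^e)^{M'}=S^{M'}$ for $M'\succeq e$) and then applying Theorem~\ref{thm:intro-maincharacterization} inside $U_e$ (your $M(e)$) is a legitimate substitute for the paper's direct lemma. The converse directions, however, are not established.

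\textbf{(3)$\Rightarrow$(2).} You never produce an $e$ with $S_e$ ample. Hypothesis (3) says nothing about projections, so this cannot come before ampleness itself. The fix is to prove (3)$\Rightarrow$(1) directly, as the paper does. For $M\neq M_\bot$ write $M=e_1\vee M'$ with $e_1$ an atom. Then $S^M=(S^{e_1})^{M'}$ is isometric, because $S^{e_1}$ is ample and hence superisometric. Since $S^{M_\bot}=S$ is connected, $S$ is superconnected and therefore ample.

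\textbf{(2)$\Rightarrow$(1) is the real gap.} You leave this direction open. Lifting a copy $W$ of $M$ from $S_e$ back to $U$ requires one consistent choice of $a$ or of $b$ on the whole slice of $W$ carrying the merged coordinate. Shattering only guarantees that each fiber meets $S$, possibly with $a$ in some fibers and $b$ in others. Ampleness of $S^e$ only controls those tuples lying in $S^e$, not the whole slice. The case $e\not\preceq\Lambda(M)$ is not treated at all.

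The paper never lifts copies. It proves that weak isometricity together with ampleness of $S^e$ and $S_e$ for one $e$ forces $S^f$ to be ample for \emph{every} elementary $f$; the superconnectivity argument above then finishes. The key claim is that each $S^f$ is weakly $(k,\ell)$-isometric:
\begin{itemize}
\item A bad square in $S^f$ expands to a $3$-cube $Q$ of $U$ on which $S$ induces an isometric $6$-cycle.
\item A four-case analysis on where the block of $e$ sits relative to $Q$ yields a forbidden configuration in $S^e$: two opposite corners of a square, or a $6$-cycle in a $3$-cube. Weak isometricity of $S$ and ampleness of $S_e$ are used to pin down which vertices lie in $S$.
\end{itemize}
Next, weak isometricity of $S$ gives $(S^f)_e=(S_e)^f$ and $(S^f)^e=(S^e)^f$ whenever $\supp(e)\neq\supp(f)$, and both are ample. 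Induction on $|U|$ then applies to $S^f$. The case $\supp(f)=\supp(e)$ is reduced to this one: rerunning the argument for $S^*$ yields some $e'$ of different support with both $S^{e'}$ and $S_{e'}$ ample.

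\textbf{(4)$\Rightarrow$(1) fails as planned.} Intervals of $H(U)$ are binary subproducts $\{u_1,v_1\}\times\cdots\times\{u_m,v_m\}$. So as soon as some factor of $M$ has three or more blocks, no interval meets all boxes of $M$, and no copy of $M$ lies in any interval. This already happens for extended $M$ with several singleton blocks, so the single-interval idea cannot work.

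In your fallback, the claim that intervals of $S^e$ are strong-projections of intervals of $S$ is false. Take $[u',v']\subseteq U_e$ where exactly one endpoint has the merged coordinate $\{a,b\}$ and the other has a third value $c$. The union of the fibers is then a box whose $i$-th factor is $\{a,b,c\}$, not an interval.

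The paper handles this case by induction on $|U|$ applied to $S\cap V$ for that box $V$. When $V=U$, so that one factor is ternary and all others have size at most $2$, it instead chooses $e$ inside a binary factor and invokes the already-proved implication from condition (2), which needs only \emph{one} $e$. Your fallback targets condition (3), which demands $S^e$ ample for \emph{every} $e$. You therefore lose that freedom and would need a separate argument for the case $V=U$.
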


Condition (4) establishes a strong link between ampleness in Cartesian products and classical ampleness.  Recall that the \emph{interval} $[u,v]$ between $u,v\in U$ consists of all $w\in U$ on shortest $(u,v)$-paths of 
$H(U)$. Each such interval is a hypercube, thus the ampleness of $S\cap [u,v]$ coincides with the classical definition of ampleness. This last characterization also allows to algorithmically recognize if $S\subseteq U$ is ample in time polynomial in the size of $S$ and the number of factors $m$. 

\medskip
In the second part of the paper, we present further properties of ample sets and of their box and prism complexes. For a set $S\subseteq U$, the \emph{box complex} $\BBox(S)$ of $S$ consists of all  boxes $B\in \BBox(U)$ included in $S$.    A box complex  $\BBox(S)$  is  a \emph{bouquet} if there exists a vertex $v_0\in S$ belonging to all maximal boxes of $\BBox(S)$.  Each box $B$ is a Cartesian product of cliques.  
Interpreting each clique as a simplex, each box $B\in \BBox(S)$ is realized by a Cartesian product of simplices, i.e., by a  a prism $\Pi(B)$.  The \emph{prism complex}  $||\BBox(S)||$ of $S$ is obtained by replacing each box $B\in \BBox(S)$ by the prism $\Pi(B)$.  

Our first result in this direction is a characterization of ample sets via push downs. Classical push down (also called shifting or stabilization) is an operation on set families with numerous combinatorial applications \cite{FuPa}. One of them is the elegant proof by Haussler \cite{Ha} of the fundamental lemma of \cite{HaLiWa} (with numerous applications in Machine Learning) that the density of the 1-inclusion graph of any set system $S\subseteq 2^E=\{ 0,1\}^E$ does not exceed its VC-dimension. The \emph{push down}  $S[e\negthickspace\downarrow]$ of  $S\subseteq 2^E$ with respect to  $e\in E$ is obtained from $S$ by replacing  every set $s\in S$ such that $s\setminus \{ e\}\notin S$ by the set $s\setminus \{ e\}$.
From definition,  $S[e\negthickspace\downarrow]$ and $S$ have the same size.  Furthermore, if $E=\{ e_1,\ldots,e_m\}$, then after performing 
the push downs of $S$ with respect to  $e_1,\ldots,e_m$, the set $S[e_1,\ldots e_m\negthickspace\downarrow]=S[e_1\negthickspace\downarrow][e_2\negthickspace\downarrow] \ldots [e_m\negthickspace\downarrow]$ will no longer change if further push downs are applied. The cube complex of  $S[e_1,\ldots e_m\negthickspace\downarrow]$ is a bouquet of cubes with origin at $(0,\ldots,0)$.  Performing serial push downs of $S$ along two different permutations of the same set of elements  may lead to  two different results.  This does not happen if $S$ is ample: a set  $S\subseteq \{ 0,1\}^E$  is ample if and only if 
all series of push downs commute, i.e., $S[e_{i_1},\ldots e_{i_k}\negthickspace\downarrow]=S[e_1,\ldots e_k\negthickspace\downarrow]$  \cite{BaChDrKo}. In this case, the bouquet of cubes 
$S[e_1,\ldots e_m\negthickspace\downarrow]$ coincides with the simplicial complex $\overline{X}(S)=\underline{X}(S)$. 

In this paper, we define a notion of push down for subsets of Cartesian products based on one-dimensional minor-subproducts.  Let $S\subseteq U=U_1\times\ldots\times U_m$ and assume that $U_i=\{0,1,\ldots,|U_i|-1\}$ for each $i$. Let $M=M(\Lambda)$ be a one-dimensional minor-subproduct of $U$. Then every box $B\in \cB(\Lambda,U)$ is a clique whose elements only differ in their $i$-coordinate. The \emph{push down} operation $S[M\negthickspace\downarrow]$  is defined boxwise: for each $B\in \cB(\Lambda,U)$, $S[M\negthickspace\downarrow ]\cap B$ is equal to the $|S\cap B|$ elements of $B$ with the smallest $i$-coordinates.  Informally, the push down operation shifts the elements of $S$ as far down as possible, subject to the condition that the elements of every box $B$ of $\cB(\Lambda,U)$ stay within $B$.\footnote{If the one-dimensional minor-subproduct $M$ is mixed, then 
every box $B\in \cB(\Lambda,U)$ is a clique  defined by some factor $U_i$. In this case, our definition of push down  coincides with the definition of shifting used in \cite{BrCaDiMoYe}.}

\begin{theorem} \label{intro-thm:intro-pushdown} A set $S\subseteq U$ is ample if and only if every 
serial push down commutes on $S$, i.e., for any  sequence of one-dimensional minor-subproducts $M^1,M^2,\ldots, M^k$ with pairwise distinct supports and any permutation $M^{i_1},M^{i_2},\ldots, M^{i_k}$ 
we have $S[M^1,M^2,\ldots, M^k\negthickspace\downarrow]=S[M^{i_1},M^{i_2},\ldots, M^{i_k}\negthickspace\downarrow]$.
\end{theorem}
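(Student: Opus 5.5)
The plan is to prove both directions by reduction to the binary case, using the interval characterization of Theorem~\ref{thm:intro-efficientcharacterization}(4) together with the fact that a push down acts independently on each box of the box-partition of a one-dimensional minor-subproduct.

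\textbf{Ample implies commutation.} The first step is to show that ampleness is preserved by a single push down $S[M\negthickspace\downarrow]$. I would use the commutativity characterization, Theorem~\ref{thm:intro-maincharacterization}(5). Let $M=M(\Lambda)$ have support $i$. Shattering of a minor-subproduct $M'$ by $S[M\negthickspace\downarrow]$ can be tested box by box: each box of $\cB(\Lambda)$ only loses its ``top'' elements, and their number is unchanged.

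Then I compare $S[M\negthickspace\downarrow]$ with $S$ in terms of projections and strong projections.
\begin{itemize}
\item For a minor-subproduct $N$ whose $i$-th partition refines or is comparable with that of $\Lambda$, the operations $S_N$ and $S^N$ relate to $S[M\negthickspace\downarrow]_N$ and $S[M\negthickspace\downarrow]^N$ by a push down in $N$.
\item For $N$ with support disjoint from $i$, push down commutes with both projection and strong projection.
\end{itemize}
These relations give a counting identity: for every $N$, $|S^N|$ and $|S_N|$ transform monotonically. In particular, the shattered and strongly shattered minor-subproducts of $S[M\negthickspace\downarrow]$ are obtained from those of $S$ by the same map.

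With this in hand, I would argue as in \cite{BaChDrKo}, in two steps.
\begin{enumerate}
\item Ampleness of $S$ gives ampleness of $S[M\negthickspace\downarrow]$.
\item For an ample set, the result of a serial push down is determined intrinsically, as the bouquet of all strongly shattered patterns placed at the origin $(0,\dots,0)$, truncated to the factors involved.
\end{enumerate}
Concretely, I would prove by induction on $k$ that $t\in S[M^1,\dots,M^k\negthickspace\downarrow]$ if and only if the box of $t$ relative to the meet of the $M^j$ contains a copy in $S$ of the ``lower set'' of $t$ along the supports. This description is symmetric in the order of the $M^j$, so commutation follows.

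\textbf{Commutation implies ample.} I would argue by contrapositive. Suppose $S$ is not ample. By Theorem~\ref{thm:intro-efficientcharacterization}(4) there exist $u,v\in S$ such that $S\cap[u,v]$ is not ample in the hypercube $[u,v]$.

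I would reduce to this hypercube by choosing one-dimensional minor-subproducts whose boxes separate the coordinates $u_j,v_j$ from the rest of $U_j$. Relabel so that $\{u_j,v_j\}=\{0,1\}$ and the two-element block structure is respected. I expect that pushing down along the partitions merging the other values isolates the behaviour on $[u,v]$. The aim is to show that the push downs restricted to this sub-hypercube coincide with binary push downs of $S\cap[u,v]$.

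Binary non-ampleness gives two orders of binary push downs with different results \cite{BaChDrKo}. These would lift to two orders of push downs on $S$ that differ.

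\textbf{Main obstacle.} I expect the hardest step to be this lifting. Elements of $S$ outside $[u,v]$ but in the same boxes can fill positions and mask the discrepancy. Since the supports must be pairwise distinct, I cannot freely use several partitions per factor.

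The first thing I would try is to choose $[u,v]$ minimal with the non-ample property, together with a shattered but not strongly shattered pattern. I would then take the one-dimensional partitions $\alpha_j=\{\{u_j\},\{v_j\},U_j\setminus\{u_j,v_j\}\}$ and order the labels so that the other values lie above. Then I would count $|S[\cdot\negthickspace\downarrow]\cap\{0,1\}^{\text{supp}}|$ and show that it equals the number of strongly shattered sets in one order and the number of shattered sets in another. Non-ampleness makes these counts differ.
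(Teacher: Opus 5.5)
\textbf{The converse direction has a genuine gap.} The minor-subproducts you propose, $\alpha_j=\{\{u_j\},\{v_j\},U_j\setminus\{u_j,v_j\}\}$, cannot detect anything. A push down only moves elements inside the boxes of $\cB(\Lambda,U)$, and every box of this minor that meets $[u,v]$ is a singleton. So these push downs leave $S\cap[u,v]$ unchanged in every order.

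The counting plan cannot work either. A push down preserves $|S\cap B|$ for every box $B$. Hence, whenever $[u,v]$ is a union of boxes of the minors used, $|S[\cdots\da]\cap[u,v]|=|S\cap[u,v]|$ in every order. It is never $|\uX|$ in one order and $|\oX|$ in another; for a non-ample set both of these differ from $|S\cap[u,v]|$.

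You also may not relabel $U_j$ so that the other values lie above. The hypothesis is commutation with respect to the fixed order. Inside the cube $[u,v]$ it suffices to identify $\min(u_j,v_j)$ with $0$.

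The missing idea is simply this. For each $j$ with $u_j\ne v_j$, take the elementary minor-subproduct $M^j$ whose only non-trivial block is $\{u_j,v_j\}$. Its boxes are edges parallel to $\{u_j,v_j\}$ and singletons, and each of them is either contained in $[u,v]$ or disjoint from it. So, by induction on the length, $S[M^{j_1},\ldots,M^{j_l}\da]\cap[u,v]$ is exactly the binary serial push down of $S\cap[u,v]$. The ``masking'' obstacle you anticipate does not occur. Commutation on $S$ therefore gives commutation of all serial push downs of $S\cap[u,v]$. Then $S\cap[u,v]$ is ample by \cite[Corollary 3]{BaChDrKo}, and $S$ is ample by \cref{thm:ample-elementary}. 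This is the paper's argument.

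\textbf{The forward direction is also only asserted.}

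In step (1), the commutativity criterion quantifies over all $N,N'$ with disjoint supports. This includes $N$ whose $i$-th partition is incomparable with $\alpha_i$, which your case list omits. The ``same map'' relating the shattered minor-subproducts of $S$ and of $S[M\da]$ is never defined, and these families genuinely change. For example, with $m=1$ and $S=\{0,2\}\subseteq\{0,1,2\}$, the partition $\{\{0,2\},\{1\}\}$ is shattered by $S[M\da]=\{0,1\}$ but not by $S$.

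In step (2), your description is true for ample $S$, provided the box of $t$ is taken in the join of the $M^j$, not the meet. However, the induction step needs the following fact: if $r_k$ slices of $S\cap B(t)$ each contain a box of shape $(r_1,\ldots,r_{k-1})$, then $S\cap B(t)$ contains a box of shape $(r_1,\ldots,r_k)$. This fails for merely weakly isometric sets, for instance the complement of an antipodal pair in $\{0,1\}^3$. You give no argument for it. It can be obtained from the shape-preserving bijection in the proof of \cref{f-vector-push-down}.

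The paper avoids all of this in two steps. First, \cref{lem:pushdownample} shows that ampleness is preserved by elementary push downs via the interval criterion: each $S[e\da]\cap[u,v]$ is unchanged, a binary push down, or a copy of $S_e$ or $S^e$ intersected with an interval. Second, ample sets are weakly isometric, so \cref{lem:length2pushdown} lets you swap any two adjacent push downs, and adjacent swaps generate all permutations.
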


The \emph{dimension} of a box $B=B_1\times\ldots\times B_m$  is $\dim(B)=\sum_{i=1}^m (|B_i|-1)$, which is  the topological dimension of the prism $\Pi(B)$. Let $f_i(S)$ denotes the number of faces of $\BBox(S)$ of dimension $i$. As usually, the vector $f(S)=(f_0(S),f_1(S),\ldots,)$ is called the \emph{$f$-vector} of  $\BBox(S)$ (and of $S$). 
The \emph{Euler characteristic} of  box-complex  $\BBox(S)$ (or of the prism complex  box-complex  $||\BBox(S)||$ of $S\subseteq U$ is $\chi(S)=\sum_{i=0}^{\infty}(-1)^if_i(S)$. 

 We show that the push down operation preserves the $f$-vectors of ample sets, and that by applying repeated push down operations we end up with a bouquet of prisms. This allows us to prove the following result, generalizing the analogous result for the binary case of \cite{Wi} and \cite{BaChDrKo}:

\begin{theorem} \label{Euler-characteristic}  A set $S\subseteq U$ is ample if and only if $\chi(S\cap V)=1$ for every full-dimensional subproduct $V$ with $S\cap V\neq \varnothing$.
\end{theorem}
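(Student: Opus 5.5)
We prove both directions by combining the push-down machinery with the fact that ampleness is inherited by full-dimensional subproducts.

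\emph{Forward direction.} Let $S$ be ample and let $V=V_1\times\ldots\times V_m$ be a full-dimensional subproduct with $S\cap V\neq\varnothing$. We first check that $S\cap V$ is ample in $V$. Every minor-subproduct of $V$ belongs to $\MP^*(U)$. Shattering a minor-subproduct of $V$ by $S\cap V$ is the same as shattering it by $S$, and the same holds for strong shattering, since all boxes and copies involved lie in $V$. Hence $\MP^*(U)$-ampleness of $S$, given by Theorem~\ref{thm:intro-amplemidample}, yields ampleness of $S\cap V$.

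It therefore suffices to prove $\chi(S)=1$ for every nonempty ample $S$. Here we use the two push-down facts stated before the theorem:
\begin{itemize}
\item[(a)] push downs along one-dimensional minor-subproducts preserve the $f$-vector of an ample set, and they preserve ampleness (the commutation property of Theorem~\ref{intro-thm:intro-pushdown} should be stable under a push down);
\item[(b)] repeated push downs end in a set $T$ whose box complex is a bouquet of prisms with origin $v_0=(0,\ldots,0)$.
\end{itemize}
Then $\chi(S)=\chi(T)$. A bouquet is a downward-closed family of boxes all containing $v_0$, so $\|\BBox(T)\|$ is a union of prisms sharing $v_0$, and it is star-shaped with respect to $v_0$ in the product-of-simplices realization. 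It is therefore contractible, and $\chi(T)=1$.

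If we want a combinatorial argument instead, we can compute directly. In the bouquet case, $T$ is a down-set of $\prod\{0,\ldots,|U_i|-1\}$, and each box of $T$ is determined by its blocks $B_i$. Then $\chi(T)$ equals the sum, over vertices $x\in T$, of $\prod_i$ of alternating sums over the subsets of $\{0,\ldots,x_i\}$ having maximum $x_i$. For $x_i>0$ this factor is $\sum_{A\subseteq\{0,\ldots,x_i-1\}}(-1)^{|A|}=0$, and for $x_i=0$ it is $1$. Only $x=v_0$ contributes, so $\chi(T)=1$.

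\emph{Backward direction.} Suppose $S$ is not ample; we find a $V$ with $\chi(S\cap V)\neq 1$. By Theorem~\ref{thm:intro-efficientcharacterization}(4), some interval $[u,v]$ with $u,v\in S$ has $S\cap[u,v]$ not ample. Such an interval is the full-dimensional subproduct $\prod_i\{u_i,v_i\}$, which is a hypercube. Every box contained in a hypercube subproduct is a subcube, so for full-dimensional $V\subseteq[u,v]$ the number $\chi(S\cap V)$ is the classical cube-complex Euler characteristic. By the binary theorem of \cite{Wi,BaChDrKo}, there is a subcube $V\subseteq[u,v]$ with $S\cap V\neq\varnothing$ and $\chi(S\cap V)\neq 1$. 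Since subcubes of $[u,v]$ are full-dimensional subproducts of $U$, this contradicts the hypothesis.

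\emph{Main obstacle.} The delicate step is (a): we must show that a push down along a one-dimensional $M$ maps ample sets to ample sets and gives, for every dimension, a bijection between the boxes of $\BBox(S)$ and those of $\BBox(S[M\da])$. Our plan is to work fiberwise over each box of $\cB(\Lambda)$. A box $B'$ that meets $k$ elements in the pushed coordinate of a fiber corresponds to the boxes whose projection onto $M$ lies in the strong-projection on the relevant level. Ampleness supplies the commutativity $(S^{M'})_{M}=(S_{M})^{M'}$ from Theorem~\ref{thm:intro-maincharacterization}, which should make these counts match level by level.
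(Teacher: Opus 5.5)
Your proof is correct, and its skeleton is the paper's. You restrict to full-dimensional subproducts, then push $S$ down to the stable set $T=S[1,\ldots,m\da]$, which is a bouquet with origin $\mathbf{0}$ and has the same $f$-vector. For the converse you pass to intervals $[s,t]$ with $s,t\in S$, apply the binary theorem of \cite{Wi,BaChDrKo}, and conclude with Theorem~\ref{thm:ample-elementary}(7).

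Your facts (a) and (b) are exactly Lemma~\ref{lem:pushdownample}, Proposition~\ref{f-vector-push-down} and Corollary~\ref{stable}. So the ``main obstacle'' is already settled in the paper, and you should simply cite these results. Your fiberwise sketch would not stand on its own. For instance, commutation of serial push downs on $S$ does not obviously transfer to $S[M\da]$: a later push down with the same support as $M$ does not give a serial push down of $S$. The paper instead obtains ampleness of $S[e\da]$ from the interval characterization, and $f$-vector preservation from an explicit twin-box bijection.

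The genuine difference lies in how you evaluate $\chi(T)$.
\begin{itemize}
\item The paper inducts on $|T|$. The top vertex $t^B$ of a maximal box $B=B_\le(t^B)$ is a corner, and inclusion--exclusion gives $\chi(T)=\chi(T\setminus\{t^B\})+\chi(B)-\chi(B\setminus\{t^B\})=1$, using Euler's formula for a single prism.
\item Your topological route is valid. $||\BBox(T)||$ is realized as a subcomplex of the face complex of $\prod_i\Delta^{|U_i|-1}$, since faces of this polytope meet in faces. It is therefore a union of convex cells all containing $\mathbf{0}$, hence star-shaped and contractible. Moreover, $\sum_i(-1)^if_i(T)$ is the Euler characteristic of this regular CW complex.
\item Your closed-form count is also correct and avoids induction altogether. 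Grouping boxes by their top vertex $x$, each coordinate with $x_i>0$ contributes $\sum_{A\subseteq\{0,\ldots,x_i-1\}}(-1)^{|A|}=0$, so only $x=\mathbf{0}$ survives.
\end{itemize}

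Note that this count needs $T$ to be a down-set, and that does not follow from being a bouquet. For example, $\{(0,0),(2,0)\}\subseteq\{0,1,2\}\times\{0\}$ is a bouquet with origin $(0,0)$ but not a down-set. The down-set property comes from the stability of $T$, as in Lemma~\ref{l:boxBleq}.

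Both of your routes are shorter than the paper's. The paper's argument has the advantage of staying within the combinatorics of boxes.
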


One of the main results of the paper is a decomposition theorem for ample sets and their box/prism complexes. It asserts that any ample set $S\subseteq U$ can be obtained from its maximal boxes by successive ample amalgams. Roughly speaking, an \emph{ample amalgam} is an operation of gluing  together two ample sets $S_1$ and $S_2$ along an ample subset $S_0=S_1\cap S_2$ (see Definition \ref{def:amalgam}). Then each maximal box of $\BBox(S)$ is either a maximal box of $\BBox(S_1)$, or a maximal box of $\BBox(S_2)$, or a maximal box of both, in which case it is a maximal box of $\BBox(S_0)$. To prove the decomposition theorem, we establish and use the ampleness of several types of sets, which are defined by a factor $U_i$ and an element $a\in U_i$, and have a clear geometric meaning (see Definition \ref{def:sector}): sectors, cosectors, extended  sectors, boundaries and neighborhoods of sectors, hyperplanes. This result may be useful in other settings and provides strong geometric and recursive properties of ample sets. A consequence of the decomposition theorem  is that prism complexes of ample sets are contractible topological spaces. This shows that ample sets have not only strong combinatorial, metric, and geometric properties, but also important topological properties. These results can be summarized as follows: 

\begin{theorem}\label{intro:ample-amalgams-pseudo-boxes} For each ample set 
$S\subseteq U$, the following holds:
\begin{itemize}
    \item[(1)] $S$ can be obtained from the set of its maximal boxes  by a sequence of ample amalgams;
    \item[(2)]  the prism complex  $||\BBox(S)||$  of $S$ is contractible. 
\end{itemize}
\end{theorem}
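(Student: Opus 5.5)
We will prove (1) by induction on $|S|$, or more precisely on the number of maximal boxes of $\BBox(S)$. Then we derive (2) from (1) via the gluing lemma for contractible CW-complexes. If $S$ is a single box there is nothing to prove. Otherwise $S$ is not a box, so some factor $U_i$ and some $a\in U_i$ produce a nontrivial split of $S$. We use the sectors and related sets of Definition~\ref{def:sector}. Let $S_1$ be the extended sector of $S$ at $(i,a)$, i.e.\ $S$ together with the neighbourhood of the part lying on the side $\{x_i=a\}$. Let $S_2$ be the corresponding cosector with its boundary. We choose $(i,a)$ so that both $S_1$ and $S_2$ are proper subsets of $S$; this is possible because $S$ is not a single box and is isometric (Theorem~\ref{thm:intro-amplemidample}).

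The key steps, in order, are as follows.
\begin{itemize}
\item[(a)] Show that $S_1$, $S_2$ and $S_0=S_1\cap S_2$ (the boundary, or hyperplane-carrier, of the sector) are ample. Each of these sets is expressed through projections and strong-projections of $S$ on one-dimensional or elementary minor-subproducts, intersected with subproducts. Ampleness then follows from the commutativity and superisometricity characterizations of Theorem~\ref{thm:intro-maincharacterization}, together with the efficient criteria of Theorem~\ref{thm:intro-efficientcharacterization}. The interval criterion (4) there is convenient here: an interval of $S_j$ between two of its points is the intersection of an interval of $S$ with a half-space $\{x_i=a\}$ or $\{x_i\neq a\}$. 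In the binary cube, such an intersection with a classical ample set is again ample.
\item[(b)] Verify the amalgam conditions of Definition~\ref{def:amalgam}. We need $S=S_1\cup S_2$, and every box of $S$ must lie in $S_1$ or in $S_2$. For the second condition we argue as follows. A box $B\subseteq S$ whose $i$-th factor contains $a$ together with other elements meets both sides. We include it in $S_1$ by defining the extended sector to contain all boxes of $S$ meeting $\{x_i=a\}$; ampleness of that set is then the content of (a). Consequently every maximal box of $S$ is maximal in $S_1$ or in $S_2$, and those maximal in both are maximal in $S_0$.
\item[(c)] Apply induction to $S_1$ and $S_2$. Each has fewer elements, and we check that neither has more maximal boxes than $S$. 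Then $S$ is an ample amalgam of two sets already obtained from their maximal boxes, which gives (1).
\end{itemize}

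For (2), the prism complex $\|\BBox(S)\|$ is the union of $\|\BBox(S_1)\|$ and $\|\BBox(S_2)\|$, glued along $\|\BBox(S_0)\|$; this uses that every box lies in $S_1$ or $S_2$. By induction all three complexes are contractible, since $S_0$ is ample and smaller. The gluing lemma (a union of two contractible subcomplexes with contractible intersection is contractible) then yields contractibility. The base case is a single prism, which is convex and hence contractible. As a sanity check, the Euler characteristic criterion of Theorem~\ref{Euler-characteristic} is consistent with this: $\chi=1+1-1$.

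The main obstacle is step (a), specifically the ampleness of the extended sector $S_1$ and of the boundary $S_0$. A shattered minor-subproduct of $S_1$ may use boxes straddling $x_i=a$, and we must locate a copy inside $S_1$. I would first reduce, via Theorem~\ref{thm:intro-amplemidample}(4), to extended minor-subproducts. I would then split into two cases according to whether the $i$-th partition isolates $a$, or puts $a$ in the nontrivial block. In the second case, a shattering of $S_1$ lifts to a shattering of $S$, and a copy in $S$ can be moved into $S_1$ by choosing representatives equal to $a$.
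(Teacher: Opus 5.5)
Your architecture is the paper's: split $S$ along a pair $(i,a)$ into the extended sector $S^+(a)=S(a)\cup N(S(a))$ and a cosector piece, use that every box of $S$ meeting $\{x_i=a\}$ lies in $S^+(a)$, induct on $|S|$, and finish with the gluing lemma.

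\textbf{The pieces need to be pinned down.} You must take $S_2=S^c(a)$, the bare cosector. Then $S_0=S^+(a)\cap S^c(a)=N(S(a))$, which lies on the cosector side; it is not the boundary $\partial S(a)$ of the sector. If, as your wording (``cosector with its boundary'', $S_0$ ``the boundary of the sector'') suggests, $S_2$ also contains $\partial S(a)$, the induction can stall. For the ample path $S=\{(a,0),(b,0),(b,1)\}$ in $\{a,b\}\times\{0,1\}$, every choice of coordinate and value makes one of the two pieces equal to $S$.

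With $S_2=S^c(a)$, your claim that a good $(i,a)$ exists is true, and easier than the paper's appeal to a Tietze--Nakajima-type lemma. Suppose that for every $i$, every value $a$ occurring as an $i$-th coordinate in $S$, and every $w\in S$, the tuple obtained from $w$ by setting its $i$-th coordinate to $a$ were in $S$. Then $S$ would be the product of its coordinate projections, i.e.\ a box. Connectivity then gives $N(S(a))\neq\varnothing$. You should state this.

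\textbf{The genuine gap is step (a).} The ampleness of $N(S(a))$ and $S^+(a)$ is the entire content of Theorem~\ref{sector-boundary}(3),(4), and your proposed mechanism for it fails. These sets are not intersections of $S$ with half-spaces. You also do not exhibit them as (strong-)projections; the paper does this only for $\partial S(a)$. Take $u,v\in N(S(a))$ with $u_i=b\neq c=v_i$, both different from $a$. Then $N(S(a))\cap[u,v]$ depends on the neighbours with $i$-th coordinate $a$, which lie outside $[u,v]$. So the interval criterion does not reduce this to a binary question. This is exactly the hard case $b\neq c$ that the paper handles by projecting along $e=\{b,c\}$ and inducting on gallery length.

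Your last paragraph points to a workable alternative, but it is incomplete. For $S_1$ it is only sketched, and an arbitrary copy in $S$ cannot be ``moved''. You must restrict to the subproduct whose $i$-th factor is $(U_i\setminus Q)\cup\{a\}$, where $Q\ni a$ is the nontrivial block, to force $a$ as representative. For $S_0$ the argument is absent. It would go as follows.
\begin{itemize}
\item If $\{a\}$ is a block of the $i$-th partition, the boxes with $i$-block $\{a\}$ miss $N(S(a))$, so there is nothing to check.
\item If $a\in Q$, split $Q$ into $\{a\}$ and $Q\setminus\{a\}$. The resulting minor-subproduct is still extended. It is shattered by $S$, because each $s\in N(S(a))$ in a $Q$-box lies in $S$ together with its neighbour with $i$-th coordinate $a$.
\item Hence $S$ contains a copy $W^+$ of it. Deleting from $W^+$ the tuples with $i$-th coordinate $a$ leaves a copy of the original minor-subproduct inside $N(S(a))$.
\end{itemize}
Completed this way, your route would be much shorter than the paper's box-superisometricity case analysis, but as written step (a) is not established.
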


We already mentioned several important examples 
of classical ample sets. 
Most of those examples  arise as set systems. Each ample set of $2^E$ is an ample set in our sense 
due to the bijection between the Boolean cube $2^E$ and the binary Cartesian product $\{ 0,1\}^E$. In this paper, we present several natural examples of ample sets in Cartesian products that are not binary. The first new example of ample sets of Cartesian products is that of \emph{quasi-median graphs}. These graphs are the Hamming analogs of median graphs (in particular, they are the retracts of the Hamming graphs \cite{Wi-qm}, while median graphs are the retracts of hypercubes). They have been introduced by Mulder  \cite{Mu} and investigated in numerous papers, in particular in \cite{BaMuWi}.
Median graphs are important in geometric group theory because they are exactly the 1-skeleta of CAT(0) cube complexes. Similarly, quasi-median graphs also found applications in this theory \cite{Ge}. 
Secondly, we present a class of polyhedra that we call prism-like polyhedra, as their combinatorial structure resembles that of a partial prism. We show that the vertex sets of these polyhedra can be represented by an ample set of a Cartesian product.

Our principal example concerns payoff games in graphs and actually initiated our work on ample sets in Cartesian products. 
Games on graphs are played on a weighted directed graph $G=(V_{\max}\cup V_{\min},E,w)$, by one or two players, where a pebble is moved around the graph and players collect rewards. It turns out that winning strategies can be represented by a vector $\sigma\in N^+(v_1)\times N^+(v_2)\times\ldots\times N^+(v_m)$, where $\{v_1,\ldots,v_m\}$ are the player-controlled vertices, and $N^+(v_i)$ is the set of successors of $v_i$. The second author proved in \cite{maat_strategy_2025} that, if $|N^+(v_i)|\leq 2$ for $i=1,2,\ldots, m$, then the set of winning strategies $\Sigma$ is a binary ample set for multiple classes of games on graphs. We extend this result by removing the degree constraint. In particular, we show that for any mean payoff game, $\Sigma$ is an ample set of $N^+(v_1)\times N^+(v_2)\times\ldots\times N^+(v_m)$. 

We conclude the paper with a unified treatment of various notions of shattering and VC-dimension, occurring in the literature on multiclass learning. We interpret these in terms of shattering and  dimension of minor-subproducts of specific types.

\subsection{Organization} 
We provide the necessary background in \cref{sec:preliminaries}. Then in \cref{sec:shatterstrongshatter} we provide formal definitions of our main new concepts, minor-subproduct, box-partition, and generalized partition, and we generalize the definitions of shattering, strong-shattering, ampleness and lopsidedness to Cartesian products. After proving the main properties of minor-subproducts in \cref{section:propertiesminorsubproducts}, we prove \cref{thm:intro-amplemidample} in \cref{sec:ample-extample}. We discuss projection, strong-projections, and restrictions in \cref{sec:projections}, which we use for our main characterizations (\cref{thm:intro-maincharacterization,thm:intro-efficientcharacterization}) in  \cref{sec:maincharacterization}.  Then in \cref{sec:pushdowns} we explore the relation between ampleness and push down operations, and in \cref{sec:boxcomplexes} we consider the box complexes of ample sets. Finally, we present some classes of ample sets in \cref{sec:examples}, and discuss the relation of minor-subproducts to VC-dimension in \cref{sec:VC-dimension}. To illustrate the main new notions, 
we use a running example of an ample set and we also relate them with analogous notions in the binary case. 

\section{Preliminaries}\label{sec:preliminaries}

\subsection{Cartesian products of sets} 
The \emph{Cartesian product} of $m$ finite \emph{nonempty} sets $U_1,\ldots,U_m$ is the set $U:=U_1\times\ldots\times U_m$ of all tuples $(u_1,\ldots,u_m)$ with  $u_{i} \in U_i$ for $i=1,\ldots,m$. 
A factor $U_i$ which is a singleton is called \emph{trivial}. If all factors $U_1,\ldots, U_m$ have size $2$, then $U$ is called a \emph{binary product}. If $U_1=\ldots=U_m=Y$ and $X=\{1,\ldots,m\}$, then $U$ coincides with the set $Y^X$ of all maps $f:X\rightarrow Y$. 
%
For a set $A\subseteq X=\{1,\ldots,m\}$ with $A=\{ i_1,\ldots, i_k\}$,  an $A$-\emph{tuple}  is any element $t=(t_{i_1},\ldots,t_{i_k})$ of the product $U_{i_1}\times\ldots\times U_{i_k}$. For a tuple $u=(u_1,\ldots,u_m)\in U$ and $A\subseteq X$, $u|_A$ denotes the \emph{trace} (or the \emph{restriction}) of $u$ on $A$: $u|_A$ is the $A$-tuple whose coordinates are the $u_i$ with $i\in A$. Conversely, if $t$ is an $A$-tuple, then any tuple $u\in U$ such that $u|_A=t$ is called an \emph{extension} of $t$. The set of all extensions of $t$ is called the \emph{fiber} of $t$ and is denoted by $F(t)$.

\begin{definition} [Subproducts]  Given a Cartesian product $U=U_1\times\ldots\times U_m$ with $X=\{ 1,\ldots,m\}$ and some nonempty sets $V_{i_j}\subseteq U_{i_j}$ for $j=1,\ldots,k$. If $A=\{ i_1,\ldots,i_k\}$, then the \emph{subproduct} $V$ of $U$ defined by the sets $V_{i_1},\ldots,V_{i_k}$ is the set of all $A$-tuples $v=(v_{i_1},\ldots,v_{i_k})$ such that $v_{i_j}\in V_{i_j}$.  
Then $V_{i_1},\ldots,V_{i_k}$ are called the \emph{subfactors} of the subproduct $V=V_{i_1}\times\ldots\times V_{i_k}$ and $A\subseteq X$ is called the \emph{support} of $V$ and is denoted by $\supp(V)$. 
A subproduct  $V$ is called \emph{full-dimensional} (or a \emph{box}) if $\supp(V)=X$. 
The \emph{dimension} $\dim(V)$ of a subproduct $V=V_{i_1}\times\ldots\times V_{i_k}$ is the sum $\sum_{j=1}^k (|V_{i_j}|-1)$.
\end{definition}

A subproduct $V$ is not always a subset of the product $U$. 
In fact, $V$ is a subset of $U$ if and only if $V$ is  full-dimensional. However, $U$ hosts copies of each subproduct $V$: 

\begin{definition} [Copies of subproducts] Let $A$ and $B$ be two disjoint sets of $X$, say $A=\{i_1,\ldots, i_k\}$
and $B=\{j_1,\ldots, j_\ell\}$. Let $t=(t_{i_1},\ldots, t_{i_k})$ be an $A$-tuple and $s=(s_{j_1},\ldots, s_{j_{m-k}})$ be a $B$-tuple. 
We denote by $s\times t$ the $(A\cup B)$-tuple $u=(u_{i_1},\ldots,u_{i_k})$ such that $u_i=t_i$ if $i\in A$ and $u_i=s_i$ if $i\in B$.  
If $V$ is an $A$-subproduct and $t$ is a $(X\setminus A)$-tuple, then we call the full-dimensional subproduct $V\times t\subseteq \Prod(X)$ a \emph{copy of} $V$ in $\Prod(X)$. 
If $V$ is full-dimensional, then $V$ has a unique copy in $\Prod(X)$, which is $V$ itself. 
\end{definition}

\subsection{Partition lattice} \label{partitionlattice} (following \cite[Chapter IV]{Gr}) A \emph{partition} of a set $Z$ with $n$ elements is a set  $\alpha=\{ P_1,\ldots,P_k\}$ of nonempty pairwise disjoint subsets of $Z$ whose union is $Z$.  The members $P_1,\ldots, P_k$ of $\alpha$  are called the \emph{blocks} of $\alpha$. A singleton as a block is called \emph{trivial}. A \emph{trivial partition} is a  partition whose all blocks are trivial. Analogously, a \emph{co-trivial partition} is the partition with a single block $Z$. There is a one-to-one correspondence between the partitions of $Z$ and the equivalence relations on $Z$. The set of all partitions of $Z$ is denoted by  $\Part(Z)$. 

Given two partitions $\alpha,\beta\in \Part(Z)$, $\alpha$ is a \emph{refinement} of  $\beta$ (in which case  we say that $\alpha$ is \emph{finer} than $\beta$ and that $\beta$ is \emph{coarser} than $\alpha$) 
if every block of $\alpha$ is a subset of some block of $\beta$. In that case, we write $\alpha\preceq \beta$. Equivalently, the blocks of $\beta$ are unions of blocks of $\alpha$. $\Part(Z)$ with the partial order $\preceq$  forms a complete lattice, 
which is called the \emph{partition lattice} of $Z$.  The meet and the join of partitions $\alpha$ and $\beta$ in the partition lattice $\Part(Z)$ are defined as follows. The \emph{meet} $\alpha\wedge \beta$ is the partition $\gamma$ whose blocks are the intersections of a block of $\alpha$ and a block of $\beta$ that are not disjoint from each other. The \emph{join} $\alpha\vee \beta$ is the partition $\rho$  whose blocks are the equivalence classes of the transitive closure $\sim^*$ of the following binary relation  $\sim$ on the blocks of $\alpha=\{ P_1,\ldots,P_k\}$ and $\beta=\{ Q_1,\ldots,Q_{\ell}\}$: $P_i\sim Q_j$ 
if $P_i\cap Q_j\ne\varnothing$. The \emph{atoms} of the partition lattice  $\Part(Z)$  are the partitions with $n-2$ singleton blocks and one block with two elements.  The minimal element of the lattice $\Part(Z)$ is the trivial partition $\{ \{ z\}: z\in Z\}$ and the maximal element of $\Part (Z)$ is the co-trivial partition $\{ Z\}$ with a single block. 
We say that a partition  $\alpha$ is \emph{quasi-trivial} if  $\alpha$  contains at most one non-trivial block. The \emph{restriction} of a partition $\alpha=\{ P_1,\ldots,P_k\}$ 
of $Z$ on a subset $Z'$ of $Z$ is the partition $\alpha'$ of $Z'$ whose blocks are the non-empty intersections $P_i\cap Z'$.  
Last, notice that $\Part(Z)$ is a \emph{complemented lattice} \cite{Gr}, i.e., for
each element $\alpha\in \Part(Z)$ there exists an   element $\beta\in \Part(Z)$ (called a \emph{complement} of $\alpha$) such that the meet $\alpha\wedge \beta$ is the minimal partition $\alpha_{\bot}=\{ \{ z\}: z\in Z\}$ and the join $\alpha\vee\beta$ is the maximal partition $\alpha^{\top}=\{ Z\}$ with a single block. In general, $\alpha$ may have many complements. We will denote any complement of $\alpha$ by $\alpha^{\diamond}$.

\subsection{Cartesian products of graphs and minors} In this subsection, we  endow $U=U_1\times\ldots\times U_m$ with the structure of a Hamming graph (Cartesian product of cliques). We also present some notions related to graphs. 

\subsubsection{Basic notions about graphs}
Let $G=(V,E)$ be a finite simple undirected graph with the set of vertices $V$ and the set of edges $E$. For two distinct
vertices $v,w$ of $G=(V,E)$  we write $v\sim w$ when there is an edge
connecting $v$ with
$w$. 
The subgraph of $G$ \emph{induced by} a subset $S\subseteq V$ is the graph
$G[S]=(S,E')$ such that $uv\in E'$ if and only if $uv\in E$.
The \emph{distance}
$d(u,v)=d_G(u,v)$ between two vertices $u$ and $v$ of a connected graph $G$ is the
length of a shortest $(u,v)$--path (measured by number of edges). The \emph{interval}
$[u,v]$ between $u$ and $v$ of $G$ consists of all vertices on shortest
$(u,v)$--paths, that is, of all vertices (metrically) \emph{between} $u$
and $v$: $[u,v]=\{ x\in V: d(u,x)+d(x,v)=d(u,v)\}$.  
A set $S\subseteq V$ (or the subgraph $G[S]$ induced by $S$)  is called \emph{isometric} if $d_{G[S]}(u,v)=d_G(u,v)$ for any
 $u,v\in S$ and  \emph{weakly isometric} if $d_{G[S]}(u,v)=d_G(u,v)$ for any
 $u,v\in S$ with $d_G(u,v)=2$. Equivalently, $S$ is weakly isometric if $S\cap [x,y]\ne\{x,y\}$ for all $x,y\in S$ 
 with $d_G(x,y)=2$. Finally, a set $S$ (or the subgraph $G[S]$) is called \emph{convex}
if $[u,v]\subseteq S$ for any $u,v\in S$. A \emph{halfspace} of $G$ is a convex set $S$ with a convex complement $V\setminus S$. An \emph{isometric embedding} of a connected graph $G=(V,E)$ into a connected graph $G'=(V',E')$ is a map $\varphi: V\rightarrow V'$ such that $d_{G'}(\varphi(u),\varphi(v))=d_G(u,v)$ for any two vertices $u,v\in V$. The image $\varphi(G)$ of $G$ is an isometric subgraph of $G$. 

\subsubsection{Cartesian product of graphs}
Let $G_{1}=(U_1,E_1),\ldots,G_{m}=(U_m,E_m)$ be a set of $m$ graphs.  The \emph{Cartesian product} $G:=\prod_{i=1}^m G_i=G_1\times\ldots\times G_m$ is a graph defined on the set $U=U_1\times\ldots\times U_m$ of all tuples $(x_1,\ldots,x_m)$, $x_{i} \in U_i$, where two vertices $x=(x_1,\ldots,x_m)$ and $y=(y_1,\ldots,y_m)$ are adjacent in $G$ if and only if there exists an index $j$ such that $x_{j} y_{j} \in E(G_{j})$ and $x_{i} = y_{i}$ for all $i\neq j$.
A \emph{subproduct} $G'$ of a Cartesian  product $G=\prod_{i=1}^m
G_i$ is a product
such that $G'=\prod_{j=1}^k G'_{i_j}$, where each $G'_{i_j}$ is a nonempty subgraph of $G_{i_j}$.

\subsubsection{Minors in graphs} We will consider the following slightly modified version of the notion of minor in graphs. 
A graph $M$ is called a \emph{minor} of a graph $G$ if there exists a
partition of vertices  of $G$ into connected subgraphs ${\mathcal P}=\{
P_1,\ldots,P_t\}$ and a
bijection $f: V(M)\rightarrow {\mathcal P}$ such that if $uv\in E(M)$ then
there exists an edge of
$G$ running between the subgraphs $f(u)$ and $f(v)$ of $\mathcal P$, i.e.,
after contracting each subgraph $P_i\in {\mathcal P}$ into a single vertex  we
will obtain a graph containing $M$ as a spanning subgraph. An \emph{induced minor} of $G$ is a minor of an induced subgraph $G'$ of $G$. 

In case of Cartesian products of graphs, the following definition of minor takes into consideration the product structure:

\begin{definition}\label{minor-subproduct} [Minor-subproduct] \cite{ChLaRa} A Cartesian product $M=M_1\times\ldots\times M_m$ of graphs $M_1,\ldots,M_m$ is called a \emph{minor-subproduct} of a Cartesian product $G:=\prod_{i=1}^m G_i$ if $M_{i}$ is a minor of $G_{i}$ for each $i=1,\ldots,m$.
\end{definition}
However, in this paper we use an adapted version of this notion using partitions. This is worked out in more detail in \cref{sec:minor-subproducts}.


\subsubsection{Hamming graphs} A \emph{Hamming graph}  is the Cartesian product $K_{n_1}\times\ldots\times K_{n_m}$  of $m$ complete graphs $K_{n_1},\ldots, K_{n_m}$. If $U_1,\ldots,U_m$ are the vertex-sets of the factors $K_{n_1},\ldots, K_{n_m}$ and $U=U_1\times\ldots\times U_m$, then we denote this Hamming graph by $H(U)$ and call it the \emph{Hamming graph} of the product $U$. 
The $m$-\emph{dimensional hypercube} $Q_m$ is the Cartesian product of $m$ copies of $K_2$: $Q_m=K_2\times \ldots \times K_2$. Equivalently, $Q_m$ is the Boolean cube defined by a set $X$ of size $m$: the elements $Q_m$ are vertices of the Boolean cube, and they can also be viewed as subsets of $X$ by considering them indicator vectors of sets: then two sets $A,B\subseteq X$ are adjacent in $Q_m$ if and only if $|A\Delta B|=1$. We refer to a 2-dimensional cube as a square. A sequence $(x_1,x_2,x_3,x_4)$ of vertices of a graph 
$G$ is called a \emph{square} if the vertices $x_1,x_2,x_3,x_4$ induce a square of $G$, where $x_i,x_j$ are adjacent if and only if $i$ and $j$ differ by 1 (modulo 4). 

The graph-distance  $d(x,y)$ (or $d_{H(U)}(x,y)$ if we need to avoid an ambiguity) between two vertices  $x=(x_1,\ldots,x_m)$ and $y=(y_1,\ldots,y_m)$ of a Hamming graph $H(U)=K_{n_1}\times\ldots\times K_{n_m}$  coincides with the \emph{Hamming distance} between the $m$-tuples $(x_1,\ldots,x_m)$ and $(y_1,\ldots,y_m)$, and thus is equal to the number of indices $i$ such that $x_i\ne y_i$. 
The intervals $[x,y]$ in Hamming graphs $H(U)$ have a special form: if $x=(x_1,\ldots,x_m)$ and $y=(y_1,\ldots,y_m)$, then $[x,y]$ is the binary subproduct $\{x_1,y_1\}\times\ldots\times\{ x_m,y_m\}$, where each pair $\{x_i,y_i\}$ is an edge or a single vertex. Thus $[x,y]$ is isomorphic to the $k$-hypercube, where $k$ is the distance between $x$ and $y$ in $H$. 
We will call a set $S\subseteq U=U_1\times \ldots\times U_m$ \emph{isometric}, \emph{weakly isometric},  or \emph{convex} if $S$ induces an isometric, weakly isometric, or convex subgraph  of the Hamming graph $H(U)$. For a set $S\subseteq U$, we denote by $H(S)$ the subgraph of 
$H(U)$ induced by $S$ (in literature, $H(S)$ is often called the \emph{1-inclusion graph} of $S$). An example of an isometric set is shown in \cref{fig:runningexample}.

A graph $G$ is called a \emph{partial cube} if $G$ admits an isometric embedding into a hypercube. Analogously, a graph $G$ is called a \emph{partial Hamming graph} if $G$ admits an isometric embedding into a Hamming graph. For an edge $uv$ of a graph $G=(V,E)$ the vertex-set $V$ can be partitioned into the following three sets 
$W(u,v)=\{ x\in V: d(x,u)<d(x,v)\}, W(v,u)=\{ x\in V: d(x,v)<d(x,u),$ and $W_=(u,v)=\{ x\in V: d(x,u)=d(x,v)$. 
Notice that for all edges $uv$ the set $W_=(u,v)$ is empty of and only if $G$ is bipartite (i.e., all cycles of $G$ are even). Partial hypercubes have been nicely characterized by Djokovi\'c \cite{Dj} and his characterization was generalized in \cite{Ch_Hamming} to partial Hamming graphs: 

\begin{theorem} \label{thm:partial-Hamming} (1) \cite{Dj} A connected graph $G$ is a partial cube if and only if $G$ is bipartite and for any edge $uv$ the sets $W(u,v)$ and $W(v,u)$ are complementary halfspaces;

\medskip\noindent
(2) \cite{Ch_Hamming} A connected graph $G$ is a partial Hamming graph if and only if for any edge $uv$ the sets $W(u,v),W(v,u),$ and $W_=(u,v)$ are complementary halfspaces, i.e., $W(u,v),W(v,u),W_=(u,v)$ and their pairwise unions are convex. 
\end{theorem}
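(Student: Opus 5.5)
Both parts are classical (Djoković for (1), and \cite{Ch_Hamming} for (2)). I would give one argument for (2), and obtain (1) as the special case where all the sets $W_=(u,v)$ are empty, i.e., where $G$ is bipartite.

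\emph{Necessity.} Let $\varphi:G\to H(U)$ be an isometric embedding and let $uv$ be an edge. Then $\varphi(u)$ and $\varphi(v)$ differ in exactly one coordinate $i$, say with values $a\neq b$. Since $d$ is the Hamming distance, we get
\[
W(u,v)=\{x:\varphi(x)_i=a\},\qquad W(v,u)=\{x:\varphi(x)_i=b\},\qquad W_=(u,v)=\{x:\varphi(x)_i\notin\{a,b\}\}.
\]
The set of tuples of $U$ whose $i$-th coordinate lies in a fixed subset of $U_i$ is convex in $H(U)$, because intervals of $H(U)$ are the binary subproducts $\{x_1,y_1\}\times\ldots\times\{x_m,y_m\}$. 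Since $\varphi$ preserves distances, it maps intervals of $G$ into intervals of $H(U)$. Hence the preimage of a convex set is convex. This gives convexity of the three sets and of their pairwise unions. In the cube case the $i$-th factor has only two values, so $W_=(u,v)=\varnothing$, and bipartiteness follows.

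\emph{Sufficiency, step 1.} Define a relation on edges by $uv\,\Theta\, xy$ iff $\{W(u,v),W(v,u)\}$ and $\{W(x,y),W(y,x)\}$ induce the same "split pattern". Concretely, I take the transitive closure of: $xy\,\Theta\, uv$ when $x\in W(u,v)$ and $y\in W(v,u)$ (or vice versa), or when $uv$ and $xy$ lie in a common triangle. The first task is to show, using convexity of the three sets and their unions, the following facts.
\begin{itemize}
\item $\Theta$ is an equivalence relation.
\item For each class $E_i$, the sets $W(u,v)$, $uv\in E_i$, form a partition $\mathcal P_i$ of $V$. Each block is convex.
\item Every edge of $E_i$ joins two different blocks.
\item Any two distinct blocks of $\mathcal P_i$ are joined by an edge, so the blocks behave like the vertices of a clique.
\end{itemize}
The key local lemma is that if $uv$ and $vw$ are edges with $w\in W_=(u,v)$, then $uvw$ is a triangle. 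This uses convexity of $W(u,v)\cup W_=(u,v)$ together with the convexity of the sets $W$ themselves.

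\emph{Sufficiency, step 2.} Set $U_i=\mathcal P_i$ and define $\varphi(x)=(P_1(x),\ldots,P_m(x))$, where $P_i(x)$ is the block of $\mathcal P_i$ containing $x$.
\begin{itemize}
\item A shortest $(x,y)$-path uses at most one edge of each class. If it used two, its endpoints would lie in a common convex block $W(u,v)$ while the path leaves that block, a contradiction. This gives $d(x,y)\le$ the number of coordinates in which $\varphi(x)$ and $\varphi(y)$ differ.
\item Conversely, any path from a block to a different block of $\mathcal P_i$ must use an edge of $E_i$. This gives the reverse inequality.
\end{itemize}
Hence $\varphi$ is an isometric embedding into the Hamming graph $H(U)$.

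I expect the main obstacle to be step 1: proving that $\Theta$ is transitive and that the $W$-sets of a class really tile $V$ into clique-like blocks. For this I would first prove, from the convexity of the pairwise unions, that for any edge $xy$ and any edge $uv$, either $x,y$ lie in the same one of the three sets $W(u,v)$, $W(v,u)$, $W_=(u,v)$, or $\{W(x,y),W(y,x),W_=(x,y)\}$ refines compatibly with $\{W(u,v),W(v,u),W_=(u,v)\}$.
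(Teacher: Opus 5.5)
The paper gives no proof of this theorem; it quotes it from \cite{Dj} and \cite{Ch_Hamming}, so your proposal has to stand on its own. Your necessity argument is correct. For sufficiency, however, you only list the properties the classes should have, and the step where the convexity hypotheses must do their work is missing.

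The ``key local lemma'' you single out is vacuous. If $vw$ is an edge and $w\in W_=(u,v)$, then $d(w,u)=d(w,v)=1$ by definition, in any graph. Likewise, $\Theta$ is automatically an equivalence relation, being the transitive closure of a symmetric relation. The real content is that the $W$-sets of one class never overlap partially and cover $V$. You also need that every edge joining two of them lies in the class; your reverse inequality in Step~2 uses this silently. With your generator (Djokovi\'c pairs plus triangles), it is not even clear that an edge $xy$ with $x\in W(u,v)$ and $y\in W_=(u,v)$ lies in the class of $uv$.

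Step~2 has a flaw of its own. Your argument that a geodesic uses at most one edge of a class only handles a return to the same block. For crossings $P\to Q\to R$ with $R\ne P$, the endpoints are not in a common block. You need convexity of $P\cup R=W(r,s)\cup W(s,r)$ for an edge $rs$ joining $P$ and $R$; this is where the hypothesis on pairwise unions enters.

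The missing ingredient is the following lemma. If $xy$ is an edge with $x\in W(u,v)$ and $y\notin W(u,v)$, then $W(x,y)=W(u,v)$. Moreover, $W(y,x)\subseteq W(v,u)$ if $y\in W(v,u)$, and $W(y,x)\subseteq W_=(u,v)$ if $y\in W_=(u,v)$.

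The proof rests on one observation: if $pq$ is an edge with $p\in W(r,s)$ and $q\notin W(r,s)$, then $W(p,q)\subseteq W(r,s)$. Indeed, every $z\in W(p,q)$ has $p\in[z,q]$, so $z\notin W(r,s)$ would contradict convexity of $W(s,r)\cup W_=(r,s)$. Applied to $xy$, this gives $W(x,y)\subseteq W(u,v)$. Since $y\in[z,x]$ for $z\in W(y,x)$, convexity of $W(u,v)\cup W_=(u,v)$, respectively of $W(u,v)\cup W(v,u)$, gives the inclusion for $W(y,x)$. In both cases $d(y,u)\ge d(x,v)=d(x,u)+1$; in the second case this uses $v\notin W(y,x)$. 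Hence $u\in W(x,y)$ and $v\notin W(x,y)$, so the observation applied to $uv$ gives $W(u,v)\subseteq W(x,y)$.

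With the lemma the rest follows. Fix $H=W(u,v)$. The set $H$ together with all $W(y,x)$, over edges $xy$ with $x\in H$ and $y\notin H$, is a partition $\Pi(H)$ of $V$.
\begin{itemize}
\item Disjointness: apply the lemma with reference edge $yx$ if $y'\in W(y,x)$, and with reference edge $xy$ if $y'\in W_=(x,y)$.
\item Covering: take the last edge of a geodesic from $z\notin H$ into $H$.
\item The lemma also gives $\Pi(H')=\Pi(H)$ for every block $H'\in\Pi(H)$.
\item Any two blocks are joined by an edge, and every edge crosses exactly one such partition.
\end{itemize}
Use these partitions as the coordinates of $\varphi$, instead of classes generated by Djokovi\'c pairs and triangles. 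Together with the fix above, Step~2 then goes through. When all $W_=(u,v)$ are empty, each partition has two blocks, which yields (1).
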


In case of a Hamming graph $H(U)$ and an edge $e=uv$ of $H(U)$, we will denote the halfspaces 
$W(u,v)\cup W(v,u)$ and $W_=(u,v)$ by $W(e)$ and $W_=(e)$, respectively. If $u,v\in U_i$, then 
$W(e)$ coincides with the full-dimensional subproduct $U_1\times\ldots\times U_{i-1}\times \{ u,v\}\times U_{i+1}\times\ldots\times U_m$ and $W_=(e)$ coincides with the full-dimensional subproduct $U_1\times\ldots\times U_{i-1}\times (U_i\setminus\{ u,v\})\times U_{i+1}\times\ldots\times U_m$. Analogously, if $S$ is an isometric subset of $U$  (and thus $H(S)$ is a partial Hamming graph), we denote by $S(e)$ the halfspace $W(u,v)\cup W(v,u)$ and by $S_=(e)$ the halfspace $W_=(u,v)$ of $H(S)$. Notice that $S(e)=S\cap W(e)$ and $S_=(e)=S\cap W_=(e)$.

\begin{figure}[h]
    \centering
    \includestandalone[width=0.7\linewidth]{img/runningexample}
    \caption{The Hamming graph corresponding to the Cartesian product $\{a,b,c\}\times \{A,B\}\times \{0,1\}$. Coordinates are shown next to the graph for clarity. The seven colored vertices induce an isometric subgraph. We will use this set as a \bf{running example}. 
    }
    \label{fig:runningexample}
\end{figure}

\section{Shattering and strong-shattering in Cartesian products}\label{sec:shatterstrongshatter}
The main goal of this section is to define a notion of shattering, which generalizes the classical notion of shattering and is appropriate for Cartesian products of 
sets. In the existing definitions of shattering in binary products $\{ a,b\}^X$ or in 
products $Y^X$, used in various definitions of VC-dimension, a subset $S$ of a product always shatters a subset of coordinates $Y\subseteq X$. 
In this paper, we consider a more fine-grained notion of shattering by considering shattering not only of subproducts of a Cartesian product $U=U_1\times\ldots\times U_m$, but also of the minor-subproducts of $U$, when viewed as a Hamming graph.
For this, we adapt and refine \cite[Definition 2]{ChLaRa} of shattering the minor-products in Cartesian products of graphs to our setting of Hamming graphs. Roughly speaking, minor-subproducts of Hamming graphs are obtained by partitioning each $U_i$ into a collection of non-empty sets, contracting each such set into a single vertex, resulting into the contraction of the clique $U_i$ to a smaller clique $U^+_i$, and taking the Cartesian products of the resulting $U^+_i$s. Similarly to extensions and fibers for tuples in subproducts of $U_1\times\ldots\times U_m$, we define extensions and fibers for tuples in minor-subproducts. Using this, the definition of shattering minor-subproducts becomes similar to the classical one.

\subsection{Minor-subproducts}\label{sec:minor-subproducts}
We now adapt the notion of minor-subproduct from \cref{minor-subproduct} to our needs, using partitions. 
Let $U=U_1\times \ldots\times U_m$, where all $U_i$ are nonempty. For each factor $U_i$ we denote by  $\Part(U_i)$ the lattice of partitions of the sets $U_i$ endowed with the partial order $\preceq_i$ 
and the lattice operations $\wedge_i$ and $\vee_i$ (where $\preceq_i$, $\wedge_i$ and $\vee_i$ correspond to $\preceq$, $\wedge,$ and $\vee$ in the definition of the partition lattice from Subsection \ref{partitionlattice}). 

While the minors of graphs in general can take many forms, the minors and the induced minors of complete graphs are also complete subgraphs. 
Namely, each minor $M_i$ of the complete graph $G_i$ with the vertex-set $U_i$ can be viewed as a partition $\alpha_i=\{ P^i_1,\ldots,P^i_k\}\in \Part(U_i)$ and a contraction of  each block $P^i_j$ of $\alpha_i$ into a single vertex $w^i_j$. Then  $M_i=\{ w^i_1,\ldots,w^i_\ell\}$ is called a \emph{minor} of the set $U_i$ defined by the
partition  $\alpha_i$  of  $U_i$. Analogously, each induced minor of $G_i$ can be viewed as a partition $\alpha'_i=\{ Q^i_1,\ldots,Q^i_k\}\in \Part(V_i)$ of a nonempty subset $V_i$ of $U_i$ and the contraction of each block of $\alpha'_i$ into a single vertex. Implicitly, we remember which vertices of the original graph are contracted to which vertex of the minor. 
Therefore, the  minors of $U_i$, viewed as a complete graph $G_i$, are in bijection with the elements of the partition lattice $\Part(U_i)$. We extend this bijection to a bijection between minor-subproducts of Hamming graphs and generalized partitions, which we define next. 

\begin{definition} [Lattice of generalized partitions] For a Cartesian product  $U=U_1\times\ldots\times U_m$, let $\GPart(U)=\GPart(U_1\times\ldots\times U_m)=\Part(U_1)\times\ldots\times \Part(U_m)$ be the lattice which is the direct product of the lattices $\Part(U_i), i=1,\ldots,m$. Then $\GPart(U)$ has  the $m$-tuples $\Lambda=(\alpha_1,\ldots,\alpha_m)$ with $\alpha_i\in \Part (U_i), i\in X$, as elements, which we call \emph{generalized partitions}. For two  generalized partitions $\Lambda=(\alpha_1,\ldots,\alpha_m)$ and $\Upsilon=(\beta_1,\ldots,\beta_m)$, we set $\Lambda\preceq \Upsilon$ if $\alpha_i\preceq_i \beta_i$ for $i=1,\ldots,m$, $\Lambda\wedge \Upsilon=(\alpha_1\wedge_1 \beta_1,\ldots,\alpha_m\wedge_m \beta_m)$,  and  $\Lambda\vee \Upsilon=(\alpha_1\vee_1 \beta_1,\ldots,\alpha_m\vee_m \beta_m)$. We denote by $\Lambda_{\bot}$ and $\Lambda^{\top}$ the minimal and the maximal elements of the lattice $\GPart(U)$. See \cref{f:example-gen-part-lattice} for an example.
\end{definition}

\begin{figure}[htbp]
    \centering
    \includestandalone[width=0.8\linewidth]{img/partitionlattice}
    \caption{Hasse diagram of the lattice of generalized partitions of $U=U_1\times U_2$, where $|U_1|=3$ and $|U_2|=2$. Its elements are represented by the corresponding box-partitions of $U$, see Definition \ref{d:box-partition}.}
    \label{f:example-gen-part-lattice}
\end{figure}

 Since the lattice $\GPart(U)$ is the direct product of complemented lattices $\Part(u_i)$, $\GPart(U)$ is also complemented. 
We will denote by $\Lambda^{\diamond}$ any complement of $\Lambda\in \GPart(U)$.  If $\Lambda=(\alpha_1,\ldots,\alpha_m)$, then $\Lambda^{\diamond}=(\alpha_1^{\diamond},\ldots,\alpha_m^{\diamond})$, where $\alpha_i^{\diamond}$ is a complement of the partition $\alpha_i$ in $\Part(U_i), i\in X$. The atoms of $\GPart(U)$ are the generalized partitions $\Lambda=(\alpha_1,\ldots,\alpha_m)$ in which all blocks of all partitions are trivial, except that one block in one partition has size 2. 
Analogously, the co-atoms of $\GPart(U)$ are  the generalized partitions $\Lambda=(\alpha_1,\ldots,\alpha_m)$ in which all partitions except one are co-trivial 
and one partition has two blocks. The \emph{support} of $\Lambda=(\alpha_1,\ldots,\alpha_m)\in \GPart(U)$ is the set of indices $i$ such that the partition $\alpha_i$ is not trivial. 

\begin{lemma}\label{lem:atomistic}
    The lattice $\GPart(U)$ is atomistic and co-atomistic, i.e. every element can be written as the join of a finite number of atoms and as the meet of a finite number of co-atoms. Moreover, if $A\cup B=\supp(\Lambda)$ with $A\cap B=\varnothing$, then there exist $\Lambda',\Lambda''\in \GPart(U)$ with $\Lambda'\vee \Lambda''=\Lambda$, $\supp(\Lambda')=A$ and $\supp(\Lambda'')=B$.
\end{lemma}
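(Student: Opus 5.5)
Since $\GPart(U)=\Part(U_1)\times\ldots\times\Part(U_m)$ is a direct product, joins and meets are computed coordinatewise. The plan is therefore to reduce atomisticity and co-atomisticity to the corresponding facts for each partition lattice $\Part(U_i)$, and then assemble. The support statement will follow from the same coordinatewise description.

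For atomisticity, I would first treat a single factor. Given $\alpha_i\in\Part(U_i)$, consider the set $A_i$ of atoms $\{\{a,b\}\}\cup$ singletons of $\Part(U_i)$ with $a,b$ lying in a common block of $\alpha_i$. The join of these atoms has as blocks the classes of the transitive closure of ``lying in a common $2$-set''. Each block of $\alpha_i$ is connected by such pairs, and no pair crosses two blocks, so the join is exactly $\alpha_i$. The empty join, which is the trivial partition, covers the case where $\alpha_i$ is trivial.

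Next, an atom of $\Part(U_i)$ lifts to the atom of $\GPart(U)$ having that partition in coordinate $i$ and trivial partitions elsewhere. Since joins are taken coordinatewise and the trivial partition is the neutral element for $\vee_i$, the join over all $i$ of the lifted sets $A_i$ equals $\Lambda$. Finiteness is automatic because $U$ is finite.

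Co-atomisticity is handled dually. For $\alpha_i$, and for each pair of distinct blocks $P,Q$ of $\alpha_i$, take the two-block partition $\{P',U_i\setminus P'\}$ with $P'=P$. More simply, for each block $P$ of $\alpha_i$ use $\{P,U_i\setminus P\}$ when $P\ne U_i$. The meet of these partitions has as blocks the nonempty intersections, and two elements are separated iff they lie in different blocks of $\alpha_i$. Hence the meet is $\alpha_i$, with the empty meet giving $\{U_i\}$ when $\alpha_i$ is co-trivial. Lift each two-block partition to the co-atom of $\GPart(U)$ that is co-trivial in all other coordinates. Since $\{U_j\}$ is neutral for $\wedge_j$, the coordinatewise meet over all $i$ recovers $\Lambda$.

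For the last assertion, suppose $A\cup B=\supp(\Lambda)$ with $A\cap B=\varnothing$. Define $\Lambda'$ by $\alpha'_i=\alpha_i$ for $i\in A$ and trivial otherwise, and $\Lambda''$ analogously using $B$. Then $\supp(\Lambda')=A$, because $\alpha_i$ is nontrivial exactly for $i\in\supp(\Lambda)$, and likewise $\supp(\Lambda'')=B$. Coordinatewise, $\Lambda'\vee\Lambda''$ is $\alpha_i\vee\alpha_\bot=\alpha_i$ for $i\in A\cup B$, and the trivial partition, which equals $\alpha_i$, for $i\notin\supp(\Lambda)$. So $\Lambda'\vee\Lambda''=\Lambda$.

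The only step needing genuine care is the verification that the join of the chosen atoms in $\Part(U_i)$ is exactly $\alpha_i$. This uses the transitive-closure description of the join from Subsection \ref{partitionlattice}, and I would check it explicitly; everything else is bookkeeping in a product lattice.
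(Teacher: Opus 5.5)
Your proof is correct and follows essentially the same route as the paper: coordinatewise reduction in the product lattice, atoms given by pairs inside a block (the paper uses only the star pairs $\{p_1,p_{t+1}\}$ rather than all pairs, which makes no difference), co-atoms $\{P,U_i\setminus P\}$ that are co-trivial in every other coordinate, and $\Lambda',\Lambda''$ obtained by keeping $\alpha_i$ for $i\in A$ (resp.\ $B$) and trivializing the other coordinates. Excluding the block $P=U_i$ and stating the empty join/meet conventions explicitly actually tidies up a degenerate case the paper glosses over; just delete the aborted first sentence of your co-atomistic paragraph.
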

\begin{proof}
    Let $\Lambda=(\alpha_1,\ldots,\alpha_m)\in \GPart(U)$, with $\alpha_i=(P_1^i,\ldots,P_{\ell_i}^i)$. Consider any $P_j^i=\{p_1,p_2,\ldots,p_{k}\}$ with $|P_j^i|\geq 2$, and let $\Lambda_{j}^i$ be the generalized partition in which every block is trivial, except that the $i$-partition has a block $P_j^i$. For $t=1,2,\ldots,k-1$, let $\Lambda_t'$ be an atom of $\GPart(U)$ whose only non-trivial block is $\{p_1,p_{t+1}\}$. Clearly $\Lambda_j^i=\bigvee_{t=1}^{k-1}\Lambda_{t}'$. Since we can write any $\Lambda_j^i$ as a join of atoms in this manner and since $\Lambda=\bigvee_{i,j:j\leq \ell_i} \Lambda_j^i$, it follows that $\Lambda$ can be written as a join of atoms. For the last part of the lemma, take $\Lambda'=\bigvee_{i\in A,j\leq \ell_i}\Lambda_j^i$ and $\Lambda''=\bigvee_{i\in B,j\leq \ell_i}\Lambda_j^i$. Then clearly $\Lambda'\vee \Lambda''=\Lambda$, $\supp(\Lambda')=A$ and $\supp(\Lambda'')=B$. Finally, to prove the lattice is co-atomistic, for any $\Lambda$ we take $\Lambda_{j}^i$ to be the co-atom that has partition $\{P_j^i,U_i\backslash P_j^i\}$ in its $i$-th component, and then clearly $\Lambda=\bigwedge_{i,j:j\leq \ell_i}\Lambda_j^i$.
\end{proof}

\begin{definition}\label{minor-subproduct-revised} [Minor-subproduct revised] Let  $U=U_1\times\ldots\times U_m$ and $\Lambda=(\alpha_{1},\ldots,\alpha_m)$ be an element of the lattice $\GPart(U)$ with $\alpha_i=\{ P^i_{1},\ldots,P^i_{\ell_i}\}\in \Part(U_i)$. Then a full-dimensional Cartesian product $M:=M(\Lambda,U)=M_1\times\ldots\times M_m$ is called a \emph{minor-subproduct} of $U$ defined by  $\Lambda$ if  $M_i=\{ w^i_1,\ldots,w^i_{\ell_i}\}$, where  $w^i_j$ is a new element to which the block $P^i_j$ of $\alpha_i$ is contracted. When the set $U$ is clear from context, we also use the shorthand notation $M=M(\Lambda)$. The \emph{support} $\supp(M)$ of  $M$ consists of all $i\in X$ such that $\alpha_i$  is a non-trivial partition of $U_i$ (i.e. the support of the underlying generalized partition $\Lambda$). Let $\MP(U)$ denote the set of all minor-subproducts $M=M(\Lambda,U)$ with $\Lambda\in \GPart(U)$. 
\end{definition}

\begin{remark}
Note that, whenever we consider a minor-subproduct, we implicitly remember which elements were contracted to each element of $M$. To this end, we sometimes refer to elements of minor-subproducts by their boxes, and to coordinates of minor-subproducts by their corresponding blocks.
\end{remark}

For $M,M'\in \MP(U)$ with $M=M(\Lambda,U)$ and 
$M'=M(\Upsilon,U)$ we will set $M\preceq M'$ if $\Lambda\preceq \Upsilon$ in $\GPart(U)$ (note: if $M\preceq M'$ then $M'$ will have \emph{smaller} or equal cardinality compared to $M$). We will also set $M\vee M'=M(\Lambda\vee \Upsilon,U)$ and $M\wedge M'=M(\Lambda\wedge \Upsilon, U)$. 
We set $M_{\bot}=M(\Lambda_{\bot},U)$ and $M^{\top}=M(\Lambda^{\top},U)$. If $M=M(\Lambda,U)$ and $\Lambda^{\diamond}$ is a complement of $\Lambda$ in the lattice $\GPart(U)$, then we set $M^{\diamond}=M(\Lambda^{\diamond},U)$ and call $M^{\diamond}$ a \emph{complement} of $M$. A subfactor $M_i$ of size 1 of $M=M(\Lambda,U)\in \MP(U)$ is called  \emph{co-trivial}. A subfactor $M_i$ is called \emph{trivial} if the partition $\alpha_i\in \Part(U_i)$ of $\Lambda$ is trivial. 

Using the one-to-one correspondence  between the minor-subproducts $M=M(\Lambda, U)=M_1\times\ldots\times M_m\in \MP(U)$  and generalized partitions $\Lambda=(\alpha_1,\ldots,\alpha_m)\in \GPart(U)$, we distinguish the following types of minor-subproducts and generalized partitions:   

\begin{itemize}
\item \emph{trivial} if all partitions $\alpha_1,\ldots\alpha_m$ of $\Lambda$ are trivial. Then $M=M(\Lambda,U)$ coincides with $U$ and $\Lambda$ coincides with the  minimum $\Lambda_\bot$ of $\GPart(U)$;
\item \emph{co-trivial} if all partitions $\alpha_1,\ldots\alpha_m$ of $\Lambda$ are co-trivial. Then $M$ is a single $m$-tuple and $\Lambda$ coincides with the maximum $\Lambda^\top$ of  $\GPart(U)$; 
\item \emph{mixed} if each of the  partitions $\alpha_1,\ldots\alpha_m$ is either trivial or co-trivial;
\item \emph{elementary} if $\Lambda$ is an atom of the lattice $\GPart(U)$; 
\item \emph{one-dimensional} if $|\supp(\Lambda)|=1$;
\item \emph{co-elementary}  if $\Lambda$ is a co-atom of the lattice $\GPart(U)$; 
\item \emph{binary} if all partitions $\alpha_1,\ldots\alpha_m$ of $\Lambda$ have either two blocks or one block;
\item \emph{extended subproduct} if all partitions $\alpha_1,\ldots\alpha_m$ of $\Lambda$ are quasi-trivial.
\end{itemize}

We denote by $\MMP(U),\EMP(U), \EEMP(U)$ and $\BMP(U)$ the sets of all mixed, extended, elementary, and binary  minor-subproducts of $U$, respectively. 

 
\subsection{Boxes and box-partitions} We now introduce a third framework besides generalized partitions and minor-subproducts. The convex sets of any Cartesian product of graphs $G=G_1\times\ldots\times G_m$ are exactly the Cartesian products $B=B_1\times\ldots\times B_m$ of nonempty convex subsets $B_i$ of the factors $G_i$ \cite{VdV}. In case of the Hamming graph $H(U)$ of a Cartesian product $U=U_1\times \ldots \times U_m$, the factors of $H(U)$ are cliques and any nonempty subset of a clique is convex, thus the nonempty convex sets  of $U$ are exactly the full-dimensional subproducts of $U$: 

\begin{lemma} \label{convex-Hamming} A set $B\subseteq U=U_1\times \ldots\times U_m$ is a convex subset of $U$  if and only if $B$ is a full-dimensional subproduct.
\end{lemma}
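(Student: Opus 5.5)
The plan is to reduce \cref{convex-Hamming} to the general description of convex sets in Cartesian products of graphs quoted just before it \cite{VdV}, and to check the two directions directly using the formula for intervals in $H(U)$. Recall from the preliminaries that for $x,y\in U$ the interval $[x,y]$ is the binary subproduct $\{x_1,y_1\}\times\ldots\times\{x_m,y_m\}$. So a set $B\subseteq U$ is convex if and only if, for all $x,y\in B$, every tuple obtained by choosing each coordinate independently from $\{x_i,y_i\}$ lies in $B$.

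For the direction ``full-dimensional subproduct $\Rightarrow$ convex'', let $B=B_1\times\ldots\times B_m$ with $\varnothing\ne B_i\subseteq U_i$, and take $x,y\in B$. Then $x_i,y_i\in B_i$ for every $i$. Hence $\{x_1,y_1\}\times\ldots\times\{x_m,y_m\}\subseteq B$, that is, $[x,y]\subseteq B$.

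For the converse, let $B$ be nonempty and convex, and let $B_i$ be the projection of $B$ onto the $i$-th coordinate. Clearly $B\subseteq B_1\times\ldots\times B_m$, so it remains to show the reverse inclusion. Given $z=(z_1,\ldots,z_m)$ with $z_i\in B_i$, choose witnesses $b^i\in B$ with $b^i_i=z_i$. I will build $z$ coordinate by coordinate by induction. Set $w^1=b^1$. Suppose $w^k\in B$ agrees with $z$ on the coordinates $1,\ldots,k$. Define $w^{k+1}$ to agree with $w^k$ everywhere except coordinate $k+1$, where it takes the value $z_{k+1}=b^{k+1}_{k+1}$. Each coordinate of $w^{k+1}$ lies in $\{w^k_j,b^{k+1}_j\}$, so $w^{k+1}\in[w^k,b^{k+1}]\subseteq B$. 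After $m$ steps, $w^m=z\in B$. (The emptyset convention is excluded, since subproducts have nonempty subfactors.)

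The only step requiring care is this coordinate-wise interpolation, and it rests entirely on the product description of intervals in Hamming graphs. Nothing beyond that formula is needed. Alternatively, one can cite \cite{VdV} together with the observation that every subset of a clique is convex.
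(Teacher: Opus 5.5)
Your proof is correct, but it takes a different route from the paper. The paper gives no argument of its own for this lemma. It invokes the general theorem of \cite{VdV}: the convex sets of a Cartesian product of graphs are exactly the products of convex subsets of the factors. It then observes that every nonempty subset of a clique is convex.

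You instead verify both directions directly from the product description $[x,y]=\{x_1,y_1\}\times\ldots\times\{x_m,y_m\}$ of intervals in $H(U)$, which the paper records in the preliminaries. The coordinate-by-coordinate interpolation $w^{k}\mapsto w^{k+1}\in[w^k,b^{k+1}]$ carries the whole converse direction, and it is sound.

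The citation buys generality and places the lemma in a known framework. Your argument buys a self-contained elementary proof that uses nothing beyond the Hamming metric. In fact your interpolation works verbatim in any Cartesian product of graphs, using $[x,y]=\prod_i[x_i,y_i]_{G_i}$, to show that a convex set is the product of its projections. So the only extra ingredient \cite{VdV} supplies in general is that these projections are convex in the factors, which is automatic for cliques.

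Your remark about excluding $B=\varnothing$ matches the paper's own phrasing: ``the nonempty convex sets of $U$ are exactly the full-dimensional subproducts.''
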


In view of this lemma, we call the full-dimensional subproducts of $U$ \emph{boxes}. Let $\BBox(U)$ denote the set of all boxes of $U$ plus the empty set. Since $\BBox(U)$ is closed by intersections and $U\in \BBox(U)$, by \cite[Theorem 2.21]{DaPr}, $\BBox(U)$ is a  lattice with respect to inclusion: the meet of two boxes is their intersection and the join is the smallest box containing them. Although we will not need the lattice structure of $\BBox(U)$, we will use the following relation on boxes: 

\begin{definition} [Parallel boxes and galleries] \label{def:parallel} Two boxes $B',B''\in \BBox(U)$  are called \emph{parallel} if there exist a subproduct $V=V_{i_1} \times\ldots\times V_{i_k} $ of $U$ and two $(X\setminus \supp(V))$-tuples $t',t''$ such that $B'=V\times t'$ and $B''=V\times t''$, i.e., if $B'$ and $B''$ are copies of $V$. Denote by $\Theta(V)$ all copies of $V$. With some abuse of notation, for a box $B$ we denote by $\Theta(B)$ all boxes of $U$ parallel to $B$. Obviously, $B\in \Theta(V)$ if and only of $\Theta(B)=\Theta(V)$. 

Let $B',B''\in \Theta(V)$ be two parallel boxes, say  $B'=V\times t',B''=V\times t''$. Then $B',B''$ are \emph{adjacent} if the tuples $t',t''$ differ in a single coordinate. A \emph{gallery of length $\ell$} between  $B',B''$ is a sequence $\gamma(B',B'')=(B'=B_0,B_1,\ldots,B_{\ell-1},B_\ell=B'')$ of boxes of $\Theta(V)$ such that any two consecutive boxes $B_i,B_{i+1}, i=0,\ldots,\ell-1$ are adjacent. 
A \emph{geodesic gallery} between $B',B''$ is a gallery of length equal to the Hamming distance between the tuples $t'$ and $t''$. 
\end{definition}

\begin{example}\label{parallel-edges}
Each pair of tuples $u',u''$ of $U$ (i.e., vertices of $H(U)$) are parallel and a gallery between them is any path in $H(U)$ connecting $u'$ and $u''$. Now, consider the case of two parallel edges $e'=u'v'$ and $e''=u''v''$. This means that $e',e''\in \Theta(W)$ for some subproduct $W$, which has $\supp(W)=\{ i\}$ for some $i$, and $W=\{ u_i,v_i\}$; and then $\{u_i',v_i'\}=\{u_i'',v_i''\}=\{u_i,v_i\}$, so we can pick $(X\setminus \{ i\})$-tuples $t',t''$ such that $e'=W\times t'$ and $e''=W\times t''$. 
\end{example} 

$\BBox(U)$ is associated with its own notion of partition: 

\begin{definition} [Box-partitions] \label{d:box-partition} A \emph{box-partition} of $U=U_1\times \ldots \times U_m$ is a partition 
$\cB=\{ B_1\ldots, B_k\} $ of $U$ into boxes, i.e., $B_1, \ldots, B_k$ are boxes of $U$, 
$\bigcup_{i=1}^k B_i=U$, and $B_i\cap B_j=\varnothing$ for any $i\ne j$. We denote by $\BPart(U)$ the set of all box-partitions $\cB$ of $U$. 
\end{definition}

The set $\BPart(U)$ of box-partitions defines a lattice. For two box-partitions $\cB,\cB'\in \BPart(U)$, we set  ${\cB}\preceq {\cB}'$ if $\cB$ refines $\cB'$, i.e., each box of $\cB'$ is a disjoint union of boxes of $\cB$. The minimal element is the box-partition consisting of boxes of size 1 and the maximal element is the box-partition consisting of a single box $U$. The meet $\cB'\wedge \cB''$ of two box-partitions $\cB',\cB''$ is the box-partition $\cB$ whose boxes are the non-empty intersections of boxes from $\cB'$ and $\cB''$. By \cite[Theorem 2.16]{DaPr}, $\BPart(U)$  is indeed a lattice. The join $\cB'\vee \cB''$ of  $\cB',\cB''$ is the box-partition obtained by applying to the set $\mathcal S$ of all boxes of $\cB'$ and $\cB''$ the following iterative procedure: at each iteration, pick any pair $B',B''\in \mathcal S$ of intersecting boxes and replace them in $\mathcal S$ by the smallest box $B$ containing $B'$ and $B''$, and continue until $\mathcal S$ becomes a partition of $U$. Since at each iteration the number of boxes strictly decreases, this procedure finishes with a partition. 

Generalized partitions $\Lambda\in \GPart(U)$ (and their minor-subproducts $M(\Lambda)\in \MP(U)$) can be viewed as box-partitions. Namely, to  each generalized partition $\Lambda=(\alpha_1,\ldots,\alpha_m)$ with $\alpha_i=(P^i_{1},\ldots, P^i_{\ell_i})$ we associate the set ${\mathcal B}(\Lambda,U)$  of all  full-dimensional subproducts of the form $P^1_{j_1}\times\ldots\times P^m_{j_m}$. Then ${\mathcal B}(\Lambda,U)$ is a box-partition of $U$. 
If $\Lambda,\Upsilon$ are distinct generalized partitions, then 
${\mathcal B}(\Lambda,U)$  and ${\mathcal B}(\Upsilon,U)$ are distinct box-partitions of $U$ and $\Lambda\preceq \Upsilon$ if and only if $\cB(\Lambda,U)\preceq \cB(\Upsilon, U)$. The meet operation in the lattice $\GPart(U)$ corresponds to the meet operation in the lattice $\BPart(U)$: for $\Lambda,\Upsilon\in \GPart(U)$ we have $\cB(\Lambda\wedge \Upsilon)=\cB(\Lambda)\wedge \cB(\Upsilon)$. 

Not every box-partition $\cB\in \BPart(U)$ is the box-partition of some generalized partition, for example, the partition of $\{ a,b\}\times \{ A,B,C\}$ into 3 boxes $B_1=\{ (a,A),(a,B),(b,A),(b,B)\}, B_2=\{ (a,C)\}, B_3=\{ (b,C)\}$. Therefore,  we only have an injection from $\GPart(U)$ to $\BPart(U)$. Nevertheless, in our definitions and proofs it will be convenient to use the bijection between $\GPart(U)$ and  $\MP(U)$ and their injection into $\BPart(U)$ because each minor-subproduct $M=M(\Lambda)$ can be fully described by its box-partition $\cB(\Lambda)$. 

Each box $B$ arises as a box in the box-partition $\cB(\Lambda,U)$ of some generalized partition 
$\Lambda$. Indeed, suppose that $B=B_1\times\ldots\times B_m$ for $B_i\subseteq U_i, i=1,\ldots,m$. Let $A_i=U_i\setminus B_i$ for each $i$ such that $B_i\ne U_i$. Consider the binary generalized partition $\Lambda=(\alpha_1,\ldots,\alpha_m)$, where $\alpha_i=\{ A_i,B_i\}$ or $\alpha_i=\{ U_i\}$. Then clearly, $B=B_1\times\ldots\times B_m$ is a box of the box-partition $\cB(\Lambda,U)$. Note that the same box $B$  may occur in different box-partitions $\cB(\Lambda,U)$ and $\cB(\Upsilon,U)$ and thus in the description of minor-subproducts $M(\Lambda)$ and $M(\Upsilon)$. In this case, in the box-partition descriptions of $M(\Lambda)$ and $M(\Upsilon)$ we will identify such boxes $B$. Likewise, for convenience of notation, we often label elements of minor-subproducts by their respective box, and consider two elements of $M(\Lambda)$ and $M(\Upsilon)$ equal if they induce the same box in the box partition. In particular, since there can be boxes with one element, we identify such boxes with the element they contain, such that statements like $M(\Lambda_{\bot},U)=U$ are well-defined. 

\begin{figure}[htbp]
    \centering
    \includestandalone[height=0.3\linewidth]{img/boxpartition}
    \includestandalone[height=0.3\linewidth]{img/minorsubproduct}
    \caption{A box partition (left) and its related minor-subproduct (right).}
    \label{fig:boxpartitionexample} 
\end{figure}

\begin{example}
    Consider again the Cartesian product $\{a,b,c\}\times \{A,B\}\times \{0,1\}$, and take the generalized partition $\Lambda=\left(\{\{a,b\},\{c\}\},\{\{A\},\{B\}\},\{\{0\},\{1\}\}\right)$. This generalized partition induces a box partition (\cref{fig:boxpartitionexample}, left), and also a minor-subproduct (\cref{fig:boxpartitionexample}, right). As shown by the coloring, there is a one-to-one correspondence between the vertices of the minor-subproduct and the boxes of the box partition.
\end{example}
 Box-partitions arising from generalized partitions can be characterized in the following way:

\begin{lemma} \label{box-partition} For a box-partition $\cB=\{ B_1,\ldots,B_k\}\in \BPart(U)$ there exists a generalized partition $\Lambda\in \GPart(U)$ such that $\cB=\cB(\Lambda,U)$ if and only if: for each edge $uv$ of $H(U)$ such that the tuples $u$ and $v$ belong to distinct boxes of $\cB$, and for every edge $u'v'$ parallel to $uv$, the tuples $u'$ and $v'$ also belong to distinct boxes of $\cB$. 
\end{lemma}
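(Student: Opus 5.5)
The necessity direction is immediate from the definitions. If $\cB=\cB(\Lambda,U)$ with $\Lambda=(\alpha_1,\ldots,\alpha_m)$, consider an edge $uv$ in coordinate $i$. Then $u$ and $v$ lie in distinct boxes exactly when $u_i$ and $v_i$ lie in distinct blocks of $\alpha_i$, since all other coordinates coincide. A parallel edge $u'v'$ also lies in coordinate $i$ and satisfies $\{u'_i,v'_i\}=\{u_i,v_i\}$ (\cref{parallel-edges}), so $u'$ and $v'$ lie in distinct boxes as well.

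For sufficiency, I construct $\Lambda$ directly from $\cB$. For each $i$ and each pair $a\ne b$ in $U_i$, the edges $\{a,b\}\times t$, with $t$ ranging over $(X\setminus\{i\})$-tuples, form one parallelism class. By hypothesis, either all edges of this class are cut by $\cB$ (endpoints in distinct boxes) or none is. I declare $a\equiv_i b$ when none is cut. Call this the uncut relation.

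The key steps are as follows.
\begin{enumerate}
\item Show that $\equiv_i$ is an equivalence relation. Transitivity is the only nontrivial part. If $a\equiv_i b$ and $b\equiv_i c$, then for every $t$ the points $(a,t),(b,t),(c,t)$ all lie in one box of $\cB$, so $\{a,c\}\times t$ is uncut. Let $\alpha_i$ be the resulting partition of $U_i$ and $\Lambda=(\alpha_1,\ldots,\alpha_m)$.
\item Show that each box $P^1_{j_1}\times\cdots\times P^m_{j_m}$ of $\cB(\Lambda,U)$ lies inside a single box of $\cB$. Any two of its points are joined by a path that changes one coordinate at a time within a block. Every edge of this path is uncut, so the path stays in one box of $\cB$.
\item Show the converse inclusion: each box $B=B_1\times\cdots\times B_m\in\cB$ lies inside a single box of $\cB(\Lambda,U)$. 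Equivalently, each $B_i$ lies inside one block of $\alpha_i$. Take $a,b\in B_i$ with $a\ne b$. The edge $\{a,b\}\times t$ lies inside $B$ for any $t$ chosen from the other factors of $B$, so it is uncut. By the all-or-none property its whole class is uncut, hence $a\equiv_i b$.
\end{enumerate}

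Steps 2 and 3 together say that every box of either partition is contained in a box of the other. Hence the two partitions refine each other, so they are equal and $\cB=\cB(\Lambda,U)$.

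The main obstacle is step 3. It is the only place where the fact that boxes are products is used, and it is exactly what fails in the counterexample $\{a,b\}\times\{A,B,C\}$ given above. In that example the class of the edge $\{a,b\}$ is cut in one fiber and uncut in another, and the hypothesis rules this out. The remaining steps are routine path arguments in the Hamming graph $H(U)$.
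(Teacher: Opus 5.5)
Your proof is correct, but it takes a different route from the paper's.

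The paper does not build an equivalence relation from the cut pattern. Instead it reads $\alpha_i$ off $\cB$ directly, as the family of all $i$-th factors $P_i$ of the boxes $P_1\times\cdots\times P_m\in\cB$. It then proves by contradiction that these factors are pairwise equal or disjoint. Suppose two factors $P'_i\ne P''_i$ of boxes $B',B''$ share an element $u_i$, and take $v_i\in P''_i\setminus P'_i$. Then an edge in coordinate $i$ with $i$-th coordinates $u_i,v_i$ leaving $B'$ is cut, while the parallel edge inside $B''$ is not.

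That argument is shorter and invokes the hypothesis only once. However, it leaves the final identification implicit: each box of $\cB$ is then a product of blocks, so $\cB\subseteq\cB(\Lambda,U)$, and two partitions of $U$ with one contained in the other coincide.

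Your version defines $\alpha_i$ intrinsically: $a\equiv_i b$ if and only if the parallelism class of $\{a,b\}$ is uncut. It also separates the two ingredients cleanly. The parallel-edge hypothesis is what makes $\equiv_i$ well defined on $U_i$. The product structure of the boxes of $\cB$ enters only in your step 3, which is exactly what rules out non-box partitions such as the "diagonal" partition of a square into two antipodal pairs. Finally, you verify $\cB=\cB(\Lambda,U)$ explicitly by mutual refinement. This shows directly that $\Lambda$ is determined by which parallelism classes are cut, mirroring the necessity direction.
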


\begin{proof} First suppose that $\cB=\cB(\Lambda,U)$ for $\Lambda=(\alpha_1,\ldots,\alpha_m)\in \GPart(U)$. Pick two parallel edges $uv$ and $u'v'$ with 
$u$ and $v$ belonging to distinct boxes of $\cB$, say $u\in B'$ and $v\in B''$. Since $B',B''\in \cB=\cB(\Lambda,U)$, 
$B'$ and $B''$ can be written as $B'=P'_1\times\ldots\times P'_m$ and $B''=P''_1\times\ldots\times P''_m$, where $P'_i,P''_i$ are blocks of the partition $\alpha_i$, $i=1,\ldots,m$. Since $uv$ is an edge of $H(U)$, 
the tuples $u=(u_1,\ldots,u_m)$ and $v=(v_1,\ldots,v_m)$ differ in a single coordinate, say $u_1\ne v_1$ and $u_i=v_i$ for $i>1$. Since $u\in B'$ and $v\in B''$, this implies  that $u_1\in P'_1$ and $v_1\in P''_1$. Since the edges 
$uv$ and $u'v'$ are parallel, the tuples  $u=(u'_1,\ldots,u'_m)$ and $v=(v'_1,\ldots,v'_m)$ also differ only in the first coordinate, in which $u'_1=u_1$ and $v'_1=v_1$. This implies that $u'_1\in P'_1$ and $v'_1\in P''_1$, thus the vertices $u'$ and $v'$ also belong to different boxes of $\cB$. 

Conversely, suppose that $\cB$ is a box-partition of $U$ whose parallel edges satisfy the condition of the lemma. By definition (see Lemma \ref{convex-Hamming}), each box $B$ of $\cB$ is a full-dimensional subproduct $P_1\times\ldots\times P_m$. For any $i$, let $\alpha_i$ be the set consisting of all blocks $P_i$ that occur in the subproduct representation $P_1\times\ldots\times P_m$ of a box $B$ of $\cB$. 
We assert that each $\alpha_i$ is a partition of $U_i$. Suppose this is not true. Then there exists $i$ and two boxes $B',B''$ such that their $i$th factors $P'_i$ and $P''_i$ are distinct but they intersect. Let $u_i\in P'_i\cap P''_i$. Since 
$P'_i$ and $P''_i$ are distinct, there exists $v\in P'_i\Delta P''_i$, say $v_i\in P''_i\setminus P'_i$. Since $u_i\in P'_i$ and $v_i\notin P'_i$, we can find an edge $uv$ of $H(U)$ with $u\in B'$ and $v\notin B'$ such that the $i$th coordinate of $u$ is $u_i$ and the $i$th coordinate of $v$ is $v_i$. Analogously, since $u_i,v_i\in P''_i$ and the box $B''$ is a full-dimensional subproduct of $U$ with $P''_i$ as a factor, 
we can find an edge $u'v'$ of $H(U)$ with $u',v'\in B''$  such that the $i$th coordinate of $u'$ is $u_i$ and the $i$th coordinate of $v'$ is $v_i$. But then the edges $uv$ and $u'v'$ are parallel and they violate the condition of the lemma. 
\end{proof}

For box-partitions of $U$ we apply the same terminology as for generalized partitions and their minor-subproducts: we call  a box-partition  $\cB(\Lambda,U)$ trivial, co-trivial, mixed, etc. if its related generalized partition $\Lambda$ is respectively trivial, co-trivial, mixed, etc. In language  of box-partitions $\cB(\Lambda,U)$, these terms can be interpreted as follows:

\begin{itemize}
\item \emph{trivial}: all boxes of $\cB(\Lambda,U)$ are singletons; 
\item \emph{co-trivial}: $\cB(\Lambda,U)$ consists of a single box $U$;
\item \emph{mixed} if $\Lambda$ has $k$ co-trivial factors $U_{i_1},\ldots,U_{i_k}$  and $m-k$ trivial factors, then  $\cB(\Lambda,U)$ is a partition of 
$U$ into parallel boxes from $\Theta(U_{i_1}\times\ldots\times U_{i_k})$;
\item \emph{elementary}:  all boxes of $\cB(\Lambda,U)$ are singletons except a set of boxes of size 2 corresponding to the class $\Theta(uv)$ of parallel edges of $H(U)$. They are defined by the unique block $\{u,v\}$ of size 2 of $\Lambda$;
\item \emph{one-dimensional}: each box of $\cB(\Lambda,U)$ is a clique whose elements only differ in their $i$-coordinate, where $\supp(\Lambda)=\{ i\}$;
\item \emph{co-elementary}:  $\cB(\Lambda,U)$ contains two boxes, i.e., the box partitions of co-elementary generalized partitions are the pairs of proper complementary halfspaces of $H(U)$.
\end{itemize}

\subsection{Shattering and strong-shattering of minor-subproducts} 
In this subsection, we define shattering and strong-shattering of minor-subproducts of $U$ by subsets of $U$. For this, we generalize the notions of extension, fiber, and copy. 

\begin{definition}\label{expansion-fibers-revised} [Expansions and fibers for minor-subproducts]
Let $M=M(\Lambda,U)=M_1\times\ldots\times M_m\in \MP(U)$, where $\Lambda=(\alpha_1,\ldots,\alpha_m)\in \GPart(U)$ and for $i=1,\ldots,m$, $\alpha_{i}=\{ P^{i}_1,\ldots,P^{i}_{\ell_{i}}\}$ is a partition of $U_i$ and $M_i=\{w^i_1,\ldots,w^i_{\ell_i}\}$. A tuple $u=(u_1,\ldots,u_m)\in U$ is called an \emph{expansion} of a tuple $t=(t_1,\ldots,t_m)\in M$ if each coordinate $t_i$ corresponds to the block containing $u_i$, i.e. for each $i\in X$ and $j=1,\ldots,\ell_i$, we have $u_i\in P^i_{j}$ iff $t_i=w^i_{j}$. The \emph{fiber} $F(t)$ of $t\in M$ consists of all expansions of $t$ in $U$.
\end{definition}

From the definition it follows that: 

\begin{lemma} \label{fibers-as-boxes} If $M=M(\Lambda,U)=M_1\times\ldots\times M_m\in \MP(U)$,  then the following hold:

\begin{itemize}
\item[(1)]  each fiber $F(t)$ of $t\in M$ is a box of $U$ defined by $\Lambda$ and each box defined by $\Lambda$  is a fiber of a unique tuple $t\in M$;
\item[(2)] If $A=\supp(M)$ and $B=X\setminus A$, then each fiber $F(t), t\in M$ has the form $F(t)=(\prod_{i\in A}\prod_{j=1}^{\ell_i} P^i_{j})\times (\prod_{i\in B} U_i)$;
\item[(3)] the set $\{ F(t): t\in M\}$ defines a box-partition of  $U=U_1\times\ldots\times U_m$. 
\end{itemize}
\end{lemma}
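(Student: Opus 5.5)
The plan is to unfold Definition~\ref{expansion-fibers-revised} coordinatewise. Everything reduces to the fact that each $\alpha_i$ is a partition of $U_i$, and to the bijection between the blocks $P^i_j$ and the elements $w^i_j$ of $M_i$.

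For (1), fix $t=(t_1,\ldots,t_m)\in M$ with $t_i=w^i_{j_i}$. By definition, $u\in F(t)$ if and only if $u_i\in P^i_{j_i}$ for every $i\in X$. Hence
\[
F(t)=P^1_{j_1}\times\cdots\times P^m_{j_m}.
\]
Each factor is a nonempty subset of $U_i$, so $F(t)$ is a full-dimensional subproduct, i.e.\ a box of $\cB(\Lambda,U)$. Conversely, any box of $\cB(\Lambda,U)$ has the form $P^1_{j_1}\times\cdots\times P^m_{j_m}$. It is therefore the fiber of $t=(w^1_{j_1},\ldots,w^m_{j_m})$. This $t$ is unique: two distinct tuples differ in some coordinate $i$, and then their fibers have different (indeed disjoint) $i$-th factors, because distinct blocks of $\alpha_i$ are disjoint and nonempty.

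For (2), I read the displayed formula as saying that on coordinates $i\in A=\supp(M)$ the fiber has as factor a single block $P^i_{j_i}$ of $\alpha_i$, chosen according to $t_i$. For $i\in B=X\setminus A$, the partition $\alpha_i$ is trivial, so $F(t)$ has a singleton factor there. If instead $B$ is meant as the set of co-trivial coordinates, the factor is all of $U_i$. I would state the formula with the factor ``the block of $\alpha_i$ indexed by $t_i$'' and note which case of $B$ gives $U_i$. This notational reconciliation is the only delicate point; the mathematics is immediate from (1).

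For (3), every $u\in U$ lies in exactly one fiber. Since each $\alpha_i$ partitions $U_i$, each $u_i$ lies in a unique block $P^i_{j_i}$, so $u\in F(t)$ exactly for $t=(w^1_{j_1},\ldots,w^m_{j_m})$. Thus the fibers cover $U$ and are pairwise disjoint. By (1) they are boxes, so $\{F(t):t\in M\}=\cB(\Lambda,U)$ is a box-partition.
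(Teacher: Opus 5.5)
Your proof is correct and takes the same approach as the paper. The paper gives no argument beyond saying the lemma follows from Definition~\ref{expansion-fibers-revised}, and unfolding that definition coordinatewise, as you do, is what is intended.

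Your handling of (2) is also right. As literally printed, the formula cannot hold. On $i\in B=X\setminus\supp(M)$ the partition $\alpha_i$ is trivial, so the $i$-th factor of $F(t)$ is a singleton rather than $U_i$. On $i\in A$ the factor is the single block indexed by $t_i$, not a product of all blocks. The intended statement is $F(t)=\prod_{i=1}^m P^i_{j_i}$ with $t_i=w^i_{j_i}$, where $|P^i_{j_i}|=1$ for $i\in B$, which is exactly your repair.
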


In previous sections we defined shattering, strong-shattering and copies for subproducts. Now that we introduced minor-subproducts, we are ready to define our main notions of shattering and strong-shattering for minor-subproducts. An illustration of these notions is given in \cref{fig:shatterstrongshatter}. 

\begin{definition}[Shattering minor-subproducts]
Let  $U=U_1\times\ldots\times U_m$ and $S$ be a subset of $U$.
A minor-subproduct $M\in \MP(U)$   is \emph{shattered} by $S$ if for any $t\in M$ the fiber $F(t)$ intersects $S$, i.e., $t$ has an expansion belonging to $S$. Equivalently, if $M=M(\Lambda)$ for $\Lambda\in \GPart(U)$, then $M$ is shattered by $S$ if $S$ intersects each box $B$ of the box-partition $\cB(\Lambda)$. 
\end{definition}


\begin{definition}[Copies of minor-subproducts]
Let $M=M(\Lambda,U)=M_1\times\ldots\times M_m$ be a minor-subproduct of $U=U_1\times\ldots\times U_m$ defined by $\Lambda=(\alpha_1,\ldots,\alpha_m)\in \GPart(U)$, where $\alpha_{i}=\{ P^{i}_1,\ldots,P^{i}_{\ell_{i}}\}$ is a partition of $U_i$, $i=1,\ldots,m$. For each $i=1,\ldots,m$ and
each $w^i_{j}\in M_i$, pick one element $u^i_j\in P^i_j, j=1,\ldots, \ell_i$. Set $W_{i}=\{ u^{i}_{j_1},\ldots,u^{i}_{\ell_i}\}$ and  consider the full-dimensional subproduct $W=W_{1}\times\ldots\times W_{m}$ of $U$. Any subproduct $W$ of this form is called a \emph{copy} of $M$. 
\end{definition}

\begin{definition}[Strong-shattering of minor-subproducts]\label{def:strongshatter} A minor-subproduct $M\in \MP(U)$  is \emph{strongly-shattered} by a set $S\subseteq U=U_1\times\ldots\times U_m$ if
$S$ contains a copy of $M$.
\end{definition}

\begin{example}\label{ex:shatterbinary}
    In the binary case $U=\{a,b\}^X$, in previous literature (e.g. \cite{BaChDrKo,ChChMoWa}) a set $Y\subseteq X$ is shattered by $S\subseteq U$ if for all $Y$-tuples $t\in \{a,b\}^Y$ there is an extension of $t$ contained in $S$. In our definition, this is equivalent to stating that $S$ shatters the unique minor-subproduct $M(Y)\in \MP(U)$ with $\supp(M(Y))=X\setminus Y$ (note: uniqueness follows since in the binary case every minor-subproduct is mixed). Likewise, in previous literature strong-shattering of $Y$ by $S$ meant that there exists an $(X\setminus Y)$-tuple $t$ such that all extensions of $t$ belong to $S$. And that in turn is equivalent to saying that $S$ strongly shatters the minor-subproduct $M(Y)$.
\end{example}

\begin{figure}[htbp]
    \centering
    \includestandalone[width=0.7\linewidth]{img/shatterstrongshatter}
    \caption{The set $S$ in the figure shatters the minor-subproduct $M=M(\Lambda,U)$, where $\Lambda=\left(\{\{a,b\},\{c\}\},\{A\},\{B\},\{\{0,1\}\}\right)$. This can be observed from the figure, since every box contains an element of $S$. The set $S$ also strongly shatters $M$, since $S$ contains a copy of $M$, namely $\{a,c\}\times \{A,B\}\times\{1\}$ (marked by $*$ in the figure).}
    \label{fig:shatterstrongshatter}
\end{figure}

The notion of minor-subproduct can be extended in a straightforward way to all subproducts $V=V_{i_1}\times\ldots\times V_{i_k}$ of $U$ and therefore the set $\MP(V)$ of minor-subproducts of $V$ is well-defined. However, $\MP(V)$ is not a subset of $\MP(U)$, even if $V$ is full-dimensional (recall that in that case, $V$ is a subset of $U$). Nevertheless, we can define the notion of shattering and strong-shattering of minors $M\in \MP(V)$ for all subproducts $V$ of $U$ and all subsets $S$ of $U$. 
First, if $V=V_1\times\ldots\times V_m\subseteq U$ is a full-dimensional product of $U$, then $M=M(\Lambda,V)\in \MP(V)$ is 
\emph{shattered} by $S\subseteq U$ if $M$ is shattered by $S\cap V$ in $V$, i.e., if each box $B$ of the box-partition $\cB(\Lambda,V)$ of $V$ intersect $S$. Analogously, $M$ is \emph{strongly shattered} by $S$ if $S\cap V$ contains a copy of $M$. 
Now, suppose that $V=V_{i_1}\times\ldots\times V_{i_k}$ is not full-dimensional and $A=\{ i_1,\ldots,i_k\}$. Let $M=M_{i_1}\times\ldots\times M_{i_k}$ be a minor-subproduct of $V$ defined by the generalized partition $\Lambda=(\alpha_{i_1},\ldots,\alpha_{i_k})$ of $V$. Consider the full-dimensional subproduct $V'=V'_1\times\ldots\times V'_m$, where $V'_i=V_i$ if $i\in A$ and $V'_i=U_i$ if $i\notin A$. Consider the generalized partition $\Lambda'=(\alpha'_1,\ldots,\alpha'_m)$ of $V'$, where $\alpha'_i=\alpha_i$ if $i\in A$ and $\alpha'_i=\{ U_i\}$ if $i\notin A$. Finally, let $M'=M(\Lambda',V')$ be the minor-subproduct of $V'$ corresponding to $\Lambda'$.  Then we say that the minor $M$ of $V$ is \emph{shattered} by a set $S\subseteq U$ if the minor $M'$ of the full-dimensional subproduct $V'$ is shattered by $S$. Analogously, $M$ is 
\emph{strongly shattered} by $S$ if $M'$ is strongly shattered by $S$. We denote by $\MP^*(U)$ the set of all minor-subproducts $M\in \MP(V)$ over all full-dimensional subproducts $V$ of $U$, and by $\MP^{**}(U)$ the set of all minor-subproducts $M\in \MP(V)$ over all subproducts $V$ of $U$. Notice that $\MP(U)\subset \MP^*(U)\subset \MP^{**}(U)$.

\subsection{Ample and lopsided sets} We continue with the definitions of ample and lopsided  sets of Cartesian products of sets, which generalize the notions of ample and lopsided sets for binary products. We provide several versions of ample sets by  applying the \emph{``shattering$\rightarrow$strong-shattering'' principle} to some  subsets of minor-subproducts of $U$ and of its subproducts.  

\begin{definition}[Ample sets of Cartesian products]\label{def:basicample}
Let $S\subseteq U=U_1\times\ldots\times U_m$ and ${\mathcal M}$ be a subset of minor-subproducts of $\MP^{**}(U)$. Then $S$ is called: 
\begin{itemize}
\item  ${\mathcal M}$-\emph{ample} if  each   $M\in {\mathcal M}$ which is shattered by $S$ is strongly shattered by $S$;
\item \emph{ample} if $S$ is $\MP(U)$-ample. Let $\Amp(U)$ denote the set of all ample subsets of $U$; 
\item \emph{weakly ample} if $S$ is $\MMP(U)$-ample. 
\end{itemize}
We will also consider \emph{$\MP^{**}(U)$-ample}, \emph{$\MP^*(U)$-ample} and \emph{$\EMP(U)$-ample} sets. 
\end{definition}

By definition, if ${\mathcal M'}\subseteq {\mathcal M}''\subseteq \MP^{**}(U)$, then each ${\mathcal M}''$-ample set is  ${\mathcal M}'$-ample. Consequently, each $\MP^{**}(U)$-ample set is $\MP^{*}(U)$-ample, each  $\MP^{*}(U)$-ample set is ample, and each ample set is $\EMP(U)$-ample and weakly ample. Theorem \ref{thm:ample=weaklyample} in \cref{sec:ample-extample} establishes that all these variations of ampleness, except weak ampleness,  are equivalent. On the other hand, we prove in Proposition \ref{lopsider=wample} that weak ampleness is equivalent to lopsidedness (which we define below). 
Notice also that $\MMP(U)$-ampleness can be defined via shattering/strong-shattering of subproducts instead of minor-subproducts. The following simple example shows that weakly ample sets are not necessarily ample:

\begin{example} \label{weak-ample-nonample} Let $U=\{ a,b,c\}\times \{ A,B,C\}$, and consider the set $S$ shown in blue in \cref{fig:amplenotisometric}. We show that $S$ is not ample or $\EMP$-ample. Take the extended subproduct $M=\{a,b,c\}\times\{\{A,B\},C\}$ (for ease of notation, we denote this minor-subproduct by its boxes, and one-element blocks by their element). $M$ has two copies in $U$, namely $W^1=\{a,b,c\}\times\{A,C\}$ and $W^2=\{a,b,c\}\times\{B,C\}$.

The extended subproduct $M$ is shattered but
not strongly shattered by $S$. Indeed, one can check that the fiber $F(v)$ of each of the six tuples $v$ of $M$ intersects $S$. However, none of the possible two copies of $M$ is contained in $S$, as $(c,A)\in W^1\backslash S$ and $(b,B)\in W^2\backslash S$.
So $S$ is not ample or $\EMP$-ample. However, the set $S$ is weakly ample, as one can easily verify that $M\in \MMP(U)$ is both shattered and strongly shattered if and only if $\supp(M)\geq 1$. Thus weak ampleness is a weaker notion than ampleness.


    \begin{figure}[htbp]
        \centering
        \includestandalone[width=0.5\linewidth]{img/amplenotisometric2}
        \caption{The Cartesian product $U=\{a,b,c\}\times\{A,B,C\}$, its Hamming graph, and the box-partition defined by $M$ from Example \ref{weak-ample-nonample}. The set $S$ is shown in blue.}
        \label{fig:amplenotisometric}
    \end{figure}
\end{example}

\begin{example}\label{ex:amplebinary}
The classical notion of ample/lopsided set \cite{BaChDrKo,La} corresponds to ample sets in \emph{binary products}, i.e., in  Cartesian products of sets of size 2. In this case, each factor can be identified with e.g. $\{ 0,1\}$, $\{ a,b\}$, or with $K_2$. The subsets $S$ of $U$ correspond to sets of binary vectors, or equivalently, to subsets of $2^X$, corresponding to families of subsets of $X$. In this case, the three definitions of ample sets coincide with the classical definition: this is since we saw in \cref{ex:shatterbinary} that shattering and strong-shattering can be described in terms of minor-subproducts, and since in the binary case every minor-subproduct is mixed.
\end{example}

\begin{remark} The notions of ampleness with respect to the sets $\EEMP(U)$ and $\CMMP(U)$ of elementary and co-elementary minor-subproducts are not considered because they are too permissive. For example, all elementary minor-subproducts define box-partitions with many boxes, therefore any sparse enough set $S\subseteq U$ will not shatter any such minor and thus $S$ will be  $\EEMP(U)$-ample. On the other hand, any connected set $S$ will satisfy the ``shattering$\rightarrow$strong-shattering'' principle for all co-elementary minor-subproducts and thus will be $\CMMP(U)$-ample. Nevertheless, elementary minor-subproducts will be very useful in recursive characterizations of ample sets. 
\end{remark}

For a set $S\subseteq U$, denote by $S^*=U\setminus S$ its complement in $U$. In case of binary products $U=\{ a,b\}^X$, Lawrence's \cite{La} definition of lopsided sets 
can be stated in the following way in terms of strong-shattering: $S$ is lopsided if for any partition of $X$ into the sets $A$ and $B$, 
either $A$ is strongly shattered by $S$ or $B$ is strongly shattered by $S^*$. In case of $U=\{ a,b\}^X$, the lattice of generalized 
partitions $\GPart(U)$ is isomorphic to the Boolean lattice, which is a uniquely complemented lattice: the unique complement of $M$ with $\supp(M)=A$ is given by the minor-subproduct $M^{\diamond}$ with $\supp(M)=X\backslash A$.
For general Cartesian products $U$,  the lattices $\GPart(U)$ and $\MP(U)$ are still complemented but are no longer uniquely complemented. If we require Lawrence's dichotomy \emph{``either $M$ is strongly shattered by $S$ or $M^\diamond$ is strongly shattered by $S^*$''} to any $M\in \MP(U)$ {and to any complement $M^{\diamond}$, then one can show that, for example, if all factors have size $>2$, then $S=\varnothing$ or $S=U$.}

To get a more insightful notion of lopsidedness, instead of all minor-subproducts we will consider only mixed minor-subproducts, which are the minor-subproducts where each subfactor is either trivial or co-trivial. 
Each mixed minor-subproduct $M$ has a unique complement $M^\diamond$, which is also a mixed minor-subproduct. 

\begin{definition}[Lopsided sets of Cartesian products]\label{def:basicample}
A set $S\subseteq U=U_1\times\ldots\times U_m$ is called 
\item \emph{lopsided} if for any mixed minor-subproduct $M$ of $U$ either $M$ is strongly shattered by $S$ or the complement $M^\diamond$ of $M$  is strongly shattered by $S^*$. 
\end{definition}

From the definition it follows that the class of lopsided sets is closed by taking complements. 
The next result establishes an equivalence between lopsided  and weakly ample sets: 

\begin{proposition} \label{lopsider=wample} A set $S\subseteq U=U_1\times\ldots\times U_m$ is lopsided if and only if $S$ is weakly ample. Consequently, the complement $S^*$ of a  weakly ample set $S$ is  weakly ample. 
\end{proposition}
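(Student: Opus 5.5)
Mixed minor-subproducts correspond to subsets $A\subseteq X$ of co-trivial coordinates. Let $M_A$ denote the mixed minor-subproduct whose partitions $\alpha_i$ are co-trivial for $i\in A$ and trivial for $i\notin A$; its unique complement is $M_{X\setminus A}$. Both shattering and strong-shattering of $M_A$ have a simple description.

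$S$ shatters $M_A$ iff for every $(X\setminus A)$-tuple $t$ the fiber $U_A\times t$ meets $S$. Equivalently, the projection of $S$ onto the coordinates $X\setminus A$ is all of $\prod_{i\notin A}U_i$.

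$S$ strongly shatters $M_A$ iff $S$ contains a copy of $M_A$, i.e.\ a set $\{a\}\times\prod_{i\notin A}U_i$ for some $A$-tuple $a$ (singletons in the coordinates of $A$, full factors elsewhere). Equivalently, some fiber $a\times U_{X\setminus A}$ is contained in $S$.

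With these reformulations the proposition reduces to the classical binary argument, done coordinatewise on the sets $A$. The plan is to show that, for each $A$, the lopsidedness dichotomy for the pair $(M_A, M_{X\setminus A})$ is equivalent to the shattering$\rightarrow$strong-shattering implication for $M_{X\setminus A}$ (respectively $M_A$). The key observation is the duality
\[
S \text{ does not shatter } M_{X\setminus A} \iff \exists\, a\in U_A \text{ with } (a\times U_{X\setminus A})\cap S=\varnothing \iff S^* \text{ strongly shatters } M_A.
\]
To see this, note that $S$ fails to shatter $M_{X\setminus A}$ iff some fiber $U_{X\setminus A}\times a$, with $a$ an $A$-tuple, misses $S$. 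That fiber is exactly a copy of $M_A$ lying in $S^*$.

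Now lopsidedness for the complementary pair $(M_{X\setminus A}, M_A)$ says: $S$ strongly shatters $M_{X\setminus A}$, or $S^*$ strongly shatters $M_A$. By the duality, the second alternative means $S$ does not shatter $M_{X\setminus A}$. So the dichotomy reads ``not shattered or strongly shattered,'' which is precisely the ampleness implication for $M_{X\setminus A}$.

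As $A$ ranges over all subsets of $X$, $X\setminus A$ ranges over all of them too. Hence $S$ is lopsided iff it satisfies the principle for every mixed minor-subproduct, i.e.\ iff $S$ is weakly ample.

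For the consequence, the definition of lopsidedness is symmetric under swapping $S\leftrightarrow S^*$ together with $M\leftrightarrow M^\diamond$. Indeed, the condition for the pair $(M,M^\diamond)$ applied to $S^*$ is the condition for the pair $(M^\diamond,M)$ applied to $S$. So $S^*$ is lopsided iff $S$ is, and by the equivalence just proved, the complement of a weakly ample set is weakly ample.

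The only point needing care, and the main obstacle, is the edge cases of the identification. When $A=\varnothing$ or $A=X$ one must check that trivial and co-trivial minor-subproducts, and trivial factors $|U_i|=1$ (for which trivial and co-trivial coincide), are handled consistently. One must also confirm that the complement of a mixed $\Lambda$ in $\GPart(U)$ is indeed $M_{X\setminus A}$; this holds because the only complement of the bottom of $\Part(U_i)$ is the top, and vice versa.
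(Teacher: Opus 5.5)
Your proposal is correct and is essentially the paper's argument. Both rest on the same observation: for a mixed $M$, the boxes of its box-partition are exactly the copies of its unique complement $M^\diamond$, so ``$S$ does not shatter $M$'' is equivalent to ``$S^*$ strongly shatters $M^\diamond$''. You package this as a single per-$M$ equivalence, whereas the paper gives two separate contradiction arguments; your phrasing makes the symmetry under $S\leftrightarrow S^*$ immediate.
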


\begin{proof} First, suppose that $S$ is lopsided. To prove that $S$ is weakly ample, pick any mixed minor-subproduct $M=M_1\times\ldots\times M_m$ shattered by $M$. Let $M=M(\Lambda,U)$, where $\Lambda=(\alpha_1,\ldots,\alpha_m)\in \GPart(U)$ and assume $M_1,\ldots,M_i$ are the trivial subfactors and $M_{i+1},\ldots,M_m$ the co-trivial subfactors of $M$. Then the boxes of the box-partition $\cB(\Lambda,U)$ of $U$ have the form $\{ u_1\}\times\ldots\times \{ u_i\}\times U_{i+1}\times\ldots U_m$, where $u_1\in U_1,\ldots, u_i\in U_i$. 
Notice also that the complement of $M$ in $\MP(U)$ is the mixed minor-subproduct $M^\diamond=M^\diamond_1\times\ldots\times M^\diamond_m$  whose factors $M^\diamond_1,\ldots,M^\diamond_i$ are co-trivial and the factors $M^\diamond_{i+1},\ldots,M^\diamond_m$ are trivial. 

Suppose by way of contradiction that $M$ is not strongly shattered by $S$. Since $S$ is lopsided, the complement $M^\diamond$ of $M$ in $\MP(U)$ is strongly shattered by $S^*$ and thus $S^*$ contains a copy $W$ of $M^\diamond$.  Together with the structure of $M^\diamond$, this implies that there exist $u^*_1\in U_1,\ldots, u^*_i\in U_i$ such that for any choice of the elements $u_{i+1}\in U_{i+1},\ldots, u_m\in U_m$, the tuple $(u^*_1,\ldots,u^*_i,u_{i+1},\ldots,u_m)$ belongs to $S^*$. Equivalently, the box $B:=\{ u^*_1\}\times\ldots\times \{ u^*_m\}\times U_{i+1}\times\ldots\times U_m$ is included in $S^*$. But $B$ is a box of $\cB(\Lambda,U)$, contrary to the assumption that $M$ is shattered by $S$ and thus each box of $\cB(\Lambda,U)$ must contain an element of $S$. This contradiction shows that $M^\diamond$ cannot be strongly shattered by $S^*$ and by lopsidedness of $S$, $M$ must be strongly shattered by $S$. Consequently, any lopsided set is weakly ample. 

Conversely, suppose that $S$ is  weakly ample. Pick any mixed minor $M\in \MMP(U)$  and let 
$M^\diamond$ be its complement in $\MP(U)$. We use the same notation as before for the 
factors of $M$ and $M^\diamond$. We assert that either $M$ is strongly shattered by $S$ or 
$M^\diamond$ is strongly shattered by $S^*$. First suppose that $M$ is strongly shattered by $S$. 
Then $S$ contains a copy of $M$, which implies that there exist 
$u^*_{i+1}\in U_{i+1},\ldots, u^*_m\in U_m$ such that for any choice of the 
elements $u_{1}\in U_{1},\ldots, u_i\in U_i$, the tuple $(u_1,\ldots,u_i,u^*_{i+1},\ldots,u^*_m)$ 
belongs to $S^*$. This implies that the box $B':=U_1\times\ldots\times U_i\times \{ u^*_{i+1}\}\times\ldots\times \{ u^*_m\}$ is included in $S$. As showed above, any copy of 
$M^\diamond$ is a box of the form $B'':=\{ u''_1\}\times\ldots\times \{ u''_i\}\times U_{i+1}\times\ldots\times U_m$ for some choice of the elements $u''_1\in U_1,\ldots,u''_i\in U_i$. 
Each such box $B''$ intersects $B'$ in the tuple $(u''_1,\ldots,u''_i,u^*_{i+1},\ldots, u^*_m)$, and therefore cannot be included in $S^*$. Consequently, $S^*$ does not contain any copy of $M^\diamond$, hence $M^\diamond$ is not strongly shattered by $S^*$. 

Now suppose that $M\in \MMP(U)$ is not strongly shattered by $S$. Since $S$ is  weakly ample, this implies that $M$ is not shattered by $S$. Therefore, the box-partition $\cB(\Lambda,U)$ contains a box $B$ not intersecting $S$. Then $B$ has the form $B=\{ u^*_1\}\times\ldots\times\{ u^*_i\}\times U_{i+1}\times\ldots\times U_m$ for some choice of   $u^*_1\in U_1,\ldots,u^*_i\in U_i$. Consequently,  $B$ is a copy of $M^\diamond$. Since $B\subseteq S^*$, $M^\diamond$ is strongly shattered by $S^*$. This concludes the proof that each  weakly ample set is lopsided.  
\end{proof}

\section{Properties of minor-subproducts}\label{section:propertiesminorsubproducts} In this section, we establish some auxiliary properties of minor-subproducts. 

\begin{lemma} \label{lem:refinement1} If $\Lambda,\Upsilon\in \GPart(U)$  with  $\Lambda\preceq \Upsilon$ and $M=M(\Lambda,U), M'=M(\Upsilon,U)$, then $M'\in \MP(M)$.
\end{lemma}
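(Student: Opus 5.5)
The plan is to treat $M=M(\Lambda,U)=M_1\times\ldots\times M_m$ as a Cartesian product in its own right and to produce an explicit generalized partition $\Upsilon'\in\GPart(M)$ whose minor-subproduct $M(\Upsilon',M)$ is $M'$. We use the identification convention of the paper: elements of minor-subproducts are labelled by their boxes and coordinates by their blocks. Write $\Lambda=(\alpha_1,\ldots,\alpha_m)$ and $\Upsilon=(\beta_1,\ldots,\beta_m)$, with $\alpha_i=\{P^i_1,\ldots,P^i_{\ell_i}\}$ and $M_i=\{w^i_1,\ldots,w^i_{\ell_i}\}$.

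First, fix $i$ and use the hypothesis $\alpha_i\preceq_i\beta_i$. By the definition of refinement, every block $Q$ of $\beta_i$ is a disjoint union of blocks of $\alpha_i$, say $Q=\bigcup_{j\in J_Q}P^i_j$. Moreover, as $Q$ ranges over $\beta_i$, the index sets $J_Q$ partition $\{1,\ldots,\ell_i\}$, because the blocks of $\alpha_i$ partition $U_i$ and each lies in exactly one block of $\beta_i$. We then define $\beta'_i=\{\{w^i_j: j\in J_Q\}: Q\in\beta_i\}$. This is a partition of $M_i$, and it yields $\Upsilon'=(\beta'_1,\ldots,\beta'_m)\in\GPart(M)$.

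Second, we check that $M(\Upsilon',M)$ is $M'$. Coordinatewise, contracting the block $\{w^i_j:j\in J_Q\}$ of $\beta'_i$ to a single point corresponds exactly to contracting $Q=\bigcup_{j\in J_Q}P^i_j$ in $U_i$. So the $i$-th factor of $M(\Upsilon',M)$ is in canonical bijection with the $i$-th factor $M'_i$ of $M'$. For the box-level identification, take a box of $\cB(\Upsilon',M)$, say $\prod_i\{w^i_j:j\in J_{Q_i}\}$. Its expansion in $U$, namely the union of the fibers $F(t)$ from Lemma~\ref{fibers-as-boxes}, is $\prod_i\bigcup_{j\in J_{Q_i}}P^i_j=\prod_i Q_i$. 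This is precisely the box of $\cB(\Upsilon,U)$ that labels the corresponding element of $M'$. Hence the two constructions give the same set with the same labels, and $M'\in\MP(M)$.

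We do not expect a real obstacle here. The only delicate point is being precise about what equality of minor-subproducts means under the paper's labelling convention, which is what the box-level check in the second step settles.
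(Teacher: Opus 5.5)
Your proposal is correct and follows the paper's argument: the paper also works factor by factor, viewing $M'_i$ as obtained by first contracting the blocks of $\alpha_i$ and then merging the resulting elements that lie in a common block of $\beta_i$. Your explicit partition $\beta'_i$ of $M_i$, together with the check that each box of $\cB(\Upsilon',M)$ expands to $\prod_i Q_i$, just makes that two-stage contraction precise under the paper's labelling convention.
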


\begin{proof} Let $\Lambda=\{ \alpha_1,\ldots,\alpha_m\}$ and $\Upsilon=\{ \beta_1,\ldots,\beta_m\}$. Since $\Lambda\preceq \Upsilon$, each partition $\alpha_i$ of $U_i$ is a refinement of the partition $\beta_i$. By definition, $M=M_1\times\ldots\times M_m$ and $M'=M'_1\times\ldots\times M'_m$, where each $M_i$ has the blocks of $\alpha_i$ as vertices and $M'_i$ has the blocks of $\beta_i$ as vertices. Since $\beta_i$ is a coarsening of $\alpha_i$, $M'_i$ can be viewed as a minor of $U_i$ first obtained by contracting each block of $\alpha_i$ to a single element and then contracting all such elements belonging to the same block of $\beta_i$ into a single element. Consequently, each $M'_i$ is a minor of $M_i$ and therefore $M'=M'_1\times\ldots\times M'_m$ is a minor-product of $M=M_1\times\ldots\times M_m$. 
\end{proof}

The following lemma is a consequence of the previous result:

\begin{lemma} \label{lem:refinement2} If $\Lambda,\Upsilon\in \GPart(U)$  and $M=M(\Lambda,U), M'=M(\Upsilon,U)$, then 
$M,M'$ are minor-subproducts of $M\wedge M'$, and $M\vee M'$ is a minor-subproduct of $M$ and $M'$. 
\end{lemma}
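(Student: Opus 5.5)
The statement follows from Lemma \ref{lem:refinement1} once the relevant comparabilities in the lattice $\GPart(U)$ are in place. Write $M=M(\Lambda,U)$ and $M'=M(\Upsilon,U)$. By definition $M\wedge M'=M(\Lambda\wedge\Upsilon,U)$ and $M\vee M'=M(\Lambda\vee\Upsilon,U)$. Since $\GPart(U)$ is the direct product of the partition lattices $\Part(U_i)$, meet and join are computed coordinatewise. In each $\Part(U_i)$ we have $\alpha_i\wedge_i\beta_i\preceq_i\alpha_i\preceq_i\alpha_i\vee_i\beta_i$, and likewise with $\beta_i$ in place of $\alpha_i$. Therefore
\[
\Lambda\wedge\Upsilon\preceq\Lambda\preceq\Lambda\vee\Upsilon \quad\text{and}\quad \Lambda\wedge\Upsilon\preceq\Upsilon\preceq\Lambda\vee\Upsilon .
\]

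\textbf{First claim.} Apply Lemma \ref{lem:refinement1} to the pair $\Lambda\wedge\Upsilon\preceq\Lambda$. This gives $M\in\MP(M\wedge M')$. Applying it to $\Lambda\wedge\Upsilon\preceq\Upsilon$ gives $M'\in\MP(M\wedge M')$.

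\textbf{Second claim.} Apply Lemma \ref{lem:refinement1} to $\Lambda\preceq\Lambda\vee\Upsilon$. This gives $M\vee M'\in\MP(M)$. Applying it to $\Upsilon\preceq\Lambda\vee\Upsilon$ gives $M\vee M'\in\MP(M')$.

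The only point requiring care is identification. An element of $M\vee M'$ is a block-product of $\Lambda\vee\Upsilon$. Viewed inside $\MP(M)$, it is obtained by contracting blocks of $\alpha_i$ into blocks of $\alpha_i\vee_i\beta_i$. This is possible because every block of $\alpha_i\vee_i\beta_i$ is a union of blocks of $\alpha_i$. The same holds with $\beta_i$, so both routes give the same boxes of $U$. This matches the paper's convention of identifying elements of minor-subproducts by their boxes, so no further argument is needed.
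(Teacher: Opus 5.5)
Your proof is correct and matches the paper, which states this lemma without proof as a direct consequence of Lemma \ref{lem:refinement1}. Feeding the coordinatewise comparabilities $\Lambda\wedge\Upsilon\preceq\Lambda\preceq\Lambda\vee\Upsilon$ and $\Lambda\wedge\Upsilon\preceq\Upsilon\preceq\Lambda\vee\Upsilon$ into that lemma gives exactly the four memberships; your closing remark on identifying elements via boxes spells out what the paper leaves implicit.
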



\begin{lemma} \label{minors-of-minors} If $M\in \MP(U)$ and $M'\in \MP(M)$, then $M'\in \MP(U)$. 
\end{lemma}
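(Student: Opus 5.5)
The plan is to unwind the two contraction steps into a single partition of each factor. Write $M=M(\Lambda,U)=M_1\times\ldots\times M_m$ with $\Lambda=(\alpha_1,\ldots,\alpha_m)$ and $\alpha_i=\{P^i_1,\ldots,P^i_{\ell_i}\}$. Each $M_i=\{w^i_1,\ldots,w^i_{\ell_i}\}$ is the set of blocks of $\alpha_i$, with $w^i_j$ standing for $P^i_j$. A minor-subproduct $M'\in\MP(M)$ is then given by a generalized partition $\Upsilon'=(\beta'_1,\ldots,\beta'_m)$ of $M$, where $\beta'_i=\{Q^i_1,\ldots,Q^i_{k_i}\}$ is a partition of $M_i$ and each block $Q^i_r$ is contracted to a new element $z^i_r$. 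So each $Q^i_r$ is a set of contracted blocks of $\alpha_i$.

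Next I lift $\beta'_i$ to a partition $\beta_i$ of $U_i$. Its blocks are
$$R^i_r=\bigcup_{w^i_j\in Q^i_r}P^i_j,\qquad r=1,\ldots,k_i .$$
These sets are nonempty, since $Q^i_r\neq\varnothing$ and the $P^i_j$ are nonempty. They are pairwise disjoint because the $Q^i_r$ are disjoint and the $P^i_j$ are disjoint. They cover $U_i$ because the $Q^i_r$ cover $M_i$ and the $P^i_j$ cover $U_i$. Hence $\beta_i\in\Part(U_i)$. Moreover $\alpha_i\preceq_i\beta_i$, since every block of $\alpha_i$ lies in exactly one $R^i_r$. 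Set $\Upsilon=(\beta_1,\ldots,\beta_m)\in\GPart(U)$, so that $\Lambda\preceq\Upsilon$.

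It remains to identify $M'$ with $M(\Upsilon,U)$, in the spirit of the convention that elements of minor-subproducts are labeled by their boxes. The map $z^i_r\mapsto R^i_r$ is a bijection between $M'_i$ and the blocks of $\beta_i$. Contracting $U_i$ first along $\alpha_i$ and then along $\beta'_i$ merges $u,v\in U_i$ exactly when their $\alpha_i$-blocks lie in the same $Q^i_r$, that is, exactly when $u,v$ lie in the same $R^i_r$. So the composite contraction equals the direct contraction along $\beta_i$. For boxes, a tuple $t'=(z^1_{r_1},\ldots,z^m_{r_m})\in M'$ has fiber $Q^1_{r_1}\times\ldots\times Q^m_{r_m}$ in $M$. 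Expanding each element of $M$ to its fiber in $U$ (\cref{fibers-as-boxes}) yields the box $R^1_{r_1}\times\ldots\times R^m_{r_m}\in\cB(\Upsilon,U)$. Thus the box-partition of $U$ induced through $M$ coincides with $\cB(\Upsilon,U)$, and $M'=M(\Upsilon,U)\in\MP(U)$. This is also the converse of \cref{lem:refinement1}.

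I expect no genuine obstacle; the argument is bookkeeping. The one point needing care is stating precisely what "$M'=M(\Upsilon,U)$" means, given the paper's convention of identifying elements with boxes. I would handle it by the fiber computation above, which shows that the identification is compatible with expansions and fibers.
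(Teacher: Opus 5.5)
Your proof is correct and follows the paper's argument. Like the paper, you lift each partition of $M_i$ to the coarser partition of $U_i$ whose blocks are unions of the corresponding $\alpha_i$-blocks, and identify $M'$ with the minor-subproduct of $U$ defined by the resulting generalized partition $\Upsilon\succeq\Lambda$ (your block labels $Q$ and $R$ are swapped relative to the paper's). Your check that the lifted sets form a partition, and that composing fibers gives the boxes of $\mathcal{B}(\Upsilon,U)$, makes explicit what the paper leaves implicit.
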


\begin{proof} Let $M=M(\Lambda, U)=M_1\times\ldots\times M_m$ and $M'=M(\Upsilon,M)=M'_1\times\ldots\times M'_m$. Let $\Lambda=(\alpha_1,\ldots,\alpha_m)$, where $\alpha_i=\{ P^i_{1},\ldots,P^i_{\ell_i}\}, i=1,\ldots, m$. 
Then $M_i=\{ w^i_1,\ldots, w^i_{\ell_i}\}$ for $i=1,\ldots,m$. The generalized partition $\Upsilon$ has the form $\Upsilon=(\beta_1,\ldots,\beta_m),$ where each $\beta_i$ is a partition $\{ R^i_1,\ldots,R^i_{k_i}\}$ of $M_i$. If we  set $Q^i_j=\bigcup_{w^i_t\in R^i_j} P^i_t$, then we conclude that $\beta'_i=\{ Q^i_1,\ldots,Q^i_{k_i}\}$ is a partition of the factor $U_i$ that is coarser than $\alpha_i$ of $U_i$. Consequently, if we set $\Lambda'=(\beta'_1,\ldots,\beta'_m)$, then $\Lambda'\in \GPart(U)$ and $\Lambda\preceq \Lambda'$. Since $M'=M(\Lambda',U)$ (because each $z^i_j\in M'_i$ can be obtained by the contraction of the block $Q^i_j$ of $\beta'_i$), we get $M'\in \MP(U)$, as required. 
\end{proof}

The \emph{restriction} of a generalized partition $\Lambda=(\alpha_1,\ldots,\alpha_m)$ of $U$  to a full-dimensional subproduct 
$V=V_1\times\ldots\times V_m$ is the generalized partition $\Lambda'=(\alpha'_1,\ldots,\alpha'_m)$ of $V$, where  $\alpha'_1,\ldots,\alpha'_m$ of $\Lambda'$ are the restrictions of the partitions  $\alpha_1,\ldots,\alpha_m$ to $V_1,\ldots,V_m$, respectively. Then $M(\Lambda',V)\in \MP(V)$ is called the \emph{restriction} of $M(\Lambda,U)\in \MP(U)$ to $V$.

\begin{lemma} \label{minors-versus-subproducts1} If $V=V_1\times\ldots\times V_m$ is a full-dimensional subproduct of  $U=U_1\times\ldots\times U_m$, then any minor-subproduct  $M$ of $V$ is the restriction on $V$ of a minor-subproduct of $U$. Furthermore, if $M$ is an extended minor-subproduct of $V$, then $M$ is  the restriction on $V$ of an extended minor-subproduct of $U$. 
\end{lemma}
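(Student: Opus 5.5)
The plan is to build the extension explicitly, one coordinate at a time. Let $M=M(\Lambda',V)$ with $\Lambda'=(\alpha'_1,\ldots,\alpha'_m)$, where each $\alpha'_i=\{Q^i_1,\ldots,Q^i_{k_i}\}$ is a partition of $V_i$. For each $i$ we construct a partition $\alpha_i$ of $U_i$ whose restriction to $V_i$ equals $\alpha'_i$. Setting $\Lambda=(\alpha_1,\ldots,\alpha_m)$, the restriction of $\Lambda$ to $V$ is then $\Lambda'$ by definition, so $M$ is the restriction of $M(\Lambda,U)\in\MP(U)$.

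The natural choice keeps the blocks of $\alpha'_i$ and makes every point of $U_i\setminus V_i$ a singleton:
\[
\alpha_i=\{Q^i_1,\ldots,Q^i_{k_i}\}\cup\bigl\{\{u\}: u\in U_i\setminus V_i\bigr\}.
\]
This is clearly a partition of $U_i$: its blocks are nonempty and pairwise disjoint, and they cover $V_i\cup(U_i\setminus V_i)=U_i$. Its restriction to $V_i$ consists of the nonempty intersections of its blocks with $V_i$. Each $Q^i_j$ meets $V_i$ in itself, and each singleton $\{u\}$ with $u\notin V_i$ meets $V_i$ in the empty set, which is discarded. So the restriction is exactly $\alpha'_i$.

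For the second statement, suppose $M$ is extended, so each $\alpha'_i$ is quasi-trivial, with at most one non-trivial block. Adding only singleton blocks creates no new non-trivial block, so each $\alpha_i$ is again quasi-trivial. Hence $\Lambda$ is an extended generalized partition, and $M(\Lambda,U)$ is an extended minor-subproduct of $U$ restricting to $M$.

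There is no real obstacle here. The only point needing care is that the restriction operation discards empty intersections, which is why blocks lying entirely outside $V_i$ do not appear in the restriction. One could instead merge $U_i\setminus V_i$ into an existing block, but that might break quasi-triviality, so the singleton choice is the right one.
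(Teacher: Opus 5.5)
Your construction is correct and proves the lemma as literally stated, but it is not the paper's construction. The paper extends each $\alpha'_i$ \emph{without creating new blocks}: the elements of $U_i\setminus V_i$ are distributed among the existing blocks. In the extended case, all of them go into the unique non-trivial block, or into an arbitrary block if $\alpha'_i$ is trivial. You instead add each $u\in U_i\setminus V_i$ as a new singleton block. Your version is uniform and needs no case distinction.

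Your closing remark is mistaken, however. Merging all of $U_i\setminus V_i$ into the non-trivial block, or into a single block when $\alpha'_i$ is trivial, leaves exactly one non-trivial block, so quasi-triviality is preserved. That is precisely what the paper does.

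More importantly, the block-preserving construction has a property yours lacks. Since every block of $\alpha_i$ meets $V_i$, restriction induces a bijection between $M(\Lambda,U)$ and $M$. Each box of $\cB(\Lambda,U)$ meets $V$ exactly in the corresponding box of $\cB(\Lambda',V)$. This bijection is used implicitly in Lemma~\ref{minors-versus-subproducts2}, where shattering of $M$ by $S\cap V$ is lifted to shattering of the extension by $S$. It is therefore also used in the implication (2)$\Rightarrow$(3) of Theorem~\ref{thm:ample=weaklyample}, which explicitly takes $\alpha_1=\{P^1_1\cup\{x\},P^1_2,\ldots\}$.

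With your singleton extension this lifting fails. The boxes whose $i$th block is $\{u\}$ with $u\notin V_i$ are disjoint from $V$, so they need not meet $S$. For example, take $m=1$, $U=\{0,1\}$, $V=S=\{0\}$: the unique minor-subproduct of $V$ is shattered by $S\cap V$, but your extension $\{\{0\},\{1\}\}$ is not shattered by $S$. Either extension settles the lemma on its own, but for its intended use you need the block-preserving one.
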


\begin{proof} Let $M=M(\Upsilon,V)=M_1\times\ldots\times M_m$ with 
$\Upsilon=(\beta_1,\ldots,\beta_m)\in \GPart(V)$, where $\beta_i=\{ Q^i_1,\ldots Q^i_{\ell_i}\}$ is a partition of $V_i$. From the partition $\beta_i$ of $V_i$ we derive a partition $\alpha_i=\{ P^i_1,\ldots,  P^i_{\ell_i}\}$ 
of $U_i$ by assigning the elements of $U_i\setminus V_i$ to the blocks of $\beta_i$ and without creating new blocks. If 
$M$ is an extended minor-subproduct of $V$ and $Q^i_j$ is the unique non-trivial block of the partition $\beta_i$, then 
in $\alpha_i$ we assign all elements of $U_i\setminus V_i$ to the block $Q^i_j$  of $\beta_i$ (or if $\beta_i$ is trivial, we assign all these elements to the first block). Then each $\alpha_i$ is a quasi-trivial partition of $U_i$. In all cases, let $\Lambda=(\alpha_1,\ldots,\alpha_m)$, then clearly the restriction of $M(\Lambda,U)$ to $V$ is $M(\Gamma,V)$. Moreover, if $M$ is an extended minor-subproduct of $V$, then $M(\Lambda,U)$ is an extended minor-subproduct of $U$. 
\end{proof}

\begin{lemma} \label{minors-versus-subproducts2} Let $V=V_1\times\ldots\times V_m$ be a full-dimensional subproduct of $U=U_1\times\ldots\times U_m$, $S\subseteq U$, and $M\in \MP(V)$ such that $M$ is the restriction of $M'\in\MP(U)$. If $M$ is shattered by $S\cap V$ in $V$, then $M'$ is shattered by $S$ in $U$. 
\end{lemma}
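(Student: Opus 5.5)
The plan is to show directly that every box of the box-partition $\cB(\Lambda',U)$, where $M'=M(\Lambda',U)$, contains a point of $S$. Each such box is met by a corresponding box of $\cB(\Lambda,V)$, where $M=M(\Lambda,V)$, and that box in turn meets $S\cap V$ by the shattering assumption.

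Write $\Lambda'=(\alpha_1,\ldots,\alpha_m)$ with $\alpha_i=\{P^i_1,\ldots,P^i_{\ell_i}\}$. By definition of restriction, $\Lambda=(\alpha'_1,\ldots,\alpha'_m)$, where $\alpha'_i$ consists of the nonempty sets $P^i_j\cap V_i$.

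Fix an arbitrary box $B=P^1_{j_1}\times\ldots\times P^m_{j_m}$ of $\cB(\Lambda',U)$. The first thing to check is that $P^i_{j_i}\cap V_i\neq\varnothing$ for every $i$. This is the step where care is needed: the restriction discards blocks of $\alpha_i$ that miss $V_i$. So we need that every block of $\alpha_i$ meets $V_i$, or else handle boxes of $\cB(\Lambda',U)$ that miss $V$.

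I would argue this as follows. The hypothesis is that $M$ is the restriction of $M'$, with $M'$ regarded as a minor-subproduct whose coordinates are identified with those of $M$ (the blocks of $\alpha'_i$ correspond bijectively to those of $\alpha_i$). This is exactly the situation produced by \cref{minors-versus-subproducts1}, where $\alpha_i$ is obtained from $\beta_i$ by distributing the elements of $U_i\setminus V_i$ among existing blocks without creating new ones. Hence every block of $\alpha_i$ meets $V_i$. I would state this as the standing interpretation of ``restriction'' in the lemma: $|M_i|=|M'_i|$ for all $i$, that is, $\alpha_i$ and its restriction have the same number of blocks. If a block of $\alpha_i$ could miss $V_i$, the statement would be false, since the box built from that block contains no point of $V$ and so need not meet $S$.

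Given this, $B\cap V=(P^1_{j_1}\cap V_1)\times\ldots\times(P^m_{j_m}\cap V_m)$ is a box of $\cB(\Lambda,V)$. Since $M$ is shattered by $S\cap V$ in $V$, this box contains some $s\in S\cap V$. Then $s\in B\cap S$.

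As $B$ was an arbitrary box of $\cB(\Lambda',U)$, every box of $\cB(\Lambda',U)$ meets $S$, so $M'$ is shattered by $S$ in $U$.
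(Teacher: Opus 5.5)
Your argument is correct and is essentially the paper's proof run from the other side. The paper takes $t\in M$, finds an expansion in $S\cap V$, and notes that it lies in the fiber of the ``corresponding'' $t'\in M'$. You instead fix a box of $\cB(\Lambda',U)$ and intersect it with $V$, which is the direction that actually covers every box of $M'$. Your observation that this requires every block of $\alpha_i$ to meet $V_i$ makes explicit an assumption the paper's proof uses tacitly. That assumption holds for the restrictions built in \cref{minors-versus-subproducts1}, which is how the lemma is used, but fails in general: take $U=\{a,b\}$, $V=\{a\}$, $\alpha_1=\{\{a\},\{b\}\}$ and $S=\{a\}$.
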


\begin{proof} Pick any $t\in M$. Since $M$ is shattered in $V$ by $S\cap V$, there exists an expansion $u$ of $t$ in $V$ belonging to $S\cap V$. Since $V$ is a full-dimensional subproduct of $U$, $u$ is a tuple of $U$, thus $u$ belongs to $S$ and to the fiber $F(t')$ in $U$, where $t'\in M'$ is the element corresponding to $t$. Consequently, $M$ is shattered by $S$ in $U$.
\end{proof}

\begin{lemma} \label{shattering-fulld-minor-subproducts} Let $V=V_1\times\ldots\times V_m$ be a full-dimensional subproduct of $U=U_1\times\ldots\times U_m$ and let $M=M(\Lambda,V)\in \MP(V)$. For a set $S\subseteq U$ the following conditions are equivalent:
\begin{itemize}
\item[(1)] $M$ is shattered by $S$;
\item[(2)] for each box $B$ of $\BPart(\Lambda,V)$, we have $B\cap S\ne\varnothing$;
\item[(3)] $M$ is shattered by $S\cap V$.
\end{itemize}
\end{lemma}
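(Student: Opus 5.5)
The lemma is essentially a matter of unwinding definitions, so I will go through the cycle (1)$\Rightarrow$(2)$\Rightarrow$(3)$\Rightarrow$(1), using only the definitions of shattering for minor-subproducts of full-dimensional subproducts given just before Definition \ref{def:basicample}, together with Lemma \ref{fibers-as-boxes}.

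\emph{(1) and (2).} By definition, $M\in\MP(V)$ with $V$ full-dimensional is shattered by $S\subseteq U$ exactly when every fiber $F(t)$, $t\in M$, taken inside $V$, meets $S$. By Lemma \ref{fibers-as-boxes}(1), applied with $V$ in place of $U$, the fibers $F(t)$ with $t\in M$ are exactly the boxes of the box-partition $\cB(\Lambda,V)$; this is the collection the statement calls $\BPart(\Lambda,V)$. So (1) says precisely that each box $B\in\cB(\Lambda,V)$ satisfies $B\cap S\neq\varnothing$, which is (2). This gives both directions between (1) and (2).

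\emph{(2) and (3).} Each box $B$ of $\cB(\Lambda,V)$ is a subset of $V$, because $\cB(\Lambda,V)$ partitions $V$. Hence $B\cap S=B\cap(S\cap V)$. Shattering of $M$ by $S\cap V$ inside $V$ is defined as in the ambient case, with $V$ now playing the role of the whole product. It therefore means that every box of $\cB(\Lambda,V)$ meets $S\cap V$, and by the identity above this is equivalent to (2).

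The only point that needs care is bookkeeping rather than mathematics. One must make sure the fibers are computed in $V$ and not in $U$, since $M$ lives in $\MP(V)$ and $\MP(V)\not\subseteq\MP(U)$. One must also use $B\subseteq V$ to pass between $S$ and $S\cap V$. I do not expect a genuine obstacle here.
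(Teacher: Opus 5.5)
Your proposal is correct and follows the paper's argument: (1)$\Leftrightarrow$(2) comes from Lemma \ref{fibers-as-boxes} (applied in $V$) together with the definition of shattering, and the link with (3) uses only that every box of $\cB(\Lambda,V)$ lies in $V$, so $B\cap S=B\cap(S\cap V)$. The paper simply calls (2)$\Rightarrow$(3)$\Rightarrow$(1) trivial, whereas you prove (2)$\Leftrightarrow$(3) directly, which is the same content.
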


\begin{proof} The equivalence between (1) and (2) follows from Lemma \ref{fibers-as-boxes} and the definition of shattering, and the implications (2)$\Rightarrow$(3)$\Rightarrow$(1) are trivial.
\end{proof}

\begin{lemma} \label{ample-in-V-ample-in-U} If $V$ is a full-dimensional subproduct of $U=U_1\times\ldots\times U_m$ and $S\in \Amp(V)$, then $S\in \Amp(U)$. 
\end{lemma}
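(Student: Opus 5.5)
Let $M=M(\Lambda,U)\in\MP(U)$ with $\Lambda=(\alpha_1,\ldots,\alpha_m)$ be shattered by $S$, where $S\subseteq V$ and $S\in\Amp(V)$. The plan is to transfer $M$ to a minor-subproduct of $V$, use ampleness of $S$ in $V$ to get a copy there, and lift that copy back to a copy of $M$ in $U$. The natural candidate is the restriction $M'=M(\Lambda',V)$ of $M$ to $V$, where $\Lambda'=(\alpha'_1,\ldots,\alpha'_m)$ and $\alpha'_i$ is the restriction of $\alpha_i$ to $V_i$.

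First I check that $M'$ has the same "shape" as $M$. Every block $P$ of $\alpha_i$ meets $V_i$. Indeed, fix such a block $P=P^i_j$. Every box of $\cB(\Lambda,U)$ of the form $P^1_{j_1}\times\cdots\times P\times\cdots\times P^m_{j_m}$ meets $S\subseteq V$, since $M$ is shattered. Hence $P\cap V_i\neq\varnothing$. So restriction is a bijection between the blocks of $\alpha_i$ and the blocks of $\alpha'_i$. Consequently, every box of $\cB(\Lambda',V)$ equals $B\cap V$ for a unique box $B\in\cB(\Lambda,U)$, and this gives a bijection between the two box-partitions.

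Second, $M'$ is shattered by $S$ in $V$. Each box $B\cap V$ of $\cB(\Lambda',V)$ satisfies $(B\cap V)\cap S=B\cap S\neq\varnothing$, using $S\subseteq V$ and the shattering of $M$. Since $S\in\Amp(V)$, $S$ contains a copy $W=W_1\times\cdots\times W_m$ of $M'$. Here each $W_i$ consists of one chosen element from each block $P\cap V_i$ of $\alpha'_i$.

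Third, lift $W$ back to $U$. An element chosen from $P\cap V_i$ is in particular an element of $P$. Since the block correspondence from the first step is bijective, $W_i$ picks exactly one element from each block of $\alpha_i$. So $W$ is a copy of $M$ in $U$, and $W\subseteq S$. Hence $M$ is strongly shattered by $S$, and $S$ is ample in $U$.

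The only delicate point is the first step: one must ensure that no block of $\alpha_i$ misses $V_i$, since otherwise the copy in $V$ would not be a copy of $M$. This is exactly where shattering combined with $S\subseteq V$ is used. The rest is bookkeeping with Lemma \ref{fibers-as-boxes}.
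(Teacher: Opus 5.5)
Your proof is correct and follows the paper's argument. In both, the key point is that shattering of $M$ by $S\subseteq V$ forces every block of every $\alpha_i$ to meet $V_i$; hence the restricted minor-subproduct is shattered in $V$, and any copy of it contained in $S$ is already a copy of $M$ in $U$. The only difference is that the paper removes a single element $x$ from one factor and inducts, whereas you restrict all factors simultaneously, which avoids the induction at no cost.
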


\begin{proof} It is enough to consider the case when   $V=(U_1\backslash\{x\})\times U_2\ldots\times U_m$ and $|U_1|>1$ 
and apply induction in remaining cases. Let $M=M(\Lambda,U)$  
be any minor-subproduct of $U$ that is shattered by $S$. 
Let $\Lambda=(\alpha_1,\ldots,\alpha_m)$ and suppose that 
$P^1_1$ is the block of $\alpha_1=\{P^1_1,\ldots,P^1_{\ell_1}\}$ containing the element $x$. Pick any 
tuple $t=(t_1,\ldots,t_m)\in M$. Since $M$ is shattered by $S$, $t$ has an expansion $u=(u_1,\ldots,u_m)$ belonging to $S$. Since $S\subseteq V=(U_1\backslash\{x\})\times U_2\ldots\times U_m$ it follows that $u_1\ne x$. We deduce that  the block $P^1_1$ is non-trivial. Consequently, $\alpha'_1=\{P^1_1\setminus \{ x\},\ldots,P^1_{\ell_1}\}$ is a partition of $U_1\setminus \{ x\}=V_1$, and $\Lambda'=(\alpha'_1,\alpha_2,\ldots,\alpha_m)$ is a generalized partition of $V$ and $M=M(\Lambda',V)$. Therefore $S$ shatters $M$ in $V$. Since $S\in \Amp(V)$, $M$ is strongly shattered 
by $S$ in $V$, hence $S$ has a copy $W$ of $M$. Since $W\subseteq S\subseteq V\subset U$, we are done. 
\end{proof}

Finally, in the Cartesian product $U=U_1\times\ldots\times U_m$ all factors $U_i$ are nonempty, however $U$ may contain trivial factors (of one element). 
For a trivial factor $U_i$, let $U^{-i}=U_1\times\ldots\times U_{i-1}\times U_{i+1}\times\ldots U_m$. Analogously, for a minor-subproduct 
$M=M_1\times\ldots\times M_m$ of $U$ with a trivial factor $M_i$, let $M^{-i}=M_1\times M_{i-1}\times M_{i+1}\times\ldots M_m$. 
Clearly, $M^{-i}$ is a minor-subproduct of $U^{-i}$. Finally, for a set $S\subseteq U$, the set $S^{-i}\subseteq U^{-i}$ consists of the traces $s|_{X\setminus \{ x\}}$ of all  $s\in S$ to $X\setminus \{ i\}$. The following lemma allows us to ignore 
trivial factors, and the proof of each its assertions directly follows from their respective definitions:

\begin{lemma}\label{lem:trivial-factor} Let $U_i$ be a trivial factor of $U=U_1\times\ldots\times U_m$. For a 
set $S\subseteq U$ and a minor-subproduct $M$ of $U$, the following equivalences hold:
\begin{itemize}
\item[(1)] $S$ is isometric in $U$ if and only $S^{-i}$ is isometric in $U^{-i}$;
\item[(2)] $M$ is shattered by $S$ if and only if $M^{-i}$ is shattered by $S^{-i}$;
\item[(3)] $M$ is strongly shattered by $S$ if and only if $M^{-i}$ is strongly shattered by $S^{-i}$;
\item[(4)] $S$ is ample (weakly ample) in $U$ if and only $S^{-i}$ is ample (resp. weakly ample) in $U^{-i}$.
\end{itemize} 
\end{lemma}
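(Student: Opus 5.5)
The plan is to verify each equivalence directly from the definitions, using the bijection $s\mapsto s^{-i}$ between $U$ and $U^{-i}$. Since $|U_i|=1$, every tuple $u\in U$ has the same $i$-th coordinate, say $a$. Hence the trace map $u\mapsto u|_{X\setminus\{i\}}$ is a bijection $U\to U^{-i}$, and it carries $S$ onto $S^{-i}$.

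For (1), observe that this bijection preserves Hamming distance, because no two tuples differ in coordinate $i$. It is therefore a graph isomorphism $H(U)\cong H(U^{-i})$, which restricts to an isomorphism $H(S)\cong H(S^{-i})$. Since isometricity is a property invariant under isomorphisms of the ambient and induced graphs, (1) follows.

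For (2) and (3), let $M=M(\Lambda,U)$ with $\Lambda=(\alpha_1,\ldots,\alpha_m)$. The partition $\alpha_i$ of the singleton $U_i$ is necessarily $\{\{a\}\}$, which is both trivial and co-trivial. Set $\Lambda^{-i}$ to be $\Lambda$ with the $i$-th entry deleted, so that $M^{-i}=M(\Lambda^{-i},U^{-i})$. Each box of $\cB(\Lambda,U)$ has the form $P^1_{j_1}\times\ldots\times\{a\}\times\ldots\times P^m_{j_m}$, and the bijection maps it exactly onto the corresponding box of $\cB(\Lambda^{-i},U^{-i})$. This gives a bijection between the two box-partitions. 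Under it, $B\cap S\ne\varnothing$ holds if and only if $B^{-i}\cap S^{-i}\ne\varnothing$, which proves (2).

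Similarly, a copy $W=W_1\times\ldots\times W_m$ of $M$ must have $W_i=\{a\}$, since one element is chosen from the single block. The map $W\mapsto W^{-i}$ is therefore a bijection between copies of $M$ and copies of $M^{-i}$. Moreover, $W\subseteq S$ holds if and only if $W^{-i}\subseteq S^{-i}$, which proves (3).

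For (4), it remains to observe that $M\mapsto M^{-i}$ is a bijection $\MP(U)\to\MP(U^{-i})$. This holds because $\GPart(U)\cong\GPart(U^{-i})$: the $i$-th coordinate lattice $\Part(U_i)$ has a single element. The bijection also restricts to a bijection between mixed minor-subproducts. Ampleness (resp. weak ampleness) then transfers through (2) and (3), quantified over these corresponding families.

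No step is a genuine obstacle. The only point needing care is the bookkeeping: the paper's convention of identifying elements of minor-subproducts with their boxes must be compatible with deleting coordinate $i$, and a singleton factor must count as a trivial partition, so that $M^{-i}$ is well defined for every $M\in\MP(U)$.
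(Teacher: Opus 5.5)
Your proposal is correct and takes the same approach as the paper. The paper gives no written proof beyond remarking that each assertion follows directly from the definitions. Your coordinate-deletion bijection, applied to tuples, boxes, copies and generalized partitions, carries out exactly that verification. You also correctly check that $\{\{a\}\}$ is both trivial and co-trivial, so that $M^{-i}$ is defined for every $M$ and mixed minor-subproducts correspond to mixed ones.
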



\section{Ampleness and its variations}\label{sec:ample-extample}

The goal of this section is to prove that ampleness is equivalent to other stronger and weaker versions. 
We also show that ample sets are isometric. 

\begin{theorem}\label{thm:ample=weaklyample} For a subset $S$ of $U=U_1\times \ldots \times U_m$, the following conditions are equivalent: 

\begin{itemize}
    \item[(1)] $S$ is ample; 
    \item[(2)] $S$ is $\EMP(U)$-ample;
    \item[(3)] $S$ is $\EMP(U)$-ample in $U$  and for any proper full-dimensional subproduct $V=V_1\times\ldots\times V_m\subset U$, $S\cap V$ is $\EMP(V)$-ample;
    \item[(4)] $S$ is $\MP^*(U)$-ample;
    \item[(5)] $S$ is $\MP^{**}(U)$-ample. 
\end{itemize}
\end{theorem}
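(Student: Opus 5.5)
The implications $(5)\Rightarrow(4)\Rightarrow(1)\Rightarrow(2)$ are immediate, since $\EMP(U)\subseteq\MP(U)\subseteq\MP^*(U)\subseteq\MP^{**}(U)$. Also $(4)\Leftrightarrow(5)$ holds essentially by definition. A minor $M\in\MP(V)$ of a non-full-dimensional subproduct $V$ is shattered, respectively strongly shattered, by $S$ exactly when the associated $M'\in\MP(V')$ is. Here $V'$ is obtained by adding the missing factors $U_i$ with co-trivial partitions, and $M'\in\MP^*(U)$. So the real content is to close the cycle by proving $(2)\Rightarrow(3)$ and $(3)\Rightarrow(4)$. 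I would prove both by induction on $|U|=\sum_i|U_i|$, using Lemma \ref{lem:trivial-factor} to discard trivial factors.

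For $(2)\Rightarrow(3)$, by induction it suffices to treat $V=(U_1\setminus\{x\})\times U_2\times\ldots\times U_m$. Let $M=M(\Upsilon,V)$ be extended and shattered by $S\cap V$, and let $\beta_1$ be its first partition. For each block $Q$ of $\beta_1$, let $M_Q\in\EMP(U)$ be obtained by adding $x$ to $Q$. This is still quasi-trivial if $Q$ is the non-trivial block, or if $\beta_1$ is trivial. By Lemma \ref{minors-versus-subproducts2}, $M_Q$ is shattered by $S$, so $S$ contains a copy $W_Q$. If $W_Q$ does not use $x$ as the representative of $Q\cup\{x\}$, then $W_Q\subseteq V$ is a copy of $M$ and we are done. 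The bad case is when every admissible $W_Q$ uses $x$. Then I would pass to a coarser extended minor, for instance merging $x$ with a second block, or a minor of $V$ with $Q$ enlarged. The aim is to exhibit two copies which differ only in the representative of $Q\cup\{x\}$. Exchanging $x$ for an element $q\in Q$ would then stay inside $S$, using that $S$ shatters the fiber boxes in which $q$ occurs. I expect this exchange argument to be the main obstacle of the whole theorem. What I would try first is a minimality argument: take a copy $W\subseteq S$ of $M_Q$, and apply (2) to the elementary-like extended minor whose only non-trivial block is $\{x,q\}$ together with the block structure of $M$ restricted to the slice through $W$.

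For $(3)\Rightarrow(4)$, let $V\subseteq U$ be full-dimensional and $M=M(\Lambda,V)$ shattered by $S\cap V$. If $V\ne U$, the set $S\cap V$ again satisfies (3) inside $V$: (2) for $V$ is given, and subproducts of $V$ are subproducts of $U$. Since $|V|<|U|$, induction gives a copy inside $S\cap V$, so we may assume $V=U$. Now induct on the number of non-trivial blocks of $\Lambda$ beyond one per factor. If some $\alpha_i$, say $\alpha_1$, has two non-trivial blocks $P,P'$, pick $y\in P'$ and set $V_1=(U_1\setminus P')\cup\{y\}$. Restricting $M$ to $V=V_1\times U_2\times\ldots\times U_m$ turns $P'$ into the trivial block $\{y\}$. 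This restriction need not be shattered, so instead I would choose $y$ block by block. Using that $M$ is shattered, select for each box of $\cB(\Lambda)$ a witness in $S$. I would then aim to show, by the inductive hypothesis applied to the minors obtained by merging $P'$ into a single element, that some choice of $y$ keeps all boxes hit. Iterating reduces to an extended minor of a proper subproduct, where (3) supplies the copy. Any copy found inside $V$ is a copy of the original $M$, since its representatives lie in the prescribed blocks.
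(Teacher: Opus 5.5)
Your reduction to $(2)\Rightarrow(3)$ and $(3)\Rightarrow(4)$ is sound, with $(5)\Rightarrow(4)\Rightarrow(1)\Rightarrow(2)$ immediate and $(4)\Leftrightarrow(5)$ by definition. However, $(2)\Rightarrow(3)$ has a genuine gap at exactly the step you flag. The bad case, where the copy $W\subseteq S$ of $M_Q$ uses $x$ as the representative of $Q\cup\{x\}$, is never resolved, and your suggested remedies do not work as stated:
\begin{itemize}
\item Merging $x$ into a second block creates, when $Q$ is the non-trivial block of $\beta_1$, a partition with two non-trivial blocks, so (2) cannot be applied to it.
\item Exchanging $x$ for some $q\in Q$ is unjustified. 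Shattering only says that each box contains \emph{some} element of $S$, not the specific tuples obtained from $W$ by replacing $x$ with $q$.
\end{itemize}
The missing idea is to \emph{refine} rather than coarsen. Let $\Upsilon=(\beta_1,\ldots,\beta_m)$, and let $M'\in\EMP(U)$ be obtained from $M$ by adjoining $\{x\}$ to $\beta_1$ as a new singleton block; this partition of $U_1$ is still quasi-trivial. For $i\ge 2$, write $P^i_j$ for the blocks of $\beta_i$ and $u^i_j\in P^i_j$ for the representatives occurring in $W$. A box of $M'$ whose first block lies in $\beta_1$ is a box of $M$, so it meets $S\cap V$. A box $\{x\}\times P^2_{j_2}\times\cdots\times P^m_{j_m}$ contains $(x,u^2_{j_2},\ldots,u^m_{j_m})\in W\subseteq S$. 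Hence $M'$ is shattered by $S$, so by (2) $S$ contains a copy $W'$ of $M'$. Removing $x$ from the first factor of $W'$ gives a copy of $M$ inside $S\cap V$. This is the paper's argument, and no exchange step is needed.

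Your $(3)\Rightarrow(4)$ sketch takes a different route from the paper and can be completed, but the claim that ``some choice of $y$ keeps all boxes hit'' must be made precise.
\begin{itemize}
\item Apply the induction hypothesis on $|U|$ to the proper subproduct $U'=P'\times U_2\times\cdots\times U_m$. It is proper because the nonempty block $P$ is disjoint from $P'$.
\item Use the minor of $U'$ with co-trivial first partition $\{P'\}$ and the remaining partitions of $\Lambda$. Its boxes are boxes of $M$, so it is shattered by $S\cap U'$.
\item The resulting copy has a single first coordinate $y\in P'$, which meets every box of $M$ with first block $P'$ simultaneously.
\item The restriction of $M$ to $V=((U_1\setminus P')\cup\{y\})\times U_2\times\cdots\times U_m$ is then shattered by $S\cap V$. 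Since $V$ is proper, the outer induction gives a copy of $M$ directly, so no iteration is needed.
\end{itemize}
The paper instead proves $(3)\Rightarrow(1)$ by a minimal counterexample. It deletes single elements $x,y$ of a non-trivial block of a factor that is neither trivial nor co-trivial and splits into three cases, which reduces to mixed minors. It then obtains $(1)\Rightarrow(4)$ by running $(1)\Rightarrow(3)\Rightarrow(1)$ inside each full-dimensional subproduct. Your block-collapsing step is essentially the argument the paper uses later to show that commutativity implies ampleness.
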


\begin{proof} In the subsequent proofs, each time when we use the induction on the size of $U$, if $U$ has a trivial 
factor, then we can use the induction hypothesis on $U^{-i}$ and $S^{-i}$, ignoring trivial factors this way. The implication (1)$\Rightarrow$(2) is obvious. Now we will prove the implication (2)$\Rightarrow$(3)  by induction on   $|U_1|+\ldots+|U_m|$. By previous remark, we can suppose that all factors of $U$ are non-trivial. Pick any $x\in U_1$ and consider the full-dimensional subproduct $V=(U_1\backslash\{x\})\times U_2\times\ldots\times U_m$. Let $S'=S\cap V$. If we show that $S'$ is $\EMP(V)$-ample in $V$, then the statement of the theorem would follow by the induction hypothesis.  

To show the $\EMP(V)$-ampleness of $S'$, consider any  $M=M(\Upsilon,V)\in \EMP(V)$ that is shattered by $S'$ in $V$. Let $M(\Lambda,U)\in \EMP(U)$ be an extended minor-subproduct whose restriction to $V$ equals $M$, as defined in Lemma \ref{minors-versus-subproducts1}. Then 
by Lemma \ref{minors-versus-subproducts2}, $M(\Lambda,U)$ is shattered by $S$ in $U$. Since $S$ is $\EMP(U)$-ample, $M$ is strongly shattered by $S$, i.e., $S$ contains a copy $W$ of $M$.
To be precise, let $M=M(\Upsilon,V)=M_1\times\ldots\times M_m$ be defined in $V$ by the generalized partition $\Upsilon=(\beta_1,\ldots,\beta_m)\in \GPart(V)$, where  $\beta_{i}=\{ P^{i}_1,\ldots,P^{i}_{\ell_{i}}\}$ is a partition of $V_i, i=1,\ldots,m$.  
Since each partition $\beta_i$ is quasi-trivial, all blocks of $\beta_i$ are trivial, except for at most one block, which we will denote by  $P^i_1$. Then $M(\Lambda,U)$ is defined by the generalized partition $\Lambda=(\alpha_1,\ldots,\alpha_m)$ of $U$,  where $\alpha_i=\beta_i$ if $i>1$ and $\alpha_1=\{ P^{1}_1\cup \{ x\},P^1_2,\ldots,P^{1}_{\ell_{1}}\}$, and $M(\Lambda,U)$ is an extended minor-subproduct of $U$. We know that $S$ contains a copy $W=W_1\times\ldots\times W_m\subseteq S$ of $M(\Lambda,U)=M_1\times\ldots\times M_m$, whose factors are of the form $W_i=\{ u^{i}_1,\ldots,u^{i}_{\ell_i}\}$, where $u^1_1\in P^1_1\cup \{ x\}$, and where $u^i_j\in P^i_j$ if $i>1$ or $j>1$. 
We distinguish two cases.

First suppose that $x\notin W_1$, i.e., $u^1_1\ne x$. Then $W\subseteq V$, since $x$ is the only coordinate from $U$ not occurring in $V$. Since $W\subseteq S$, necessarily $W\subseteq S'=S\cap V$, thus $S'$ contains $W$, that is a copy of $M$. 

Now, suppose that $u^1_1=x$, 
this means $W_1=\{ x,\ldots,u^{i}_{\ell_i}\}$. Consider the minor-subproduct $M'=M(\Phi,U)$ of $U$ defined by the generalized partition $\Phi=(\phi_1,\ldots,\phi_m)$,  where $\phi_i=\beta_i$ if $i>1$ and
$\phi_1$ is obtained from $\beta_1$ by adding the trivial block $\{ x\}$: $\phi_1=\{ P^{1}_1,P^1_2,\ldots,P^{1}_{\ell_{1}}, \{ x\}\}$. Consequently,  $\phi_1,\ldots,\phi_m$ are quasi-trivial partitions of $U_1,\ldots,U_m$, respectively. Then $M'$ is an extended minor-subproduct of $U$, and we have $M'=M(\Phi,U)=M_1'\times M'_2\times\ldots\times M'_m$, where $M'_i=M_i$ if $i\ge 2$ and $M'_1=M_1\cup \{ w^1_{{\ell_1}+1}\}$, with $w_{\ell_1+1}^1$ corresponding to the coordinate $x$. 

\begin{claim}
 $M'$ is shattered by $S$ in $U$.   
\end{claim}

To prove the claim, pick any $t\in M'$. If $t\in M$, then $t$ has an expansion $v$ in $V$ belonging to $S'$, since $M$ is shattered in $V$ by the set $S'$. But then $v$ is also an expansion of $t$ in $U$ belonging to $S$, thus in $U$ we have  $F(t)\cap S\ne\varnothing$. Now suppose that $t\in M'\setminus M$. By the definition of $M'$ we conclude that $t$ is an $m$-tuple of the form $(w^1_{\ell+1},w_{j_2}^2,\ldots,w_{j_m}^m)$ for some $j_2,\ldots,j_m$. Since $S$ contains $W$ and since $x\in W_1$, we have $(x,u_{j_2}^2,\ldots,u_{j_m}^m)\in S$, where $(x,u_{j_2}^2,\ldots,u_{j_m}^m)\in F(t)$ is an expansion of $t$.   
Consequently, $M'$ is shattered by $S$ in $U$. 

Since $S$ is $\EMP(U)$-ample and $M'$ is an extended minor-subproduct of $U$,  $M'$ is strongly shattered in $U$ by $S$. Therefore $S$ contains a copy $W'=W'_1\times\ldots\times W'_m$ of $M'$. 
We set $W'_i=\{ u^{i}_1,\ldots,u^{i}_{\ell_i}\}$ for $i>1$ and  $W'_1=\{ u^{i}_1,\ldots,u^{i}_{\ell_i},x\}$, where $u^i_j\in P^i_j$ for all $i,j$. 
Consider the full-dimensional subproduct $W''=W''_1\times\ldots\times W''_m$, where $W''_i=W'_i$ if $i>1$ and $W''_1=\{ u^{i}_1,\ldots,u^{i}_{\ell_i}\}$. Then $W''\subset W'\subseteq S$. On the other hand, $W''\subseteq V$, hence $W''$ is a copy of  $M$ in $V$. Consequently, the set $S'=S\cap V$ contains a copy of $M$, which completes the proof that $S'$ is $\EMP(V)$-ample and the proof of the implication (2)$\Rightarrow$(3). 

We continue with the proof of (3)$\Rightarrow$(1).  For contradiction, assume that for $U=U_1\times\ldots\times U_m$  there exists $S\subseteq U$ for which (3) holds but not (1). Assume that we have picked a set $U$ with this property with the least number of elements. By the remark at the beginning of the proof of the theorem, we can suppose that all factors of $U$ are non-trivial. Since $S$ is not ample, there exists  $M\in\MP(U)$ that is shattered but not strongly shattered by $S$. 
Now suppose that there is a subfactor $M_i$ of $M$ that is neither trivial nor co-trivial, say without loss of generality  that this applies to $M_1$. Let $\Lambda=(\alpha_1,\ldots,\alpha_m)$ and $\alpha_1=\{P_1^1,P_2^1,\ldots,P_{\ell_1}^1\}$. Our assumption on $M_1$ tells us that $\ell_1>1$ and that there is some $P_i^1$ with $|P_i^1|>1$. Assume without loss of generality that $|P_1^1|>1$, and let $x,y\in P_1^1$ with $x\neq y$.

Now we remove $x$ from our universe. More precisely, let $U(x)=(U_1\backslash\{x\})\times U_2\times U_3\times\ldots\times U_m$, and let $\Lambda(x)\in \GPart(U(x))$ with $\Lambda(x)=(\beta_1,\ldots,\beta_m)$, where $\beta_1=\{P_1^1\backslash \{x\},P_2^1,P_3^1,\ldots,P_{\ell_1}^1\}$ and $\beta_i=\alpha_i$ for $i=2,3,\ldots,m$. Furthermore, let $M(x)=M(\Lambda(x),U(x))$ and $S(x)=S\cap U(x)$. Since $U(x)$ has strictly less elements than $U$ and by the choice of $U$, every $\EMP(U(x))$-ample subset of $U(x)$ is ample (since for $U(x)$ the implications $(2)\Rightarrow (3)\Rightarrow(1)$ hold). We distinguish three cases, and show that $M$ is strongly shattered by $S$ in all cases.

     \begin{case}\label{case:Mx}
         $M(x)$ is shattered by $S(x)$ in $U(x)$.
     \end{case}
     Since $U(x)$ is a full-dimensional subproduct of $U$, $S(x)$ is $\EMP(U(x))$-ample,  and therefore ample (by what we just observed). It follows that $M(x)$ is strongly shattered by $S(x)$, and thus $S(x)$ contains a copy $W=W_1\times\ldots\times W_m$ of $M(x)$. Any tuple $u=(u_1,\ldots,u_m)\in W$ is an expansion in $U(x)$ of a tuple $t'=(v^1_{j_1},\ldots v^m_{j_m})$ of $M(x)$, where each $u_i$  belongs to $P^i_{j_i}$ (or, if $i=1$ and $j_1=1$, to $P^1_1\setminus \{ x\}\subset P^1_1$). It follows that $u=(u_1,\ldots,u_m)$ is also an expansion in $U$ of the tuple $t=(w^1_{j_1},\ldots,w^m_{j_m})\in M$ corresponding to the tuple $t'\in M(x)$. Consequently, $W$  is a copy of $M$ in $U$, and we deduce that $M$ is strongly shattered in $U$. 

    Analogously to $U(x)$, we create $U(y)$ by removing $y$ from $U_1$; $M(y)$ and $S(y)$ are defined similarly to $M(x)$ and $S(x)$. 
    \begin{case}
        $M(y)$ is shattered by $S(y)$ in $U(y)$.
    \end{case}
    This case is completely analogous to \cref{case:Mx}.
    \begin{case}
        $M(x)$ is not shattered by $S(x)$ in $U(x)$, and $M(y)$ is not shattered by $S(y)$ in $U(y)$.
    \end{case}
    In this case, we create another full-dimensional subproduct of $U$, by removing all elements of $U_1\backslash P_1^1$. Let $U'=P_1^1\times U_2\times U_3\times \ldots \times U_m$. Let $\Lambda'=(\gamma_1,\ldots,\gamma_m)$, where $\gamma_1=\{P_1^1\}$, and $\gamma_i=\alpha_i$ for $i=2,3,\ldots,m$. Let $M'=M(\Lambda',U')$ and $S'=S\cap U'$.
    Note that the set of boxes of $M'$ is a strict subset of the set of boxes of $M$. Recall that $M$ being shattered by $S$ means that every box of $M$ contains an element of $S$. Therefore every box of $M'$ contains an element of $S'$, so $M'$ is shattered by $S'$. Furthermore,  $S'$ is $\EMP(U')$-ample  by condition (3), and since $U'$ has less elements than $U$, $S'$ is ample in $U'$. Thus, $M'$ is strongly shattered by $S'$. Let then $W'=W_1'\times W_2'\times \ldots \times W_m'$ be a copy of $M'$ in $U'$, where $W_i'=\{u_1^i,u_2^i,\ldots,u_{\ell_i}^i\}$. Since $P_1^1=U_1'$, $W_1'$ consists of a single element $u_1^1$. Since the labels of $x$ and $y$ could be interchanged, we may assume without loss of generality that $u_1^1\neq x$. Now we will establish a contradiction, by showing that $M(x)$ is shattered by $S(x)$ in $U(x)$.

    Pick any box $B=P_{j_1}^1\times P_{j_2}^2\times \ldots\times P_{j_m}^m$ of $M(x)$. If $j_1\neq 1$, then the entire box $B$ is contained in $U(x)$. In particular, since $M$ is shattered by $S$, $B$ contains an element of $S$, which is also in $S(x)$. On the other hand, if $j_1=1$, then the entire box $B$ is contained in $U'$. This means that $B$ contains an element $u$ of $W'$, namely $(u_1^1,u_{j_2}^2,u_{j_3}^3,\ldots,u_{j_m}^m)\in B$. In particular, since $u_1^1\neq x$, $u\in U(x)$ and $u\in S(x)$. So in both cases there is an element of $S(x)$ in $B$, hence $S(x)$ shatters $M(x)$. This proves that Case 3 is not possible.

    We conclude that our initial assumption on $M_1$ was incorrect. Therefore,  $M_1$ must be either trivial or co-trivial, which implies that $M_i$ is trivial or co-trivial for all $i$. In particular, this means that $M$ is an extended minor-subproduct. 
    Since $S$ is $\EMP(U)$-ample and $S$ shatters $M$, this implies that $M$ is strongly shattered by $S$. Since this holds for all $M$, we conclude $S$ is ample. Consequently, $U$ and $S$ that we defined at the start of the proof of this implication do not exist. This proves that $\EMP(V)$-ampleness in all full-dimensional subproducts $V$ implies ampleness of $S$ in $U$, hence (3)$\Rightarrow$(1). 

    The implications (4)$\Rightarrow$(1) and (5)$\Rightarrow$(1) are trivial. To prove the implication (1)$\Rightarrow$(4), let $S\subseteq U$ be ample and let $V$ be a full-dimensional subproduct of $U$. Then $S\cap V$ is $\EMP(V)$-ample  by the implication (1)$\Rightarrow$(3). But then $S\cap V$ is ample in $V$ by the implication (3)$\Rightarrow$(1) applied to $V$. Therefore $S$ is $\MP^*(U)$-ample. 
    
    It remains to prove the implication (4)$\Rightarrow$(5), which essentially follows from the definition of shattering and strong-shattering of arbitrary subproducts. Let $S$ be a $\MP^*(U)$-ample subset of $U$ and let $M\in \MP^{**}(U)$ be shattered by $S$. We assert that   $M$ is strongly shattered by $S$. By the definition, let $M\in \MP(V)$ for a subproduct $V$ of $U$. If $V$ is full-dimensional, then $M$ is strongly shattered because $S$ is $\MP^*(U)$-ample and we are done. Therefore, we can suppose that $V=V_{i_1}\times\ldots\times V_{i_k}$ is not full-dimensional, i.e. its support  $A=\{ i_1,\ldots,i_k\}$ is a proper subset of $X$. Let $M=M_{i_1}\times\ldots\times M_{i_k}$ be defined by the generalized partition $\Lambda=(\alpha_{i_1},\ldots,\alpha_{i_k})$ of $V$.  Consider the full-dimensional subproduct $V'=V'_1\times\ldots\times V'_m$, where $V'_i=V_i$ if $i\in A$ and $V'_i=U_i$ if $i\notin A$ and consider the generalized partition $\Lambda'=(\alpha'_1,\ldots,\alpha'_m)$ of $V'$, where $\alpha'_i=\alpha_i$ if $i\in A$ and $\alpha'_i=\{ U_i\}$ if $i\notin A$. Finally, let $M'=M(\Lambda',V')$ be the minor-subproduct of $V'$ corresponding to $\Lambda'$. Recall that $M$ is shattered by $S$ if and only if $M'\in \MP(V')$ is shattered by $S$. Since $V'$ is a full-dimensional subproduct and $S$ is $\MP^*(V')$-ample, we conclude that $M'$ is strongly shattered by $S$. But this implies that $M$ is strongly shattered by $S$, hence each $\MP^*(U)$-ample set $S$ is $\MP^{**}(U)$-ample. This concludes the proof of the theorem. 
\end{proof}

\begin{proposition} \label{prop:ample-is-isometric} Every ample set $S\subseteq U$ is isometric. 
\end{proposition}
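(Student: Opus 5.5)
We argue by induction on the Hamming distance $d(u,v)$ between two elements $u,v\in S$. The aim is to show that some neighbor of $u$ that lies in the interval $[u,v]$ also lies in $S$; the induction hypothesis then finishes the argument. The case $d(u,v)\le 1$ is trivial. We use two earlier facts. By Theorem \ref{thm:ample=weaklyample} (implication (1)$\Rightarrow$(4)), $S$ is $\MP^*(U)$-ample. In particular, $S\cap V$ is ample in every full-dimensional subproduct $V$, and by Lemma \ref{shattering-fulld-minor-subproducts} shattering a minor of $V$ can be checked on $S\cap V$.

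Fix $u,v\in S$ with $d(u,v)=k\ge 2$, and let $D=\{i: u_i\ne v_i\}$. Consider the interval $I=[u,v]=\prod_{i}\{u_i,v_i\}$. This is a full-dimensional subproduct, hence a binary box isomorphic to $Q_k$. Then $S\cap I$ is ample in $I$ in the classical sense (Example \ref{ex:amplebinary}), and it contains the two antipodal vertices $u,v$ of this cube. So it suffices to prove the binary claim: a classical ample set containing two antipodal vertices of $Q_k$ contains a neighbor of one of them inside the cube, and in fact a shortest path between them.

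For the binary claim we argue directly with shattering, avoiding the binary literature. Pick a coordinate $i\in D$ and consider the mixed minor-subproduct $M$ of $I$ whose partition is co-trivial at every coordinate of $D\setminus\{i\}$ and trivial at $i$. Its box-partition consists of the two halves $I\cap\{x_i=u_i\}$ and $I\cap\{x_i=v_i\}$. These contain $u$ and $v$ respectively, so $M$ is shattered by $S\cap I$. By ampleness, $S\cap I$ contains a copy of $M$, which is an edge $\{w,w'\}$ of $I$ in direction $i$.

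This alone does not give a neighbor of $u$, so we make a better choice. Take the minor whose partition is trivial on all of $D$ except one coordinate. Alternatively, apply induction inside $I$. Let $I'$ be the half $I\cap\{x_i=u_i\}$, and choose $i$ so that $S\cap I'$ meets the face ``opposite'' to $u$ in $I'$. The cleanest route I plan to use is the standard one: show $S\cap I$ is connected, and use that a connected ample (hence median-like) subset of a cube containing antipodes has a monotone path. Connectivity follows from ampleness by taking the co-elementary minor with the two halfspaces $\{x_i=a\}$, $\{x_i\ne a\}$ and an inductive argument on $k$.

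Concretely, the induction goes as follows. The edge $\{w,w'\}$ found above lies in $S\cap I$ with $w\in I'$. The sets $S\cap[u,w]$ and $S\cap[w',v]$ are again ample, being intersections with boxes, by Theorem \ref{thm:ample=weaklyample}(4). Their intervals are strictly shorter than $k$ whenever $w\ne u$ or $w'\ne v$. So by induction there are geodesics $u\to w$ and $w'\to v$ inside $S$. Because $w,w'\in[u,v]$, the concatenation $u\to w\to w'\to v$ has length $d(u,w)+1+d(w',v)=k$, so it is a geodesic in $S$.

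The main obstacle is the degenerate case $w=u$ and $w'=v$, which forces $k=1$. For $k\ge 2$ this cannot happen, since $w,w'$ are adjacent. Hence $d(u,w)<k$ and $d(w',v)<k$, and the induction applies. I would double-check that each interval $[u,w]$ used is itself a full-dimensional subproduct, which it is.
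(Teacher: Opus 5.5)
Your final argument, the paragraph beginning ``Concretely'', is correct, and it takes a somewhat different route from the paper's proof.

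The paper argues by minimal counterexample. It chooses $U$ of least size carrying an ample non-isometric $S$, with $x,y\in S$, $d(x,y)\ge 2$ and $[x,y]\cap S=\{x,y\}$. If some $U_i\neq\{x_i,y_i\}$, it deletes one element of $U_i\setminus\{x_i,y_i\}$; ampleness survives by Theorem~\ref{thm:ample=weaklyample}, and minimality then gives a contradiction. So, after discarding trivial factors, $U=[x,y]$ and $S=\{x,y\}$. Finally, the minor with partition $\{\{x_1\},\{y_1\}\}$ in one coordinate and co-trivial partitions elsewhere is shattered but has no copy in $S$.

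You use the very same minor, but positively and in one step. You restrict to the box $I=[u,v]$ via $\MP^*(U)$-ampleness. Strong shattering of the minor then hands you an edge $ww'\subseteq S\cap I$ in direction $i$, and you assemble the geodesic $u\to w\to w'\to v$ by induction on distance.

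Your version is constructive. It avoids both the element-by-element shrinking of $U$ and the trivial-factor reduction. The paper only has to refute the local configuration $[x,y]\cap S=\{x,y\}$, leaving implicit the standard induction from that local criterion to isometricity, which you carry out explicitly. Both proofs rest on the same ingredient: ampleness is inherited by intersections with full-dimensional subproducts.

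Some tidying is needed.

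\begin{itemize}
\item Delete the paragraph beginning ``This alone does not give a neighbor of $u$''. Nothing in it is used, and parts of it are not true as stated. For instance, ample sets need not be median: $\{0,1\}^3$ minus one vertex is ample, being the complement of an ample singleton, yet the three neighbours of the removed vertex have no median in it.
\item The ``degenerate case'' is not an obstacle. Since $w_i=u_i$, $w'_i=v_i$, and $w,w'$ agree off coordinate $i$, each $j\in D\setminus\{i\}$ contributes to exactly one of $d(u,w)$ and $d(w',v)$. Hence $d(u,w)+d(w',v)=k-1$ always, which gives both strict inequalities and the length count at once.
\item State the induction hypothesis as ``any two points of $S$ at distance less than $k$ are joined by a geodesic inside $S$''. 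Then you can apply it directly to the pairs $(u,w)$ and $(w',v)$, without passing to $S\cap[u,w]$ and $S\cap[w',v]$.
\end{itemize}
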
 

\begin{proof}  Assume for contradiction that there exists an ample set $S\subseteq U$ that is not isometric. Take the product $U$ with the smallest number of elements with this property. We may assume each factor of $U$ is non-trivial by \cref{lem:trivial-factor}. Since $S$ is not isometric in $U$, there exist $x,y\in S$ such that $d_H(x,y)>1$ and $[x,y]\cap S=\{ x,y\}$. Suppose that there is a coordinate $i$ such that $U_i\neq \{x_i,y_i\}$, say we have $x_m'\in U_m\setminus\{x_m,y_m\}$. Let  $U'=U_1\times U_2\times\ldots\times U_{m-1}\times (U_m\setminus \{ x_m'\})$. Then the set $S'=S\cap U'$ is ample in $U'$ and $U$ (by \cref{thm:ample=weaklyample}). By the minimality choice of $U$, $S'$ is isometric in $U'$. Since $x,y\in S'$, there exists $z\in [x,y]\cap S', z\ne x,y$. But this is impossible, since $U'$ is a convex subset  of $U$: $z$ must belong to $S$ and to the interval between $x$ and $y$ in $H(U)$, contrary to our assumption that $[x,y]\cap S=\{ x,y\}$. So there is no such $i$. Then there is also no coordinate $i$ where $x_j=y_j$, as this would result in a trivial factor $U_i$. Hence each factor $U_i$ is binary, i.e., $U_i=\{ x_i,y_i\}, i=1,\ldots,m$. 

Therefore $[x,y]=U$, whence $S=\{ x,y\}$. Let $M=M(\Lambda,U)$, where $\Lambda=(\alpha_1,\ldots,\alpha_m)$, with $\alpha_1=\{ \{ x_1\},\{ y_1\}\}$ and $\alpha_i=\{ \{U_i\}\}$ if $i>1$. 
Then $M=M(\Lambda,U)$ is the Cartesian product $\{ w^1_1,w^1_2\}\times \{ w^2\}\times\ldots \{ w^m\}$ (i.e., $M$ is the product of an edge and one-vertex factors), which consists of the $m$-tuples $t_1=(w^1_1,w^2,\ldots,w^m), t_2=(w^1_2,w^2,\ldots,w^m)$. Then $t_1$ has $x$ as expansion and $t_2$ has $y$ as expansion, thus  $S$ shatters the minor-subproduct $M$. On the other hand, $S$ does not contain any copy of $M$, because any such copy will be an edge  with endpoints differing only in the first coordinate. This shows that any ample set $S\subseteq U$ is isometric.  
\end{proof}

\section{Restrictions, projections, and strong-projections}\label{sec:projections}
Several operations on subsets of binary products preserve ampleness \cite{BaChDrKo}. We generalize these operations to subsets of arbitrary products, and prove some of their basic properties.

\begin{definition}[Complements, restrictions, projections, and strong-projections] Let  $S$ be a subset of  $U=U_1\times\ldots\times U_m$. First, set $S^*=U\setminus S$ and call $S^*$ the \emph{complement} of $S$. For a full-dimensional subproduct $V\subseteq U$, we call the intersection $S\cap V$ the \emph{restriction} of $S$ to $V$. For a minor-subproduct $M=M(\Lambda,U)$, let  $S_M=\{ t\in M: F(t)\cap S\ne \varnothing\}$ and $S^M=\{ t\in M: F(t)\subseteq S\}$ (recall that the fiber $F(t)$ of $t\in M$ consists of all expansions of $t$ in $U$ and coincides with a box of $U$). The sets $S_M$ and $S^M$ are called the \emph{projection} and the \emph{strong-projection} of $S$ on $M$, respectively. 
\end{definition}

\begin{example}\label{ex:binaryprojection} In case of subsets $S$ of binary products $U=\{ a,b\}^X$, each minor-subproduct $M$ is binary, say with support $X\setminus Y$. Then the sets $S_M$ and $S^M$ are defined uniquely by the set $Y$, and can be denoted by $S_Y$ and $S^Y$, respectively, which coincides with the definition from  \cite{BaChDrKo}. 
\end{example} 

\begin{example}\label{ex:projection}
An example of the projection and strong-projection operators is shown in \cref{fig:projections}. 
\end{example}

\begin{figure}[htbp]
    \centering
    \includestandalone[width=0.7\linewidth]{img/projections1}
    \includestandalone[width=0.2\linewidth]{img/projections2}
    \caption{Left: box partition related to the minor-subproduct $M$ from \cref{fig:shatterstrongshatter}. Right: the sets $S^M$ and $S_M$. Since all vertices have their third coordinate equal to $\{0,1\}$, this coordinate is not shown.}
    \label{fig:projections}
\end{figure}

To be able to compose the operators  $S^M$ and $S_M$, the operators need to be well-defined not only for subsets $S$ of $U$, but also for subsets $S$ of minor-subproducts $M'$ of $U$. For this, we need to adapt the notion of fibers. Recall that a fiber in $U$ of an element $t$ of $M$ consists of the elements of $U$ that are in the box corresponding to $t$. Likewise, the fiber of an element of a minor-subproduct $M$ in a minor-subproduct $M'$ that refines $M$  consists of the blocks of $M'$ that together form a block of $M$. This is made more formal in the following definition.
\begin{definition}[Fibers relative to minor-subproducts]
    Let $M=M(\Lambda,U)$ and $M'=M(\Lambda',U)$ where $M'\preceq M$. Let $M=M_1\times\ldots\times M_m$ with $M_i=\{w_1^i,\ldots,w_{\ell_i}^i\}$, and let $M'=M_1'\times\ldots\times M_m'$ with $M_i'=\{v_1^i,\ldots,v_{\ell_i'}^i\}$. Suppose $\Lambda=(\alpha_1,\ldots,\alpha_m)$ with $\alpha_i=\{P_1^i,\ldots,P_{\ell_i}^i\}$, and $\Lambda'=(\beta_1,\ldots,\beta_m)$ with $\beta_i=\{Q_1^i,\ldots,Q_{\ell_i'}^i\}$. A tuple $u=(u_1,u_2,\ldots,u_m)\in M'$ is called an $M'$-\emph{expansion} of a tuple $t=(t_1,t_2,\ldots,t_m)\in M$ if for each $i\in X$, if $t_i=w_j^i$ and $u_i=v_{k}^i$, then $Q_{k}^i\subseteq P_{j}^i$. The $M'$-\emph{fiber} of $t$ is denoted by $F_{M'}(t)$, and is the set of all $M'$-expansions of $t$. The $U$-fiber $F_U(t)$ coincides with the fiber $F(t)$.
\end{definition}

This allows us to extend the definition of the operators $S^M$ and $S_M$ to subsets of minor-subproducts $M'$ and arbitrary pairs $M,M'$. 
\begin{definition}[Projections and strong-projections, bis]\label{def:operatorsextended} 
    Let $M,M'\in \MP(U)$ and let $S\subseteq M'$. Then the set  $S_M=\{t\in M\vee M':F_{M'}(t)\cap S\neq \varnothing\}$ is called the \emph{projection} of $S$ on $M$ and the set 
    $S^M=\{t\in M\vee M':F_{M'}(t)\subseteq S\}$ is called the \emph{strong-projection} of $S$ on $M$.
\end{definition}

\begin{remark} In Definition \ref{def:operatorsextended}  we make no assumptions on how $M,M'$ relate with respect to $\preceq$. Also, note that the operation implicitly depends on $M'$, which we can consider the relative universe of $S$. For our purposes, the set $M'$ is always clear from the context, hence we do not write it down.
\end{remark}


\begin{lemma}\label{lem:projections-and-complements}  Let $M,M'\in \MP(U)$ and  $S\subseteq M'$. Then 
    $(S^*)^M=(S_M)^* \mbox{ and } (S^*)_M=(S^M)^*.$ 
\end{lemma}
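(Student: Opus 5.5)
The proof is a direct unwinding of Definition \ref{def:operatorsextended}, applied pointwise over a single common index set. Here $S\subseteq M'$ and $S^*=M'\setminus S$, the complement taken in the relative universe $M'$. Both $(S^*)^M$ and $S_M$ are subsets of $M\vee M'$, so the complement $(S_M)^*$ is taken inside $M\vee M'$. The plan is to check, for each fixed $t\in M\vee M'$, that $t$ lies in the left-hand side exactly when it lies in the right-hand side.

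The key observation is that the $M'$-fiber $F_{M'}(t)$ depends only on $t$ and on $M'$, not on $S$. It is well defined because $M'\preceq M\vee M'$ by Lemma \ref{lem:refinement2}, and it is a subset of $M'$. Since $F_{M'}(t)\subseteq M'$, we get
\[
F_{M'}(t)\subseteq M'\setminus S \iff F_{M'}(t)\cap S=\varnothing .
\]
The left condition says $t\in (S^*)^M$. The right condition says $t\notin S_M$, i.e. $t\in (S_M)^*$. This proves the first identity.

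For the second identity, the cleanest route is to apply the first one to $S^*$ in place of $S$, using $(S^*)^*=S$. This gives $S^M=((S^*)_M)^*$, and taking complements in $M\vee M'$ yields $(S^*)_M=(S^M)^*$. Alternatively, the pointwise check goes through the same way: $F_{M'}(t)\cap(M'\setminus S)\neq\varnothing$ holds iff $F_{M'}(t)\not\subseteq S$.

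The only real point of care, rather than an actual obstacle, is bookkeeping of the ambient sets. Complements of subsets of $M'$ are taken in $M'$, while complements of projections are taken in $M\vee M'$. The argument also uses that $F_{M'}(t)$ is contained in $M'$ and is nonempty, which holds because every block of $M\vee M'$ is a union of blocks of $M'$. Once these conventions are fixed, both equalities are immediate.
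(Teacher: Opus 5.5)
Your proof is correct and is essentially the paper's argument. Both are a pointwise unwinding of Definition \ref{def:operatorsextended}, using $F_{M'}(t)\subseteq M'$ to get $F_{M'}(t)\subseteq M'\setminus S$ if and only if $F_{M'}(t)\cap S=\varnothing$. The paper checks the second identity directly instead of substituting $S^*$ into the first, and nonemptiness of $F_{M'}(t)$ is not actually needed for either equivalence; neither point affects correctness.
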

\begin{proof}
    We have 
    \begin{eqnarray*}
    (S^*)^M&=&\{t\in M\vee M':F_{M'}(t)\subseteq S^*\}\\
    &=&\{t\in M\vee M':F_{M'}(t)\cap S = \varnothing\} = (S_M)^*\\
    (S^*)_M&=&\{t\in M\vee M':F_{M'}(t)\cap S^*\neq \varnothing\}\\
    &=& \{t\in M\vee M':F_{M'}(t)\nsubseteq S\} = (S^M)^*.
\end{eqnarray*}
This concludes the proof. 
\end{proof}

With \cref{def:operatorsextended}, we can compose arbitrary sequences of projections and strong-projections. For example, if we have $S\subseteq U$, then $(S^M)^{M'}$, $(S_M)^{M'}$, $(S^M)_{M'}$, and $(S_M)_{M'}$ are all well-defined and are subsets of $M\vee M'$.

\begin{lemma}\label{lem:operatorsselfcommutative}
    Let $M,M',M''\in \MP(U)$ and  $S\subseteq M''$. Then $(S^{M})^{M'}=S^{M\vee M'}=(S^{M'})^{M}$, and $(S_{M})_{M'}=S_{M\vee M'}=(S_{M'})_{M}$.
\end{lemma}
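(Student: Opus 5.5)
The plan is to reduce everything to a single description of iterated fibers. Both compositions land in $(M\vee M')\vee M''=M\vee M'\vee M''$, by Definition \ref{def:operatorsextended} and associativity of the join in $\GPart(U)$. So both sides of each identity are subsets of the same minor-subproduct $N:=M\vee M'\vee M''$. By symmetry it suffices to prove $(S^{M})^{M'}=S^{M\vee M'}$ and $(S_{M})_{M'}=S_{M\vee M'}$.

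The key step is a transitivity property of relative fibers. Let $K\preceq L\preceq N$ be minor-subproducts of $U$ and $t\in N$. Then $F_K(t)$ is the disjoint union of the sets $F_K(s)$ over $s\in F_L(t)$. To see this, let $u\in K$ have blocks $Q^i$ in each coordinate. Then $u$ is a $K$-expansion of $t$ if and only if each $Q^i$ lies in the block of $t$. Since $L$ is intermediate, each $Q^i$ lies in a unique block $R^i$ of $L$, and $R^i$ lies in the block of $t$ exactly when $Q^i$ does. The tuple $s=(R^1,\ldots,R^m)$ is then the unique element of $F_L(t)$ with $u\in F_K(s)$.

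Apply this with $K=M''$, $L=M\vee M''$, $N=M\vee M'\vee M''$ and $t\in N$. The strong-projection $S^M$ lives on $L$. By definition, $t\in (S^M)^{M'}$ if and only if $F_L(t)\subseteq S^M$, that is, $F_{M''}(s)\subseteq S$ for every $s\in F_L(t)$. By the decomposition this is equivalent to $F_{M''}(t)\subseteq S$, i.e.\ $t\in S^{M\vee M'}$. Note that $S^{M\vee M'}$ lives on $(M\vee M')\vee M''=N$, the same set. The projection statement is identical, with ``$\subseteq S$'' replaced by ``$\cap S\neq\varnothing$''. Alternatively, it follows from the strong-projection statement and Lemma \ref{lem:projections-and-complements} applied to $S^*$:
\[
(S_M)_{M'}=\bigl(((S^*)^M)^{M'}\bigr)^*=\bigl((S^*)^{M\vee M'}\bigr)^*=S_{M\vee M'}.
\]
Here the complements are taken in the appropriate ambient minor-subproducts.

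The main obstacle is bookkeeping rather than mathematics. We must check that the relative universes are the right ones, namely that $S^M$ is regarded as a subset of $M\vee M''$ when the second operator is applied. We must also check that the lemma on relative fibers holds when elements of minor-subproducts are identified with their boxes, as in the remark after Definition \ref{minor-subproduct-revised}. I would state the transitivity of fibers as a separate claim, proved via boxes: $F_{M''}(t)$ consists of the boxes of $\cB(M'')$ contained in the box of $t$. Each such box lies in a unique box of $\cB(M\vee M'')$, and that box is in turn contained in the box of $t$. With this claim in hand, the lemma follows in two lines.
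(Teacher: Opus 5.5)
Your proposal is correct and follows essentially the paper's argument: both rest on the fiber decomposition $F_{M''}(r)=\bigcup_{t\in F_{M\vee M''}(r)}F_{M''}(t)$ for $r\in M\vee M'\vee M''$, with the remaining identity obtained through Lemma \ref{lem:projections-and-complements}. The only differences are that the paper proves the projection identity directly and derives the strong-projection one by complementation, whereas you do the reverse, and that you also verify the union is disjoint, which is not needed.
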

\begin{proof}
    First of all, if we have an element $r\in M\vee M'\vee M''$, then we can expand it to an element $t$ of $M\vee M''$, and then expand it to an element of $M''$. In this way, we can get all expansions of $r$ in $M''$. So we have $\bigcup_{t\in F_{M\vee M''
    }(r)} F_{M''}(t)=F_{M''}(r)$. We get the following equalities, where the equation marked with $*$ uses the above observation:
    \begin{eqnarray*}
        (S_{M})_{M'}&=&(\{t\in M\vee M'':F_{M''}(t)\cap S\neq \varnothing\})_{M'}\\
        &=&\{r\in M\vee M'\vee M'':\exists t\in F_{M\vee M''}(r):F_{M''}(t)\cap S\neq \varnothing\}\\
        &\stackrel{*}{=}& \{r\in M\vee M'\vee M'':F_{M''}(r)\cap S \neq \varnothing\} = S_{M\vee M'}
    \end{eqnarray*}
    Analogously, $(S_{M'})_M=S_{M\vee M'}$. Applying \cref{lem:projections-and-complements} then tells us
    \[
    (S^{M})^{M'}=((S^{**})^M)^{M'}=(((S^*)_M)^*)^{M'}=(((S^*)_M)_{M'})^*=((S^*)_{M\vee M'})^*=(S^{M\vee M'})^{**}=S^{M\vee M'}
    \]
    and likewise $(S^{M})^{M'}=S^{M\vee M'}$.
\end{proof}

We also need a more specific definition of weak isometricity. Recall that a set $S\subseteq U$ is weakly isometric if $[x,y]\cap S\ne \{ x,y\}$ for all $x,y\in S$ with $d_{H(U)}(x,y)=2$. Then, given $k,\ell\in X$ with $k\neq \ell$, we say that the set $S\subseteq U$ is \emph{weakly $(k,\ell)$-isometric} if for all $x,y\in S$ with $d_{H(U)}(x,y)=2$ and which differ only in their $k$- and $\ell$-coordinates, we have $[x,y]\cap S\ne \{ x,y\}$.

\begin{lemma} \label{lem:convexity-preimages} Let $M\in \MP(U),$ $S\subseteq U$, and $C\subseteq M$. Let $k,\ell\in X$ with $k\neq \ell$. Then the following properties hold:
\begin{itemize}
\item[(1)] if $C$ is  convex in $M$, then $C^+=\bigcup_{t\in C} F(t)$ is convex in $U$;
\item[(2)] if $S$ is  isometric in $U$, then $S_M$ is isometric in $M$; 
\item[(3)] if $S$ is weakly $(k,\ell)$-isometric and $\supp(M)\subseteq \{k,\ell\}$, then $S_M$ and $S^M$ are weakly $(k,\ell)$-isometric.
\end{itemize}
Consequently, if  $M,M'\in \MP(U),$  $S\subseteq M'$ is isometric (weakly $(k,\ell)$-isometric) in $M'$, and $C\subseteq M\vee M'$ is convex in $M\vee M'$, then $S_M$ is isometric (weakly $(k,\ell)$-isometric) in $M\vee M'$ and $C^+=\bigcup_{t\in C} F_{M'}(t)$ is convex in $M'$.
\end{lemma}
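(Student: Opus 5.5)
We prove (1), (2), (3) in turn. Then we deduce the relative version by viewing $M'$ itself as a Cartesian product: by Lemma \ref{lem:refinement1} and Lemma \ref{minors-of-minors}, $M\vee M'$ is a minor-subproduct of $M'$. Moreover, for $t\in M\vee M'$ the $M'$-fiber $F_{M'}(t)$ is exactly the ordinary fiber of $t$ when $M'$ is taken as the universe. So the final assertion is (1)--(3) applied in the universe $M'$.

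For (1), we use Lemma \ref{convex-Hamming}: convex sets of $M$ are the boxes $C=C_1\times\ldots\times C_m$ with $\varnothing\ne C_i\subseteq M_i$. Each $w^i_j\in M_i$ corresponds to a block $P^i_j$ of $\alpha_i$. Put $C_i^+=\bigcup_{w^i_j\in C_i}P^i_j\subseteq U_i$. Then $C^+=\bigcup_{t\in C}F(t)=C_1^+\times\ldots\times C_m^+$, which is a box and hence convex in $U$. (The empty case is trivial.)

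For (2), we compare distances via the contraction map $\pi:U\to M$, $u\mapsto t$ with $u\in F(t)$. This map is $1$-Lipschitz: $\pi$ sends an edge of $H(U)$ to an edge or a vertex of $H(M)$, because changing one coordinate of $u$ changes at most the block in that coordinate. Take $t,t'\in S_M$. Choose expansions $u\in F(t)\cap S$ and $u'\in F(t')\cap S$ so as to minimize $d_U(u,u')$. On a coordinate $i$ with $t_i=t'_i$, the $i$-th coordinates of $u$ and $u'$ lie in the same block. Replacing $u'_i$ by $u_i$ keeps the point in $F(t')$, but possibly not in $S$. So the naive choice fails, and we argue instead as follows. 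Since $S$ is isometric, there is a geodesic $u=x_0,\dots,x_n=u'$ inside $S$. Its image under $\pi$ is a walk in $S_M$ from $t$ to $t'$. We must show that this walk has length $d_M(t,t')$. Here is the key point. By (1) applied to a box, $\pi^{-1}$ of the interval $[t,t']$ in $M$ is convex in $U$. Hence the geodesic stays within $\pi^{-1}([t,t'])$, so the walk lies in $[t,t']\cap S_M$. A walk in an interval $[t,t']$ of a Hamming graph, where each step changes one coordinate and there are no backward steps, is monotone. Deleting the repeated (stationary) vertices, each remaining step moves one coordinate from $t_i$ to $t'_i$ and never back. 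Therefore the reduced walk has length exactly $d_M(t,t')$. The point needing care is that the walk can never return to an old value. This holds because every vertex of the walk lies in $[t,t']$, which is the binary box $\prod\{t_i,t'_i\}$. A coordinate can change back only through an edge inside that box. I expect the monotonicity claim to be the main obstacle. In a binary box a coordinate can go $t_i\to t'_i\to t_i$, so I would also use that geodesics in $U$ change each coordinate at most once. Then the block of coordinate $i$ changes at most once along the path, and the image has at most $d_M(t,t')$ nondegenerate steps, hence exactly $d_M(t,t')$.

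For (3), take $t,t'\in S_M$ differing exactly in coordinates $k,\ell$, with $d_M(t,t')=2$. Since $\supp(M)\subseteq\{k,\ell\}$, fibers are products of a block in coordinate $k$, a block in coordinate $\ell$, and singletons elsewhere. So the chosen expansions $u\in S\cap F(t)$ and $u'\in S\cap F(t')$ agree off $\{k,\ell\}$, and they differ in both $k$ and $\ell$ because the blocks differ. Thus $d_U(u,u')=2$. Weak $(k,\ell)$-isometry gives a common neighbour $z\in S$, and $\pi(z)$ is a common neighbour of $t$ and $t'$ in $S_M$. For $S^M$, assume the conclusion fails for $t,t'$: both middle points $s,s'$ of $[t,t']$ have fibers not contained in $S$. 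We pick $a\in F(s)\setminus S$ and $b\in F(s')\setminus S$ and aim for a contradiction. Since $F(t)\cup F(t')\subseteq S$, we can choose $x\in F(t)$ and $y\in F(t')$ at distance $2$ with $[x,y]\cap S=\{x,y\}$. This uses the product structure: pick $x$ with the $k$-coordinate of $a$ and the $\ell$-coordinate of $b$, and $y$ with the $k$-coordinate of $b$ and the $\ell$-coordinate of $a$. With the appropriate choice of roles, the middle points of $[x,y]$ are then $a$ and $b$, giving the contradiction.
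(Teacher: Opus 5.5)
Your proof is correct. For (1) and for the relative version it is the paper's argument; for (2) and (3) you take different routes.

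In (2), the paper uses (1) only to show that for $t,t'\in S_M$ with $d_M(t,t')\ge 2$ there is some $x\in S_M\cap[t,t']\setminus\{t,t'\}$: a geodesic of $S$ between expansions must enter a third fiber, and convexity of the preimage of $[t,t']$ forces that fiber into $[t,t']$. It then inducts on $d_M(t,t')$. Your argument is more direct. A geodesic of $H(U)$ changes exactly the coordinates in which its endpoints differ, each once, so each block changes at most once. Hence the image under $\pi$, after deleting repetitions, is already a geodesic of $S_M$. Once you use that observation, the convexity step and the abandoned ``minimize $d_U(u,u')$'' attempt are unnecessary and can be removed.

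In (3), the paper reduces to elementary $M$ via Lemmas \ref{lem:atomistic} and \ref{lem:operatorsselfcommutative}, which means applying the elementary case inside intermediate minor-subproducts. It handles $S^M$ by a case analysis on fiber sizes and obtains $S_M$ by complementation through Lemma \ref{lem:projections-and-complements}. You instead treat every $M$ with $\supp(M)\subseteq\{k,\ell\}$ at once, using that the relevant fibers are rectangles in coordinates $k,\ell$ with identical remaining coordinates. Lifting to a square of $U$ handles $S_M$. For $S^M$, suppose $s$ carries the $k$-block of $t$. Then your recombined points $x$ (with $k$-coordinate $a_k$ and $\ell$-coordinate $b_\ell$) and $y$ (with $k$-coordinate $b_k$ and $\ell$-coordinate $a_\ell$) lie in $F(t)$ and $F(t')$, and the square $[x,y]$ has middles $a,b\notin S$.

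Your route avoids the atomistic reduction, the bookkeeping in intermediate universes, and the complement trick. The paper's route has the modest advantage that it only ever inspects fibers of size at most $2$.
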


\begin{proof} In all parts of the proof, suppose that $M=M(\Lambda, U)$, where $\Lambda=(\alpha_1,\ldots,\alpha_m)$ and $\alpha_i=\{P^i_1,\ldots,P^i_{\ell_i}\}$ for $i=1,\ldots,m$. 

To prove (1), let $C$ be a convex set of $M$. By Lemma \ref{convex-Hamming} applied to $M$, $C$ is a full-dimensional product of $M$, say $C=C_1\times\ldots\times C_m$. Then the union 
$C^+=\bigcup_{t\in C} F(t)$ coincides with the full-dimensional subproduct $P^1\times\ldots\times P^m$ of $U$, where $P^i$ is the union of all blocks of the partition $\alpha_i$ that correspond to the $i$th coordinate $t_i$ of some $t=(t_1,\ldots,t_m)\in C$. By Lemma \ref{convex-Hamming} applied to $U$, $C^+$ is a convex set of $U$ and we are done. 

To prove (2), let $S$ be an isometric set of $U$ and pick any $u,v\in S_M$ with
$k:=d_{H(M)}(u,v)\geq 2$. Let $u=(u_1,\ldots,u_m), v=(v_1,\ldots,v_m)$ be the coordinates of $u$ and $v$ in $M$. Suppose without loss of generality that that $u$ and $v$ differ in the first $k$ coordinates, i.e., $u_{k+1}=v_{k+1},\ldots,u_m=v_m$. This implies that $C:=[u,v]=\{u_1,v_1\}\times\ldots\times \{ u_k,v_k\}\times\{ u_{k+1}\}\times\ldots\times \{ u_m\}$. 
By Lemma \ref{convex-Hamming},  the set $C:=[u,v]$ is convex in $M$, thus by assertion (1)  its pre-image $C^+$ in $U$ is also convex. Pick any  $u'\in F(u)\cap S$ and $v'\in F(v)\cap S$ (they exist because $u,v\in S_M$). Since $S$ is isometric in $U$, there exists a shortest $(u',v')$-path $P$ included in $S$ (i.e., a path of length $d(u',v')=d_{H(U)}(u',v')$). Since $u',v'\in C^+$, the path $P$ starts in $F(u)$ and ends in $F(v)$. Since the fibers $F(u)$ and $F(v)$ are not adjacent (because $u$ and $v$ are not adjacent in $M$), and the fibers of vertices of $M$ define a partition of $U$, the path $P$ traverses a fiber $F(x)$ with $x\ne u,v$. Let $x'\in P\cap F(x)$. Since $C^+$ is convex and $P$ is a shortest $(u',v')$-path, necessarily 
$P\cap F(x)\subseteq C^+$. Since the fibers of  $M$ define a partition of $U$, this implies that $x\in C=[u,v]$. Since 
$x\ne u,v$ and $x'\in P\cap F(x)\subseteq S\cap F(x)$, we deduce that $x\in S_M\cap ([x,y]\setminus \{ u,v\})$. Consequently, for any pair $u,v\in S_M$ there exists $x\in S\cap [u,v], x\ne u,v$ and this easily implies 
isometricity by applying induction on $k=d_{H(M)}(u,v)$.

Now we prove assertion (3). Suppose by way of contradiction that we have some $S\subseteq U$ that is weakly $(k,\ell)$-isometric, and that $S^M$ or $S_M$ is not weakly ($k,\ell$)-isometric. Because of \cref{lem:atomistic}, we know that $M$ can be split up into atoms of $\GPart(U)$. Combined with \cref{lem:operatorsselfcommutative} this means that we can obtain the projection $S_M$ by projecting $S$ onto each of its atoms.  Similarly, $S^M$ is the result of a number of strong-projections of atoms. Thus, we may assume without loss of generality that $M$ is elementary, since if the lemma holds for all elementary $M$, then the lemma holds by induction for all $M$ with $\supp(M)\subseteq\{k,\ell\}$. 

Assume that $S^M$ is not weakly $(k,\ell)$-isometric. Then there must exist $u,v\in S^{M}$ differing in exactly two coordinates, say in their $k$ and $\ell$-coordinates, such that both their common neighbors $w,z$ in $M$ do not belong to $S^M$. Then,  $[u,v]\backslash S^{M}=\{w,z\}$. Note that all boxes of $M$ have 1 or 2 elements, and now we consider $|F(u)|$ and $|F(v)|$. It is not possible that $|F(u)|=|F(v)|=2$, since $u,v$ differ in both the $k$ and $\ell$-coordinate and $M$ is elementary. If $|F(u)|=|F(v)|=1$ then $[u,v]\subseteq U$, implying $S$ was not weakly $(k, \ell)$-isometric. Thus, one of $F(u)$ or $F(v)$ must have size 2 and the other size 1. Because of shared coordinates, the same holds for $F(w)$ and $F(z)$. Assume w.l.o.g. $|F(u)|=|F(z)|=2$ and $|F(v)|=|F(w)|=1$. There must be some $z'\in F(z)$ that is not in $S$, since $z\notin S^{M}$. Likewise, there is a neighbor of $z'$ that is in $F(u)$, call it $u'$. We have $u'\in S$, since $u\in S^M$. Let $F(v)=\{v'\}$ and $F(w)=\{w'\}$. Then $u',v'\in S$, $z',w'\notin S$, and $[u', v']=\{u',v',w',z'\}$, which contradicts our assumption that $S$ was weakly $(k,\ell)$-isometric.

On the other hand, suppose $S_M$ is not weakly $(k,\ell)$-isometric. By Lemma  \ref{lem:projections-and-complements} we have $S_{M}=((S^*)^{M})^*$. Since $S^*$ is weakly $(k,\ell)$-isometric, by the previous case, $(S^*)^M$ is also weakly $(k,\ell)$-isometric, hence $((S^*)^{M})^*=S_M$ is weakly $(k,\ell)$-isometric, which is a contradiction. This completes the proof of assertion (3).
    
The final assertion of the lemma follows by applying the assertions (1),(2),(3) with $M'$ instead of $U$ and $M\vee M'$ instead of $M$. 
\end{proof}


\begin{lemma}\label{lem:superscriptample}
    Let $S\subseteq U$ be ample in $U$ and $M\in \MP(U)$. Then $S^M$ is ample in $M$.
\end{lemma}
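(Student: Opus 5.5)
We argue directly from the definitions, using \cref{lem:refinement1} and \cref{minors-of-minors} to pass between minor-subproducts of $M$ and of $U$. Let $M=M(\Lambda,U)$ and let $N\in\MP(M)$ be shattered by $S^M$ in $M$. By \cref{minors-of-minors}, $N=M(\Upsilon,U)$ for some $\Upsilon\in\GPart(U)$ with $\Lambda\preceq\Upsilon$. In other words, each block of $\Upsilon$ in coordinate $i$ is a union of blocks of $\alpha_i$. Every box of $\cB(\Upsilon,U)$ is therefore the disjoint union of the $U$-fibers $F(t)$ of those $t\in M$ lying in the corresponding box of $N$ in $M$. We must find a copy of $N$ in $M$ contained in $S^M$, that is, a choice of blocks of $\Lambda$ whose associated boxes $F(t)$ all lie in $S$.

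The natural move is to pass to a single minor-subproduct of $U$ built from both $\Lambda$ and $\Upsilon$. Shattering of $N$ by $S^M$ means: every box of $\cB(\Upsilon,U)$ contains a box $F(t)$, $t\in M$, with $F(t)\subseteq S$. Consider the generalized partition $\Phi=(\phi_1,\ldots,\phi_m)$ of a suitable full-dimensional subproduct, and aim to use $\MP^*(U)$-ampleness (\cref{thm:ample=weaklyample}(4)). Concretely, a copy of $N$ in $M$ selects one block $P^i_{j}$ of $\alpha_i$ inside each block $Q$ of $\upsilon_i$. The requirement is that the product of the selected blocks, i.e.\ the box $\prod_i \bigcup_{Q} P^i_{j(Q)}$, is contained in $S$. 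Such a box is exactly a union of boxes of $\cB(\Lambda,U)$ forming a copy of $N$.

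I would prove the statement by induction on $|U|$, in the style of the proof of (3)$\Rightarrow$(1) in \cref{thm:ample=weaklyample}. If every block of $\Upsilon$ contains only one block of $\Lambda$, then $N=M$ and the claim is trivial. If some block of $\Upsilon$ is the union of at least two blocks of $\alpha_1$, pick two such blocks $P,P'$ of $\alpha_1$ inside one block of $\upsilon_1$. Then consider the full-dimensional subproducts $U\setminus P$ and $U\setminus P'$, restricting $S$, $\Lambda$ and $\Upsilon$ there. The restrictions are ample by \cref{thm:ample=weaklyample}(4), and a copy found in either restriction is a copy in $U$. If neither restricted $N$ is shattered, we obtain a box $B$ of $\cB(\Upsilon)$ whose only $S$-contained $\Lambda$-subboxes lie in the $P$-slice, and a box $B'$ whose only ones lie in the $P'$-slice. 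We would then try to derive a contradiction as in Case~3 there, by restricting to the slice $P\cup P'$ and using ampleness of $S$ on a merged partition.

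The base case is the one where each block of $\Upsilon$ contains at most one nontrivially merged pair; it should reduce to $\MP^*(U)$-ampleness applied to $\Lambda$ restricted to a union of fibers. The hardest step is the Case~3 contradiction. Because $S^M$ consists of whole boxes contained in $S$, rather than points meeting $S$, the exchange argument is subtler. If it fails, I would instead first prove the claim for elementary $M$, where $S^M=S^e$, by a direct copy-lifting argument. I would then conclude for general $M$ by writing $\Lambda$ as a join of atoms (\cref{lem:atomistic}) and iterating with \cref{lem:operatorsselfcommutative}, since $(S^{e_1})^{e_2}=S^{e_1\vee e_2}$.

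For elementary $e=\{a,b\}\subseteq U_1$, a shattered $N$ of $e$ gives a minor-subproduct $N'$ of $U$, namely $\Upsilon$ itself, shattered by $S$. The strategy is to show, via ampleness of $S$ applied to the partition $\Upsilon$ with the block containing $\{a,b\}$ split so that $a$ and $b$ each get their own block, that $S$ contains a copy of $N$ using both $a$ and $b$ whenever needed. This gives the required copy in $S^e$.
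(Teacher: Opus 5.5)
Your reduction to elementary $M$ (write $\Lambda$ as a join of atoms via Lemma~\ref{lem:atomistic} and use $(S^{e_1})^{M'}=S^{e_1\vee M'}$) is the same as the paper's. Your first route, induction on $|U|$ with a Case~3-type exchange and an unjustified base case, is left open by your own account. So everything rests on the elementary case $e=\{a,b\}\subseteq U_1$.

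\textbf{Where the elementary sketch breaks.} It is correct only when $\{a,b\}$ is itself a block of the first partition of $\Upsilon$. Then every $\Upsilon$-box with first coordinate $\{a,b\}$ contains a whole $e$-edge of $S$. Hence the minor with $\{a,b\}$ split into $\{a\},\{b\}$ is shattered by $S$, and merging $a,b$ in a copy gives a copy in $S^e$.

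If instead $\{a,b\}\subsetneq Q$ for a block $Q$, shattering of $N$ by $S^e$ only says the following. Each $\Upsilon$-box with first coordinate $Q$ contains an $e$-edge of $S$ \emph{or} a point of $S$ with first coordinate in $Q\setminus\{a,b\}$. In that case no split of $Q$ separating $a$ from $b$ need be shattered. Take $U=\{a,b,c\}\times\{0,1\}$, $S=\{c\}\times\{0,1\}$ (a box, hence ample), and $\Upsilon=(\{\{a,b,c\}\},\{\{0\},\{1\}\})$. Then $N$ is shattered by $S^e$. But any split separating $a$ from $b$ has a block containing $a$ or $b$ and not $c$, and the boxes over that block miss $S$. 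The copy you need, $\{c\}\times\{0,1\}$, uses neither $a$ nor $b$. So ``a copy using both $a$ and $b$ whenever needed'' is exactly the step with no justification.

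\textbf{The missing idea (the paper's Case~2).}
\begin{enumerate}
\item If some copy of $\Upsilon$ in $S$ avoids $a$ and $b$, it is already a copy of $N$ in $S^e$. So assume every copy meets $\{a,b\}$.
\item Restrict to $V=(U_1\setminus\{b\})\times U_2\times\cdots\times U_m$. The restriction of $\Upsilon$ (with $Q\setminus\{b\}$ in place of $Q$) is shattered by $S\cap V$, since an $e$-edge of $S$ contributes its $a$-end. Since $S\cap V$ is ample by Theorem~\ref{thm:ample=weaklyample}, this yields a copy $W^a\subseteq S$ of $\Upsilon$ whose first factor meets $Q$ exactly in $a$. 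Symmetrically, you get a copy $W^b$ using $b$.
\item Only now split, and only one element: replace $Q$ by $Q\setminus\{b\}$ and $\{b\}$. The $\{b\}$-boxes are hit by $W^b$, the $(Q\setminus\{b\})$-boxes by $W^a$, and the remaining boxes by the shattering of $N$.
\item Ampleness of $S$ gives a copy whose first factor contains $b$ and a single element of $Q\setminus\{b\}$. By the assumption in step~1, that element must be $a$. Merging $a$ and $b$ produces the required copy of $N$ inside $S^e$.
\end{enumerate}
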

\begin{proof}
    Due to \cref{lem:atomistic,lem:operatorsselfcommutative}, it suffices to prove the statement for elementary $M$, as the result for general $M$ would follow immediately by induction. By \cref{thm:ample=weaklyample}, we only need to show $\EMP(M)$-ampleness of $S^M$. Without loss of generality, let $M_1$ be the non-trivial factor of $M$, let $\alpha_1=\{P_1^1,\ldots,P_{\ell_1}^1\}$ be its related partition, and set $P_1^1=\{a,b\}$. Let $E\in \MP(U)$ such that $E\succeq M$ and in particular $E\in \EMP(M)$, and $E$ is shattered by $S^M$ in $M$.
    Let $\beta_1=(Q_1^1,\ldots, Q_{\ell_1'}^1)$ be the partition related to the first factor of $E$, such that $Q_2^1,\ldots,Q_{\ell_1'}^1$ are all trivial blocks. Since $E\succeq M$, $a$ and $b$ are either both in $Q_1^1$ or both not in $Q_1^1$. We distinguish two cases:

    \begin{case}  $a,b\notin Q_1^1$.
    \end{case}
    
    In this case, the partition corresponding to the first coordinate of $E$ consists of $Q_1^1$, $\{a,b\}$ and trivial blocks. Let $E'\in \MP(U)$ be the same as $E$, except that its generalized partition has blocks $\{a\}$ and $\{b\}$ instead of $\{a,b\}$. We argue that $S$ is shattered by $E'$. Pick any $t=(t_1,\ldots,t_m)$ in $E'$.  If the first coordinate of $t$ is not $\{a\}$ or $\{b\}$, then $t\in E$, and since $E$ is shattered by $S^M$ in $M$, that means $F_M(t)\cap S^M\neq \varnothing$. Moreover, then $F_M(t)=F(t)$, thus $F(t)\cap S\neq \varnothing$. 
    If on the other hand the first coordinate of $t$ is $\{a\}$ or $\{b\}$, then let $t'=(\{a,b\},t_2\ldots,t_m)\in E$. There exists $t''\in F_M(t')\cap S^M$, since $E$ is shattered by $S^M$. Then $|F(t'')|=2$ and $F(t'')\subseteq S$, say $F(t'')=\{t^a,t^b\}$, with $t^a,t^b$ having respective coordinates $a$ and $b$. If the first coordinate of $t$ was $\{a\}$, then $t^a\in F(t)\cap S$, and otherwise $t^b\in F(t)\cap S$. In all cases, every box of $E'$ contains an element of $S$, so $E'$ is shattered by $S$. It is then also strongly shattered by $S$. By ampleness, $S$ contains a copy $W=W_1\times \ldots \times W_m$ of $E'$. Let $W'=(W_1\backslash \{a,b\}\cup\{\{a,b\}\})\times W_2\times \ldots\times W_m$ be created from $W$ by merging $a$ and $b$ into a single element $\{a,b\}$ (note that $a,b\in W_1$ as $\{a\},\{b\}$ are blocks in the first partition of $E'$). Then $W'\subseteq M$, and $W'$ is a copy of $E$ in $M$. We claim that $W'\subseteq S^M$. For all the elements that do not have an $\{a,b\}$-coordinate this is trivial, since they are also elements of $S$. Now consider $w\in W'$ and assume that $w=(\{a,b\},w_2,\ldots, w_m)$. Then the tuples $(a,w_2,\ldots,w_m)$ and $(b,w_2,\ldots,w_m)$ are in $S$, since they were in $W$, and therefore $w$ is in $S^M$. In conclusion, $S^M$ is strongly shattered by $E$.
    
    \begin{case} $a,b\in Q_1^1$.
    \end{case} 

    Similarly to the previous case, we can  show that $S$ is shattered by $E$. Then, by ampleness, $S$ contains a copy $W=W_1\times \ldots\times W_m$ of $E$. If there exists such a copy $W$ of $E$ such that $a,b\notin W_1$,  then $W$ is also contained in $S^M$, which implies that $S^M$ is strongly shattered by $E$. Therefore, for the rest of the proof we can suppose that any copy $W$ of $E$ in $S$ contains either $a$ or $b$ in $W_1$. First, we show that $S$ contains two copies $W^a$ and $W^b$ of $E$, where $W^a$ contains $a$ in its first subfactor, and $W^b$ has $b$ in the first subfactor.
    
    Let $V=(U_1\backslash \{b\})\times U_2\times\ldots\times U_m$ and $E'\in \MP(V)$ be the restriction of $E$ to $V$.  We assert that $S\cap V$ is shattered by $E'$. Pick any $t\in E'$. If the first coordinate of $t$ is not $Q_1^1\backslash \{b\}$, then $t$ is an element of $E$. Since $F_{M}(t)$ contains an element of $S^M$ (as $S^M$ is shattered by $E$), and in this case $F_M(t)=F(t)$, the fiber $F(t)$ of $t$ also contains an element of $S\cap V$.

    On the other hand, if $t$ has first coordinate $Q_1^1\backslash \{b\}$, say $t=(Q_1^1\backslash \{b\}, t_2,\ldots, t_m)$, then let $t'\in E$ with $t'=\{Q_1^1,t_2,\ldots,t_m\}$. Since $E$ is shattered by $S^M$, there exists $t''\in F_M(t')\cap S^M$. If the first coordinate of $t''$ is not $\{a,b\}$, then $t''\in V$, and therefore $t''\in F(t)\cap(S\cap V)$. Otherwise, if the first coordinate of $t''$ is $\{a,b\}$, say $t''=(\{a,b\},t_2'',\ldots,t_m'')$. Since $t''\in S^M$, we deduce that $t''':=(a,t_2'',\ldots,t_m'')\in S$, hence $t'''\in F(t)\cap (S\cap V)$. So in all cases the boxes of $E'$ contain an element of $S\cap V$, thus $S\cap V$ is shattered by $E'$.

    By \cref{thm:ample=weaklyample} we know that $S\cap V$ is ample in $V$, and thus $E'$ is strongly shattered by $S\cap V$. In particular, $S\cap V$ contains a copy $W^a$ of $E'$, which is also a copy of $E$. Because of our assumption on copies of $E$, since the first subfactor of $W^a$ cannot contain $b$, but must have an element of $Q_1^1$, it follows that $W^a$ has $a$ in its first subfactor. With an analogous argument (by removing $a$ from $U_1$) we can find a copy $W^b$ of $E$ that contains $b$.

    Now we create $E(b)\in \MP(U)$ from $E$, by replacing the first partition $\beta_1$ by the partition $\{Q_1^1\backslash \{b\},\{b\},Q_2^1,\ldots,Q_{\ell_1'}^1\}$. We claim that $E(b)$ is shattered by $S$. Let $t\in E(b)$. If $t$ has as first coordinate $\{b\}$, then $F(t)$ contains an element of $W^b$. On the other hand, if the first coordinate of $t$ is $Q_1^1\backslash \{b\}$, then $F(t)$ contains an element of $W^a$. Finally, if $t$ has any other first coordinate, then $t$ is also an element of $E$, and $F(t)$ contains an element of $S$ because $S^M$ shatters $E$. So $E(b)$ is shattered by $S$. In particular, it is strongly shattered by $S$ since $S$ is ample, thus let $W'=W_1'\times\ldots\times W_m'$ be a copy of $E(b)$ in $S$. Note that, because of our assumption on $W$, $W'_1$ must contain $a$: if $W_1'$ had any other element from $Q_1^1\backslash \{b\}$, then $(W_1'\backslash \{b\})\times W_2'\times \ldots\times W_m'$ would give us a copy of $E$ in $S^M$, since $(W_1'\backslash \{b\})$ contains neither $a$ nor $b$. Thus $a,b\in W_1'$. Let $W''=W_1'\backslash \{a,b\}\cup \{\{a,b\}\}\times W_2'\times\ldots\times W_m'$. We claim that $W''\subseteq S^M$. Pick any $t\in W''$. If its first coordinate is not $\{a,b\}$, then $t\in W'\subseteq S$ and therefore $t\in S^M$. Otherwise, the two elements of $F(t)$ are both in $W'$, so they are both in $S$, and therefore $t\in S^M$. We conclude that $W''$ is a copy of $E$ in $S^M$, and therefore $E$ is strongly shattered by $S^M$ and $S^M$ is ample.
    \end{proof}

\section{Main characterizations of ampleness}\label{sec:maincharacterization}
In this section we characterize ampleness via commutativity, superconnectivity, superisometricity, and complement. We also characterize ample sets using elementary minor-subproducts. Our results generalize analogous characterizations  of ample sets in binary products given in  \cite{BaChDrKo}. We continue with the main definitions used in this section. Let $U=U_1\times \ldots\times U_m$. 

\begin{definition}[Commutativity]
    A set $S\subseteq U$ is \emph{commutative} if for all $M,M'\in \MP(U)$ with $\supp(M)\cap\supp(M')=\varnothing$ we have $(S_M)^{M'}=(S^{M'})_M$.
\end{definition}

 \begin{definition}[Superisometricity/superconnectivity]
        A set $S\subseteq U$ is  \emph{superisometric} (respectively, \emph{superconnected}) if $S^M$ is isometric (respectively, connected) for all $M\in \MP(U)$.
 \end{definition}


\subsection{Weak isometricity and commutativity} We continue with two auxiliary lemmas, linking weak isometricity of sets with weaker versions of cummutativity. We also establish some properties of commutative sets.

 \begin{lemma}\label{lem:weakisometricity}
    Let $M''\in MP(U)$ and $k,\ell\in X$ with $k\neq \ell$. For a set $S\subseteq M''$ the following conditions are equivalent: 
     \begin{enumerate}
         \item $S$ is weakly $(k,\ell)$-isometric; 
         \item $S^*$ is weakly $(k,\ell)$-isometric;
         \item $(S^M)_{M'}=(S_{M'})^M$ for all elementary $M,M'\in \MP(U)$ with $\supp(M)=k$ and $\supp(M')=\ell$;
         \item $(S^M)_{M'}=(S_{M'})^M$ for all one-dimensional $M,M'\in \MP(U)$ with $\supp(M)=k$ and $\supp(M')=\ell$.
     \end{enumerate}
 \end{lemma}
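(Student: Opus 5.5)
The plan is to prove the cycle (1)$\Leftrightarrow$(2), (1)$\Rightarrow$(4)$\Rightarrow$(3)$\Rightarrow$(1). Throughout, all sets live in $M''$, which is itself a Hamming graph, so by working inside $M''$ (and using \cref{minors-of-minors}) we may assume $M''=U$. Replacing $M$ by $M\vee M''$ does not change the operators, so $M,M'$ can be taken to be minor-subproducts of $M''$.

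For (1)$\Leftrightarrow$(2), fix $x,y$ differing exactly in coordinates $k,\ell$, with common neighbours $w,z$. The interval $[x,y]=\{x,y,w,z\}$ is a square. Failure of weak isometricity of $S$ on this square means $x,y\in S$ and $w,z\notin S$, that is, $w,z\in S^*$ and $x,y\notin S^*$. This is exactly a failure for $S^*$ on the opposite diagonal. The condition is symmetric, so the equivalence follows.

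For (1)$\Rightarrow$(4), take one-dimensional $M,M'$ with supports $\{k\}$ and $\{\ell\}$. Then $M\vee M'$ has support $\{k,\ell\}$, and each of its fibers in $U$ is a product $P\times Q\times\{t\}$. Here $P$ is a block in coordinate $k$, $Q$ is a block in coordinate $\ell$, and $t$ is fixed on the other coordinates. Both sides of (4) are decided fiberwise. Unwinding the definitions:
\begin{itemize}
\item $(S^M)_{M'}$ asks whether some $q\in Q$ has $P\times\{q\}\subseteq S$;
\item $(S_{M'})^M$ asks whether every $p\in P$ has $(\{p\}\times Q)\cap S\neq\varnothing$.
\end{itemize}
The inclusion $(S^M)_{M'}\subseteq(S_{M'})^M$ is trivial. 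For the converse, suppose every row $\{p\}\times Q$ meets $S$ but no column $P\times\{q\}$ lies in $S$. Then $S\cap (P\times Q)$ shatters the rectangle in the row direction but contains no full column. Choose, for each $p$, an element $q(p)$ with $(p,q(p))\in S$. Consider a column $q_0=q(p_0)$ and a row $p_1$ with $(p_1,q_0)\notin S$. The points $(p_0,q_0)$ and $(p_1,q(p_1))$ lie in $S$, while $(p_1,q_0)\notin S$. Weak isometricity then forces $(p_0,q(p_1))\in S$. Iterating, pick $q^*$ maximizing the number of $p$ with $(p,q^*)\in S$, and let $p_1$ be a row missing it. Then $(p,q^*)\in S$ forces $(p,q(p_1))\in S$, and additionally $(p_1,q(p_1))\in S$. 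So column $q(p_1)$ is strictly larger, a contradiction. Hence some column is full, which proves (4).

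(4)$\Rightarrow$(3) is immediate, since elementary minor-subproducts are one-dimensional.

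For (3)$\Rightarrow$(1), take $x,y\in S$ differing exactly in coordinates $k,\ell$. Let $M$ be elementary with block $\{x_k,w_k\}$ and $M'$ elementary with block $\{x_\ell,y_\ell\}$, where $w$ is the common neighbour with $w_k=y_k$. The $M\vee M'$-fiber containing $x$ is the square $[x,y]$. The point of $(S_{M'})^M$ corresponding to this square belongs to it iff both edges $\{x,w'\}$ and $\{w,y\}$ meet $S$ (in the appropriate grouping), which holds because $x,y\in S$. By (3) it belongs to $(S^M)_{M'}$, so some edge in the $k$-direction of the square lies in $S$. That edge contains $w$ or $z$, giving $[x,y]\cap S\ne\{x,y\}$.

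The main obstacle is the counting argument in (1)$\Rightarrow$(4). I will first verify the exchange step carefully, checking that the two points used really differ in exactly coordinates $k,\ell$; this holds when $p_0\ne p_1$ and $q_0\ne q(p_1)$. Degenerate cases are treated separately.
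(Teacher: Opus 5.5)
Your proof is correct, but it is organized differently from the paper's. The paper proves (1)$\Leftrightarrow$(3) only at the level of squares: for elementary $M,M'$, every fiber of $M\vee M'$ in $M''$ is a vertex, an edge or a square. It then gets (3)$\Rightarrow$(4) indirectly. It writes the one-dimensional $M,M'$ as joins of atoms (\cref{lem:atomistic}, \cref{lem:operatorsselfcommutative}), and uses \cref{lem:convexity-preimages}(3) to know that every intermediate set in the chain of elementary (strong-)projections stays weakly $(k,\ell)$-isometric. This lets it swap the elementary operators one pair at a time.

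You instead prove (1)$\Rightarrow$(4) in one stroke on each rectangle $P\times Q\times\{t\}$. Weak $(k,\ell)$-isometricity says exactly that the bipartite graph on $P\cup Q$ defined by $S$ has no induced $2K_2$. So the column neighbourhoods are nested, and a column of maximum size must be full once every row meets $S$. Your route is more elementary and self-contained: it needs neither the atomistic decomposition nor \cref{lem:convexity-preimages}(3). It also exposes the Ferrers-diagram structure that the paper only makes explicit later, in \cref{lem:length2pushdown}. The paper's route, in exchange, fits the ``reduce to atoms and commute elementary operators'' pattern that it reuses throughout.

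Three small remarks:
\begin{itemize}
\item In your exchange step, $p_0\neq p_1$ and $q_0\neq q(p_1)$ follow automatically from $(p_0,q_0)\in S$, $(p_1,q_0)\notin S$ and $(p_1,q(p_1))\in S$. So no degenerate cases need separate treatment.
\item In (3)$\Rightarrow$(1), the edge you call $\{x,w'\}$ should be $\{x,z\}$.
\item For the reduction to $M''=U$, say explicitly two things. Every elementary (resp.\ one-dimensional) minor-subproduct of $M''$ has the form $M\vee M''$ with $M\in\MP(U)$ elementary (resp.\ one-dimensional). And any $M$ whose join with $M''$ is trivial in $M''$ gives a trivial identity.
\end{itemize}
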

\begin{proof} The equivalence (1)$\Longleftrightarrow$(2) immediately follows from the definition of weak $(k,\ell)$-isometricity. Before proving the other implications, we adopt the following convention for elementary minor-subproducts. Let $M=M_1\times\ldots\times M_m$ be an elementary minor-subproduct. Then all its factors are trivial except for the $k$-th factor. Suppose without loss of generality that this factor is $M_1$, i.e., $k=1$. By definition of an elementary minor-subproduct, the partition $\alpha_1=\{P_1^1\ldots,P_{\ell_1}^1\}$ corresponding to $M_1$ 
is an atom of the lattice $\Part(U_1)$. This means that  all of its blocks are trivial except one, which contains two elements, i.e., without loss of generality, we can assume that $P_1^1=\{a,b\}$. Likewise, we assume that the elementary minor-subproduct $M'=M'_1\times M'_2\times\ldots\times M'_m$ has as non-trivial factor $M_2'$ (recall that $\ell\ne k$), whose partition $\beta_2=\{Q_1^2,\ldots,Q_{\ell_2}^2\}$ contains one non-trivial block $Q_1^2=\{c,d\}$.
     
Next  we establish the implication (1)$\Rightarrow$(3). Suppose $S$ is weakly $(k,\ell)$-isometric. Pick any $u=(u_1,u_2,\ldots,u_m)\in M\vee M'$. If $u_1$ corresponds to $P_1^1=\{ a,b\}$ and $u_2$ to $Q_1^2=\{c,d\}$, and $M,M'\npreceq M''$, then the $M''$-fiber $F_{M''}(u)$ of $u$ is a square of the Hamming graph $H(M'')$.  
Weak  $(k,\ell)$-isometricity of $S$ in $M''$ implies  that the intersection of $S$ and $F_{M''}(u)$ is not equal to two opposite corners of the square $F_{M''}(u)$. Therefore, the intersection $S \cap F_{M''}(u)$ is either empty, a single vertex, two adjacent vertices, three vertices of $F_{M''}(u)$, or the whole square $F_{M''}(u)$. This can be equivalently rewritten in the following form:  $u$ belongs to $(S^M)_{M'}$ if and only if $u$ belongs to $(S_{M'})^M$. In all other cases, $u\in M\vee M'$ have $M''$-fibers  $F_{M''}(u)$ that have size size 1 or 2 (they are vertices or edges of $H(M'')$), and one can easily verify that for them we also have that $u\in (S^M)_{M'}$ if and only if $u\in (S_{M'})^M$. This establishes the equality $(S^M)_{M'}=(S_{M'})^M$. 

To prove the converse implication (3)$\Rightarrow$(1), suppose that $S\subseteq M''$ satisfies (3) but $S$ is not weakly $(k,\ell)$-isometric. Then there are two vertices $u,v$ of $S$ at distance $2$ in $H(M'')$ such that $S\cap [u,v]=\{u,v\}$ and $u$ and $v$ differ in the coordinates $k$ and $\ell$. Suppose without loss of generality that $k=1, \ell=2$ and that $u=(a,c,u_3,u_4,\ldots,u_m)$ and $v=(b,d,u_3,u_4,\ldots,u_m)$. 
Then, let $M=M(\Lambda,U)$ be elementary, such that the two elements from its nontrivial block are in $a$ and $b$, respectively (then $M\vee M''$ is elementary in $M''$ with nontrivial block $\{a,b\}$). Likewise, let  $M'=M(\Lambda',U)$ be elementary, such that the unique nontrivial block of $M'\vee M''$ is $\{c,d\}$. 
Let $w\in M\vee M'$ be the element with $u\in F_{M''}(w)$, then $w\in (S_{M'})^M$ but $w\notin (S^M)_{M'}$, contrary to (3). This shows that $S$ is weakly $(k,\ell)$-isometric. 

     The direction (4)$\Rightarrow$(3) is trivial, and finally we show the implication (3)$\Rightarrow$(4). By the equivalence between the conditions (1), (2), and (3), if (3) holds, then $S$ and $S^*$ are weakly $(k,\ell)$-isometric. Let $M,M'\in \MP(U)$ be one-dimensional with disjoint supports. By \cref{lem:atomistic} we can write $M=\bigvee_{i=1}^{r} M^i$ and $M'=\bigvee_{j=1}^{r'}(M')^j$, where $M^i$ and $(M')^j$ are elementary minor-subproducts for all $i=1,\ldots,r$ and $j=1,\ldots,r'$. We clearly also have $\supp(M)=\supp(M^i)$ for all $i=1,\ldots,r$ and $\supp(M')=\supp((M')^j)$ for all $j=1,\ldots,r'$. 
     This means that, by \cref{lem:operatorsselfcommutative}, we have
     \begin{eqnarray}
            (S^{M})_{M'}&=&\left(\ldots\left(\left(\ldots\left(\left(S^{M^1}\right)\ldots\right)^{M^{r}}\right)_{(M')^1} \right)\ldots\right)_{(M')^{r'}}\label{eq:onedimcommute1}
     \end{eqnarray}   
    and
     \begin{eqnarray}
            (S_{M'})^M &=& \left(\ldots\left(\left(\ldots\left(\left(S_{(M')^1} \right)\ldots\right)_{(M')^{r'}}\right)^{M^1}\right)\ldots\right)^{M^r}.\label{eq:onedimcommute2}
        \end{eqnarray}
    It follows from \cref{lem:convexity-preimages} that all sets between the brackets in \eqref{eq:onedimcommute1} and \eqref{eq:onedimcommute2} are weakly $(k,\ell)$-isometric. That implies that all the operations in the equations \eqref{eq:onedimcommute1} and \eqref{eq:onedimcommute2} commute, since we already established the equivalence (1)$\Longleftrightarrow$(3). It then follows that $(S^M)_{M'}=(S_{M'})^M$.
 \end{proof}

\begin{lemma}\label{lem:isometricityclassified}
     Let $S\subseteq M''\in \MP(U)$, then the following conditions are equivalent:
     \begin{enumerate}
         \item $S$ is isometric in $M''$;
         \item $S_M$ is weakly isometric for all $M\in \MP(U)$;
         \item $(S^M)_{M'}=(S_{M'})^M$ for all $M'\in \MP(U)$ and all one-dimensional $M\in \MP(U)$ with $\supp(M)\cap\supp(M')=\varnothing$.
     \end{enumerate}
 \end{lemma}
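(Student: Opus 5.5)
We prove (1)$\Rightarrow$(2)$\Rightarrow$(3)$\Rightarrow$(1), working throughout inside the relative universe $M''$ so that all projections are taken with respect to $M''$. The implication (1)$\Rightarrow$(2) is immediate from the final assertion of \cref{lem:convexity-preimages}: if $S$ is isometric in $M''$, then $S_M$ is isometric in $M\vee M''$. Any isometric set is in particular weakly isometric.

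For (2)$\Rightarrow$(3), we fix a one-dimensional $M$ with $\supp(M)=\{k\}$ and an arbitrary $M'$ with $k\notin\supp(M')$. Using \cref{lem:atomistic} and \cref{lem:operatorsselfcommutative}, we write $M'=\bigvee_j (M')^j$ with each $(M')^j$ one-dimensional, each support $\{\ell_j\}$ different from $k$. Then $S_{M'}$ is obtained by a sequence of one-dimensional projections. To pass $S^M$ through these projections one at a time, it suffices to show that at each stage the current set $T=S_{(M')^1\vee\cdots\vee(M')^{j-1}}$ satisfies $(T^M)_{(M')^j}=(T_{(M')^j})^M$. By \cref{lem:weakisometricity}, (1)$\Rightarrow$(4), this holds as soon as $T$ is weakly $(k,\ell_j)$-isometric. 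Since $T$ is itself a projection $S_N$ with $N\in\MP(U)$, hypothesis (2) gives that $T$ is weakly isometric, hence weakly $(k,\ell_j)$-isometric. A telescoping swap then gives $(S^M)_{M'}=(S_{M'})^M$. We must check that $(S^M)_{N}$ at intermediate stages is always compared with the right object. Swapping the strong-projection rightwards step by step,
\[
\bigl(\bigl(S^{M}\bigr)_{(M')^1}\bigr)_{(M')^2}\cdots=\bigl(\bigl(S_{(M')^1}\bigr)^{M}\bigr)_{(M')^2}\cdots ,
\]
and at step $j$ the commutation is applied to the set $S_{(M')^1\vee\cdots\vee(M')^{j-1}}$, which is weakly isometric by (2).

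For (3)$\Rightarrow$(1), we argue by contradiction. Suppose $S$ is not isometric in $M''$, and choose $u,v\in S$ with $d(u,v)=n\ge 2$ minimal such that $S\cap[u,v]=\{u,v\}$. (If no such pair exists, induction on distance already gives isometricity.) Let $k,\ell$ be two coordinates in which $u$ and $v$ differ, and let $D$ be the remaining set of differing coordinates, so $|D|=n-2$. We take $M'$ to be the mixed-type minor-subproduct that collapses the coordinates of $D$, choosing $\beta_i$ for $i\in D$ to merge $u_i$ and $v_i$ into one block. Then $u$ and $v$ project in $S_{M'}$ to two vertices at distance $2$ differing in coordinates $k$ and $\ell$. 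The goal is to show that $[u,v]\cap S=\{u,v\}$ forces the two common neighbours of these vertices to lie outside $S_{M'}$, or at least outside on the relevant square. Taking $M$ elementary at $k$ merging $\{u_k,v_k\}$, condition (3) then fails exactly as in the proof of (3)$\Rightarrow$(1) in \cref{lem:weakisometricity}.

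The main obstacle is this last step. The fibers of the collapsed coordinates contain points outside $[u,v]$ that may lie in $S$, so we must choose $M'$ to restrict to $[u,v]$. The plan is to merge only $u_i,v_i$ and keep all other elements as singletons, so that each relevant fiber meets $[u,v]$ exactly in its binary part. If stray points of $S$ still appear, we use the minimality of $n$ to pick the pair $u,v$ so that no point of $S$ lies in those fibers, or first restrict to the convex set $[u,v]$ via \cref{lem:convexity-preimages}(1).
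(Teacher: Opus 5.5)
Your argument is correct. For (1)$\Rightarrow$(2) and (2)$\Rightarrow$(3) it coincides with the paper's: your one-coordinate-at-a-time swap is the paper's induction on $|\supp(M')|$ written out.

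The difference is in how you close the loop. The paper never goes from (3) to (1) directly. It proves (2)$\Rightarrow$(1) by a minimal counterexample that contracts one differing coordinate of $u,v$ at a time with an elementary projection. It proves (3)$\Rightarrow$(2) by showing every $S_{M'''}$ is weakly $(k,\ell)$-isometric, via a case split that needs \cref{lem:convexity-preimages}(3) when $\{k,\ell\}$ meets $\supp(M''')$. You instead collapse all but two differing coordinates in a single projection, turning a non-geodesic pair into a bad square that (3) forbids. This removes both the induction and the case analysis. It also shows slightly more: (3) restricted to elementary $M$, and to $M'$ that are joins of elementary minor-subproducts with distinct supports, already forces isometricity. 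The paper's route, in return, records (2)$\Rightarrow$(1) and (3)$\Rightarrow$(2) as standalone implications.

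Two things to tighten.

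First, the step you call the main obstacle is not one, provided you commit to the quasi-trivial choice and drop the word ``mixed''. The quasi-trivial choice means that for $i\in D$ the only non-singleton block is $u_i\cup v_i$; a co-trivial collapse on $D$ really would let fibres leave $[u,v]$. With the quasi-trivial choice, the $M''$-fibre of the common neighbour $\bar w$ of $\bar u,\bar v$ is $\{x\in[u,v]: x_k=u_k,\ x_\ell=v_\ell\}$. This set is contained in $[u,v]\setminus\{u,v\}$ and hence misses $S$, and the same holds for the other common neighbour. So any pair with $S\cap[u,v]=\{u,v\}$ works, and neither minimality of $n$ nor a restriction to $[u,v]$ is needed.

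Second, the bad square lives in $S_{M'}$, so the argument of \cref{lem:weakisometricity} only yields non-commutation for $S_{M'}$. To contradict (3) for $S$ itself, take $N$ elementary at $\ell$ and use (3) for both pairs $(M,M')$ and $(M,M'\vee N)$. Then $((S_{M'})^M)_N=((S^M)_{M'})_N=(S^M)_{M'\vee N}=(S_{M'\vee N})^M=((S_{M'})_N)^M$, which is exactly the computation in the first case of the paper's (3)$\Rightarrow$(2). Alternatively, check directly that the element of $M\vee M'\vee N\vee M''$ whose $M''$-fibre is $[u,v]$ lies in $(S_{M'\vee N})^M$ but not in $(S^M)_{M'\vee N}$.
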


 \begin{proof} First we prove (1)$\Rightarrow$(2). Since any isometric set $S$ is weakly isometric, we can apply Lemma \ref{lem:convexity-preimages} and conclude that $S_M$ is weakly isometric in $M\vee M''$ for all $M\in \MP(U)$.
     To show (2)$\Rightarrow$(1), suppose $S$ is not isometric in $M''$. This means that there are $u,v\in S$ with $d(u,v)\geq 2$ such that $[u,v]\cap S=\{u,v\}$. Suppose we have chosen $M''$, $S$, $U$, $u$ and $v$ satisfying (2) such that $d(u,v)=d_{H(M'')}(u,v)$ is minimal. Since $S_M$ is weakly isometric for all $M\in \MP(U)$ and $M''\in \MP(U)$,  clearly $S$ is weakly isometric, and thus $d(u,v)\geq 3$. Then, take an elementary $M'\in \MP(M'')$ with its only non-trivial block equal to $\{a,b\}$, where $a,b$ are two coordinates of $u,v$ where they differ. Then take $u',v'\in S_{M'}$ such that $u\in F_{M''}(u')$ and $v\in F_{M''}(v')$. We have $d_{H(M')}(u',v')=d(u,v)-1$ and $[u',v']\cap S_{M'}=\{u',v'\}$, so $S_{M'}$ is not isometric, while $(S_{M'})_M=S_{M'\vee M}$ is weakly isometric for all $M$. Thus we obtain a contradiction with the minimality choice of $M''$, $S$, $U$, $u$ and $v$, hence $S$ is isometric.

     We then prove (2)$\Rightarrow $(3) by induction on $|\supp(M')|$. For $|\supp(M')|=0$ this is trivial. Suppose we have shown the statement for $|\supp(M')|=k$, and pick an $M'$ with $|\supp(M')|=k+1$. We can find $M^1,M^2\in \MP(U)$ such that $M'=M^1\vee M^2$, $|\supp(M^1)|=k$ and $|\supp(M^2)|=1$: simply by changing $k$ partitions of $M'$ to the trivial partition to get $M^2$, and changing one partition of $M'$ to a trivial one to get $M^1$. Then, applying the induction hypothesis,  the fact that $S_{M^1}$ is weakly isometric, \cref{lem:operatorsselfcommutative}, and \cref{lem:weakisometricity}  we get
     \[
        (S^M)_{M'}\stackrel{\text{Lem.  \ref{lem:operatorsselfcommutative}}}{=}((S^M)_{M^1})_{M^2}\stackrel{\text{IH}}{=}((S_{M^1})^M)_{M^2}\stackrel{\text{Lem.  \ref{lem:weakisometricity}}}{=}((S_{M^1})_{M^2})^M=(S_{M'})^M.
     \]
     To show (3)$\Rightarrow$(2), suppose we have $M'''\in \MP(U)$, then we show that $S_{M'''}$ is weakly isometric. To achieve this, we show that $S_{M'''}$ is weakly $(k,\ell)$-isometric for all $k,\ell\in X$ with $k\neq \ell$. Suppose we have one-dimensional $M,M'\in \MP(U)$ such that $\supp(M)=k$ and $\supp(M')=\ell$. We distinguish two cases:
     \begin{itemize}
         \item Suppose $k,\ell\notin \supp(M''')$. In this case $\supp(M'\vee M''')\cap\supp(M)=\varnothing$, so applying \cref{lem:operatorsselfcommutative} we get
         \[
            ((S_{M'''})_{M'})^{M } = (S_{M'''\vee M'})^M \stackrel{(4)}{=} (S^M)_{M'''\vee M'} =((S^M)_{M'''})_{M'} \stackrel{(4)}{=} ((S_{M'''})^M)_{M'}.
         \]
         And by \cref{lem:weakisometricity} this means that $S_{M'''}$ is weakly $(k,\ell)$-isometric.
         \item Suppose that $\{k,\ell\}\cap\supp(M''')\neq\varnothing$. Then, we find $M^1,M^2$ such that $M'''=M^1\vee M^2$, $k,l\notin\supp(M^1)$ and $\supp(M^2)\subseteq \{k,\ell\}$. This can be done by changing the appropriate partitions to the trivial partition in $M'''$. Applying the previous case, we know that $S_{M^1}$ is weakly $(k,\ell)$-isometric. Then, by \cref{lem:convexity-preimages}, we know that $(S_{M^1})_{M^2}=S_{M'''}$ is also weakly $(k,\ell)$-isometric.
     \end{itemize}
     So in both cases $S_{M'''}$ is weakly $(k,\ell)$-isometric, and this implies assertion (2).
 \end{proof}

\begin{lemma}\label{lem:subproductcommutative}
    Let $S\subseteq U$ be such that $(S^M)_{M'}=(S_{M'})^M$ for all $M,M'\in \MP(U)$ with $\supp(M)\cap \supp(M')=\varnothing$. Let $V$ be a full-dimensional subproduct of $U$. Then $((S\cap V)^M)_{M'}=((S\cap V)_{M'})^M$ for all $M,M'\in \MP(V)$ with $\supp(M)\cap \supp(M')=\varnothing$.
\end{lemma}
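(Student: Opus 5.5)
By induction it suffices to treat the case $V=(U_1\setminus\{x\})\times U_2\times\ldots\times U_m$ with $|U_1|\ge 2$: every full-dimensional subproduct is reached by deleting one element at a time, and the hypothesis is exactly the statement for $U$. Set $S'=S\cap V$ and fix $M=M(\Lambda,V)$, $M'=M(\Upsilon,V)\in\MP(V)$ with disjoint supports. Since the supports are disjoint, at most one of $M,M'$ has $1$ in its support. I will extend $M$ and $M'$ to minor-subproducts $\widehat M,\widehat M'\in\MP(U)$ whose supports are still disjoint. The plan is then to read off $((S')^M)_{M'}$ and $((S')_{M'})^M$ from $(S^{\widehat M})_{\widehat M'}$ and $(S_{\widehat M'})^{\widehat M}$, or from analogous expressions for a modified set.

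The naive extension puts $x$ into a block of the first partition. This fails for two reasons. Strong-projection sees $x$ even when $x\notin S$, and projection sees $S\cap F(t)$ at points with first coordinate $x$. The fix is a two-step choice.

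\emph{Step 1.} Suppose $1\notin\supp(M)\cup\supp(M')$. Put $x$ in a new singleton block in both $\widehat M$ and $\widehat M'$. Then all partitions in coordinate $1$ are trivial, every fiber lives in a single first-coordinate slice, and the identity on $U$ restricts slicewise to $V$. Hence $((S')^M)_{M'}=((S^{\widehat M})_{\widehat M'})\cap V$, and likewise for the other side.

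\emph{Step 2.} Suppose $1\in\supp(M')$, the projection side. Let $\widehat M'$ use the partition of $V_1$ with $x$ added as a new singleton block; this keeps the supports disjoint. Then the blocks of $\widehat M'\vee\widehat M$ not involving $\{x\}$ correspond bijectively to those of $M'\vee M$, and their fibers in $U$ equal their fibers in $V$. So the equality for $S$ restricts to the equality for $S'$ on all points of $M\vee M'$.

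\emph{Step 3.} Suppose $1\in\supp(M)$, the strong-projection side. Here a singleton block $\{x\}$ is not allowed, since $M$ must have nontrivial partition $\beta$ in coordinate $1$ and $M'$ trivial there. I would instead extend $\beta$ to $U_1$ by adding the singleton $\{x\}$; this is still a nontrivial partition, so the supports remain disjoint. Every fiber of $M$ or $M\vee M'$ with first block different from $\{x\}$ lies inside $V$. Those fibers are therefore computed identically in $U$ and in $V$, and the $\{x\}$-fibers are simply discarded when we intersect back.

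The main obstacle is the bookkeeping in Step 3. The claim is that all tuples of $\widehat M\vee\widehat M'$ outside the $\{x\}$-slice form a copy of $M\vee M'$. Also, the inner operation, say $S_{\widehat M'}$, must restrict correctly before the outer one is applied. This holds because $(S_{\widehat M'})\cap(\text{non-}x\text{ part})=(S')_{M'}$, as $\widehat M'$ is trivial in coordinate $1$. So both compositions agree coordinatewise off the $x$-slice, and the lemma follows.
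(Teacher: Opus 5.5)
Your proposal is correct and is essentially the paper's proof. You reduce to deleting a single element $x$ from $U_1$, extend $M,M'$ to $U$ by adjoining $\{x\}$ as a singleton block in the first coordinate (which keeps the supports disjoint), and observe that off the $\{x\}$-slice both compositions for $S$ coincide with those for $S\cap V$. The three-case split is unnecessary because this extension works uniformly, and the remark in Step 3 that a singleton block $\{x\}$ is ``not allowed'' is a slip, since you then use exactly that extension, correctly.
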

\begin{proof}
    It suffices to show that commutativity is preserved if just one element is removed from one $U_i$ with $|U_i|\geq 2$, since then the result follows by induction. Assume without loss of generality that $|U_1|\geq 2$, and let $x\in U_1$ and $V=U_1\backslash\{x\}\times U_2\times\ldots\times U_m$. Consider any two minor-subproducts $M,M'\in \MP(V)$ with $\supp(M)\cap\supp(M)=\varnothing$. Let $N,N'\in \MP(U)$ be obtained from $M,M'$, respectively, by adding the singleton $\{x\}$ to the partition in the first dimension. Note that $\supp(N)\cap\supp(N')=\supp(M)\cap \supp(M')=\varnothing$, and therefore $(S^N)_{N'}=(S_{N'})^N$. The only difference between $M\vee M'$ and $N\vee N'$ is that the latter has extra boxes with first coordinate $\{x\}$. Adding the singleton $\{x\}$ has no effect on these extra boxes with regard to (strong-)projection, thus the only difference between $((S\cap V)^M)_{M'}$ and $(S^N)_{N'}$ is that the latter may contain some extra elements with first coordinate $\{x\}$. Likewise, $((S\cap V)_{M'})^M$ and $(S_{N'})^N$ only differ by some elements with first coordinate $\{x\}$. It follows that indeed $((S\cap V)^M)_{M'}=((S\cap V)_{M'})^M$.
\end{proof}

\subsection{Ampleness, commutativity, and superisometricity} We continue with the first main characterization of ample sets. 

\begin{theorem}\label{thm:ampleisometriccommutative}
Let $S\subseteq U$. Then the following conditions are equivalent:
\begin{enumerate}
    \item \label{item:ample} $S$ is ample;
    \item \label{item:superisometric} $S^M$ is isometric for all $M\in\MP(U)$ (superisometricity);
    \item \label{item:superconnected}  $S^M$ is connected for all $M\in\MP(U)$ (superconnectivity);
    \item \label{item:commutative} $(S^M)_{M'}=(S_{M'})^M$ for all $M,M'\in \MP(U)$ with $\supp(M)\cap\supp(M')=\varnothing$ (commutativity);
    \item \label{item:complement} $S^*$ is ample;
    \item \label{item:weaklyisometric} $(S^M)_{M'}$ is weakly isometric for all $M,M'\in \MP(U)$.
\end{enumerate}
\end{theorem}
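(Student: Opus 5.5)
We will prove the cycle (1)$\Rightarrow$(2)$\Rightarrow$(3)$\Rightarrow$(1), and separately (1)$\Leftrightarrow$(4), (4)$\Leftrightarrow$(6) and (4)$\Leftrightarrow$(5).

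For (1)$\Rightarrow$(2), Lemma \ref{lem:superscriptample} gives that $S^M$ is ample in $M$ for every $M\in\MP(U)$. Applying Proposition \ref{prop:ample-is-isometric} inside the product $M$ then shows that $S^M$ is isometric. The step (2)$\Rightarrow$(3) is trivial.

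For (3)$\Rightarrow$(1), we argue by contradiction. Let $M=M(\Lambda,U)$ be shattered but not strongly shattered by $S$. By Theorem \ref{thm:ample=weaklyample} we may take $M$ extended, and by induction on $|U|$ we may assume $S\cap V$ is ample for every proper box $V$. The plan is to find some $N\in\MP(U)$ for which $S^N$ is disconnected. In the binary case, Bandelt et al.\ take $N$ to be the strong-projection collapsing the co-trivial coordinates. Here we take $N$ to collapse each quasi-trivial block of $\Lambda$, leaving every other element untouched. The copies of $M$ in $S$ then correspond to vertices of $S^{N'}$ for a suitable coarsening $N'$. We would show that shattering forces $S^{N'}$ to meet two distinct parts, while the absence of a copy of $M$ blocks every path between them. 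This is the step I expect to be the main obstacle. If it resists, I would instead prove (3)$\Rightarrow$(6) and then derive (6)$\Rightarrow$(4)$\Rightarrow$(1).

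For (4)$\Leftrightarrow$(6), we use Lemma \ref{lem:isometricityclassified} and Lemma \ref{lem:weakisometricity}. Condition (4) restricted to one-dimensional $M$ is equivalent to isometricity of $S^{M''}$-type sets, so full commutativity makes every $(S^M)_{M'}$ weakly isometric. Conversely, weak isometricity of all $(S^M)_{M'}$ yields commutativity for one-dimensional pairs, via Lemma \ref{lem:weakisometricity}. We then reach general pairs by decomposing $M$ and $M'$ into one-dimensional joins (Lemma \ref{lem:atomistic}) and swapping adjacent operators one at a time, as in the proof of Lemma \ref{lem:isometricityclassified}; Lemma \ref{lem:operatorsselfcommutative} justifies regrouping the operators.

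For (1)$\Rightarrow$(4), ampleness of $S$ gives ampleness of $S^{M}$ (Lemma \ref{lem:superscriptample}), hence its isometricity. Lemma \ref{lem:isometricityclassified}, applied to $S^{M}$, then yields commutativity for one-dimensional factors, and we extend to general pairs as above.

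For (4)$\Rightarrow$(1), let $M$ be shattered, i.e. $S_M=M$, and write $M=M_a\vee M_b$ with disjoint supports (Lemma \ref{lem:atomistic}). Commutativity computes $S^{M}$ as iterated projections and strong-projections, which gives $(S^{M_a})_{M_b}\neq\varnothing$. We then refine this into an explicit copy, using induction on boxes via Lemma \ref{lem:subproductcommutative} to restrict to the subproduct spanned by a candidate copy.

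Finally, (4)$\Leftrightarrow$(5) follows from Lemma \ref{lem:projections-and-complements}. Complementation swaps $(S^M)_{M'}$ and $(S_{M'})^M$ with the corresponding expressions for $S^*$, so commutativity of $S$ is equivalent to commutativity of $S^*$. Combining this with (1)$\Leftrightarrow$(4), applied to both $S$ and $S^*$, gives (1)$\Leftrightarrow$(5).
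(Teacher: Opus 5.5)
The easy directions in your plan are fine: (1)$\Rightarrow$(2)$\Rightarrow$(3), (1)$\Rightarrow$(4) via isometricity of every $S^M$ and Lemma~\ref{lem:isometricityclassified}, (6)$\Rightarrow$(4), and ``$S$ commutative iff $S^*$ commutative'' via Lemma~\ref{lem:projections-and-complements}. But the two directions that carry the theorem, (4)$\Rightarrow$(1) and (3)$\Rightarrow$(1), are not proved. Since (5) and (6) only close up through them, the equivalence is not established.

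\textbf{The step (4)$\Rightarrow$(1).} The inference ``$S_M=M$ and $M=M_a\vee M_b$ give $(S^{M_a})_{M_b}\neq\varnothing$'' is false. Shattering says $(S_{M_b})_{M_a}=M$, which says nothing about $(S_{M_b})^{M_a}=(S^{M_a})_{M_b}$. For example, $S=\{00\}\subseteq\{0,1\}^2$ is ample and shatters $M^\top=M_a\vee M_b$ (the two one-dimensional co-trivial minors), yet $S^{M_a}=\varnothing$.

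The missing idea is that copies of $M$ are boxes of a \emph{complement} of $M$, not of anything obtained from $M$ by joins. The paper proceeds in two stages.
\begin{enumerate}
\item It reduces from an extended $M$ to a mixed one, using a minimal counterexample, Theorem~\ref{thm:ample=weaklyample} and Lemma~\ref{lem:subproductcommutative}. If a factor has a nontrivial block $P$ with $1<|P|<|U_1|$, restrict to $P\times U_2\times\cdots\times U_m$, where that factor becomes co-trivial; this yields a copy with some first coordinate $u$. Then restrict to $((U_1\setminus P)\cup\{u\})\times U_2\times\cdots\times U_m$, where the factor becomes trivial. The restricted minor is shattered there, and any copy of it is a copy of $M$.
\item For mixed $M$, the complement $M^\diamond$ is unique and has disjoint support, and each box of $\cB(M^\diamond)$ is a copy of $M$. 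Shattering means $(S_M)^{M^\diamond}\neq\varnothing$ and strong shattering means $(S^{M^\diamond})_M\neq\varnothing$, so commutativity finishes the proof.
\end{enumerate}
Your ``induction on boxes'' gestures at the first stage; the complement step is absent.

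\textbf{The step (3)$\Rightarrow$(1).} You give no argument here, and the sketched one cannot work as stated. Copies of $M$ are not vertices of $S^{N'}$ for a coarsening $N'$ of $M$, for the same reason as above. Moreover, the disconnected strong-projection need not come from a coarsening of $M$. In $\{0,1\}^3$, the set $S=\{000,001,100,010,111\}$ shatters the minor with co-trivial third factor and contains no copy of it. Yet $S^{N'}$ is a single point or empty for every coarsening $N'$; the only disconnected strong-projection is $S$ itself.

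The paper avoids guessing $N$ by proving (3)$\Rightarrow$(2) via a minimal counterexample. Minimality makes every $S^M$ with $M\neq M_\bot$ isometric, so $S$ itself is not isometric. Take $u,v\in S$ with $[u,v]\cap S=\{u,v\}$, a shortest $(u,v)$-path in $S$, and the elementary minor $M$ given by its first edge. Lifting a geodesic of $S^M$ produces a shorter path in $S$, a contradiction. Then (2)$\Rightarrow$(4)$\Rightarrow$(1) completes the chain.

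\textbf{The step (4)$\Rightarrow$(6).} Your direct argument is also unjustified. Isometricity of $S^M$ requires $((S^M)^N)_{N'}=((S^M)_{N'})^N$ even when $\supp(N')$ meets $\supp(M)$, which is not an instance of (4) for $S$. Route (6) through (1)$\Rightarrow$(2) and Lemma~\ref{lem:isometricityclassified} instead.
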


\begin{proof} 
    First, we prove $(1)\Rightarrow (2)$. By \cref{lem:superscriptample}, $S^M$ is ample for all $M\in \MP(U)$. By \cref{prop:ample-is-isometric}, every ample set is isometric, thus $S^M$ is isometric for all $M\in \MP(U)$.

    The implication (2)$\Rightarrow$(3) is trivial. For the converse implication (3)$\Rightarrow$(2), suppose there exist $U$ and $S\subseteq U$ such that $S$ is superconnected but not superisometric. Assume we have picked $U$ with this property with the smallest cardinality. This implies that $S^M$ is isometric for all $M\in \MP(U)\backslash M^{\bot}$ (since $(S^M)^{M'}$ is connected for all $M'\in \MP(U)$ and $|M'|<|U|$). From the minimality choice we conclude that $S$ is not isometric. Therefore, there must be non-adjacent $u,v\in S$ such that $[u,v]\cap S=\{u,v\}$. Since $S$ is connected, $u$ and $v$ are connected by a path in $S$ in the Hamming graph $H(U)$. Let $P=(u, u_1,u_2,\ldots, u_k, v)$ be such a path of shortest length. Let $x_u,x_{u_1}$ be the coordinates by which $u$ and $u_1$ differ, and let $M$ be the elementary minor-subproduct whose only nontrivial block is $\{x_u,x_{u_1}\}$. Let $u'\in M$ such that $F(u')=\{u,u_1\}$, then we have $u'\in S^M$.
    We distinguish two cases:
   \begin{case}    Every vertex of the path $P$ has a coordinate in $\{x_u,x_{u_1}\}$. Then both $u$ and $v$ have coordinate $x_u$, since $v$ having coordinate $x_{u_1}$ would imply $u_1\in [u,v]$. This implies that there must be some $i$ such that $u_i$ has coordinate $x_{u_1}$ and $u_{i+1}$ has coordinate $x_u$ (if $i=k$ we say $u_{k+1}:=v$). Suppose that $i$ is the smallest such index. Let $u_i'\in M$ be such that $F(u_i')=\{u_i,u_{i+1}\}$. Since $S^M$ is isometric, there is a $(u',u_i')$-path in $S^M$ of length $d(u',u_i')=d(u,u_{i+1})$. If we take the element with coordinate $x_u$ from each box of the $u',u_i'$-path, we get a $(u,u_{i+1})$-path in $S$ that has length (in terms of edges) at most $i-1$, which contradicts our assumption that $P=(u,u_1,\ldots, u_k,v)$ was a  $(u,v)$-path of shortest length in $S$.
   \end{case}

   \begin{case} There is a vertex of the path $P$  that does not have a coordinate in $\{x_u,x_{u_1}\}$. Let $u_i$ be the first such vertex. Then $\{u_i\}$ is also a vertex of $S^M$. Let $P'=(u',u_1',u_2',\ldots,u_j',\{u_i\})$ be a path of length $d_M(u',\{u_i\})$ in $S^M$, which exists because $S^M$ is isometric. Note that $d_M(u',\{u_i\})=d(u,u_i)=d(u_1,u_i)$, since $u_i$ does not have an $x_u$ or $x_{u_1}$-coordinate. Now consider the $u,u_i$-path in $S$ created from $u',u_1',u_2',\ldots,u_j',\{u_i\}$ by taking either the element from the box with coordinate $x_u$ (if the element $u_\ell'$ has coordinate $\{x_u,x_{u_1}\}$), or taking the sole element in the box of $u_\ell'$. This forms a path in $S$: if $u_\ell'$ and $u_{\ell+1}'$ both have coordinate $\{x_u,x_{u_1}\}$, then the two $x_u$-coordinate elements of $S$ we pick are adjacent; if $u_\ell'$ and $u_{\ell+1}'$ both don't have coordinate $\{x_u,x_{u_1}\}$, then both boxes only contain one element of $S$, and the two are adjacent; and if exactly one of $u_\ell'$ and $u_{\ell+1}'$ has coordinate $\{x_u,x_{u_1}\}$, then the chosen elements of $S$ differ in only one coordinate, so they are adjacent. The new path has $d_M(u',\{u_i\})=d(u,u_i)=d(u_1,u_i)\leq i-1$ edges, hence $(u,u_1,\ldots,u_i)$ was not a shortest $u,u_i$-path in $S$, which contradicts our assumption that $P$ was a $(u,v)$-path of shortest length in $S$.
   \end{case}

    Since in both cases we get a contradiction,  we conclude $S$ is isometric and therefore (3)$\Rightarrow$(2).
    
    Now we prove $(2)\Rightarrow (4)$. 
    First, assume that $S^M$ is isometric for all minor-subproducts $M$. By \cref{lem:isometricityclassified}, a set being isometric means $(S^{M'})_{M''}=(S_{M''})^{M'}$ for all $M''\in \MP(U)$ and all one-dimensional $M'\in \MP(U)$ with $\supp(M')\cap\supp(M'')=\varnothing$. This means that
    \begin{equation}\label{eq:commutativelemma}
    ((S^M)^{M'})_{M''}=((S^M)_{M''})^{M'}
    \end{equation}
    for all $M,M',M''\in \MP(U)$ such that $M'$ is one-dimensional and $\supp(M')\cap\supp(M'')=\varnothing$.
    
    We prove statement (4) by induction on $|\supp(M)|$. For $|\supp(M)|=0$ we trivially have $(S_{M'})^M=S_{M'}=(S^M)_{M'}$. Now suppose that we have shown the statement for $|\supp(M)|\leq k$, then choose any $M$ with $|\supp(M)|=k+1$. Then by \cref{lem:atomistic} we can find $M^1,M^2\in \MP(U)$ with $M=M^1\vee M^2$, such that $|\supp(M^1)|=|\supp(M)|-1$, and $|\supp(M^2)|=1$. We have
    \[
    (S^M)_{M'}=((S^{M^1})^{M^2})_{M'}\stackrel{\text{\eqref{eq:commutativelemma}}}{=}(((S^{M^1})_{M'})^{M^2}\stackrel{\text{IH}}{=}(((S_{M'})^{M^1})^{M^2}=(S_{M'})^{M}.
    \]
    This completes the induction step.
    
    Then, we show $(4)\Rightarrow (1)$. 
    For contradiction, assume that $U$ is a set such that there exists $S\subseteq U$ that is commutative but not ample. 
    We may assume $S$ is not $\EMP(U)$-ample by \cref{thm:ample=weaklyample} Assume that we have picked  $U$ with this property with the least number of elements. Since $S$ is not $\EMP(U)$-ample, there exists an extended subproduct $M=M(\Lambda,U)\in \MP(U)$ that is shattered by $S$ but not strongly shattered by $S$. 

    Let $M=M_1\times\ldots\times M_m$, and suppose that there is a subfactor $M_i$ that is neither trivial nor co-trivial. We may assume that $M_1$ is such a subfactor. Let $\alpha_1=\{P_1^1,\ldots,P_{\ell_1}^1\}$ be the quasi-trivial partition related to $M_1$, with $1<|P_1^1|<|U_1|$. 

    Consider the subproduct $U'=P_1^1\times U_2\times \ldots \times U_m$. By \cref{lem:subproductcommutative}, $S\cap U'$ is commutative in $U'$. Since $|U'|<|U|$, by our initial assumption  $S\cap U'$ is  ample in $U'$. In particular, let $M'=M(\Lambda',U')\in \MP(U')$, with $\Lambda'=(\{P_1^1\},\alpha_2,\ldots,\alpha_m)$. The boxes of $M'$ form a subset of the boxes of $M$. Since $M$ is shattered by $S$, it follows that $M'$ is shattered by $S\cap U'$. By ampleness, this means that $M'$ is strongly shattered by $S\cap U'$, so $S\cap U'$ contains a copy $W=W_1\times \ldots \times W_m$ of $M'$. In particular, since $\{P_1^1\}$ is a co-trivial partition in $U'$, the set $W_1$ consists of a single element $u_1^1$.

    Now let $U''=(U_1\backslash P_1^1)\cup\{u_1^1\}\times U_2\times \ldots \times U_m$, and let $M''=(\Lambda'',U'')$, where $\Lambda''=(\alpha_1\backslash P_1^1\cup \{\{u_1^1\}\}, \alpha_2,\ldots,\alpha_m)$. Since  $|P_1^1|>1$, $U''$ is a subproduct of $U$ that is strictly smaller than $U$. By \cref{lem:subproductcommutative} it follows that $S\cap U''$ is commutative in $U''$. Since $|U''|<|U|$, $S\cap U''$ is then ample. We argue that $M''$ is shattered by $S\cap U''$. Consider any box of $M''$. If its first coordinate is $\{u_1^1\}$, then the box is a smaller version of a box from $M'$ in $U'$. Since by definition $W$ is a copy of $M'$, this means that the box contains an element of $W$, which is an element of $S$ with first coordinate $u_1^1$. This element is therefore also contained in the box of $M''$. On the other hand, if we have a box of $M''$ with any other first coordinate, then this is also a box of $M$, which then contains an element of $S$, since $M$ is shattered by $S$. We conclude that $M''$ is shattered by $S\cap U''$, and because of ampleness, $M''$  is strongly shattered by $S\cap U''$. This means that $S\cap U''$ contains a copy $W'$ of $M''$. However, since every box of $M$ contains a box of $M''$, this implies that every box of $M$ contains an element of $W'$, and thus $W'$ is a copy of $M$. So $M$ is strongly shattered by $S$. This is a contradiction with our assumptions. We conclude that our last assumption, that $M_1$ is neither trivial nor co-trivial, is false. Hence we can assume that all subfactors of $M$ are either trivial or co-trivial, which means that $M$ is mixed. 

    Mixed minor-subproducts $M$ have  unique complements $M^{\diamond}$: $M^{\diamond}$ is obtained from $M$ by replacing trivial partitions by co-trivial partitions and vice versa. Note that $M$ being  shattered by $S$ can be equivalently written as  $S_M=M$. Finally, note that for any $S'\subseteq M$, we have $(S')^{M^{\diamond}}\neq \varnothing$ if and only if $S'=M$. So $M$ being shattered by $S$ is equivalent to $(S_M)^{M^{\diamond}}\neq \varnothing$.

    Since $M$ is mixed, every box of $M^{\diamond}$ contains precisely one element from every box of $M$, and therefore any box of $M^{\diamond}$ forms a copy of $M$. Therefore, every element of $S^{M^{\diamond}}$ corresponds to a copy of $M$ in $S$. So $S^{M^{\diamond}}$ is nonempty if and only if $M$ is strongly shattered by $S$. Therefore $S^{M^{\diamond}}$ is nonempty if and only if $(S^{M^{\diamond}})_M\neq \varnothing$. 
    
    In conclusion, since $M$ is shattered by $S$, we have $(S_M)^{M^{\diamond}}\neq \varnothing$. By commutativity of $S$, we have $(S^{M^{\diamond}})_M\neq \varnothing$, which implies that  $M$ is strongly shattered by $S$. This contradicts our initial choice of $S$ and $U$. We conclude that there is no set $S$ that is commutative but not ample, and that completes the proof of the implication $(4)\Rightarrow (1)$. 

    To prove the implication $(4)\Rightarrow(5)$, let $M,M'\in \MP(U)$ with $\supp(M)\cap \supp(M')=\varnothing$. By Lemma \ref{lem:projections-and-complements} we get $(S^M)^*=(S^*)_M$ and $(S_M)^*=(S^*)^M$ for any $S$. Since $S$ is commutative we have
    \[
        ((S^*)_M)^{M'}=((S^M)^*)^{M'}=((S^M)_{M'})^*=((S_{M'})^M)^*=((S_{M'})^*)_M=((S^*)^{M'})_M.
    \]
    So $S^*$ is commutative, hence by the equivalence between (1) and (4), $S^*$ is ample, yielding $(4)\Rightarrow (5)$. The 
    implication $(5)\Rightarrow (1)$ follows by replacing $S$ by $S^*$, since we already know $(1)\Rightarrow (5)$. Finally, the equivalence (2)$\Leftrightarrow$(6) follows immediately from \cref{lem:isometricityclassified}. 
    \end{proof}

\begin{lemma}\label{lem:ample-projection} If $S\subseteq U$ is ample, then for any $M\in \MP(U)$, $S_M$ is ample in $M$. 
\end{lemma}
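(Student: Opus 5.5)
The plan is to reduce the statement to \cref{lem:superscriptample} by complementation. The main tool is the complement characterization in \cref{thm:ampleisometriccommutative}, which says that a subset of a product is ample if and only if its complement is ample.

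The argument runs in three steps. First, since $S$ is ample in $U$, \cref{thm:ampleisometriccommutative} (the equivalence (1)$\Leftrightarrow$(5)) shows that $S^*=U\setminus S$ is ample in $U$. Second, applying \cref{lem:superscriptample} to $S^*$ shows that $(S^*)^M$ is ample in $M$. Third, by \cref{lem:projections-and-complements} (taking $M'=M_\bot=U$, so that $M\vee M'=M$), we have $(S^*)^M=(S_M)^*$, where the complement is now taken inside $M$. Hence $(S_M)^*$ is ample in $M$.

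To finish, we apply \cref{thm:ampleisometriccommutative} once more, this time to the product $M=M_1\times\ldots\times M_m$ itself. This requires regarding $M$ as a Cartesian product in its own right, which it is. The theorem then gives that the complement $((S_M)^*)^*=S_M$ is ample in $M$.

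The only point that needs checking is this last one: that the ambient product is legitimately $M$ rather than $U$. Every minor-subproduct of $M$ is a minor-subproduct of $U$ by \cref{minors-of-minors}, but ampleness in $M$ is defined intrinsically via $\MP(M)$. So the theorem must simply be invoked with $M$ in place of $U$. Since \cref{thm:ampleisometriccommutative} holds for an arbitrary finite Cartesian product, this is legitimate, and no obstacle remains.

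As a fallback, if one wanted to avoid complements, one could argue directly via \cref{lem:atomistic} and \cref{lem:operatorsselfcommutative}. This would reduce to elementary $M$ and then lift shattered extended minor-subproducts of $M$ to $U$, splitting the merged block $\{a,b\}$ as in the proof of \cref{lem:superscriptample}. The complement route makes this unnecessary.
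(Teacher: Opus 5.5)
Your proof is correct and is essentially the paper's own argument: pass to $S^*$ via the complement equivalence in \cref{thm:ampleisometriccommutative}, apply \cref{lem:superscriptample} to get $(S^*)^M$ ample, identify $(S^*)^M=(S_M)^*$ by \cref{lem:projections-and-complements}, and complement once more inside $M$. You also say explicitly that this last step invokes the theorem with $M$ as the ambient product, a point the paper leaves implicit.
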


\begin{proof}
    By \cref{thm:ampleisometriccommutative} we know that $S$ is ample if and only if $S^*$ is ample. It follows from \cref{lem:superscriptample} that $(S^*)^M$ is ample, which is equal to $(S_M)^*$ by \cref{lem:projections-and-complements}. Taking the complement again, we deduce that $S_M$ is ample.
\end{proof}

\subsection{Ampleness and elementary minor-subproducts} In this subsection, we characterize ample sets in a more efficient way, using elementary minor-subproducts and intersections with intervals.  Before proving this result, we consider elementary minor-subproducts in more detail. 

Recall that the elementary minor-subproducts of $U$ correspond to the atoms  of $\GPart(U)$. The atoms of $\GPart(U)$  are the generalized partitions $\Lambda=(\alpha_1,\ldots,\alpha_m)$ in which all partitions of factors contain only singleton blocks, except one partition (say, the partition $\alpha_i$ of $U_i$), which contains singleton blocks and  precisely one block $e=\{a, b\}\subseteq U_i$ of size 2. As we noted before, the box-partitions $\cB(\Lambda,U)$ corresponding to $\Lambda$ consists only of singleton boxes, and of some boxes of size two, corresponding to the class $\Theta(e)$: the edges of $H(U)$ parallel to $e$. In view of this, we allow ourselves to represent an elementary minor-subproduct by such a block $e$. 

Pick any edge $e'=uv\in \Theta(e)$ and suppose without loss of generality that the $i$-coordinate of $u$ is $a$ and the $i$-coordinate of $v$ is $b$. Recall from \cref{thm:partial-Hamming} that $e'$ defines a partition of $U$ into three complementary halfspaces $W(u,v), W(u,v)$ and $W_=(u,v)$. 
This partition is independent of the choice of the edge $e'$ from $\Theta(e)$. Therefore for  $e=\{a, b\}\subseteq U_i$ we can canonically define 
a partition of $U$ into three halfspaces $W_a(e)=W(u,v), W_b(e)=W(v,u)$, and $W_=(e)$. Then $W(e)=W_a(e)\cup W_b(e)=U\setminus W_=(e)$ is also a halfspace of $U$, which we dub the \emph{main halfspace}, while  $W_=(e)$ will be called the \emph{residual halfspace} of $e$.  In case of binary products, for any choice of $e$, the residual halfspace $W_=(e)$ is empty.  This is also the case when the factor $U_i$ containing $e$ is binary.  
If $S\subseteq U$, then we set $S_a(e)=S\cap W_a(e), S_b(e)=S\cap W_b(e), S_=(e)=S\cap W_=(e),$  and $S(e)=S\cap W(e)=S_a(e)\cup S_b(e)$. 

Now we consider the projection and strong-projection $S_e$ and $S^e$, where we again interpret edge $e$ as an elementary minor-subproduct. For binary products $U$, the set $S$ is a subset of the hypercube $\{ -1,+1\}^X$. In that case the projection $S_e$ of an elementary minor-subproduct is, as the name suggests, the orthogonal projection of $S$ on the coordinate hyperplane defined by $X\setminus \{i\}$, with $i=\supp(e)$. Then, the sets $S_e$ and $S^e$ (often denoted by $S_i$ and $S^i$)
have a natural geometric and graph-theoretical interpretation (for more details, see \cite{BaChDrKo_geometry}). 
The subgraph $H(S_e)$ of the hypercube $\{ -1,+1\}^{X\setminus \{i\}}$ induced by $S_e$ 
is obtained from the subgraph $H(S)$ of $\{ -1,+1\}^{X}$ induced by $S$ by contracting all edges of $H(S)$ from the class $\Theta(e)$. On the other hand, $S^e$ can be viewed as the intersection of the coordinate hyperplane 
$x_i=0$ with the $e$-edges of $H(S)$ viewed as solid segments. Two tuples of $S^e$ are adjacent in $H(S^e)$ if and only if they are the middles of two $e$-edges defining a square of $H(S)$.  In the same way, each cube in $H(S)$ containing $i$ in its support gives raise  to a cube of $H(S^e)$ of one dimension less.  This is why $S^e$ can be dubbed the \emph{$i$-hyperplane} of $S$ (this terminology, coming from geometric group theory, was used in a more general setting in \cite{BaChKn}).

For subsets $S$ of arbitrary products $U$,  the projection $S_e$ can be interpreted in the same way as in the  binary case and the graph $H(S_e)$ is obtained from the graph $H(S)$ by contracting all $e$-edges. In particular, if $e=\{ a,b\}\subseteq U_i$, we will denote by $U_e$ the Cartesian product $U_1\times\ldots\times U_{i-1}\times U'_i\times U_{i+1}\times\ldots\times U_m$, where $U'_i=U_i\setminus \{ a,b\}\cup \{ w\}$ (in some proofs, instead of coordinate $w$ we will use coordinate $\{ a,b\}$). Then $S_e$ and $S^e$ are subsets of $U_e$ (to avoid saying that $S_e$ and $S^e$ are subsets of $e$).  By definition of $S^e$, each $e$-edge $s's''$ of $H(S)$  will be mapped to a tuple $t$ of $S^e$, because the fiber $F(t)$ of $t$ consists of the two extensions $s'$ and $s''$, and both $s',s''$ belong to $S$. If the factor $U_i$ containing $\{ a,b\}$ is binary, then the box-partition  $\cB(\Lambda,U)$  coincides with the perfect matching defined by the $e$-edges of $U$ and thus $S^e$, and $H(S^e)$ can be interpreted as in the binary case. Otherwise,  $\cB(\Lambda,U)$  will contain singleton boxes. By definition of $S^e$, each such box $\{ s\}\subset S$ will be bijectively mapped to the same tuple $s$ of $S^e$. Therefore,  $S^e$ is the disjoint union of two parts: the \emph{hyperplane} $(S(e))^e$ of the main halfspace $S(e)$ of $S$ and the \emph{residue} $(S_=(e))^e$, corresponding to the residual halfspace $S_=(e)$ of $S$. There will be an edge between two vertices $t,s\in S^e$, $t$ belonging to the hyperplane $(S(e))^e$ and $s$ belonging to the residue  $(S_=(e))^e$, if and only if $S$ contains a triangle $s's''s$ such that $s's''\in \Theta(e)$ and $s\in S_=(e)$. 

\begin{lemma} \label{lem:hyperplanes-ample} If $S\subseteq U$ is ample and $e=\{ a,b\}\subseteq U_1$ is an elementary product of $U$, then the hyperplane
$(S(e))^e$ and the residue $(S_=(e))^e)$ are ample in $U_e$. 
\end{lemma}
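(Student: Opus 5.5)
Both pieces will be written as strong-projections of restrictions of $S$ to full-dimensional subproducts; then Lemma \ref{lem:superscriptample} and the restriction part of Theorem \ref{thm:ample=weaklyample} give ampleness. Lemma \ref{ample-in-V-ample-in-U} finally lifts ampleness from a subproduct of $U_e$ to $U_e$.

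\emph{The residue.} Put $V=(U_1\setminus\{a,b\})\times U_2\times\ldots\times U_m$, so that $S_=(e)=S\cap V$. If $V$ is empty (i.e.\ $U_1=\{a,b\}$), the residue is empty and hence trivially ample. Otherwise $S\cap V$ is ample in $V$ by Theorem \ref{thm:ample=weaklyample} (implication (1)$\Rightarrow$(4)). The fibers of $e$ over tuples whose first coordinate is not the contracted element are singletons. Hence $(S_=(e))^e$ is, under the identification of such tuples, exactly $S\cap V$ viewed as a subset of the full-dimensional subproduct $V'=(U_1'\setminus\{w\})\times U_2\times\ldots\times U_m$ of $U_e$. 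Here $V'$ is a copy of $V$. Lemma \ref{ample-in-V-ample-in-U} then gives ampleness in $U_e$.

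\emph{The hyperplane.} Put $V=\{a,b\}\times U_2\times\ldots\times U_m$, so that $S(e)=S\cap V$, which is ample in $V$ by Theorem \ref{thm:ample=weaklyample}. Consider the restriction $e|_V$ of $e$ to $V$. It is the minor-subproduct of $V$ whose first partition is the co-trivial partition $\{\{a,b\}\}$ and whose other partitions are trivial. Lemma \ref{lem:superscriptample}, applied in the universe $V$, shows that $(S\cap V)^{e|_V}$ is ample in $M(e|_V,V)=\{w\}\times U_2\times\ldots\times U_m$. The fiber in $V$ of a tuple $(w,t_2,\ldots,t_m)$ is $\{(a,t_2,\ldots),(b,t_2,\ldots)\}$, which is also its fiber in $U$. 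Therefore $(S\cap V)^{e|_V}=(S(e))^e$, and this set lies inside the full-dimensional subproduct $\{w\}\times U_2\times\ldots\times U_m$ of $U_e$. Lemma \ref{ample-in-V-ample-in-U} again finishes the argument.

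The only delicate point is bookkeeping. We must check that the strong-projection of $S(e)$ taken in $U$ (as a subset of $U_e$) coincides with the strong-projection computed inside $V$. For the hyperplane, a tuple with first coordinate not $w$ has a singleton fiber disjoint from $S(e)$, so it contributes nothing to $(S(e))^e$. Likewise, for the residue, tuples with first coordinate $w$ contribute nothing. With this clear, the rest consists of direct applications of earlier results. A trivial factor $\{w\}$ can be ignored via Lemma \ref{lem:trivial-factor} if that is more convenient.
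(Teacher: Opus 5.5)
Your proof is correct and uses the same ingredients as the paper: restrict $S$ to a full-dimensional subproduct ($W(e)$ or $W_=(e)$) via Theorem~\ref{thm:ample=weaklyample}, then strong-project using Lemma~\ref{lem:superscriptample}. The only difference is the order of these steps. The paper first lifts $S(e)$ and $S_=(e)$ to ample subsets of $U$ (implicitly via Lemma~\ref{ample-in-V-ample-in-U}) and then strong-projects on $e$ inside $U$. You instead strong-project inside the subproduct, or, for the residue, note that the strong-projection just identifies singleton fibers, and lift to $U_e$ at the end.
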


\begin{proof} If $S$ is ample, then each of the sets $S_a(e),S_b(e), S(e),$ and $S_=(e)$ are  also ample because the intersection of an ample set with a full-dimensional subproduct is ample by Theorem \ref{thm:ample=weaklyample}. 
Consequently, 
$(S(e))^e$ and $(S_=(e))^e)$ are ample in $U_e$ by  Lemma \ref{lem:superscriptample}. 
\end{proof}

We will use the following refinement of superisometricity: 

\begin{definition} [Box-superisometricity]
    A set $S\subseteq U=U_1\times\ldots\times U_m$ 
    is \emph{box-superisometric}  if for any pair of parallel boxes $B',B''\subseteq S$, there exists a geodesic gallery between $B'$ and $B''$ that is contained in $S$. 
\end{definition}

Here is our second main characterization of ampleness: 

\begin{theorem}\label{thm:ample-elementary}
\setcounter{claim}{0} 
Let $S\subseteq U$. Then the following conditions are equivalent:
\begin{enumerate}
    \item \label{item:ample} $S$ is ample;
    \item \label{item:contraction-strongcontr} $S$ is isometric and both $S^e$ and $S_e$ are ample for some 
    elementary minor-subproduct $e$; 
    \item \label{item:weakisometricity-contractions} $S$ is weakly isometric and both $S^e$ and $S_e$ are ample for some 
    elementary minor-subproduct $e$;
    \item \label{item:strongcontractions} $S$ is connected and $S^e$ is ample for every elementary minor-subproduct $e$;
    \item $S$ is box-superisometric;
    \item $S\cap[u,v]$ is ample in $[u,v]$ for all $u,v\in U$; 
    \item $S\cap[u,v]$ is ample in $[u,v]$ for all $u,v\in S$.
  \end{enumerate}
\end{theorem}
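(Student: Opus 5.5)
We will prove the equivalences along two chains, (1)$\Rightarrow$(2)$\Rightarrow$(3)$\Rightarrow$(1) and (1)$\Rightarrow$(4)$\Rightarrow$(1), together with (1)$\Rightarrow$(5)$\Rightarrow$(1) and (1)$\Rightarrow$(6)$\Rightarrow$(7)$\Rightarrow$(1).

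\textbf{The easy forward implications.} The implications out of (1) all follow from earlier results.
\begin{itemize}
\item (1)$\Rightarrow$(2): $S$ is isometric by Proposition \ref{prop:ample-is-isometric}. Moreover, $S^e$ and $S_e$ are ample by Lemma \ref{lem:superscriptample} and Lemma \ref{lem:ample-projection}.
\item (1)$\Rightarrow$(4): the same two results give this directly.
\item (2)$\Rightarrow$(3): this is trivial.
\item (1)$\Rightarrow$(6): every interval $[u,v]$ is a box. The restriction $S\cap[u,v]$ is therefore ample by Theorem \ref{thm:ample=weaklyample}(3)/(4), read inside $[u,v]$.
\item (6)$\Rightarrow$(7): this is trivial.
\item (1)$\Rightarrow$(5): take parallel boxes $B'=V\times t'$ and $B''=V\times t''$ in $S$. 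Choose the minor-subproduct $M$ whose box-partition contains $B'$ and $B''$ as fibers, i.e.\ $\alpha_i=\{V_i,U_i\setminus V_i\}$ on $\supp(V)$ and trivial elsewhere. Then $t',t''\in S^M$. Since $S^M$ is isometric by Theorem \ref{thm:ampleisometriccommutative}, a geodesic between them in $S^M$ lifts to a geodesic gallery of boxes parallel to $B'$ lying in $S$. One checks that this geodesic stays among tuples whose $\supp(V)$-coordinate is the block $V$, using convexity of the preimage in Lemma \ref{lem:convexity-preimages}(1).
\end{itemize}

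\textbf{Reverse directions via the commutativity/superisometricity criterion.} For (5)$\Rightarrow$(1) we aim to verify superconnectivity, condition (3) of Theorem \ref{thm:ampleisometriccommutative}. Two elements of $S^M$ correspond to boxes of $\cB(\Lambda)$ contained in $S$, and these boxes need not be parallel. Shrinking each to a common copy is not available, so instead we argue by induction on $|U|$ using restrictions. Box-superisometricity passes to $S\cap V$ for boxes $V$, since geodesic galleries stay in convex sets. We then mimic the (3)$\Rightarrow$(1) proof of Theorem \ref{thm:ample=weaklyample}, with the $\EMP$-ampleness step replaced as follows. A shattered extended $M$ with a co-trivial-or-trivial pattern is reduced, via a mixed complement argument, to the existence of a box of $M^\diamond$ in $S$. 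Galleries connecting partial copies provide that box.

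For (4)$\Rightarrow$(1) and (3)$\Rightarrow$(1) we use induction on $|U|$ together with commutativity. Since $S^e$ is ample, $(S^e)^{M}$ is connected for every $M$. Any $M$ with nontrivial support factors as $e\vee M'$ (Lemma \ref{lem:atomistic}), so $S^M=(S^e)^{M'}$ is connected (Lemma \ref{lem:operatorsselfcommutative}). For $M$ trivial, $S^M=S$ is connected by hypothesis. Superconnectivity then gives ampleness by Theorem \ref{thm:ampleisometriccommutative}.

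For (3)$\Rightarrow$(1) we use a single $e=\{a,b\}$ and aim for commutativity $(S^M)_{M'}=(S_{M'})^M$. When $e\preceq M$ or $e\preceq M'$, we reduce through $S^e$ or $S_e$ using Lemma \ref{lem:operatorsselfcommutative} and the ampleness of these sets. The remaining pairs are to be handled by weak isometricity and Lemma \ref{lem:weakisometricity}, peeling atoms one at a time.

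\textbf{Main obstacle.} This last reduction is the step I expect to be hardest. Passing from a single $e$ to all $M$ is not formal, since $M$ need not lie above $e$. It is plausible that one must first upgrade weak isometricity to isometricity, using $S_e$ isometric and lifting paths across $e$-edges as in Lemma \ref{lem:isometricityclassified}. After that one would show that all $S^{e'}$ are ample by comparing $S^{e'}$ with $(S^{e})^{e'}$ and $(S_e)^{e'}$.

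For (7)$\Rightarrow$(1), first note that $S$ is isometric: for $u,v\in S$, $S\cap[u,v]$ is classically ample and hence isometric in the hypercube $[u,v]$. Now let $M$ be a shattered extended minor-subproduct. If $M$ is in fact mixed, we pick a shattered sub-configuration within an interval spanned by two elements of $S$ and apply classical ampleness there. In general, we first reduce to mixed $M$ by the same $U'/U''$ restriction trick as in (4)$\Rightarrow$(1) of Theorem \ref{thm:ampleisometriccommutative}, noting that (7) is inherited by restrictions to boxes.
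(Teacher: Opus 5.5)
Your implications out of (1) are correct and agree with the paper. So is your (4)$\Rightarrow$(1): write $M=e_1\vee M'$, so that $S^M=(S^{e_1})^{M'}$ is connected, then apply superconnectivity.

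\textbf{Main gap: (3)$\Rightarrow$(1).} You leave this step open, and your suggested fix (upgrading weak isometricity of $S$ to isometricity) does not address the real obstruction. What is needed is that $S^f$ is weakly $(k,\ell)$-isometric for \emph{every} elementary $f$, not only for the given $e$. This does not follow from isometricity of $S$. For example, delete two antipodal vertices from a $3$-cube: the resulting isometric $6$-cycle strong-projects along one coordinate onto two opposite corners of a square.

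The paper's key step shows that this configuration cannot occur when $S$ is weakly isometric and $S^e,S_e$ are ample. Over a bad square of $U_f$ one gets a $3$-cube $Q$ with $S\cap Q$ an isometric $6$-cycle. A four-case analysis on the position of the block of $e$ relative to $Q$ then produces a forbidden configuration (two opposite corners of a square, or a $6$-cycle in a $3$-cube) in $S^e$, in its hyperplane or residue, or in $S_e$.

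With this in hand, the induction runs as follows.
\begin{itemize}
\item When $\supp(f)\ne\supp(e)$, \cref{lem:weakisometricity} gives $(S^f)^e=(S^e)^f$ and $(S^f)_e=(S_e)^f$. Both are ample by \cref{lem:superscriptample}, so $S^f$ satisfies (3) in a smaller product and is ample by induction on $|U|$.
\item When $\supp(f)=\supp(e)$, the paper passes through some $e'$ of different support. It uses that $S^*$ also satisfies (3) to get $S_{e'}$ ample.
\item Connectivity of $S$ comes from lifting paths of $S_e$ and filling distance-$2$ gaps by weak isometricity.
\end{itemize}
Together these give (3)$\Rightarrow$(4). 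Even your ``easy'' subcase $e\preceq M$ is not formal: moving $e$ past $M'$ needs $(S^e)_{M'}=(S_{M'})^e$ for arbitrary $M'$. That is an isometricity-type identity (cf.\ \cref{lem:isometricityclassified}) which (3) does not provide.

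\textbf{Further gaps: (5)$\Rightarrow$(1) and (7)$\Rightarrow$(1).} After your $U'/U''$ reduction to a shattered mixed $M$ (trivial on $A$, co-trivial elsewhere), a copy of $M$ has the form $U_A\times\{t\}$ for a single tuple $t$.
\begin{itemize}
\item For (5), ``galleries connecting partial copies'' gives no mechanism for producing a common $t$.
\item For (7), $U_A\times\{t\}$ lies in no interval once some $U_i$ with $i\in A$ has three or more elements, so classical ampleness of $S\cap[u,v]$ cannot be applied there. Induction over proper boxes only yields common tuples for proper sub-boxes of $U_A$, which leaves an unproved Helly-type statement.
\end{itemize}

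The paper avoids direct arguments here.
\begin{itemize}
\item (5)$\Rightarrow$(6): geodesic galleries between boxes of an interval stay inside it, and in a hypercube box-superisometricity coincides with superisometricity, hence with binary ampleness.
\item (7)$\Rightarrow$(6): this uses Lawrence's total asymmetry.
\item (6)$\Rightarrow$(1): this is a minimal-counterexample argument that \emph{uses} (3)$\Rightarrow$(1). Since $S$ is weakly isometric, some $S^e$ or $S_e$ fails to be ample, hence fails on an interval of $U_e$. Each of the three types of such intervals comes from an interval or a proper box of $U$ on which $S$ is ample. This gives a contradiction, apart from a small residual case handled directly.
\end{itemize}
So the whole theorem rests on (3)$\Rightarrow$(4), which is exactly the step missing from your plan.
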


\begin{proof} To prove the equivalence of the conditions from (1) to (7), we establish three chains of
implications  $\rm(1)\Rightarrow \rm(2)\Rightarrow \rm(3)\Rightarrow \rm(4)\Rightarrow \rm(1)$, $\rm(1)\Rightarrow \rm(5)\Rightarrow \rm(6)\Rightarrow \rm(1)$, and $\rm(6)\Rightarrow\rm(7)\Rightarrow \rm(6)$. 

The implication (1)$\Rightarrow$(2) follows from \cref{prop:ample-is-isometric}, \cref{lem:superscriptample} and \cref{lem:ample-projection}. The 
implication (2)$\Rightarrow$(3) is trivial. To prove the implication (\ref{item:strongcontractions})$\Rightarrow$(1), by \cref{thm:ampleisometriccommutative} it suffices to show that $S^M$ is connected for any $M\in \MP(U)$. This is obviously so if $M=M_{\bot}$ since in this case we have $S^{M_\bot}=S$ and $S$ is connected. Otherwise, by \cref{lem:atomistic}, $M$ can be written as the join $M=\bigvee_{i=1}^k e_i$ of atoms (elementary minor-subproducts)  $e_1,\ldots,e_k$. Let $M'=\bigvee_{i=2}^k e_i\in \MP(U)$.  Then $S^M=(S^{e_1})^{M'}$. Since $S^{e_1}$ is ample by condition (4), the set $(S^{e_1})^{M'}$ is isometric by \cref{thm:ampleisometriccommutative}. Consequently,  we have the implication (\ref{item:strongcontractions})$\Rightarrow$(1).  We now show the final implication (3)$\Rightarrow$(4) of the first chain of implications. 
Its proof consists of several intermediate steps and extends the proof of the  implication (iii)$\Rightarrow$(iv) of Theorem 4 of \cite{BaChDrKo} (but the proof is much more involved than the binary case). 

\begin{claim}  $S$ is connected.
\end{claim}

\begin{proof}  Pick any $s,t\in S$, and let $s',t'\in S_e$ such that $s\in F(s')$ and $t\in F(t')$. Assume $s'\neq t'$ (otherwise $s=t$ or $s\sim t$), and, as $S_e$ is connected, let $P=(s',u_1',u_2',\ldots,u_k',t')$ be a path in $S^e$. Consider a sequence $s,u_1,u_2,\ldots, u_k,t$ in $S$, where each $u_i\in F(u_i')$. Every element $u_i$ in this sequence has distance at most 2 from the next element $u_{i+1}$, since each  pair $u_i,u_{i+1}$ either consists of adjacent vertices or of ends of two incident edges of $\Theta(e)$.  From weak isometricity of $S$, for each consecutive $u_i,u_{i+1}$ that are not adjacent, we can adjoin to $P$ their common neighbor $w_i\in S$ and obtain a path $P'$ connecting $s$ and $t$ in $S$. 
\end{proof}

\begin{claim}  $S^f$ is weakly $(k,\ell)$-isometric for all elementary $f\in\MP(U)$ and $k,\ell\in X$ with $k\neq \ell$.  
\end{claim}

\begin{proof}
 If $\supp(f)\in\{k,\ell\}$, then this follows immediately from \cref{lem:convexity-preimages}(3). Otherwise, let $s,t\in S^f\subseteq U_f$ such that $s,t$ have distance 2 in $U_f$ and only differ in the coordinates $k$ and $\ell$. Suppose for contradiction that there is no path of length 2 connecting $s,t$ in $S^f$. Therefore the interval $[s,t]$ in $U_f$ is a square $(s,p,t,q)$ with only two opposite vertices $s,t$ in $S^f$. 
Denote by $Q$ the 3-cube of $U$, which is the expansion of the square $[s,t]$. Set $C=S\cap Q$. Let $F(s)=\{ s_1,s_2\}, F(t)=\{t_1,t_2\}, F(p)=\{ p_1,p_2\}$, and $F(q)=\{ q_1,q_2\}$. For convenience of notation, let $a,0$ be the coordinates of $s$ (at index $k,\ell$, respectively), let $b,1$ be the coordinates of $t$, and let $A,B$ be the elements of the nontrivial block of the generalized partition of $f$. Suppose also that these three coordinates are defined by the three first factors $U_1,U_2,U_3$ of $U$. Then the four fibers $F(s),F(t),F(p)$ and $F(q)$  have the form \[F(s)=\{s_1=(a,A,0,z),s_2=(a,B,0,z)\}, F(t)=\{t_1=(b,A,1,z),t_2=(b,B,1,z)\},\]
\[F(p)=\{ p_1=(a,A,1,z), p_2=(a,B,1,z)\},F(q)=\{ q_1=(b,A,0,z), q_2=(b,B,0,z)\}\] (see \cref{fig:thm3proof}), where $z$ is a tuple of $U_4\times\ldots\times U_m$, common to all vertices of the 3-cube $Q$. By weak isometricity of $S$, $S$ must contain one of the vertices $p_1,q_1$, and one of the vertices $p_2,q_2$. The only way to do so without creating a connection between $s$ and $t$ in $S^f\subseteq U_f$, is if $C=S\cap Q$ is an isometric 6-cycle in the 3-cube $Q$. For the rest of the proof of the claim, assume without loss of generality that $Q\backslash S=\{p_2,q_1\}$. 
\begin{figure}[htbp]
    \centering
    \includestandalone[height=0.4\linewidth]{img/thm3proof1}
    \includestandalone[height=0.33\linewidth]{img/thm3proof2}
    \includestandalone[height=0.33\linewidth]{img/thm3proof3}
    \caption{Top left: the boxes of $f$ related to the square $[s,t]$. Top right: the boxes of $e$ that correspond to the vertices of $Q'$ in case 3. Bottom: the boxes of $e$ with elements of $Q$ in case 4 (note: the fourth coordinate is not indicated, the vertices marked with $+$ are those with $4$-coordinate $\beta$)}
    
    \label{fig:thm3proof}
\end{figure}

Now, we will use ampleness of  $S^e$ and $S_e$  to derive a contradiction. 
Consider the elementary minor-subproduct $e$. We distinguish four cases. 

\begin{case} The nontrivial block of $e$ is one of the pairs $\{a,b\}, \{ A,B\},\{ 0,1\}$, say $e=\{a,b\}$. 
\end{case}
Then $Q$ is included in the main halfspace $W(e)$ of $e$, hence $C\subseteq S(e)$. Since $\{ s_2,q_2\}, \{ p_1,t_1\}\subseteq S$ and $\{ s_1,q_1\}, \{ p_1,t_1\}$ are not included in $S$, in $(S(e))^e\subseteq S^e$ 
we will obtain two opposite vertices of a square, whose two other vertices are not in $(S(e))^e$. Therefore $(S(e))^e$ is not weakly isometric  and thus $(S(e))^e$ is not ample, a contradiction with Lemma \ref{lem:hyperplanes-ample} and ampleness of $S^e$.

\begin{case} The nontrivial block of $e$ is included in $U_i$ for some $i\le 3$, but contains none of $a,b,A,B,0,1$, say $e=\{ c,d\}\subseteq U_1\setminus \{ a,b\}$.
\end{case}
Then $Q$ is included in the residual halfspace $W_{=}(e)$ of $e$, hence $C\subseteq S_=(e)$. Therefore $(S_=(e))^e$ includes a 3-cube, which intersected with $(S(e))^e$ is a 6-cycle. Thus $(S_=(e))^e$ is not ample, a contradiction with \cref{thm:ample=weaklyample} and ampleness of $S^e$. 

\begin{case} The nontrivial block of $e$ contains exactly one of the six coordinates $a,b,A,B,0,1$.
\end{case}

We can assume by symmetry that this block is $\{b,c\}$ for some $c\in U_1\setminus \{ a,b\}$. Let $p'_1=(c,A,1,z), p'_2=(c,B,1,z)\},q'_1=(c,A,0,z),$ and $q'_2=(c,B,0,z)$. Then $(q'_1,q'_2,p'_2,p'_1)$ is a square of $U$ and $q'_1\sim s_1,q_1$, and  $p'_1\sim p_1,t_1$, and $p'_2\sim p_2,t_2$, and $q'_2\sim s_2,q_2$ (see \cref{fig:thm3proof}). 
Let $Q'$ be the 3-cube of  $U_e$ whose elements are the boxes of $\BBox(e,U)$ that contain elements of the 3-cube $Q$  (that is, there are four singleton boxes $\{ s_1\},\{ s_2\}, \{p_1\}, \{ p_2\}$ containing one element and four boxes $\{ q_1,q'_1\}, \{ t_1,p'_1\}, \{ q_2,q'_2\}, \{ t_2,p'_2\}$ containing two elements, corresponding to the four edges from $\Theta(e)$ incident to $q_1,q_2,t_1,t_2$, see \cref{fig:thm3proof}).  
Since $S_e$ is obtained from $S$ by contracting the edges of $\Theta(e)$, the only way that $S_e$ does not form an isometric 6-cycle in $Q'$ is if $q'_1\in S$. Since $q'_1=(c,A,0,z)$ and $t_1=(b,A,1,z)$ are at distance 2, and $S$ is weakly isometric, they must share a neighbor in $S$, which can only be $p'_1=(c,A,1,z)$. Likewise, $q'_1=(c,A,0,z)$ and $q_2=(b,B,0,z)$ are at distance 2, yielding $q'_2=(c,B,0,z)\in S$. The two elements $p'_1=(c,A,1,r)$ and $q'_2=(c,B,0,r)$ are in $S$, and the two boxes that contain them are then in $S^e$ and have distance 2. Thus by ampleness of $S^e$, they must be connected by another box of $S^e$, for which the only option is that $p'_2=(c,B,1,z)\in S$. We have now determined for all elements in the expansion of elements of $Q'$ if they are in $S$. Namely, we deduced that among the vertices $s_1,s_2,p_1,p_2,t_1,t_2,q_1,q_2,q'_1,q'_2,p'_1,p'_2$, only $q_1$ and $p_1$ do not belong to $S$. Furthermore, 
$s_1,s_2,p_1$ belong to $S_=(e)$ and $\{q_2,q_2'\},\{t_1,p_1'\},\{t_2,p_2'\}\subseteq S(e)$. 
This implies that $S^e$ forms an isometric 6-cycle in $Q'$, so $S^e$ cannot be ample. This is a contradiction, and we conclude that $S^f$ is weakly $(k,\ell)$-isometric in this case.

\begin{case} The nontrivial block of $e$ is included in $U_i$ for some $i\ge 4$, say $e=\{ \alpha,\beta\}\subseteq U_4$. 
\end{case}

Note that all vertices of the 3-cube $Q$ have the same fourth coordinate $\gamma$. First suppose $\gamma\notin \{ \alpha,\beta\}$. Then $Q$ is included in the residual halfspace $W_=(e)$. The projection $*^e$ maps the cube $Q$ to a 3-cube $Q'$ of $U_e$. Consequently, the vertices of the 6-cycle $C=S\cap Q$ give rise to a 6-cycle $C'$, which will be equal to the intersection of $Q'$ with the residue $(S_=(e))^e$. Consequently,  $(S_=(e))^e$ is not ample. Since $S^e$ is ample, this contradicts \cref{thm:ample=weaklyample}. 

Now, suppose  $\gamma\in \{ \alpha,\beta\}$, say $\gamma=\alpha$. For each 
vertex $r$ of the 3-cube $Q$ we denote by $r^+$ the vertex of $U$ having the same coordinates as $r$, except the fourth coordinate which is $\beta$. These vertices define a 3-cube $Q^+$ and the union $Q\cup Q^+$ is a 4-cube in $U$, namely, $Q\cup Q^+\subseteq W(e)$ (see \cref{fig:thm3proof}). Since $S_e$ is ample, it cannot have a cube in which it forms a 6-cycle: therefore, as $p_2,q_1\notin S$, necessarily at least one of the vertices $p^+_2, q^+_1$, must belong to $S$, say $q^+_1\in S$. Since $S$ is weakly isometric, necessarily $s^+_1,t^+_1,q^+_2$ must also belong to $S$. Consequently, $\{ s_1,s^+_1\},\{ q_2,q^+_2\},\{ t_1,t^+_1\}\subseteq S$. By weak isometricity of $S^e$ we also obtain that $\{ s_2,s^+_2\}, \{ p_1,p^+_1\}, \{ t_2,t^+_2\}\subseteq S$. This, however, implies that $S^e$ intersects a 3-cube of $U_e$ in a 6-cycle, contradicting ampleness of $S^e$. 

This finishes the proof of all cases, and implies that $S^f$ is weakly $(k,\ell)$-isometric for all $k,\ell\in X$. 
\end{proof}

Note that, since $S$ is weakly isometric,  by \cref{lem:operatorsselfcommutative,lem:weakisometricity}, we have $(S^f)_e=(S_e)^f$ if $\supp(e)\neq \supp(f)$ and $(S^f)^e=(S^e)^f$. We know that $(S_e)^f$ and $(S^e)^f$ are ample by \cref{lem:superscriptample}, so $(S^f)^e$ and $(S^f)_e$ are ample if $\supp(e)\neq \supp(f)$.
We now prove by induction on $|U|$ that $(3)\Rightarrow(4)$. Since we already proved that $S$ is connected, 
we only need to show that $S^e$ is ample for all elementary $e$. If $|U|=1$ or $|U|=2$ this is trivial. Suppose we have shown $(3)\Rightarrow(4)$ for all universes with lower cardinality than $|U|$. For any elementary minor-subproduct $f$ we have $|f|<|U|$, so if $\supp(e)\neq \supp(f)$, then $(S^f)^e$ and $(S^f)_e$ are ample and $S^f$ is weakly isometric, so by the induction hypothesis (4) holds for $S^f$, and then $S^f$ is ample 
(since we already showed $(4)\Rightarrow (1)$). Now suppose $\supp(e)=\supp(f)$. By \cref{lem:trivial-factor} we may assume $|U_i|>1$ for all $i$. If $m=1$, then any subset of $U$ is ample. Otherwise, there is an elementary minor-subproduct $e'\in \MP(U)$ with $\supp(e')\neq \supp(e)$. By what we just showed, $S^{e'}$ is ample. It follows from (3) and \cref{thm:ampleisometriccommutative} that $(S^*)_e=(S^e)^*$ and $(S^*)^e=(S_e)^*$ are ample. Since $S^*$ is also weakly isometric, we can repeat the entire proof up to this point (replacing $S$ by $S^*$) to show that $(S^*)^{e'}$ is also ample. It follows that $(S_{e'})^*$ is ample, so $S_{e'}$ is ample. Since $S$ is weakly isometric and $S^{e'},S_{e'}$ are ample, and since $\supp(e')\neq \supp(f)$, it follows from the case above that $S^f$ is ample. 
That completes the induction proof and establishes $(3)\Rightarrow(4)$. 

To prove the implication $(1)\Rightarrow (5)$, suppose that $S$ is ample. By \cref{thm:ampleisometriccommutative},   $S$ is superisometric. Let $B',B''$ be parallel boxes in $S$, such that both $B'$ and $B''$ are copies of a subproduct $V$ as in \cref{def:parallel}. Let $M=M(\Lambda,U)\in \MP(U)$ be an extended subproduct, and $\Lambda=(\alpha_1,\ldots,\alpha_m)$ be defined as follows: $M_i$ is trivial when $i\notin\supp (V)$ or $|V_i|=1$, and for all other $i$ it holds that $V_i$ is the unique nontrivial block of $\alpha_i$. Note that every copy of $V$ by definition is a box of the minor-subproduct $M$. In particular, there are $b',b''\in M$ such that $F(b')=B'$ and $F(b'')=B''$. We then know that $b',b''\in S^M$, since $B',B''\subseteq S$. By superisometricity, there is a path $b',b_1,\ldots,b_k,b''$ in $S^M$ of length $d_M(b',b'')=d(B',B'')$. In particular, this path is contained in the interval $[b',b'']$, and the expansion of any element in this interval is a copy of $W$. It follows that $F(b'),F(b_1),\ldots,F(b_k),F(b'')$ is a geodesic gallery that is contained in $S$, thus $S$ is box-superisometric. 

To prove the implication $(5)\Rightarrow (6)$, suppose that $S$ is box-isometric. Consider any interval $[u,v]$, and take any two parallel boxes $B',B''\in S\cap [u,v]$. Then there exists a geodesic gallery connecting the two boxes, which must be contained within the interval $[u,v]$, as it is of minimal length. As noticed by \cite{ChChMoWa}, in a hypercube, any path in some $S^M$ corresponds to a gallery and vice versa. As a result, in a hypercube, superisometricity is equivalent to box-superisometricity. In particular, since $[u,v]$ is a hypercube and $S\cap [u,v]$ is box-isometric within $[u,v]$, $S\cap [u,v]$ is also superisometric within $[u,v]$. By \cite{BaChDrKo} (or by \cref{thm:ampleisometriccommutative}), $S\cap [u,v]$ is ample in $[u,v]$, concluding the proof of  $(5)\Rightarrow (6)$. 

To show the final implication $(6)\Rightarrow (1)$ of the second chain of implications, suppose for contradiction that we have $U$ and $S$ such that $S\cap[u,v]$ is ample for all $u,v\in U$, but $S$ is not ample in $U$. Suppose furthermore that we pick $U$ such that $|U|$ is minimal. Clearly $S$ is weakly isometric, since ampleness in each interval implies weak isometricity. Pick any elementary minor-subproduct $e\in\MP(U)$, and let $\{a,b\}$ be the unique nontrivial block of its partitions. Since we have shown the implication $(3)\Rightarrow (1)$ already, it follows that one of $S^e$ and $S_e$ is not ample, say $S^e$ is not ample (the proof when $S_e$ is not ample is analogous). Since $e$ has strictly less elements than $U$, this means that there exist $u,v\in e$  such that $S^e\cap [u,v]$ is not ample in $[u,v]$. There are three types of intervals $[u,v]$ in $U_e$ that could occur. 

First suppose that neither $u$ nor $v$ has a coordinate equal to $\{a,b\}$. In that case, $[u,v]$ is also an interval of $U$, and we have $S\cap [u,v]=S^e\cap [u,v]$. Since $S\cap [u,v]$ is ample in $[u,v]$, it follows that $S^e\cap [u,v]$ is ample in $[u,v]$, which is a contradiction with the choice of $u,v\in U_e$.

Now, suppose that both $u$ and $v$ have the coordinate $\{a,b\}$. In that case, the union $I=\bigcup_{x\in [u,v]}F(x)$ is an interval in $U$ of one dimension higher than $[u,v]$. Since $S\cap I$ is ample in $I$, it follows by \cref{lem:superscriptample} that $(S\cap I)^{e'}$ is ample, where $e'\in \MP(I)$ is the elementary minor-subproduct with unique nontrivial block $\{a,b\}$. And since $(S\cap I)^{e'}=S^e\cap[u,v]$, this gives us the same contradiction as in the first case.

Finally, suppose that exactly one of $u,v$ has a coordinate $\{a,b\}$, say $v_{\supp(e)}=\{a,b\}$. Let $V=\bigcup_{x\in[u,v]}F(x)$, and suppose that $V\neq U$. Then $S\cap V$ is ample in $V$, since $S\cap [u',v']$ is ample in $[u', v']$ for all $u',v'\in V$ and since $|V|<|U|$. But again, we can take an elementary $e'\in \MP(V)$ whose only block is $\{a,b\}$, which implies by \cref{lem:superscriptample} that $(S\cap V)^{e'}$ is ample, and we have $(S\cap V)^{e'}=S^e\cap [u,v]$, which is a contradiction. It follows that $V=U$.

Summarizing, the only possibility for $S$ to not be ample is if $U=V$, which means $|U_i|\le 3$ for the factor where $\{a,b\}\subseteq U_i$, and $|U_i|\in \{1,2\}$ for all other $i$. By \cref{lem:trivial-factor} we may discard the factors where $|U_i|=1$. Moreover, if $|U_i|=2$ for some $i$, we can take elementary $e$ such that $U_i$ is its only nontrivial block, and then $S$ must be ample by the previous case distinction. Then, the only possibility remaining is the case where $m=1$ and $U_1$ has at most 3 elements, but in that case every subset of $U$ is ample. Consequently, if $S\cap [u,v]$ is ample for all $u,v\in U$, then $S$ is ample. This establishes that  $(6)\Rightarrow (1)$.

Finally, we show the equivalence $(6)\Leftrightarrow (7)$. The direction $(6)\Rightarrow (7)$ is trivial. To show the direction $(7)\Rightarrow (6)$, take any interval $[u,v]$ with $u,v\in U$, and suppose for contradiction that $S'=S\cap[u,v]$ is not ample in the hypercube $[u,v]$. By \cite[Theorem 4]{La}, that implies that $S'$ is not \emph{totally asymmetric} in $[u,v]$, which means: there exists an interval $[u',v']\subseteq [u,v]$ such that $S'$ is invariant under the antipodal map of the hypercube $[u,v]$ (that maps each vertex of $[u',v']$ to its opposite vertex in $[u',v']$) and additionally $\varnothing \neq S'\cap [u',v']\neq [u',v']$. But this means that $[u',v']$ has at least one pair of antipodal vertices in $S'$, say $s,t\in S'$, and that means $[s,t]=[u',v']$. But then $S'\cap [s,t]$ is not totally asymmetric as well, which implies that $S'\cap[s,t]$ is not ample by \cite[Theorem  3]{La}. Since $s,t\in S'\subseteq S$ and $S\cap [s,t]=S'\cap [s,t]$, this contradicts item (7), so we conclude that $S\cap[u,v]$ must be ample for all $u,v\in U$. This concludes the proof of $(7)\Rightarrow (6)$, and the proof of the theorem. 
\end{proof}

\cref{thm:ample-elementary} has several interesting corollaries. The first one shows that ampleness of 
a set $S\subseteq U$ does not depend on the tuples defining  $S$ but only depends how the subgraph $H(S)$ of $H(U)$ induced by $S$ is embedded. 

\begin{corollary} \label{ample-embedding} Let $U=U_1\times\ldots\times U_m$ and $U'=U'_1\times\ldots\times U'_n$ be two Cartesian products and let $S$ be an ample subset of $U$. If the subgraph $H(S)$ induced by $S$ admits an isometric embedding $\varphi$ into the Hamming graph $H(U')$, then $\varphi(S)$ is an ample set of $U'$. 
\end{corollary}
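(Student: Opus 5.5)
We use the characterization of ampleness by intervals in \cref{thm:ample-elementary}, condition (7): $T\subseteq U'$ is ample if and only if $T\cap[u',v']$ is ample in the hypercube $[u',v']$ for all $u',v'\in T$. Set $T=\varphi(S)$. Since $S$ is ample, it is isometric by \cref{prop:ample-is-isometric}. Hence $H(S)$ is connected, and its graph metric coincides with the Hamming metric $d_{H(U)}$. Because $\varphi$ is an isometric embedding, $d_{H(U')}(\varphi(x),\varphi(y))=d_{H(U)}(x,y)$ for all $x,y\in S$. In particular $T$ is isometric in $U'$, and $\varphi$ is a graph isomorphism from $H(S)$ onto $H(T)$, since adjacency means distance $1$ in both graphs.

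Fix $u,v\in S$ and put $u'=\varphi(u)$, $v'=\varphi(v)$. First, $\varphi$ maps $S\cap[u,v]$ bijectively onto $T\cap[u',v']$, because membership in an interval is the metric condition $d(u,x)+d(x,v)=d(u,v)$, and $\varphi$ preserves distances among points of $S$. Next we compare the two hypercube embeddings. The interval $[u,v]$ is the cube $\{u_1,v_1\}\times\cdots\times\{u_m,v_m\}$, whose coordinates are the set $D$ of positions where $u,v$ differ. Likewise $[u',v']$ is a cube on the set $D'$ of positions where $u',v'$ differ, and $|D|=|D'|=d$. For $x\in S\cap[u,v]$ the coordinate set recording where $x$ differs from $u$ is a subset $A(x)\subseteq D$ with $|A(x)|=d(u,x)$. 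For the image point, let $A'(\varphi(x))\subseteq D'$ be the analogous set. Then for $x,y\in S\cap[u,v]$ we have
\[
|A(x)\triangle A(y)|=d(x,y)=d(\varphi(x),\varphi(y))=|A'(\varphi(x))\triangle A'(\varphi(y))|.
\]
Thus $S\cap[u,v]$ and $T\cap[u',v']$ are two isometric embeddings of the same graph into $d$-cubes, and both contain the antipodal pair $u,v$ (respectively $u',v'$).

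The key step, and the expected obstacle, is to show that classical ampleness transfers between these two embeddings. We use that ampleness of a subset of a hypercube is invariant under isometric re-embedding of its induced subgraph, so that it depends only on the partial cube $H(S\cap[u,v])$. This is the binary case of the corollary. We justify it via \cref{thm:ampleisometriccommutative}: a binary set $C$ is ample if and only if, for every set $Y$ of coordinates, $C^{Y}$ is isometric. In a partial cube, the coordinates correspond to the Djokovi\'c classes of \cref{thm:partial-Hamming}(1), and these classes are intrinsic to the graph. Since $C$ contains the antipodes $u,v$, every coordinate of the cube is realized by some class.

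It then remains to check that the class-to-coordinate correspondence is a bijection. A single class cannot split into two coordinates, because distances are equal to cardinalities of symmetric differences. Consequently both embeddings agree up to a permutation of coordinates and an isometry of the cube, and this isomorphism preserves ampleness. Hence $T\cap[u',v']$ is ample in $[u',v']$ for all $u',v'\in T$, and by \cref{thm:ample-elementary} the set $\varphi(S)$ is ample in $U'$.
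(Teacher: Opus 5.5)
Your proof is correct, but it takes a different route from the paper's.

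\emph{The paper's route.} It works directly in $U'$ with condition (5) of \cref{thm:ample-elementary} (box-superisometricity). It asserts that $\varphi$ sends boxes of $S$ to boxes of $\varphi(S)$ and parallel boxes to parallel boxes, and that every pair of parallel boxes of $\varphi(S)$ arises this way. Geodesic galleries in $S$ are then carried to geodesic galleries in $\varphi(S)$.

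\emph{Your route.} You localize with condition (7), observing that $\varphi$ restricts to a distance-preserving bijection from $S\cap[u,v]$ onto $\varphi(S)\cap[\varphi(u),\varphi(v)]$. You then use the rigidity of hypercube embeddings of partial cubes. Because both sets contain an antipodal pair and are connected, every coordinate is used, so Djokovi\'c classes correspond bijectively to coordinates on both sides. Hence the two embeddings differ by a cube isomorphism, which preserves classical ampleness.

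\emph{What each buys.} The paper's argument is shorter, but it leaves unjustified the claim that boxes and parallelism are preserved in both directions under $\varphi$; that claim itself needs a $\Theta$-type distance computation. Your argument reduces everything to a standard, cleanly provable fact about partial cubes plus the binary theory.

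\emph{Points to make explicit.}
\begin{enumerate}
\item The Djokovi\'c-class step requires $S\cap[u,v]$, and hence its image, to be isometric in the cube. This holds because $S$ is isometric and $[u,v]$ is convex. A bare distance-preserving bijection between subsets of a cube that both contain antipodes need not come from a cube automorphism. For example, in $Q_6$ take $\{\varnothing,\{1,\ldots,6\},\{1,2\},\{1,3\},\{2,3\}\}$ and $\{\varnothing,\{1,\ldots,6\},\{1,4\},\{1,5\},\{1,6\}\}$ with the obvious bijection: all distances are preserved, but no coordinate permutation can match the triple intersections.
\item Injectivity of the class-to-coordinate map should be stated. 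It follows because any two cube edges flipping the same coordinate are $\Theta$-related.
\item Your appeal to \cref{thm:ampleisometriccommutative} is not needed once you have the cube isomorphism.
\end{enumerate}
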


\begin{proof} Let $S'=\varphi(S)$. Notice that the isometric embedding $\varphi$ maps boxes of $S$ to boxes of $S'$, furthermore $\varphi$ maps parallel boxes of $S$ to parallel boxes of $S$ and, vice-versa, any parallel boxes 
of $S'$ are images of parallel boxes of $S$. Consequently, the image of any gallery of $S$ is a gallery of $S'$. Now, pick any two parallel boxes $B_1',B'_2$ of $S$. By \cref{thm:ample-elementary}(5) it suffices to show that $B'_1$ and $B'_2$ can be connected in $S'$ by a geodesic gallery. Suppose that $B'_1=\varphi(B_1)$ and $B'_2=\varphi(B_2)$. Then $B_1,B_2$ are parallel boxes of $S$. By \cref{thm:ample-elementary}(5), $B_1$ and $B_2$ can be connected in $S$ by a geodesic gallery $\gamma(B_1,B_2)$. Since $\varphi$ is an isometric embedding, the image $\varphi(\gamma(B_1,B_2))$ is a geodesic gallery of $S'$ between $B'_1$ and $B'_2$, hence $\varphi(S)$ is ample in $U'$. 
\end{proof}

The second corollary allows to recognize ample sets in polynomial time.

\begin{corollary} \label{recognition} Given a set $S\subseteq U$, it can be decided in time polynomial in the size of the input  (i.e., in the size of $\vert S\vert$ and $m=|X|$) if $S$ is ample. 
\end{corollary}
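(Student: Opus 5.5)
The plan is to use characterization (7) of \cref{thm:ample-elementary}: $S$ is ample if and only if $S\cap[u,v]$ is ample in the hypercube $[u,v]$ for every pair $u,v\in S$. The input is the list of the $|S|$ tuples, each of length $m$, so there are $O(|S|^2)$ pairs to consider. For a fixed pair $u,v\in S$, the interval $[u,v]$ is the binary subproduct $\{u_1,v_1\}\times\ldots\times\{u_m,v_m\}$. Membership of a tuple $w\in S$ in $[u,v]$ is checked coordinatewise in $O(m)$ time. Hence $S_{uv}:=S\cap[u,v]$ is computed in $O(|S|m)$ time and has at most $|S|$ elements. The task therefore reduces to $O(|S|^2)$ instances of the classical problem: decide whether a given subset of a hypercube $\{0,1\}^k$, with $k\le m$, is ample, in time polynomial in its size and in $k$ (not in $2^k$).

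For a single instance I would use a recursive characterization from the binary setting, namely \cref{thm:ample-elementary} specialized to binary products, condition (2) or (4). A set $T\subseteq\{0,1\}^k$ is ample if and only if it is isometric and $T^e$, $T_e$ are ample for one coordinate $e$. These sets are computable directly from $T$:
\begin{itemize}
\item $T_e$ is the projection forgetting coordinate $e$;
\item $T^e$ is the set of midpoints of the $e$-edges inside $T$;
\item both have size at most $|T|$, and they live in $\{0,1\}^{k-1}$;
\item isometricity of $T$ is tested by BFS in the induced subgraph $H(T)$, comparing graph distances with Hamming distances, in time polynomial in $|T|$ and $k$.
\end{itemize}

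Naive recursion on both $T^e$ and $T_e$ could branch exponentially in $k$. To avoid this, I would instead use the binary analogue of \cref{lem:superscriptample} together with superisometricity, \cref{thm:ampleisometriccommutative}(2). This requires $T^M$ to be isometric for all $M$, and $T^M$ is nonempty only for supports $Y$ that are strongly shattered. Strongly shattered sets are witnessed by cubes contained in $T$, and a cube $T$ contains has at most $|T|$ vertices. So the number of nonempty $T^M$ is at most $\sum_{\text{cubes } C\subseteq T}1$, and a cube of dimension $d$ has $2^d$ vertices. The number of cubes in an ample set, indeed in any $T$, is at most $|T|$ times the number of sub-cubes through a vertex, which is bounded by $|T|\cdot|T|$, since each cube is determined by a vertex and a strongly shattered direction set whose size is at most $\log|T|$. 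Thus I would enumerate all cubes of $H(T)$ by growing them from vertices, which is polynomial in $|T|$ and $k$. From these I would obtain the list of all nonempty $T^Y$ and check the isometricity of each by BFS.

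The main obstacle is justifying that this enumeration stays polynomial. The number of strongly shattered sets $Y$ is at most $|T|$ by the inequality $|\uX(T)|\le|T|$ recalled in the introduction. For each such $Y$, computing $T^Y$ takes polynomial time. I would use this bound on $|\uX(T)|$ to do the following:
\begin{itemize}
\item list $\uX(T)$ level by level, extending $Y$ by one coordinate only if $Y\cup\{i\}$ is strongly shattered, since $\uX(T)$ is a simplicial complex;
\item note that this uses at most $|T|\cdot k$ tests, each polynomial.
\end{itemize}
Then $S$ is ample if and only if every such $T^Y$, over all pairs $u,v$, is isometric, and the whole procedure runs in time polynomial in $|S|$ and $m$.
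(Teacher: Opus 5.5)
Your argument is correct. Its first step is exactly the paper's: reduce via \cref{thm:ample-elementary}(7) to the $O(|S|^2)$ sets $S\cap[u,v]$ with $u,v\in S$, each computed in $O(m|S|)$ time.

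You diverge on the hypercube subproblem. The paper settles it with a citation to the polynomial recognition algorithm of \cite{KnMa}, which rests on Lawrence's total asymmetry. You instead build an explicit algorithm from the paper's own superisometricity criterion, \cref{thm:ampleisometriccommutative}(2), specialized to binary products. The only nonempty strong-projections are the $T^Y$ with $Y\in\uX(T)$; the empty ones are vacuously isometric. Since $\uX(T)$ is a simplicial complex with $|\uX(T)|\le|T|$, it can be listed level by level with at most $|T|k$ strong-shattering tests. Each test amounts to grouping $T$ by restrictions to the complementary coordinates. Finally, each $T^Y$, of size at most $|T|$, is checked for isometricity by BFS. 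This gives a self-contained procedure with an explicit polynomial bound, importing only the classical inequality $|\uX(T)|\le|T|$. The cost is more work than a one-line citation.

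Two small remarks. Your intermediate count of cubes (``a strongly shattered direction set whose size is at most $\log|T|$'') would by itself only give a quasi-polynomial bound of order $k^{O(\log|T|)}$. It is the bound $|\uX(T)|\le|T|$, which you correctly invoke afterwards, that makes the enumeration polynomial, so the ``grow cubes from vertices'' sentence can be dropped.

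Also, once $\uX(T)$ is listed you could skip the BFS checks entirely. We have $Y\in\uX(T)$ if and only if the complement of $Y$ is not shattered by $T^*$, so $|\uX(T)|=2^k-|\oX(T^*)|$. Hence $|\uX(T)|=|T|$ holds exactly when $|\oX(T^*)|=|T^*|$, i.e., when $T^*$, equivalently $T$, is ample.
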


\begin{proof} By \cref{thm:ample-elementary}(7), $S\subseteq U$ is ample if and only if for each pair $u,v\in S$ the intersection $S'=S\cap [u,v]$ is ample. This intersection can be computed in $O(m|S|)$ time using the  representation of all vertices as tuples. Namely, using this representation we compute the Hamming distance $d(u,v)$ and the Hammming distances from $u$ and $v$ to any $z\in S$. Then $z$ is included in $S'$ if and only if $d(u,v)=d(u,z)+d(z,y)$.  

Since $[u,v]$ is a hypercube of $U$, the intersection $S'=S\cap [u,v]$ is a subset of a  hypercube. Testing if a subset $S'$ of a hypercube is ample can be done in time polynomial in the size of $S'$ \cite{KnMa} by using Lawrence's total asymmetry \cite[Theorems 3\&4]{La}. Therefore testing if $S$ is ample can be done in time polynomial in the size of $S$ and the number of factors. 
\end{proof}

\cref{thm:ample-elementary}(4) characterizes ample sets as connected subsets of $U$ with ample strong-projections $S^e$ for all elementary minor-subproducts $e=\{ a,b\}\subseteq U_i$. As we noted, each strong-projection $S^e\subseteq U_e$ is the disjoint union of the hyperplane $(S(e))^e$ and the residue $(S_=(e))^e$. The hyperplane  $(S(e))^e$ is a thin part of $S^e$ because  $(S(e))^e$ can be viewed as a subset of the Cartesian product obtained from $U=U_1\times\ldots\times U_m$ by removing the factor $U_i$ (in fact by replacing $U_i$ by a single element corresponding to $\{a,b\}$ and neglecting $U_i\setminus \{ a,b\}$).  In case of binary products, $S_=(e)=\varnothing$, therefore  $S^e=(S(e))^e$. In the general case, the residue may have quite a general form. Therefore, it is worth to ask if ample sets can be characterized via their hyperplanes. This is the content of the following corollary:

\begin{corollary} \label{hyperplane} A set $S\subseteq U$ is ample if and only if $S$ is isometric and for every elementary minor-subproduct $e$ of $U$ the hyperplane $(S(e))^e$ is ample. 
\end{corollary}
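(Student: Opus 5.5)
The forward direction is immediate. If $S$ is ample, then $S$ is isometric by \cref{prop:ample-is-isometric}. The hyperplane $(S(e))^e$ is ample by \cref{lem:hyperplanes-ample}.

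For the converse, I would reduce to condition (6) (or (7)) of \cref{thm:ample-elementary}: it suffices to show that $S\cap[u,v]$ is ample in the hypercube $[u,v]$ for all $u,v\in S$. The point is that inside an interval every elementary minor-subproduct behaves as in the binary case, so residues disappear.

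Fix $u,v\in S$ and put $I=[u,v]=\{u_1,v_1\}\times\ldots\times\{u_m,v_m\}$ and $S'=S\cap I$.

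\textbf{Step 1: $S'$ is isometric.} Since $S$ is isometric and $I$ is convex, any geodesic in $S$ between two points of $S'$ stays in $I$. Hence $S'$ is isometric, and in particular connected.

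\textbf{Step 2: the strong-projections of $S'$ are ample.} By the binary theory (\cite{BaChDrKo}, or \cref{thm:ample-elementary}(4) applied in $I$), it then suffices to show that $(S')^{e'}$ is ample for every elementary $e'$ of $I$. Such an $e'$ is a pair $\{u_i,v_i\}$ with $u_i\ne v_i$. Take $e=\{u_i,v_i\}$ as an elementary minor-subproduct of $U$. The fibers of $e$ meeting $I$ are the $e$-edges inside $I$, so $I\subseteq W(e)$ and
\[
(S')^{e'}=(S(e))^e\cap I_e ,
\]
where $I_e$ is the image of $I$ in $U_e$. The set $I_e$ is a full-dimensional subproduct (indeed an interval) of $U_e$. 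Since $(S(e))^e$ is ample by hypothesis, its intersection with $I_e$ is ample by \cref{thm:ample=weaklyample}. Thus $(S')^{e'}$ is ample.

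\textbf{Step 3: conclusion.} Combining Steps 1 and 2, $S'$ is connected and all its elementary strong-projections are ample, so $S'$ is ample in $I$ by \cref{thm:ample-elementary}(4) applied with universe $I$. Since $u,v\in S$ were arbitrary, \cref{thm:ample-elementary}(7) gives that $S$ is ample.

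The main point to check carefully is the identification in Step 2. One must verify that the restriction of the elementary minor-subproduct $e$ of $U$ to the box $I$ is exactly $e'$. One must also verify that the strong-projection commutes with intersecting with $I$, i.e. that $(S\cap I)^{e'}=S^e\cap I_e$ and that this set lies in the hyperplane part. Both hold because each $e$-fiber meeting $I$ is contained in $I$: the pair $\{u_i,v_i\}$ is the whole $i$-th factor of $I$. This is also why only hyperplanes, not residues, are needed.
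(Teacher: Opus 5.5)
Your proof is correct and follows essentially the same route as the paper. Both reduce via \cref{thm:ample-elementary} to intervals and note that $S\cap[u,v]$ is isometric there. Both then apply the binary characterization by elementary strong-projections and recognize each binary hyperplane $(S')^{e'}$ as the intersection of $(S(e))^e$ with a full-dimensional subproduct of $U_e$, which is ample by \cref{thm:ample=weaklyample}. The only differences are cosmetic: you argue directly from condition (7) rather than by contradiction from condition (6), and you state the isometricity part of the forward direction explicitly.
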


\begin{proof} If $S$ is ample, then each hyperplane $(S(e))^e$ is ample by \cref{lem:hyperplanes-ample}.  Conversely, suppose that $S$ is isometric and each hyperplane of $S$ is ample, however $S$  is not ample. By Theorem  \ref{thm:ample-elementary}(6), for some $u,v\in U$, the set $S'=S\cap [u,v]$ is not ample in the hypercube $[u,v]$. Since $S$ is isometric in $U$, $S'$ is isometric in $[u,v]$. By \cite[Theorem 4]{BaChDrKo} (or by our \cref{thm:ample-elementary}(4)), there exists an elementary minor-subproduct $e=\{ a,b\}$ of $[u,v]$ such that the set $(S')^e$ is not ample in $[u,v]$. Necessarily $e\subseteq U_i$ for some $i\in X$,  whence $e$ is a minor-subproduct of $U$. Notice that the binary hyperplane  $(S')^e$ is a subset of the $e$-hyperplane $(S(e))^e$ of $S$ in $U$. Furthermore, $(S')^e$ is the intersection of $(S(e))^e$  with a full-dimensional subproduct of $U_e$. Since $(S(e))^e$ is ample in $U_e$ and $(S')^e$ is not ample, we obtain a contradiction with \cref{thm:ample=weaklyample}. 
\end{proof}

Recall now that $\BMP(U)$ denotes the set of all  minor-subproducts  $M=M(\Lambda,U)$ with $\Lambda=(\alpha_1,\ldots,\alpha_m)$ such that each $\alpha_i$ contains at most two blocks. For a tuple $a=(a_1,\ldots,a_m)\in U$, let $M(a)=M(\Lambda,U)\in \BMP(U)$, where $\Lambda=(\alpha_1,\ldots,\alpha_m)$ and $\alpha_i=\{ \{ a_i\}, U_i\setminus \{ a_i\}\}$ for each $i=1,\ldots,m$. Let $\BcMP(U)$ be the union of all $M(a)\in \BMP(U)$ over all $a\in U$. Analogously, define the set $\BcMP(V)$ for any subproduct $V=U_{i_1}\times\ldots\times U_{i_k}$ of $U$. Finally,  let $\BcMP^{**}(U)$ be the union of all $\BcMP(V)$ over all subproducts $V$ of $U$ and all $a\in V$. 

The 
minor-subproducts $M(a)$ from  $\BcMP^{**}(U)$ are extended minor-subproducts. Therefore one can ask if  $\BcMP^{**}(U)$-ample sets are ample. This is indeed the case due to the characterization of ample sets provided by \cref{thm:ample-elementary}: $S$ is ample if and only if $S\cap [u,v]$ is ample for any $u,v\in S$. Indeed, $V=[u,v]$ is a Boolean cube and a box of $U$, i.e., a full-dimensional subproduct of $U$. In Boolean cubes $V$, each minor-subproduct is binary and each binary minor-subproduct is a minor-subproduct from  $\BcMP^{**}(V)$ $C$. Thus $S\cap [u,v]$ is ample iff and only if $S\cap [u,v]$ is  $\BcMP^{**}(U)$-ample and if and only if $S\cap [u,v]$ is $\BMP^{**}(U)$-ample. Consequently, we obtain the following result:

\begin{corollary} \label{cor:binample}
\setcounter{claim}{0} 
For a set $S\subseteq U$, the following conditions are equivalent: 
\begin{enumerate}
    \item \label{item:bin-ample-bis} $S$ is $\BcMP^{**}(U)$-ample; 
    \item \label{item:bin-ample} $S$ is $\BMP^{**}(U)$-ample;
    \item \label{item:ample-bis} $S$ is ample. 
  \end{enumerate}
\end{corollary}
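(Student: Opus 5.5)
The plan is to prove the cycle (3)$\Rightarrow$(2)$\Rightarrow$(1)$\Rightarrow$(3). The implication (3)$\Rightarrow$(2) is immediate from the definitions. Indeed, every minor-subproduct $M(a)$ with $a\in V$ and $V$ a subproduct of $U$ is a binary minor-subproduct of $V$, so $\BcMP^{**}(U)\subseteq \BMP^{**}(U)\subseteq \MP^{**}(U)$. Since $\mathcal M''$-ampleness implies $\mathcal M'$-ampleness whenever $\mathcal M'\subseteq\mathcal M''$, any ample set is $\MP^{**}(U)$-ample by \cref{thm:ample=weaklyample}(5), hence $\BMP^{**}(U)$-ample. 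Similarly, (2)$\Rightarrow$(1) holds because $\BMP^{**}(U)$-ampleness implies $\BcMP^{**}(U)$-ampleness.

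The substantive direction is (1)$\Rightarrow$(3), i.e.\ that a $\BcMP^{**}(U)$-ample set is ample. By \cref{thm:ample-elementary}(7) it suffices to show that for all $u,v\in S$ the set $S'=S\cap[u,v]$ is ample in the hypercube $I=[u,v]=\{u_1,v_1\}\times\ldots\times\{u_m,v_m\}$, a full-dimensional subproduct of $U$. Since $I$ is binary, classical ampleness of $S'$ means: whenever $S'$ shatters a set $Y$ of coordinates, it strongly shatters $Y$. By \cref{ex:shatterbinary}, this is the shattering of the unique mixed minor-subproduct $N$ of $I$ with support $X\setminus Y$, which is co-trivial on $X\setminus Y$ and trivial on $Y$. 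It therefore suffices to realize $N$ as an element of $\BcMP^{**}(U)$ whose shattering and strong-shattering by $S$ agree with those of $N$ by $S'$.

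For this, let $Y'\subseteq Y$ be the set of coordinates $i$ with $u_i\neq v_i$. Coordinates in $Y\setminus Y'$ are trivial factors of $I$ and may be discarded using \cref{lem:trivial-factor}. Consider the subproduct $V=\prod_{i\in Y'}\{u_i,v_i\}$ of $U$, of support $Y'$, and the tuple $a=u|_{Y'}\in V$. Since each factor of $V$ has size $2$, the partition $\{\{a_i\},V_i\setminus\{a_i\}\}$ is the trivial partition, so $M(a)\in\BcMP(V)\subseteq\BcMP^{**}(U)$ is $V$ itself, viewed as a minor of $V$.

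The remaining step is to compare shattering. By definition, $M(a)$ is shattered (resp.\ strongly shattered) by $S$ exactly when the minor $M'$ of $V'=\prod_{i\in Y'}\{u_i,v_i\}\times\prod_{i\notin Y'}U_i$, co-trivial outside $Y'$, is shattered (resp.\ strongly shattered) by $S$. The boxes of $M'$ are the sets $t\times\prod_{i\notin Y'}U_i$. Here lies the obstacle: these boxes are larger than the boxes $t\times\prod_{i\notin Y'}\{u_i,v_i\}$ of $N$ in $I$. Shattering of $N$ by $S'$ implies shattering of $M'$ by $S$, and a copy of $M'$ in $S$ is a binary cube $\prod_{i\in Y'}\{u_i,v_i\}\times s$. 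However, $s$ need not lie in $I$, so strong shattering of $M'$ by $S$ does not immediately give strong shattering of $N$ by $S'$.

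To resolve this, I would first apply the hypothesis to a smaller subproduct, namely $V''=\prod_{i\in Y'}\{u_i,v_i\}\times\prod_{i\notin Y'}\{u_i,v_i\}$ restricted appropriately. Rather than work through $\BcMP^{**}(U)$ in that form, I would use that $\BcMP^{**}$-ampleness is inherited by $S\cap W$ for boxes $W$. This inheritance should follow by the argument of (2)$\Rightarrow$(3) in \cref{thm:ample=weaklyample}, which removes one element at a time and uses copies containing the removed element. Applied to $W=I$, the inheritance reduces the question to the binary case, where $\BcMP^{**}(I)$ consists of all minor-subproducts of subproducts of $I$ and the classical theory applies. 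Establishing this inheritance step is what I expect to be the main difficulty.
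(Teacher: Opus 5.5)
Your implications (3)$\Rightarrow$(2)$\Rightarrow$(1) are correct. Your route for (1)$\Rightarrow$(3) is the paper's: reduce via \cref{thm:ample-elementary}(7) to intervals $[u,v]$, and use that in a Boolean cube every minor-subproduct is some $M(a)$. You also correctly identify an obstacle that the paper's one-line argument takes for granted. Shattering of a minor of a non-full-dimensional subproduct is defined with co-trivial blocks $U_i$ off the support, so a copy supplied by $\BcMP^{**}(U)$-ampleness of $S$ may leave $[u,v]$. However, your proposal stops exactly there. The inheritance of $\BcMP^{**}$-ampleness by $S\cap B$ for boxes $B$ is only announced. Moreover, the argument of (2)$\Rightarrow$(3) in \cref{thm:ample=weaklyample} does not transfer verbatim, because splitting off $\{x\}$ as an extra block of a partition generally leads outside $\BcMP^{**}(U)$. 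So, as written, (1)$\Rightarrow$(3) is not proved.

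The gap can be closed by exploiting the freedom in choosing the subproduct. Let $x\in U_1$ with $|U_1|\ge 2$, and let $V=(U_1\setminus\{x\})\times U_2\times\ldots\times U_m$. Let $M(a)\in\BcMP(W)$, where $W$ is a subproduct of $V$ with support $A$, be shattered by $S\cap V$ relative to $V$.

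If $1\in A$, then $W$ is a subproduct of $U$, and the boxes and copies of $M(a)$ relative to $U$ coincide with those relative to $V$ (all of them avoid $x$). So there is nothing to prove.

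If $1\notin A$, every $U$-box of $M(a)$ contains the corresponding $V$-box, so $S$ contains a copy $K=\prod_{i\in A}C_i\times\{t\}$. If $t_1\ne x$, then $K\subseteq S\cap V$ and we are done. If $t_1=x$, let $W'=W\times U_1$ (support $A\cup\{1\}$) and $a'=(a,x)$. The minor $M(a')\in\BcMP^{**}(U)$ has partition $\{\{x\},U_1\setminus\{x\}\}$ in coordinate $1$. Its boxes with first factor $U_1\setminus\{x\}$ are exactly the $V$-boxes of $M(a)$, so they meet $S\cap V$. Its boxes with first factor $\{x\}$ meet $K$. Hence $S$ contains a copy of $M(a')$ with first factor $\{x,y\}$, $y\ne x$. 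Deleting the points of this copy with first coordinate $x$ leaves a copy of $M(a)$ inside $S\cap V$.

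Iterating this down to $[u,v]$ shows that $S\cap[u,v]$ is $\BcMP^{**}([u,v])$-ample. For $Y\subseteq\{i:u_i\ne v_i\}$, the minor $M(u|_Y)$ of $\prod_{i\in Y}\{u_i,v_i\}$ has, relative to $[u,v]$, exactly the classical boxes and copies of $Y$. Therefore $S\cap[u,v]$ is ample in $[u,v]$, and \cref{thm:ample-elementary}(7) completes the proof.
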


\section{Ampleness and push downs}\label{sec:pushdowns} 
In this section we characterize ample sets via push downs. 

\subsection{Push downs} In this section, we assume that there is a total order on the elements of each factor $U_i$ of $U$.  For convenience of notation, we assume $U_i=\{0,1,\ldots,|U_i|-1\}$ for each $i\in X$. We consider the tuple $\textbf{0}=(0,\ldots,0)$ as the \emph{basepoint} of $U$. For a tuple $u=(u_1,\ldots,u_m)$, we set  $p(u)=\sum_{i=1}^m u_i$. For a set $S\subseteq U$, we call the sum $\sum_{u\in S}p(u)$ the \emph{norm} of $S$. 

\begin{definition}[Push down]
    Let $S\subseteq U$, let $M=M(\Lambda,U)$ be a one-dimensional minor-subproduct, and let $\alpha_i$ be the unique nontrivial partition of $\Lambda$. Then every box $B\in \cB(\Lambda,U)$ is a clique whose elements only differ in their $i$-coordinate. The \emph{push down} operation $S[M\negthickspace\downarrow]$ 
    is defined boxwise: for each $B\in \cB(\Lambda,U)$, $S[M\negthickspace\downarrow ]\cap B$ is equal to the $|S\cap B|$ elements of $B$ with the smallest $i$-coordinates. Given a sequence of one-dimensional minor-subproducts $M^1,M^2,\ldots, M^k$ that have pairwise distinct supports, a \emph{serial push down} is the result of a sequence of push down operations. It is denoted by $S[M^1,M^2,\ldots, M^k\negthickspace\downarrow ]:=S[M^1\negthickspace\downarrow ][M^2\da]\ldots[M^k\negthickspace\downarrow ]$.

    A serial push down \emph{commutes} on $S$ if the result is invariant under permutations of the push down operations in the sequence.  Finally, a set $S\subseteq U$ is called \emph{stable by push downs} (or simply \emph{stable}) if for any one-dimensional minor-subproduct $M$,  $S[M\negthickspace\downarrow]=S$. 
\end{definition}

\begin{remark} Informally, the push down operation shifts the elements of $S$ as far down as possible, subject to the condition that the elements of every box $B$ of $M$ stay within $B$. Equivalently, $|S[M\da]\cap B|=|S\cap B|$ and, if $u,v\in B$ and $v_i<u_i$,  then 
$u\in S[M\negthickspace\downarrow]$ implies $v\in S[M\negthickspace\downarrow]$. In the particular case when the unique 
nontrivial partition $\alpha_i$ of $\Lambda$ consists of a single block $U_i$, each box $B$ is isomorphic to $U_i=\{0,1,\ldots,|U_i|-1\}$ and the push down will shift the elements of $S$ from $B$ to the left. 
\end{remark}

%
Notice that the operation of push down is idempotent:   $S[M,M\da]=S[M\da]$. 
Notice also that if $S[M\negthickspace\downarrow]\ne S$, then the norm of  $S[M\negthickspace\downarrow]$ is strictly smaller than the norm of $S$.  This implies that any set $S\subseteq U$ can be transformed by a sequence of push downs into a stable set. {When proving properties of ample sets and push downs, it will be convenient to decompose push down operations, as shown in the following lemma.}

\begin{lemma}\label{lem:pushdowndecomposition}
    Let $M\in \MP(U)$ be one-dimensional. Then there exists a sequence $e_1,e_2,\ldots,e_j$ of (not necessarily distinct) elementary subproducts such that
    \[
    S[M\da]=S[e_1,e_2,\ldots,e_j\da]
    \]
{for all $S\subseteq  U$.}
\end{lemma}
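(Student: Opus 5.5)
Fix the support $\{i\}$ of $M$ and let $\alpha_i=\{P_1,\ldots,P_r\}$ be its nontrivial partition. All boxes of $\cB(\Lambda,U)$ are cliques $P_j\times t$, where $t$ ranges over $(X\setminus\{i\})$-tuples. The push down $S[M\da]$ acts independently on each such box: it replaces $S\cap(P_j\times t)$ by the $|S\cap(P_j\times t)|$ elements of $P_j\times t$ with smallest $i$-coordinate. The plan is to realise this as a bubble sort inside each block $P_j$, carried out simultaneously for all tuples $t$. The elementary minor-subproducts I use are atoms $e=\{a,b\}$ with $a<b$ both in the same block $P_j$.

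The key observation is the effect of a single elementary push down $S[e\da]$ with $e=\{a,b\}\subseteq U_i$, $a<b$. Every box of $e$ is either a singleton or a pair $\{(a,t),(b,t)\}$. On a pair, the push down moves the point $(b,t)$ to $(a,t)$ if $(b,t)\in S$ and $(a,t)\notin S$, and changes nothing otherwise. Singletons are left unchanged. Within each clique $P_j\times t$, $S[e\da]$ is therefore exactly the transposition step of a sorting network acting on the $0/1$ indicator vector of $S$ indexed by $P_j$ in increasing order. This step pushes a $1$ from position $b$ to position $a$ whenever possible. The crucial point is that it does so uniformly for every $t$ and every $S$, and it leaves all other blocks untouched. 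Points whose $i$-coordinate lies in another block $P_{j'}$ are unaffected, because they lie in singleton boxes of $e$.

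Now enumerate the elements of each block $P_j$ as $p^j_1<\cdots<p^j_{k_j}$. Take the bubble-sort sequence of adjacent comparators: for $s=1,\ldots,k_j-1$, the pairs $\{p^j_{k_j-1},p^j_{k_j}\},\{p^j_{k_j-2},p^j_{k_j-1}\},\ldots,\{p^j_1,p^j_2\}$. This is $k_j-1$ passes. Concatenate these sequences over $j=1,\ldots,r$ and call the result $e_1,\ldots,e_\ell$. The sequence depends only on $\alpha_i$ and the orders, not on $S$. By the classical fact that bubble sort sorts every $0/1$ vector, each indicator vector on $P_j\times t$ ends with all its ones in the smallest positions. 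The number of ones is preserved, since each step is a bijective move within a box. Hence
\[S[e_1,\ldots,e_\ell\da]\cap(P_j\times t)=S[M\da]\cap(P_j\times t)\]
for every box, which gives the equality $S[M\da]=S[e_1,\ldots,e_\ell\da]$.

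The one point needing care is the bubble-sort correctness for $0/1$ vectors under these compare-exchange steps. I would verify it by induction on passes. After pass $s$, the $s$ smallest positions contain ones up to the total count, because each backward pass carries the rightmost unplaced one leftward to the first free position. Independence across blocks and across tuples $t$ is immediate from the box structure described above.
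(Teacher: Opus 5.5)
Your proof is correct and takes essentially the paper's route: both realize $S[M\da]$ inside each block of $\alpha_i$ as a bubble sort by elementary push downs on consecutive pairs of the block. The differences are cosmetic: the paper first splits $M$ into one extended subproduct per block and uses forward passes of decreasing length, while you concatenate full backward passes directly. One small wording fix in your last paragraph: a backward pass does not literally carry the \emph{rightmost} unplaced one to the first free position (on $0101$ the one that reaches position $1$ comes from position $2$). Your invariant is still right, because in a right-to-left pass, after the comparator on positions $m-1,m$, position $m-1$ holds a one if and only if some position $\ge m-1$ held a one before the pass.
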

\begin{proof}
    Assume without loss of generality that $\alpha_1=\{P_1^1,P_2^1,\ldots,P_{\ell_1}^1\}$ is the unique nontrivial partition related to $M$. For $i=1,2,\ldots,\ell_1$, let $M^i$ be the one-dimensional extended subproduct whose only nontrivial block is $P_i^1$ (take $M^i=M_{\bot}$ if $|P_i^1|=1$). It follows directly from the definition that $S[M\da]=S[M^1,M^2,\ldots,M^k\da]$. 
    {If we can show that the push down operator of any extended subproduct can be decomposed into elementary subproduct push downs, then we are done. Take the extended subproduct $M^i$, and assume without loss of generality that $P_i^1=\{1,2,\ldots,p\}$. If $p=1$, then $[M^i\da]$ is the identity operator, so we can ignore the push down $[M^i\da]$. Otherwise, let $f_i\in \MP(U)$ be the elementary minor-subproduct with $\supp(f_i)=\supp(M)$ and with nontrivial block $\{i,i+1\}$ for $i=1,2,\ldots,p-1$. We claim that
    \begin{equation}\label{eq:elementarypushdowns}
        S'[M^i\da] = S'[f_1,f_2,\ldots,f_{p-1}\da][f_1,f_2,\ldots,f_{p-2}\da][f_1,f_2\ldots,f_{p-3}\da]\ldots[f_1,f_2\da][f_1\da].
    \end{equation}
    The reason for this is as follows: the only nontrivial boxes of the box partition related to $M^i$ are the one-dimensional boxes with $\supp(M)$-coordinates $1,2,\ldots,p$.}
    Applying the above sequence of push down operations is exactly the same as applying bubble sort within every box (considering elements of $S'$ smaller than elements of $U\backslash S'$). As a result, after the operations, the elements of $S'[M^k]$ are exactly the smallest elements in the box, which proves \eqref{eq:elementarypushdowns}. {Applying this to every extended subproduct $M^i$, we get
    \[
        S[M^1,M^2,\ldots,M^k\da]=S[e_1,\ldots,e_j\da]
    \]
    for some sequence $e_1,\ldots,e_j$ of elementary minor-subproducts, which completes the proof.}
\end{proof}

\begin{lemma}\label{lem:pushdownample}
    Let $S\subseteq U$ be ample. Then any serial push down of $S$ is ample.
\end{lemma}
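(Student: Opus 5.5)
By Lemma \ref{lem:pushdowndecomposition}, every push down $S[M\da]$ along a one-dimensional minor-subproduct $M$ is a composition of push downs along elementary minor-subproducts. By induction on the length of the sequence, it therefore suffices to prove one statement: if $S$ is ample and $e$ is elementary with nontrivial block $\{a,b\}\subseteq U_i$, $a<b$, then $S':=S[e\da]$ is ample.

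For such an $e$ the operation is very concrete. Every box of $\cB(e)$ is either a singleton, where nothing changes, or an $e$-edge $\{u^a,u^b\}$. On an $e$-edge that meets $S$ in exactly one point, that point is moved to $u^a$. Hence $S'$ and $S$ agree outside $W(e)$, and $S'\cap W_=(e)=S_=(e)$. Moreover $S'_e=S_e$ and $(S')^e=S^e$. So the projection and strong-projection along $e$ are unchanged, and by Lemmas \ref{lem:superscriptample} and \ref{lem:ample-projection} both are ample.

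I would then apply Theorem \ref{thm:ample-elementary}, implication (3)$\Rightarrow$(1), to $S'$ with this same $e$. The only remaining hypothesis is that $S'$ is weakly isometric. Let $x,y\in S'$ with $d(x,y)=2$ and suppose their two common neighbours are not in $S'$. The square $[x,y]$ lies inside the union of at most four boxes of $\cB(e)$.

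If $i$ is not one of the two coordinates in which $x,y$ differ, the square sits in a $2$-dimensional layer. Such a layer is either untouched (fourth coordinate outside $\{a,b\}$), or paired with its parallel copy under the $e$-matching. In the paired case, its intersection with $S'$ is determined by the $e$-edges of $S$ over the square, that is, by $S^e$ and $S_e$ on the corresponding square of $U_e$. Concretely: a vertex $u^a$ is in $S'$ iff the edge is in $S_e$, and $u^b$ is in $S'$ iff the edge is in $S^e$. So an antipodal pair in the $a$-layer of $S'$ gives an antipodal pair in $S_e$ with neither common neighbour in $S_e$. This contradicts isometricity of $S_e$, which holds by Proposition \ref{prop:ample-is-isometric}. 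The $b$-layer is handled the same way with $S^e$.

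If $i$ is one of the two differing coordinates, the square has two vertices with $i$-coordinate in $\{a,b\}$, or possibly with coordinate in $U_i\setminus\{a,b\}$. Here I would do a short case analysis on the $3$-cube formed by the square together with its $e$-translate, using weak isometricity of $S$ and the fact that $S$ meets no $3$-cube in an isometric $6$-cycle. The latter holds because, by Theorem \ref{thm:ample-elementary}(6), $S\cap Q$ is ample in every such cube $Q$.

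I expect this last mixed case to be the main obstacle. There the square uses a residual coordinate $c\in U_i\setminus\{a,b\}$, so it involves a triangle $\{a,b,c\}$ in the $i$-th factor. Pulling back a missing common neighbour to a bad configuration of $S$ needs care, because after the push down the vertex at $a$ may come from $b$. My first attempt would be to rule it out by showing that a bad square in $S'$ forces $S\cap[u,v]$ to be non-ample for a suitable interval $[u,v]$ of dimension at most $3$, contradicting Theorem \ref{thm:ample-elementary}(6).
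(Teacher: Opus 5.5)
Your route is correct and differs from the paper's. The paper checks ampleness of $S[e\da]$ with the interval criterion \cref{thm:ample-elementary}(6). An interval $[u,v]$ with $\{u_i,v_i\}=\{a,b\}$ is handled by the binary push-down result \cite[Proposition 3]{BaChDrKo}. An interval meeting only one of $a,b$ in coordinate $i$ is identified with an interval of $S_e$ or of $S^e$. You instead observe that the push down leaves $S_e$ and $S^e$ unchanged, and apply (3)$\Rightarrow$(1) of the same theorem, so only a distance-$2$ check on $S'$ remains. This avoids the binary push-down theorem and the interval bookkeeping, but it leans on the longest implication of \cref{thm:ample-elementary}. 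Your treatment of squares with constant $i$-coordinate is fine.

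The case you flag as the main obstacle is not one, and it is the only step left open. It needs neither $3$-cubes nor interval ampleness; in fact, in coordinates $i,j$ the square and its $e$-translate span $\{a,b,c\}\times\{x_j,y_j\}$, which is not a cube. Weak isometricity of $S$ settles it directly.

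Let $x,y\in S'$ differ exactly in coordinates $i,j$. Write $z(\alpha,\beta)$ for the tuple agreeing with $x$ outside $\{i,j\}$, with $i$-coordinate $\alpha$ and $j$-coordinate $\beta$. Suppose $x_i\in\{a,b\}$, $y_i=c\notin\{a,b\}$, and the common neighbours $p=z(x_i,y_j)$ and $q=z(c,x_j)$ are not in $S'$. Boxes at $c$ are singletons, so $y\in S$ and $q\notin S$.
\begin{itemize}
\item If $x_i=a$: some $w=z(\alpha,x_j)$ with $\alpha\in\{a,b\}$ lies in $S$, while $z(a,y_j),z(b,y_j)\notin S$. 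Then $w,y\in S$ have both common neighbours, $z(\alpha,y_j)$ and $q$, outside $S$, contradicting weak isometricity of $S$.
\item If $x_i=b$: then $z(a,x_j),z(b,x_j)\in S$. Apply weak isometricity to each of them together with $y$; their common neighbour $q$ is outside $S$. This forces $z(a,y_j),z(b,y_j)\in S$, i.e.\ $p\in S'$, a contradiction.
\end{itemize}

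The other subcases are immediate. If $\{x_i,y_i\}=\{a,b\}$, the square is a union of two $e$-edges. The member of the antipodal pair with $i$-coordinate $b$ forces its $e$-partner into $S'$, and that partner is a common neighbour. If $\{x_i,y_i\}\cap\{a,b\}=\varnothing$, then $S'$ and $S$ agree on the square. With this filled in, your proof is complete.
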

\begin{proof}
    By \cref{lem:pushdowndecomposition} we only need to show that $S[e\da]$ is ample for any elementary $e\in\MP(U)$, since then the result follows by induction. Assume that the nontrivial block of $e$ is $\{a,b\}$ with $a<b$, and $\supp(e)=i$. For elementary minor-subproducts we can interpret the result of  a push down in terms of projections: let $u'\in e$ with $u'_i=\{a,b\}$, and let $F(u')=\{u^a,u^b\}$ with $u^a_i=a$ and $u^b_i=b$. We have $u^a\in S[e\da]$ if and only if $F(u')$ contains an element of $S$, which happens if $u'\in S_e$. Likewise, $u^b\in S[e\da]$ if and only if $u'\in S^e$. It follows that the set of elements of $S[e\da]$ with $i$-coordinate $a$ (respectively, $b$) is isomorphic to the set of elements of $S_e$ (respectively, $S^e$) with coordinate $\{a,b\}$.
    
    We show ampleness using \cref{thm:ample-elementary}(6). Let $[u,v]$ be an interval of $U$, and assume $u_j\leq v_j$ for all $j\in [m]$ ({this is without loss of generality, since we can always reassign $u,v$ to be the minimum, respectively maximum of the interval}). There are a few options:
    \begin{itemize}
        \item $u_i,v_i\notin \{a,b\}$. Then $[u,v]\cap S[e\da]=[u,v]\cap S$, which is ample.
        \item $\{u_i,v_i\}=\{a,b\}$. Then the push down operation $e$ within $[u,v]$ is equivalent to a push down operation as defined in \cite{BaChDrKo}. By \cite[Proposition 3]{BaChDrKo}, $S\cap [u,v]$ is also ample.
        \item $\{u_i,v_i\}\cap \{a,b\}=a$. Assume $u_i=a$, and let $u',v'\in e$ such that $u\in F(u')$ and $v\in F(v')$. Then $[u,v]\cap S[e\da]$ is isomorphic to $[u',v']\cap S_e$. The latter is ample by \cref{lem:ample-projection,thm:ample=weaklyample}.
        \item $\{u_i,v_i\}\cap \{a,b\}=b$. Assume $u_i=b$, and let $u',v'\in e$ such that $u\in F(u')$ and $v\in F(v')$. Then $[u,v]\cap S[e\da]$ is isomorphic to $[u',v']\cap S^e$. The latter is ample by \cref{lem:superscriptample,thm:ample=weaklyample}.
    \end{itemize}
    Since all intersections with intervals are ample, we conclude that $S[e\da]$ is ample.
\end{proof}

\begin{lemma}\label{lem:length2pushdown}
    Let $S\subseteq U$. Then $S$ is weakly isometric if and only if serial push downs of length 2 commute on $S$, i.e. $S[M^1,M^2\da]=S[M^2,M^1\da]$ {for all one-dimensional $M^1,M^2\in \MP(U)$}.
\end{lemma}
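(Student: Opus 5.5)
The plan is to localize both conditions to $2$-dimensional subproducts. Two one-dimensional push downs with the same support $i$ act only inside the lines of coordinate $i$. After $S[M^1\da]$, each box of $M^1$ is an initial segment of its line, and the subsequent push down along $M^2$ only reshuffles within boxes of $M^2$. In principle this need not commute; I would first verify whether the statement intends distinct supports (as in the definition of serial push down) and restrict to that case. So let $\supp(M^1)=\{k\}$ and $\supp(M^2)=\{\ell\}$ with $k\neq \ell$. Both operations fix all coordinates other than $k,\ell$. Hence both sides can be compared fiberwise on each $2$-dimensional slice $U_k\times U_\ell\times\{z\}$, and the equality $S[M^1,M^2\da]=S[M^2,M^1\da]$ for all such pairs is equivalent to the same equality for $S\cap(U_k\times U_\ell\times\{z\})$ in the two-factor product. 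Weak isometricity likewise splits as weak $(k,\ell)$-isometricity for all pairs $k\neq\ell$, and each such condition is a condition on these slices. So it suffices to prove, for a set $S\subseteq U_k\times U_\ell$: $S$ is weakly isometric if and only if $S[M^1,M^2\da]=S[M^2,M^1\da]$ for all one-dimensional $M^1,M^2$ of supports $k,\ell$.

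For the direction ($\Leftarrow$), take elementary $M^1=e=\{a,b\}\subseteq U_k$ with $a<b$ and $M^2=f=\{c,d\}\subseteq U_\ell$ with $c<d$. As in the proof of \cref{lem:pushdownample}, on the square $\{a,b\}\times\{c,d\}$ an elementary push down coincides with the binary push down, and off the square nothing moves except within single edges. Suppose $S$ contains exactly two opposite corners of this square, say $(a,d)$ and $(b,c)$. Pushing by $e$ first gives $\{(a,d),(a,c)\}$ and then pushing by $f$ gives $\{(a,c),(b,c)\}$. In the other order we get $\{(a,c),(b,c)\}$ and then $\{(a,c),(a,d)\}$. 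These differ. The other diagonal $\{(a,c),(b,d)\}$ behaves similarly after one push. So non-weak-isometricity yields non-commutation.

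For the direction ($\Rightarrow$), I would use \cref{lem:pushdowndecomposition} only to understand the structure. The cleaner route is to reinterpret push downs via projections. By the argument of \cref{lem:pushdownample}, $S[M\da]$ is determined by the counts $|S\cap B|$ for the boxes $B$ of $M$. On a box $B=P\times Q$ of $M^1\vee M^2$ (with $P$ a block of $\alpha_k$ and $Q$ a block of $\beta_\ell$), pushing by $M^1$ and then $M^2$ produces a left-justified staircase. The row lengths are sorted according to the column-wise counts. In the other order, the column lengths are sorted row-wise. Both results are "down-sets" in $P\times Q$ (with the orders inherited from $U_k$ and $U_\ell$), and they coincide precisely when the counting data are compatible.

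The main obstacle is showing that weak isometricity forces the two staircases to agree for arbitrary blocks $P,Q$, not just pairs. I would try the following. Reduce to elementary push downs via \cref{lem:pushdowndecomposition}, whose bubble-sort decomposition writes each side as an alternating word in elementary $e_i$ and $f_j$. Then use that elementary push downs preserve weak isometricity. This can be checked on squares, as in \cref{lem:convexity-preimages}(3), because the $\{a,b\}$-column of $S[e\da]$ is $S_e$ and $S^e$, both weakly $(k,\ell)$-isometric. By the ($\Leftarrow$) computation, elementary $e,f$ of different supports then commute on every intermediate set. This lets me rearrange the word for $S[M^1,M^2\da]$ into that for $S[M^2,M^1\da]$, provided I also justify that the elementary words for $M^1$ and for $M^2$ can be separated. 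That separation is exactly what commutation of pairs achieves, exchange by exchange, since each intermediate set stays weakly isometric.
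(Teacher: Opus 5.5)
Your slice reduction and your $(\Leftarrow)$ argument essentially match the paper's. You are also right that distinct supports are intended. For $(\Rightarrow)$ your route is genuinely different, and it works.

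\textbf{The paper's route.} The paper restricts further to a box $P\times Q$, so that both partitions are co-trivial, and then counts. Weak isometricity makes the columns of $S$ nested. Hence after pushing along the first coordinate the column counts are already sorted and equal to the sorted column counts of $S$, and both orders give the same Ferrers diagram.

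\textbf{Your route.} You write $S[M^1,M^2\da]=S[w_1w_2\da]$ and $S[M^2,M^1\da]=S[w_2w_1\da]$, where $w_1,w_2$ are the elementary words of \cref{lem:pushdowndecomposition}. These are block concatenations, not alternating words. You pass from $w_1w_2$ to $w_2w_1$ by $|w_1|\cdot|w_2|$ adjacent transpositions of some $e$ and $f$. Each transposition is legitimate because the image $T=S[u\da]$ of $S$ under the current common prefix $u$ is still weakly isometric in $U_k\times U_\ell$, and $T[e,f\da]=T[f,e\da]$ for such $T$.

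\textbf{What each buys.} The paper's argument identifies the common value explicitly, which is the content of the remark on Ferrers graphs. Yours never computes it. Instead it reduces everything to $2\times 2$ squares and reuses existing lemmas, in the same atom-by-atom spirit as \cref{lem:weakisometricity}(3)$\Rightarrow$(4).

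Two local steps need better justification than you give.
\begin{itemize}
\item \emph{Commutation.} $T[e,f\da]=T[f,e\da]$ does not follow from your $(\Leftarrow)$ computation, which only treats the diagonal. It does follow from \cref{lem:weakisometricity}. On a square $\{a,b\}\times\{c,d\}$ with $a<b$, $c<d$, the corners $(a,c),(a,d),(b,c),(b,d)$ of $T[e,f\da]$ are read off from $T_{e\vee f},(T_e)^f,(T^e)_f,T^{e\vee f}$. Those of $T[f,e\da]$ are read off from $T_{e\vee f},(T^f)_e,(T_f)^e,T^{e\vee f}$.
\item \emph{Preservation of weak isometricity under $[e\da]$.} Squares meeting exactly one of the columns $a,b$ correspond to squares of $T_e$ or $T^e$, so \cref{lem:convexity-preimages}(3) covers them. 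Squares avoiding both columns are unchanged. Squares with $k$-coordinates exactly $\{a,b\}$ have left-justified rows, so they contain no diagonal pair.
\end{itemize}
The preservation step genuinely needs your prior reduction to two factors. In $\{a,b\}\times\{y,y'\}\times\{z,z'\}$ the weakly isometric set $\{(a,y,z),(b,y',z')\}$ is pushed by $e=\{a,b\}$ to a diagonal pair of a square.

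Finally, in your $(\Leftarrow)$ computation the two final sets are interchanged. In fact $S[e,f\da]=\{(a,c),(a,d)\}$ and $S[f,e\da]=\{(a,c),(b,c)\}$. They still differ, so the conclusion stands.
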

\begin{proof}
    Suppose $S$ is not weakly isometric. Then there are $u,v\in S$ at distance 2 with $[u,v]\cap S=\{u,v\}$. Since $u,v$ have distance 2, they have two coordinates where they differ, say $u$ has coordinates $u_1,u_2$ where $v$ has $v_1,v_2$. Let $M^1$ and $M^2$ be elementary minor-subproducts with nontrivial blocks $\{u_1,v_1\}$ and  $\{u_2,v_2\}$, respectively. {Then $S[M^1,M^2\da]$ contains the element in $[u,v]$ with coordinates $\min(u_1,v_1),\max(u_2,v_2)$}, while $S[M^2,M^1\da]$ does not contain that element, thus $[M^1\da]$ and $[M^2\da]$ do not commute.

    To prove the converse assertion we may assume that $m=2$ and that the nontrivial partitions of $M^1,M^2$ are equal to the co-trivial partitions $\{U_1\}$ and $\{U_2\}$, respectively, since in the general case the effects of the push down operators are restricted to two-dimensional boxes formed by the non-trivial partitions of $M^1$ and $M^2$. For $i=1,2,\ldots,|U_1|$, let $a_i^S$ be the number of elements of $S$ with first coordinate $i$, and let $\sigma_S:[|U_1|]\to[|U_1|]$ be a permutation such that $a_{\sigma_S(1)}^S\geq a_{\sigma_S(2)}^S\geq\ldots\geq a_{\sigma_S(|U_1|)}^S$. For any weakly isometric set $S$, denote the sequence $a^S_1,a^S_2,\ldots,a^S_{|U_i|}$ by $A(S)$ and the sequence $a^S_{\sigma_S(1)}, a^S_{\sigma_S(2)},\ldots, a^S_{\sigma_S(|U_1|)}$ by $A'(S)$. Note that $S[M^2\da]$ is uniquely determined by $A(S)$. We claim that for any weakly isometric $S$, $A(S[M^1\da])=A'(S[M^1\da])$. Consider any element $a^{S[M^1\downarrow]}_i$ from $A(S[M^1\da])$, and let $(\sigma_S(i),j_1),(\sigma_S(i),j_2),\ldots, (\sigma_S(i),j_{a^S_{\sigma_S(i)}})$ be the elements of $S$ with first coordinate $\sigma_S(i)$. Now, if there is any $k\in U_1$ with $a_k\geq a_{\sigma_s(i)}$, we claim that the elements $(k,j_1),\ldots,(k,j_{a^S_{\sigma_S(i)}})$ must be in $S$: if one of them is not, say $(k,r)\notin S$, then there must also be another index $s$ such that $(k,s)\in S$ and $(\sigma_S(i),s)\notin S$ {(since there are at least as many elements with coordinate $k$)}, but that violates weak isometricity of $S$: there is path of length 2 between $(\sigma_S,r)$ and $(k,s)$). {Summarizing, if a tuple $(\sigma_S(i),j)$ is in $S$, then there are at least $i$ tuples in $S$ with second coordinate $j$, namely $(\sigma_S(i'),j)$, $i'\leq i$. Therefore, if $(\sigma_S(i),j)\in S$, then $(i,j)\in S[M^1\da]$.} 
    This implies $a^{S[M^1\downarrow]}_i\geq a^S_{\sigma_S(i)}$. Since this inequality holds for all $i\in U_1$, and since $\sum a^S_i=|S|=|S[M^1\da]|=\sum a^{S[M^1\downarrow]}_i$, the inequality $a^{S[M^1\downarrow]}_i\geq a^S_{\sigma_S(i)}$ must hold with equality everywhere. This implies $A(S[M^1\da])$ is sorted already, so $A(S[M^1\da])=A'(S[M^1\da])=A'(S)$.

    {The operation $[M^2\da]$ has no effect on $A(S)$, so we have $A(S[M^1,M^2\da])=A(S[M^1\da])=A'(S)$. Moreover $A(S[M^2])=A(S)$, and as one can easily verify that $S[M^2]$ is weakly isometric, we can repeat the argument we did for $S$ to get $A(S[M^2, M^1\da])=A'(S[M^2\da])=A'(S)$. Combining these yields $A(S[M^1,M^2\da])=A(S[M^2,M^1\da])$.
    The last push down of $S[M^1,M^2\da]$ is $M^2$, so it follows from $A(S[M^1,M^2\da])=A'(S)$ that $S[M^1,M^2\da]$ contains precisely all the elements $(i,j)$ with $i\in U_1$ and $j\leq a^S_{\sigma_S(i)}$.} 

    {We saw that the number of elements of $S[M^1,M^2\da]$ per first coordinate (per `column') is non-increasing. With an analogous argument, the number of elements for each second coordinate (per `row') of $S[M^2,M^1\da]$ is non-increasing. If $(i,j)\in S[M^2,M^1\da]$, that means that the $j$-row of $S[M^2\da]$ has at least $i$ elements. But then, for any $j'\leq j$, the $j'$-row of $S[M^2\da]$ has at least as many elements, hence $(i,j')\in S[M^2,M^1\da]$. Since $A(S[M^2,M^1\da])=A'(S)$, we conclude that $A(S[M^2,M^1\da]$ contains exactly those $(i,j)$ with $j\leq a^S_{\sigma_S(i)}$. Since this is the same set as $S[M^1,M^2\da]$, this concludes the proof.}
\end{proof}

\begin{remark} In the previous proof, if we consider the bipartite graph with vertex set $U_1\cup U_2$, with an edge from $i\in U_1$ to $j\in U_2$ precisely when $(i,j)\in S$, then $S$ induces a Ferrers graph if and only if $S$ is weakly isometric. The {`only if' direction of} this lemma is then equivalent to saying that the shape of a Ferrers diagram is uniquely defined by its Ferrers graph up to isomorphism, by sorting both the rows and columns of the adjacency matrix by number of ones. 
\end{remark}

\subsection{Characterization of ampleness via serial push downs} The goal of this subsection is to prove the following result: 

\begin{theorem}\label{thm:pushdown}
    A set $S\subseteq U$ is ample if and only if every serial push down commutes on $S$.
\end{theorem}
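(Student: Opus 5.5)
The plan is to prove the two directions separately, reducing both to the binary case via intervals (\cref{thm:ample-elementary}(6)) and to weak isometricity via \cref{lem:length2pushdown}.

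\emph{Ampleness implies commutation.} Let $S$ be ample and $M^1,\ldots,M^k$ one-dimensional with pairwise distinct supports. By \cref{lem:pushdownample} every intermediate serial push down $S[M^{i_1},\ldots,M^{i_r}\da]$ is ample. By \cref{prop:ample-is-isometric} it is therefore weakly isometric. By \cref{lem:length2pushdown}, any two consecutive push downs with distinct supports applied to such a set commute. Adjacent transpositions generate all permutations. So, by induction on the number of inversions of the permutation, we can transform $S[M^1,\ldots,M^k\da]$ into $S[M^{i_1},\ldots,M^{i_k}\da]$ one adjacent swap at a time. Each swap is legitimate because it is applied to an ample, hence weakly isometric, intermediate set. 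This direction should be routine.

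\emph{Commutation implies ampleness.} Assume every serial push down commutes on $S$. I would verify condition (6) of \cref{thm:ample-elementary}, namely that $S\cap[u,v]$ is ample in the hypercube $[u,v]$ for all $u,v$.

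\begin{itemize}
\item \textbf{Reduction to elementary push downs.} Write $[u,v]=\{u_1,v_1\}\times\ldots\times\{u_m,v_m\}$. For each $i$ with $u_i\ne v_i$, take the elementary minor-subproduct $e_i$ with block $\{u_i,v_i\}$. These have pairwise distinct supports, so every serial push down along them commutes on $S$ by hypothesis.
\item \textbf{Restriction to the interval.} Each $e_i$-box meeting $[u,v]$ is either a singleton or an edge $\{x,x'\}$ lying inside $[u,v]$. Hence the push downs preserve $[u,v]$ and its complement. We also get $(S[e_{i_1},\ldots\da])\cap[u,v]=(S\cap[u,v])[e_{i_1},\ldots\da]$, where the right-hand side is the classical binary push down in the cube $[u,v]$ toward the smaller coordinates.
\item \textbf{Conclusion on the interval.} All serial push downs of the binary set $S\cap[u,v]$ along distinct coordinates therefore commute. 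By the binary characterization of \cite{BaChDrKo} (recalled in the introduction), $S\cap[u,v]$ is ample.
\end{itemize}

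Since this holds for every interval, \cref{thm:ample-elementary} gives that $S$ is ample.

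The main obstacle is making the binary citation precise. The binary theorem concerns arbitrary sequences of distinct coordinates, possibly with base point orientation $\min(u_i,v_i)$. Our restriction realizes exactly those sequences, so the citation should suffice. If the cited result requires commutation for sequences that repeat coordinates, I would note that pushing twice along the same elementary $e$ is idempotent. If the citation is not quite adequate, the fallback is to prove directly that commuting binary push downs force isometry and ample strong-projections. That fallback would run by induction via \cref{lem:length2pushdown}: from commutation of pairs, each intermediate set is weakly isometric. One then identifies the slices of $S[e\da]$ with $S_e$ and $S^e$, as in the proof of \cref{lem:pushdownample}, and applies \cref{thm:ample-elementary}(3) recursively.
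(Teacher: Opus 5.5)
Your proposal is correct and follows the paper's proof essentially step for step. The forward direction swaps adjacent push downs using \cref{lem:pushdownample}, \cref{prop:ample-is-isometric} and \cref{lem:length2pushdown}; the converse restricts to each interval $[u,v]$ via the elementary minor-subproducts with blocks $\{u_i,v_i\}$, invokes the binary characterization of \cite{BaChDrKo}, and concludes with \cref{thm:ample-elementary}(6). Your check that every $e_i$-box meeting $[u,v]$ lies inside $[u,v]$ is a detail the paper leaves implicit, and the fallback argument is not needed.
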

\begin{proof}
    First, suppose that $S$ is ample. Consider the serial push down $S[M^1,M^2,\ldots,M^k\da]=S[M^1\da][M^2\da]\ldots[M^k\da]$, and let $i\leq k-1$. By \cref{lem:pushdownample} we know that $S[M^1,M^2,\ldots,M^{i-1}\da]$ is ample. Since any ample set is weakly isometric (\cref{prop:ample-is-isometric}), by \cref{lem:length2pushdown} it follows that 
    \[
        S[M^1,M^2,\ldots,M^{i-1}\da][M^i\da][M^{i+1}\da]=S[M^1,M^2,\ldots,M^{i-1}\da][M^{i+1}\da][M^{i}\da].
    \]
    So we can swap the order of any two adjacent minor-subproducts in the serial push down. Since we can create any permutation of the minor-subproducts in this manner, we conclude that every serial push down commutes on $S$.

    Conversely, suppose that any serial push down commutes on $S$. Pick any interval $[u,v]$, and let $I=\{i_1,i_2,\ldots,i_{|I|}\}$ be the set of indices $i$ for which $u_i\neq v_i$. For all $i\in I$, let $M^i\in \EEMP$ such that $\supp(M^i)=i$ and the unique nontrivial block of $M^i$ is $\{u_i,v_i\}$. Then within the interval $[u,v]$, the serial push down $S[M^{i_1},M^{i_2},\ldots,M^{i_{|I|}}\da]$ corresponds to a ``complete serial push down'' defined as in \cite{BaChDrKo}. Since the serial push down commutes, it follows from \cite[Corollary 3]{BaChDrKo} that $S\cap[u,v]$ is ample in $[u,v]$, hence by \cref{thm:ample-elementary}(6), $S$ is ample.
\end{proof}

Now, consider push downs with respect to the one-dimensional mixed minors. Such minors are uniquely determined by the unique factor of $U$ whose partition is co-trivial. For each $i=1,\ldots,m$, let $M^i_+$ be the one-dimensional mixed minor-subproduct defined by the generalized partition $\Lambda^i_+=(\alpha_1,\ldots,\alpha_{i-1},\alpha_i,\alpha_{i+1},\ldots,\alpha_m)$, where each of the partitions $\alpha_j, j\ne i$ is trivial and the partition $\alpha_i$ is co-trivial. The boxes of $\BPart(\Lambda^+_i,U)$  are parallel cliques of size $|U_i|$. In this case, we denote  $S[M^{i_1}_+,M^{i_2}_+,\ldots,M^{i_k}_+\da]$ by  $S[i_1,i_2,\ldots,i_k\da]$. If $S$ is ample, then  for any enumeration $1,\ldots,m$ of $X$, $S[1,\ldots,m\da]$ is called a \emph{complete (serial) push down} of $S$. {The resulting set is stable by push downs, since, using Theorem \ref{thm:pushdown} and idempotence of push downs, we have for any index $i$} 
\[ S[1,\ldots, m,i\da]= S[1,\ldots, i-1,i+1,\ldots,m,i,i\da]=S[1,\ldots, i-1,i+1\ldots,m,i\da]=S[1,\ldots, m\da].\] 
Consequently, we obtain the following corollary of Theorem \ref{thm:pushdown}: 

\begin{corollary}\label{stable} If $S\subseteq U$ is an ample set, then there exists a stable ample set $S_*$ such that $S_*=S[1,\ldots,m\da]$ for any complete serial push down $S[1,\ldots,m\da]$ of $S$ with respect to one-dimensional mixed minors.  
\end{corollary}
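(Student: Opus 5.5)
The plan is to assemble the corollary from three facts already available: ampleness is preserved by push downs, serial push downs commute on ample sets, and push downs are idempotent.

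First, fix an ample set $S$ and one enumeration of $X$, and set $S_*:=S[1,\ldots,m\da]$. Each push down with respect to a one-dimensional mixed minor-subproduct $M^i_+$ is a serial push down. By \cref{lem:pushdownample}, $S_*$ is therefore ample.

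Next, I would show that $S_*$ does not depend on the enumeration chosen. The minor-subproducts $M^1_+,\ldots,M^m_+$ have pairwise distinct supports, so \cref{thm:pushdown} applies: since $S$ is ample, every serial push down commutes on $S$. Hence $S[i_1,\ldots,i_m\da]=S[1,\ldots,m\da]$ for every permutation $(i_1,\ldots,i_m)$ of $X$. So every complete serial push down of $S$ equals the same set $S_*$.

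The step needing most care is stability, i.e.\ $S_*[M\da]=S_*$ for every one-dimensional $M$, not only for the mixed ones $M^i_+$.

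For the mixed ones, fix $i$. Use commutativity to move $i$ to the last position and then use idempotence of push down, as in the computation displayed before the corollary. This gives $S_*[i\da]=S_*$. The sequence $1,\ldots,m,i$ repeats a support, so I would not apply \cref{thm:pushdown} to it directly. Instead I would apply it to the distinct-support sequence $1,\ldots,m$ to reorder it, and then append $i$.

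For a general one-dimensional $M$ with $\supp(M)=\{i\}$ and blocks $P^i_1,\ldots,P^i_\ell$, argue directly. By $S_*[i\da]=S_*$, each clique parallel to $U_i$ meets $S_*$ in an initial segment $\{0,\ldots,r-1\}$. Each box $B$ of $\cB(\Lambda,U)$ is the part of such a clique indexed by one block $P^i_j$. Hence $S_*\cap B$ consists of those elements of $B$ whose $i$-coordinate is less than $r$. These are exactly the $|S_*\cap B|$ elements of $B$ with smallest $i$-coordinates, so the push down along $M$ leaves $B$ unchanged. Therefore $S_*$ is stable.
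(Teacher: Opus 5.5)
Your proof is correct and follows the paper's argument: ampleness of $S_*$ comes from \cref{lem:pushdownample}, independence of the enumeration from \cref{thm:pushdown}, and stability under each $M^i_+$ from reordering $1,\ldots,m$ to put $i$ last and then using idempotence, exactly as in the computation displayed before the corollary. Your final paragraph extends stability from the mixed minors $M^i_+$ to arbitrary one-dimensional $M$, using that $S_*$ meets each $i$-clique in an initial segment; the paper leaves this step implicit even though its definition of stability requires it, and your argument for it is sound.
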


\section{Box complexes of ample sets}\label{sec:boxcomplexes} In this section we define box/prism complexes of ample sets and investigate their properties: $f$-vectors, the Euler characteristic, corner peelings, and contractibility.  

\begin{definition} [Box and prism complexes, $f$-vectors] For a set $S\subseteq U$, the \emph{box complex}  $\BBox(S)$ of $S$  {is the set of} all boxes $B\in \BBox(U)$ included in $S$. The \emph{dimension} of a box $B=B_1\times\ldots\times B_m$ is $\dim(B)=\sum_{i=1}^m \dim(B_i)=\sum_{i=1}^m (|B_i|-1)$. The \emph{prism complex} $||\BBox(S)||$ of $\BBox(S)$ is obtained by replacing each box $B=B_1\times\ldots\times B_m\in \BBox(S)$ by the prism $\Pi(B)$ of dimension $\dim(B)$, which is the Cartesian product of $m$ regular Euclidean simplices  of dimensions $\dim(B_1),\ldots,\dim(B_m)$, respectively. 

Let $f_i(S)$ denotes the number of boxes of $\BBox(S)$ of dimension $i$. The vector $f(S)=(f_0(S),f_1(S),\ldots,)$ is called the \emph{$f$-vector} of the box complex $\BBox(S)$ (and of $||\BBox(S)||$ or of $S$). 
\end{definition}

{Box and prism complexes and $f$-vectors are closely connected conceptually. }By Lemma \ref{convex-Hamming}, each box $B\in \BBox(S)$ is a Cartesian product of cliques. Therefore,  replacing each clique by a regular simplex we realize each box $B$ by a  Cartesian product of simplices, i.e., by a prism $\Pi(B)\in ||\BBox(S)||$. 
The dimension of a box $B$ is  the topological dimension of the prism $\Pi(B)$. Since the intersection of two boxes $B',B''\in \BBox(S)$ is empty or a box  of $\BBox(S)$ and the intersection of the prisms $\Pi(B')$ and $\Pi(B'')$ is empty or a prism, $||\BBox(S)||$ is a cell complex in which all cells are prisms, justifying the name ``prism complex''. The graph $H(S)$ is called the \emph{1-skeleton of the prism complex  $||\BBox(S)||$}: its vertices are the 0-dimensional prisms (i.e., the vertices corresponding to the tuples of $S$) and its edges are the 1-dimensional prisms.  If $B\in \BBox(S)$ and $B'$ is a box contained in $B$, then $B'$ is called a \emph{face} of $\BBox(S)$. The boxes of $\BBox(S)$ will be also called \emph{faces}. The faces which are maximal by inclusion are called \emph{facets} of $\BBox(S)$. A box complex  $\BBox(S)$  is called a \emph{bouquet of prisms} if there exists a vertex $v_0\in S$ belonging to all facets of   $\BBox(S)$; $v_0$ is called the \emph{origin} of the bouquet.

\subsection{Push downs and $f$-vectors}
In general, a push down preserves the number of elements  (i.e., $f_0(S[M\da])=f_0(S)$) but may increase the other coordinates of $f(S)$. Similarly to the binary case (see \cite{BaChDrKo}), the following result shows that the push down  preserves the $f$-vectors of ample sets. Furthermore, we show that a complete serial push down with respect to one-dimensional mixed minors  ends up with the same 
stable ample set, which  is a bouquet of prisms: 

\begin{proposition} \label{f-vector-push-down}  For any set $S\subseteq U$, any one-dimensional minor-subproduct $M$, and any $i$, $f_i(S)\le f_i(S[M\da])$. If $S$ is ample, then 
\begin{enumerate}
    \item $f_i(S)=f_i(S[M\da])$ for any $i$, thus $f(S[M\da])=f(S)$, and
    \item if $S_*$ is the stable ample set obtained by any complete serial push down  $S[1,\ldots,m\da]$ of $S$, then $S_*$ is a bouquet of prisms with origin $\bf{0}$.   
\end{enumerate}
\end{proposition}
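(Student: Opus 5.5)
The plan is to establish the inequality $f_i(S)\le f_i(S[M\da])$ by an injection of boxes, and then to get equality for ample sets from a counting argument. By \cref{lem:pushdowndecomposition}, $[M\da]$ is a composition of elementary push downs $[e\da]$. Ampleness is preserved by each of them (\cref{lem:pushdownample}), so for both the inequality and part (1) it suffices to treat a single elementary $e$ with nontrivial block $\{a,b\}\subseteq U_i$, $a<b$. Throughout, I use the description from the proof of \cref{lem:pushdownample}: the slice of $S[e\da]$ at coordinate $a$ is $S_e$ restricted to coordinate $\{a,b\}$, and the slice at $b$ is $S^e$ restricted to $\{a,b\}$. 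Off the $\{a,b\}$-fibers nothing changes.

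For the inequality I will use the classical compression argument. Given a box $B=B_1\times\ldots\times B_m\subseteq S$, define its image $\phi(B)\subseteq S[e\da]$ as follows.
\begin{itemize}
\item If $B_i\supseteq\{a,b\}$, or $B_i\cap\{a,b\}=\varnothing$, set $\phi(B)=B$. It stays in $S[e\da]$, since a fiber fully in $S$ stays fully in, and fibers not touching $\{a,b\}$ are untouched.
\item If $B_i\cap\{a,b\}=\{b\}$, set $\phi(B)=B$ when $B$ with $b$ replaced by $a$ is not in $S$, and otherwise keep $B$ itself. In the latter case both copies lie in $S$.
\end{itemize}
More cleanly, I define $\phi(B)=B$ if $B\subseteq S[e\da]$, and otherwise $\phi(B)=B'$, where $B'$ is $B$ with $b$ replaced by $a$ in the $i$-th factor. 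One checks $B'\subseteq S[e\da]$ and $B'\notin S$, which makes $\phi$ injective. This is the standard proof for set systems, done box-by-box along the two-element fibers. The map preserves dimension, so $f_i(S)\le f_i(S[e\da])$.

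For equality in (1) under ampleness, I count faces of dimension $i$ through the hyperplane decomposition. Boxes of $S$ split into three kinds:
\begin{itemize}
\item those avoiding coordinates $a,b$ in factor $i$;
\item those whose $i$-th factor contains exactly one of $a,b$;
\item those containing both; these correspond bijectively to boxes of dimension $i-1$ of $S^e$ with coordinate $\{a,b\}$.
\end{itemize}
Boxes of the second kind, grouped by replacing $a$ or $b$ by the contracted element $w$, are counted by boxes of $S_e$ plus boxes of $S^e$ through $w$, but with a correction. The cleaner route is an inclusion–exclusion identity
\[
f_i(S)=f_i(S_e)+f_i(S^e)-(\text{boxes of } S \text{ avoiding } a,b)+(\text{corrections}),
\]
which I expect to hold exactly when every box of $S_e$ with coordinate $w$ lifts to a box of $S$. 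The same formula holds for $S[e\da]$, and there the lifting is automatic. So the key point, and the main obstacle, is showing that for ample $S$ every box of $S_e$ lifts to a box of $S$, i.e.\ $\BBox(S_e)=\BBox(S)_e$. I would prove this from commutativity (\cref{thm:ampleisometriccommutative}(4)): a box of $S_e$ is a copy of some extended minor-subproduct shattered by $S$, and ampleness (\cref{thm:ample=weaklyample}) gives a copy in $S$ mapping onto it. Once this holds, the counts for $S$ and $S[e\da]$ agree, because $(S[e\da])_e=S_e$ and $(S[e\da])^e=S^e$.

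For (2), Corollary \ref{stable} gives that $S_*$ is stable and ample. Stability means that each factor-$i$ line meets $S_*$ in an initial segment $\{0,\ldots,k-1\}$, so $S_*$ is a down-set in the product order. A down-set that is a box $B$ and is maximal must contain $\mathbf 0$: replacing each $B_j$ by $\{0,\ldots,\max B_j\}$ stays inside $S_*$ and contains $B$. By maximality, every facet equals such an initial box, hence contains $\mathbf 0$. Here I need to confirm that down-closedness of $S_*$ in each coordinate gives the product down-set property; this follows by applying the initial-segment property one coordinate at a time.
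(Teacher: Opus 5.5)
Your proposal is correct. Its core coincides with the paper's, but parts (1) and (2) are organized differently.

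\emph{Inequality.} This is the paper's twin-injection argument. The paper's $g$ swaps the two twins when both lie in $S$, while your $\phi$ fixes every $B\subseteq S[e\da]$; both are dimension-preserving injections. Discard your garbled first bullet list, which also omits the case $B_i\cap\{a,b\}=\{a\}$. You should record that $B\not\subseteq S[e\da]$ forces $B_i\cap\{a,b\}=\{b\}$, so that $B'$ is a box of the same dimension.

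\emph{Part (1).} The paper proves that $g$ is onto: a box $B'$ of $S[e\da]$ outside the image is an $a$-box not contained in $S$, and then $S\cap(B'\cup B'')$ shatters $e$ without containing either copy $B'$ or $B''$. You count instead. Rename your factor index $j$ to free $i$ for the dimension, and write $Q^a,Q^b$ for the boxes obtained from $Q$ by replacing $w$ by $a$, resp.\ $b$. Then your three kinds of boxes give
\[
f_i(S)=r_i+\lambda_i(S)+h_i+h_{i-1},
\]
where:
\begin{itemize}
\item $r_i$ counts the $i$-dimensional boxes of $S$ avoiding $a,b$ in factor $j$;
\item $h_k$ counts the $k$-dimensional boxes $Q$ of $S^e$ with $w\in Q_j$;
\item $\lambda_i(S)$ counts the $i$-dimensional boxes $Q$ of $S_e$ with $w\in Q_j$ such that $Q^a\subseteq S$ or $Q^b\subseteq S$.
\end{itemize}
The terms $r_i,h_i,h_{i-1}$ are invariant under $[e\da]$, and every such $Q$ lifts to $Q^a\subseteq S[e\da]$. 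Hence $f_i(S[e\da])-f_i(S)$ equals the number of non-liftable $i$-dimensional boxes of $S_e$, which by itself already gives the inequality without $\phi$. Write this identity out instead of ``$+$corrections'', since the $h_{i-1}$ term is always present.

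The vanishing of this defect for ample $S$ is exactly the paper's key step; commutativity plays no role, so drop that reference. Indeed, $Q\subseteq S_e$ means that $S$ shatters the elementary minor-subproduct of the box $Q^a\cup Q^b$, whose only copies are $Q^a$ and $Q^b$. Then $\MP^*(U)$-ampleness (\cref{thm:ample=weaklyample}) puts one of them in $S$. Your route buys an exact formula for the defect; the paper's is shorter once the injection is in hand.

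\emph{Part (2).} You use the down-set property, which is the content of the paper's \cref{l:boxBleq} and Claim~\ref{c:boxes}. It yields the stronger fact that every facet equals $B_\leq(t^B)$. The paper argues more directly: a facet with $0\notin B_1$ yields, by maximality, a factor-$1$ line meeting $S_*$ but not at $0$, contradicting $S_*[1\da]=S_*$.
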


\begin{proof} In view of Lemma \ref{lem:pushdowndecomposition}, to prove the inequality  $f_i(S)\le f_i(S[M\da])$ for any one-dimensional minor-subproduct $M$ and any $i$, it suffices to prove that 
$f_i(S)\le f_i(S[e\da])$ for any elementary minor-subproduct $e=\{ a,b\}$ and any $i$. Let $\{ a,b\}\subseteq \{ 0,1,\ldots, |U_{j}|-1\}=U_j$ and suppose that $a<b$.  A box $B=B_1\times\ldots\times B_m$ is called an \emph{$a$-box} if $B_j\subseteq U_j\setminus \{ b\}$ (and a \emph{$b$-box} if $B_j\subseteq U_j\setminus \{ a\}$). Equivalently, $B$ is an $a$-box if $B\subseteq W_a(e)\cup W_=(e)$. To each $a$-box $B'$ we can canonically associate a $b$-box $B''$ of the same dimension as $B'$, obtained by setting in all tuples $u'\in B'$ the $j$-coordinate to $b$ if they had $j$-coordinate equal to $a$. Then $B'$ and $B''$ are called \emph{twins}. Notice that $B'\cap B''=B'\cap W_=(e)$. If $B'\subseteq W_=(e)$, then $B'=B''$. Otherwise, if $B'\cap W_a(e)\ne\varnothing$, then  the union $B'\cup B''$ is a box of one dimension larger than $B'$ and $B''$.

We prove the inequality $f_i(S)\le f_i(S[e\da])$ by providing an injective function $g:\BBox(S)\to\BBox(S[e\da])$ that preserves the dimension $i$ of each box. The function $g$ is defined as follows. If $B$ is an $a$-box and the twin of $B$ is not contained in $S$, then $g(B)=B$. In all other cases, $g(B)$ is equal to the twin of $B$. We claim this is an injection: the preimage of any box $B$ can only consist of $B$ and/or its twin $B'$. {If $B=B'$, then there can only be one box in the preimage, and if $B\neq B'$, then $B$ can only be in the preimage if $B'\notin \BBox(S)$, and then the preimage only consists of one box.} Now we still need to show that $g$ is well-defined. If $B$ contains an edge from the parallelism class $\Theta(e)$, then for any vertex $u\in W_a(e)\cap B$ its unique neighbor $v\in W_b(e)$ also belongs to $B$. But that means $B$ is its own twin, and $B$ is not affected by the push down operation, hence $g(B)=B\in \BBox(S[e\da])$. Otherwise, $B$ is an $a$-box or a $b$-box. If $B$ is an $a$-box, let $B'$ be its twin. If $B'=B$, that means $B\subseteq W_=(e)$, so $B$ is not affected by the push down, and $g(B)=B\in\BBox(S[e\da])$. On the other hand, if $B'\neq B$ and $B'\notin \BBox(S)$, then $g(B)=B\in \BBox(S[e\da])$. If $B'\neq B$ and $B'\in \BBox(S)$, that implies $B\cup B'$ is not affected by the push down, and thus $B\cup B'\subseteq S[e\da]$, which implies $g(B)=B'\in \BBox(S[e\da])$. Finally, suppose that $B$ is a $b$-box. If $B\subseteq W_=(e)$, then again $g(B)=B\in \BBox(S[e\da])$. And if $B\cap W_b(e)\ne \varnothing$, then $B\neq B'$. However, for every vertex $v\in B$ whose $j$-coordinate is $b$, $S[e\da]$ contains its adjacent vertex $u\in U$, which has its $j$-coordinate equal to $a$. Thus $g(B)=B'\in \BBox(S[e\da])$, {and we conclude that $f_i(S)\leq f_i(S[e\da])$.}

Now, suppose that $S$ is ample. Then $S[e\da]$ is also ample by Lemma \ref{lem:pushdownample}. If $f_i(S)\ne f_i(S[e\da])$ for some $i$, then the function $g$ is not surjective (as a function $\BBox(S)\to\BBox(S[e\da])$). Hence there exists an $i$-dimensional box $B'$ of $S[e\da]$ that is not the image of an $i$-dimensional box of $S$. Let $B''$ be the twin of $B'$. If any vertex of $B'$ has $b$ as its $j$-coordinate, that would imply that each vertex with a $b$-coordinate has an $a$-neighbor (since it is in $S[e\da]$), and therefore $B''\subseteq S$ and $g(B'')=B'$. So $B'$ is an $a$-box. Let $B'_a=B'\cap W_a(e)$ and $B'_==B'\cap W_=(e)$. We have $B_=\subseteq S$, and $B'$ is not included in $S$ (otherwise, if $B''\subseteq S$ then $g(B'')=B'$, and if $B''\nsubseteq S$, $g(B')=B'$). We conclude that $B'_a$ must be nonempty and not be a subset of $S$. {Consider the $(i+1)$-dimensional box $B=B'\cup B''$ and let $S'=S\cap B$. We claim that the set $S'$ shatters in $B$ the elementary minor-subproduct $e=\{ a,b\}$: note that the boxes of this minor-subproduct consist of single elements of $B_='$, and of the edges of $\Theta(e)\cap B$. We have $B_='\subseteq S$, as $B'_=\subseteq S[e\da]$, and is unaffected by the push down; and at least one element of $S$ must be in every edge of $\Theta(e)\cap B$, because for every such edge the element with coordinate $a$ is in $S[e\da]$. By ampleness of $S'$ (Theorem \ref{thm:ample=weaklyample}(3)), $S'$ strongly shatters minor-subproduct $e$ in $B$. The only two copies of $e$ in $B$ are $B'$ and $B''$, so $B'\subseteq S$ or $B''\subseteq S$. But if $B''\subseteq S$ then $g(B'')=B'$, and otherwise $g(B')=B'$, giving us a contradiction. Hence $g$ is a bijection.}
Consequently, $f_i(S)=f_i(S[e\da])$ for all $i$, hence the ample sets $S$ and $S[e\da]$ have the same $f$-vector, establishing (1).

To prove (2), let  $S_*$ be the stable ample set obtained by any complete serial push down  $S[1,\ldots,m\da]$ of $S$. Let $B$ be a maximal box of $S_*$, say $B=B_1\times\ldots\times B_m$. Suppose by way of contradiction that $B$ does contain the origin $\bf{0}$. Since $B$ is a maximal box of $S_*$, necessarily there exists a factor of $B$, say $B_1$,   such that $0\in U_1\setminus B_1$ and the box $B'=(B_1\cup \{ 0\})\times B_2\times\ldots\times B_m$ is not included in $S_*$. Therefore there exists a tuple $(0,u_2,\ldots,u_m)\notin S_*$, where $u_2\in B_2,\ldots,u_m\in B_m$ and such that $(k,u_2,\ldots,u_m)\in S_*$ for some $k\in U_1$. But this implies that  $S_*[1\da]\ne S_*$, contrary to the choice of $S_*$. 
\end{proof}



We conclude this subsection with a property of stable ample sets $S_*\subseteq U$. On the elements of $U=U_1\times\ldots\times U_m$ with $U_i=\{ 0,1,\ldots,|U_i|-1\}, i=1,\ldots,m$, consider the partial order  $\leq$, where $x\leq y$ if $x_i\leq y_i$ for all $i=1,\ldots,m$.  For an element $t\in U$, let $B_\leq(t)=\{ t'\in U: t'\leq t\}$. 
Notice that if $t=(b_1,\ldots,b_m)$, then $B_\leq(t)$ coincides with the box $\{ 0,1,\ldots,b_1\}\times\ldots\times \{ 0,1,\ldots,b_m\}$. 

\begin{lemma} \label{l:boxBleq} If $S_*\subseteq U$ is a stable ample set and $t\in S_*$, then the box $B_\leq(t)$ belongs to $S_*$. 
\end{lemma}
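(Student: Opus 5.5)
Stability means that $S_*[M\da]=S_*$ for every one-dimensional minor-subproduct $M$, and this gives a downward-closedness property. Apply it to elementary $M=e$ with nontrivial block $\{c,c+1\}\subseteq U_j$. Every box of $\cB(e)$ of size two has the form $\{x,x'\}$, where $x,x'$ agree outside coordinate $j$ and $x_j=c$, $x'_j=c+1$. If $x'\in S_*$ but $x\notin S_*$, then the push down would replace $x'$ by $x$, contradicting $S_*[e\da]=S_*$. Hence for every $y\in S_*$ with $y_j\ge 1$, the tuple obtained by lowering the $j$-th coordinate by one also lies in $S_*$.

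Iterating this one coordinate at a time shows that $S_*$ is a down-set for $\le$: every $t'\le t$ with $t\in S_*$ lies in $S_*$. Concretely, go from $t$ to $t'$ by decreasing coordinates one step at a time; every intermediate tuple stays in $S_*$.

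Since $B_\le(t)=\{t'\in U: t'\le t\}$ is exactly this set of predecessors, $B_\le(t)\subseteq S_*$. It is the box $\{0,\ldots,b_1\}\times\ldots\times\{0,\ldots,b_m\}$, so it belongs to $\BBox(S_*)$.

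There is no real obstacle here. The only care needed is in confirming that the elementary push down, as defined boxwise, moves an element $x'$ of a two-element box to the smaller coordinate $c$ whenever $x\notin S_*$. Ampleness of $S_*$ is not even needed; stability alone suffices.
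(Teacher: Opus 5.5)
Your proof is correct and takes essentially the paper's approach. The paper inducts on the norm $p(t)$ and uses a push down along a whole coordinate line, which moves from $b_j$ down to $b'_j$ in one step; you instead use the elementary minor-subproducts with block $\{c,c+1\}$ and walk down in unit steps, which is equally valid since stability is required for every one-dimensional minor-subproduct, and, as you note, neither argument uses ampleness.
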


\begin{proof} Let $t=(b_1,\ldots,b_m)$. We proceed by induction on the norm $p(t)=\sum_{i=1}^m b_i$ of $t$.  Pick any element $t'=(b'_1,\ldots,b'_m)\in B_\leq (t)$ different from $t$. Then $b'_i\le b_i$ and $b'_j<b_j$ for some coordinate $j$. Suppose without loss of generality that $b'_1<b_1$. Let $t''=(b'_1,b_2,\ldots,b_m)$. Then obviously $t'\le t''\leq t$ and $p(t'')<p(t)$. If $t''\notin S_*$, then performing a push-down along the first coordinate, we will deduce that the set $S_*$ is not stable by push-downs. Therefore, $t''\in S_*$. Since $p(t'')<p(t)$ and $t'\in B_\leq(t'')$, by induction hypothesis we obtain that 
$t'\in S_*$, as required. 
\end{proof}

\subsection{Euler characteristic}
For a set $S\subseteq U$, we will denote by $\chi(S)=\sum_{i=0}^{\infty}(-1)^if_i(S)$ the \emph{Euler characteristic} of its box-complex  $\BBox(S)$. A \emph{corner} of a set $S\subseteq U$ is a vertex $s\in S$ belonging to a unique maximal box $B$ of $\BBox(S)$. 

\begin{theorem} \label{Euler-characteristic}  A set $S\subseteq U$ is ample if and only if $\chi(S\cap V)=1$ for every full-dimensional subproduct $V$ with $S\cap V\neq \varnothing$.
\end{theorem}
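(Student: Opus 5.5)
For the forward direction, I would reduce to the stable bouquet. Suppose $S$ is ample and $V$ is a full-dimensional subproduct with $S\cap V\ne\varnothing$. By \cref{thm:ample=weaklyample}, $S\cap V$ is ample in $V$, so it suffices to prove that $\chi(S)=1$ for every nonempty ample $S\subseteq U$. Relabel each factor as $U_i=\{0,\ldots,|U_i|-1\}$. By \cref{f-vector-push-down}(1), push downs preserve the $f$-vector of ample sets, hence $\chi(S)=\chi(S_*)$, where $S_*=S[1,\ldots,m\da]$ is the stable ample set of \cref{stable}.

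By \cref{l:boxBleq}, $S_*$ is a down-set for $\le$ and is the union of the boxes $B_\le(t)$, $t\in S_*$. I would then compute $\chi(S_*)$ by a discrete-Morse or pairing argument. For each box $B=B_1\times\ldots\times B_m\subseteq S_*$ with $B\ne\{\bf 0\}$, let $j$ be the first index with $B_j\ne\{0\}$, and pair $B$ with $B\triangle(\{0\}$ in factor $j)$. This means adding $0$ to $B_j$ if $0\notin B_j$, and removing it otherwise (which is allowed when $|B_j|\ge 2$).

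The down-set property should ensure that the partner box still lies in $S_*$. When $B_j=\{0\}$ the first index skips $j$, so the pairing must be organized with care. A cleaner formulation is to take $j$ as the first index with $B_j\ne\{0\}$ and toggle $0$ within factor $j$. If $0\notin B_j$, then adding $0$ keeps the box inside $S_*$ because $S_*$ is a down-set. If $0\in B_j$, then $|B_j|\ge 2$, and removing $0$ leaves a nonempty factor different from $\{0\}$, so the first index is unchanged. The map is therefore an involution on boxes other than $\{\bf 0\}$, and it changes the dimension by exactly one. Hence all such boxes cancel in $\chi$, leaving $\chi(S_*)=1$ from the vertex $\bf 0$.

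For the converse, I would argue by contradiction with a minimal counterexample. Suppose $S$ satisfies the Euler condition on all subproducts $V$ but is not ample. The condition passes to every $S\cap V$, so by minimality of $|U|$ each $S\cap V$ with $V\subsetneq U$ is ample. By \cref{thm:ample=weaklyample}(3), this means $S$ fails $\EMP(U)$-ampleness in $U$ itself. Every interval $[u,v]$ with $[u,v]\ne U$ is a proper subproduct, so $S\cap[u,v]$ is ample there. By \cref{thm:ample-elementary}(6), the only remaining possibility is that $U$ itself is a hypercube $[u,v]$ on which $S$ is not ample. In that binary case the statement is the classical result of \cite{Wi} and \cite{BaChDrKo}: $S$ is ample iff $\chi(S\cap V)=1$ for every subcube $V$ meeting $S$. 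Invoking it gives the contradiction.

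The main obstacle is verifying that the pairing in the forward direction is a genuine fixed-point-free involution staying inside $\BBox(S_*)$. The check I described should settle this, since the down-set property is exactly what makes adding $0$ admissible.
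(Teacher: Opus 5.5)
Your proof is correct. The converse is essentially the paper's argument, but your forward direction computes $\chi(S_*)$ by a genuinely different route.

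\textbf{The paper's forward direction.} It inducts on $|S_*|$. It uses the bouquet structure from \cref{f-vector-push-down}(2) and shows that every maximal box $B$ equals $B_\le(t^B)$ with $t^B$ a corner. It then removes $t^B$ and uses
$f_i(S_*)=f_i(S_*\setminus\{t^B\})+f_i(B)-f_i(B\setminus\{t^B\})$,
together with $\chi(B)=1$ from Euler's formula for a product of simplices.

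\textbf{Your forward direction.} Your involution toggles $0$ in the first factor $j$ with $B_j\neq\{0\}$. It is a fixed-point-free matching on $\BBox(S_*)\setminus\{\{\mathbf{0}\}\}$ that shifts dimension by exactly one. Your verification is right: adding $0$ stays in $S_*$ by \cref{l:boxBleq}, removing $0$ stays inside $B$, and in both cases the index $j$ is unchanged, so the map is an involution.

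This needs only that $S_*$ is a nonempty down-set. You should state explicitly that this gives $\mathbf{0}\in S_*$, so that $\{\mathbf{0}\}$ is actually a face. The argument proves in one stroke that every nonempty down-set has $\chi=1$, a single prism included. So it avoids the polytope Euler formula, the corner analysis, and the re-entry into an induction. The paper must check that $S_*\setminus\{t^B\}$ and $B\setminus\{t^B\}$ are again covered by its induction hypothesis. In exchange, the paper's route exhibits the corner structure of $S_*$, which ties in with the later discussion of corner peelings.

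A small point: the equality $f(S)=f(S_*)$ requires applying \cref{f-vector-push-down}(1) one push down at a time. You need \cref{lem:pushdownample} to keep ampleness along the sequence.

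\textbf{The converse.} Your minimal-counterexample wrapper is harmless but superfluous. Every subcube of an interval $[u,v]$ is a full-dimensional subproduct of $U$, so the classical binary theorem applies to each $S\cap[u,v]$ directly. Then \cref{thm:ample-elementary}(6) or (7) finishes, which is exactly what the paper does.
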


\begin{proof} First, we show the `only if'-direction. Wo only need to show that if $S$ is ample and nonempty, then $\chi(S)=1$. The result then follows for all $V$ from \cref{thm:ample=weaklyample}. Let $S_*$ be the stable ample set obtained by any complete serial push down  $S[1,\ldots,m\da]$ of $S$. By Proposition \ref{f-vector-push-down}, $S_*$ is a bouquet of prisms with origin $\bf{0}$ and $S_*$ has the same $f$-vector as $S$. Therefore, it suffices to show that  $\chi(S_*)=1$, which we do by induction on $|S_*|$. For $|S_*|=1$ we have $S_*=\{\bf{0}\}$ and $\chi(S_*)=1$. Then take some $S$ with $|S_*|\geq 2$, and suppose that we have shown the statement for all ample sets $S$ of smaller size. On the elements of $S_*$, consider the partial order  $\leq$ defined above. Each maximal box has the form $B=B_1\times\ldots\times B_m$, where $B_i=\{ 0,b^{1}_i,\ldots b^{j_1}_i\}\subseteq U_i$ with $0\le b^{1}_i\le \ldots \le b^{j_1}_i$. Then clearly, $(B,\leq)$ has a unique maximal element $t^B=(b^{j_1}_1,\ldots,b^{j_m}_m)$. 

\begin{claim} \label{c:boxes} Each maximal box $B$ of $S_*$ coincides with the box $B_\leq(t^B)=\{t\in U: t\leq t^B\}$.  Furthermore, 
the unique maximal element $t^B$ of $B$ is a corner of $S_*$. 
\end{claim}

\begin{proof} Since $t^B$ is the unique maximal element of $(B,\leq)$, $B$ is included in the box $B_\leq(t^B)$. 
Since $t^B\in S_*$, by Lemma \ref{l:boxBleq}, the box $B_\leq(t^B)$ is included in $S_*$. Since $B$ may not be strictly contained in a larger box of $S_*$, 
necessarily $B=B_\leq(t^B)$. 

Secondly, we show that $B$ is the only maximal box of $S_*$ that contains $t^B$. Suppose that $t^B\in B'\neq B$ for some maximal box $B'\subseteq S_*$. That means $B'\nsubseteq B$, so $B'$ has elements with coordinates not occurring in $B$, and $t^B$ then has a neighbor $t'\in B'\backslash B$. Since neighboring elements are always comparable with $\leq$, either $t'\leq t^B$ or $t^B\le t'$. If $t'\leq t^B$, then $t'\in B_\leq(t^B)$ and by the equality $B=B_\leq(t^B)$ we deduce that $t'\in B$, contrary to the choice of $t'$. Therefore $t^B\leq t'$. {By what we found before, we have $B'=B_\le(t^{B'})$.} From $t'\in B'$ and $B'=B_\le(t^{B'})$ it follows that $t^B\le t'\leq t^{B'}$. This yields $B\subset B_{\leq}(t^{B'})=B'$, which contradicts maximality of $B$. Consequently, $t^B$ is a corner of $S_*$. 
\end{proof}

Now fix a maximal box $B$, and let $S'_*=S_*\backslash\{t^B\}$. {We claim that $S'_*$ is a stable bouquet of prisms with origin $\bf{0}$.} Let $B'$ be any maximal box of $S'_*$. If $B'$ is a maximal box of $S_*$ then $\mathbf{0}\in B'$. Otherwise, there is a maximal box $B''$ of $S_*$ containing $B'$, implying that $t^B\in B''$, which in turn implies $B''=B$ and thus $B'\subseteq B$. That means $B'$ is a maximal box of $B\backslash \{t^B\}$. The maximal boxes of $B\backslash \{t^B\}$ are all of the form $\{t\in B: t_i\neq t^B_i\}$ for some $i$, and in particular they all contain $\bf{0}$. {So $S'_*$ is a stable bouquet of prisms, allowing us to} use the induction hypothesis to get $\chi(S'_*)=1$. Since $|S_*|\geq 2$, every maximal box has size at least 2, so $B\backslash\{t^B\}$ is also a nonempty bouquet of prisms with origin $\bf{0}$. By induction hypothesis we have $\chi(B\backslash\{t^B\})=1$. Moreover, the elements $f_i(B)$ correspond to the \mbox{$f$-vector} of a product of simplices, and from Euler's formula for polytopes we obtain $\chi(B)=1$. {Now we compute $\chi(S_*)$.} Consider a box $B'$ of $S_*$. If $B'$ contains $t^B$, then $B'$ is a box of $S_*$ and $B$, but not of $S_*'$ and $B\backslash \{t^B\}$. If $B'$ does not contain $t^B$, then there are two options: if $B'\subseteq B$, then $B'$ is a box of $S_*$, $B$, $S_*'$, and $B\backslash \{t^B\}$; and if $B'\nsubseteq B$, then $B'$ is a box of $S_*$ and $S_*'$, but not of $B$ and $B\backslash \{t^B\}$. It follows that for each $i$ we have $f_i(S_*)=f_i(S'_*)+f_i(B)-f_i(B\backslash \{t^B\})$, which implies $\chi(S_*)=\chi(S'_*)+\chi(B)-\chi(B\backslash \{t^B\})=1+1-1=1$, thus $\chi(S)=1$. 

For the converse direction, suppose $\chi(S\cap V)=1$ for every full-dimensional subproduct $V$ with $S\cap V\neq\varnothing$. In particular for all $s,t\in S$, $\chi(S\cap V)=1$ for all subproducts $V\subseteq [s,t]$. By \cite{Wi} and \cite[Corollary 2]{BaChDrKo} 
this implies that $S\cap [s,t]$ is ample for all $s,t\in S$, which by \cref{thm:ample-elementary}(7) implies that $S$ is ample.
\end{proof}

\subsection{Corner peelings} A \emph{corner peeling} of a set $S\subseteq U$ is a total ordering $s_1,\ldots,s_n$ of the vertices of $S$ such that for any $i=n,\ldots,1$, $s_i$ is a corner of the set $S_i=\{s_1,\ldots,s_i\}$ (see the previous subsection for the definition of corner). An \emph{isometric dismantling} of a set $S\subseteq U$ is a total ordering $s_1,\ldots,s_n$ of $S$ such that for any $i=n,\ldots,1$, $S_i$ is an isometric set of $U$.  In the binary case, in \cite{ChChMoWa} it was shown that ampleness is preserved by corner peelings, and an example of an ample set without corners was provided. {We extend the result that ampleness is preserved to Cartesian products.}

\begin{lemma}\label{lem:corners}
    Suppose $S\subseteq U$ is ample, and let $s\in S^*$ such that $S\cup \{s\}$ is isometric. Then $s$ is a corner of $S\cup \{s\}$ and the sets $S\cup\{s\}$ and $S^*\setminus \{s\}$ are ample. 
\end{lemma}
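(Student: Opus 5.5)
The plan is to reduce everything to the interval characterization \cref{thm:ample-elementary}(7) and to the classical binary result of \cite{ChChMoWa}. Set $T=S\cup\{s\}$. Since $S^*\setminus\{s\}=T^*$, \cref{thm:ampleisometriccommutative}(5) shows that ampleness of $T^*$ follows from ampleness of $T$. So only two things need proof: $T$ is ample, and $s$ is a corner of $T$.

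\textbf{Ampleness of $T$.} By \cref{thm:ample-elementary}(7) it suffices to show that $T\cap[u,v]$ is ample in the hypercube $[u,v]$ for all $u,v\in T$.
\begin{itemize}
\item If $s\notin[u,v]$, then $T\cap[u,v]=S\cap[u,v]$. This set is ample by \cref{thm:ample=weaklyample}, since $[u,v]$ is a full-dimensional subproduct.
\item If $s\in[u,v]$, put $S'=S\cap[u,v]$. It is ample in $[u,v]$, and $S'\cup\{s\}=T\cap[u,v]$ is isometric in $[u,v]$ because $T$ is isometric in $U$ and intervals are convex.
\end{itemize}
In the second case we invoke the binary result of \cite{ChChMoWa}: if an ample set of a hypercube becomes isometric after adding one vertex, the enlarged set is ample and the added vertex is a corner. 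This gives ampleness of $T\cap[u,v]$. If that exact statement is not available, it can be rederived via Lawrence's total asymmetry \cite[Theorem 4]{La}, as in the proof of \cref{thm:ample-elementary}. Suppose some subcube $[u',v']\ni s$ had $T\cap[u',v']$ antipodally symmetric and proper. Then $S\cap[u',v']$ differs from it only at $s$, so the antipode $\bar s$ of $s$ is missing from $T$. Isometricity of $T$ together with the symmetry then forces a contradiction with the ampleness of $S$ on a smaller subcube.

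\textbf{$s$ is a corner.} We must show that the boxes of $\BBox(T)$ containing $s$ have a unique maximal element. Boxes are closed under intersection but not under join, so the approach is to show that the join $B^*$ of all boxes of $T$ containing $s$ is itself contained in $T$. Write $B^*=\prod_i B^*_i$, where $B^*_i$ is the union of the $i$-th factors of those boxes. Suppose some $w\in B^*\setminus T$. Then $w\ne s$ and $w\in S^*$. Choose such a $w$ closest to $s$, and pick boxes $B^1,\dots,B^k\ni s$ in $T$ witnessing the coordinates of $w$. Restrict to the interval $[s,w]$, which is a hypercube.
\begin{itemize}
\item Each $B^j\cap[s,w]$ is a subcube of $T\cap[s,w]$ through $s$.
\item By the binary corner result just used, $s$ is a corner of $T\cap[s,w]$, so these subcubes lie in a single subcube through $s$.
\item That subcube contains the join of the $B^j\cap[s,w]$, which is all of $[s,w]$ and hence contains $w$. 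This contradicts $w\notin T$.
\end{itemize}
Hence $B^*\subseteq T$, and $B^*$ is the unique maximal box containing $s$.

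The main obstacle is this last step. It transfers the corner property from intervals to all of $U$, and to make it work one must check that the binary corner result of \cite{ChChMoWa} applies within $[s,w]$ in the form needed.
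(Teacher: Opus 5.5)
Your argument is correct, but it takes a different route from the paper's.

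\textbf{Your route.} You reduce everything to the binary case, which is essentially \cite[Lemma 4.1]{ChChMoWa}, the result the paper says its proof adapts.
\begin{itemize}
\item Ampleness of $T=S\cup\{s\}$ follows from the interval criterion \cref{thm:ample-elementary}(7). On each interval $[u,v]\ni s$ you apply the binary lemma, which is legitimate: $S\cap[u,v]$ is ample and $T\cap[u,v]$ is isometric because $[u,v]$ is convex.
\item The corner property is lifted from intervals to $U$ by your join argument, and this works. The cubes $B^j\cap[s,w]$ through $s$ have join $[s,w]$, so the unique maximal cube of $T\cap[s,w]$ at $s$ would contain $w$. Choosing $w$ closest to $s$ is not even needed.
\item Your Lawrence-based fallback is too vague to replace the citation. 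The argument therefore stands exactly as firmly as the binary lemma you invoke.
\end{itemize}

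\textbf{The paper's route.} It works directly in $U$ and does not appeal to the binary lemma.
\begin{itemize}
\item For the corner, it names the maximal box explicitly: $C[s]$, the convex hull of $s$ and its neighbours in $S$. This coincides with your $B^*$. It shows $C[s]\subseteq S\cup\{s\}$ using only the isometricity of $S^*$. A vertex $t\in C[s]\setminus(S\cup\{s\})$ would have to be reached from $s$ by a geodesic in $S^*$, yet every neighbour of $s$ in $[s,t]\subseteq C[s]$ lies in $S$.
\item For ampleness, it verifies box-superisometricity (\cref{thm:ample-elementary}(5)) by constructing geodesic galleries between parallel boxes. Only here is the isometricity of $S\cup\{s\}$ used.
\end{itemize}

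\textbf{Comparison.} Your approach is shorter and modular, but it depends entirely on the external binary result. In particular, your corner argument needs $T\cap[s,w]$ to be isometric. The paper's argument is self-contained in the Hamming setting and identifies the maximal box concretely. It also shows something slightly stronger: for ample $S$, every $s\in S^*$ is a corner of $S\cup\{s\}$, and isometricity is needed only for the ampleness conclusion.
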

\begin{proof}
    This proof is an adaptation of that of \cite[Lemma 4.1]{ChChMoWa}. Let $C[s]$ be the convex hull in $U$ of $s$ and of the neighbors of $s$ contained in $S$. That is, $C[s]$ is the subproduct whose factors contain all coordinates of $s$ and of its neighbors in $S$. We claim that $C[s]\subseteq S\cup \{s\}$. Suppose there is $t\in C[s]$ that is not in $S\cup \{s\}$. Then $t$ is not $s$, and not a neighbor of $s$: if $t$ were a neighbor, then the coordinate in which $t$ differs from $s$ does not occur in $s$ or any other neighbor of $S$, which would imply $t\notin C[s]$. Then $t$ has distance at least 2 from $s$. We have $[s,t]\subseteq C[s]$ by convexity of $C[s]$, and by isometricity of $S^*$, $s$ and $t$ must be connected within $[s,t]$ by vertices from $S^*$. However, every neighbor of $s$ in $[s,t]$ is in $S$, so we have a contradiction, hence $C[s]\subseteq S\cup \{s\}$. {Moreover, any box of $\BBox(S\cup \{s\})$ that contains $s$, is contained in $C[s]$, since all coordinates occurring in such a box would occur in one of the neighbors of $s$.} Hence $C[s]$ is the only maximal box of $\BBox(S\cup \{s\})$ containing $s$, thus $s$ is a corner of $S\cup \{s\}$. 

    We show ampleness of $S\cup \{s\}$ by \cref{thm:ample-elementary}(5). Let $B',B''$ be two parallel boxes of $S\cup\{s\}$. If $s\notin B'\cup B''$, then by ampleness of $S$, there is a geodesic gallery connecting $B'$ and $B''$. Otherwise, suppose without loss of generality that $s\in B'$. Clearly $B'\subseteq C[s]$. First, suppose that $B'=C[s]\cap \conv(B'\cup B'')$. Let $s'$ be the element of $B''$ with the smallest distance to $s$. We have $d(s,s')=d(B',B'')$. Since by assumption $S\cup \{s\}$ is isometric, there is a path $s,s_1,s_2,\ldots,s_{d(B,B')}=s'$ within $S\cup \{s\}$. However, $s_1\in C[s]$ as it is a neighbor of $S$, and since this is a shortest path, $s_1\in \conv(B'\cup B'')$, hence by our assumption on $B'$ we have $s_1\in B'$. But that means that we have a path of length $d(B',B'')-1$ between $s_1\in B'$ and $s'\in B''$, which is a contradiction, hence $B'\subsetneq C[s]\cap \conv(B'\cup B'')$.
    
    This implies that there must be $t=(t_1,t_2,\ldots,t_m)\in \left(C[s]\cap \conv(B'\cup B'')\right)\backslash B'$. Specifically, there must be one coordinate $t_i$ of $t$ that does not occur in $B'$. Since $B',B''$ are parallel boxes, {each of the two boxes can therefore not contain multiple $i$-coordinates, hence all elements of $B''$ have $i$-coordinate $t_i$.}
    Let $s^i$ be the neighbor of $s$ that has $i$-coordinate $t_i$. Since $s,t\in C[s]$, $s^i\in [s,t]$,  and $C[s]$ is convex, we have $s^i\in C[s]$. The box $B'$ only contains one $i$-coordinate, so there exists a box $B^i$ parallel to $B'$ and neighboring $B'$ such that $B^i$ has $i$-coordinate equal to $t_i$. Since $s^i\in C[s]$, $B^i$ must be contained in $C[s]\subseteq S\cup\{s\}$. We have $d(B^i,B'')'=d(B',B'')-1$, since the $i$-coordinate of $B^i$ matches that of $B''$. By ampleness of $S$, there is a geodesic gallery from $B^i$ to $B''$ of length $d(B^i,B'')$. Extending this gallery with $B'$ gives us a geodesic gallery from $B'$ to $B''$. So in all cases there is a geodesic gallery, hence $S\cup \{s\}$ is ample. Since the complement of an ample set is also ample, the set $S^*\setminus \{s\}$ is ample. 
\end{proof}

\begin{corollary}\label{cor:isometricdismantling}
    If $S\subseteq U$ has an isometric dismantling $s_1,\ldots, s_n$, then $S$ is ample and $s_1,\ldots, s_n$ is a corner peeling of $S$.
\end{corollary}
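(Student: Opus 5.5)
The plan is induction on $n$ along the dismantling order, using Lemma \ref{lem:corners} as the engine. By definition of an isometric dismantling, each prefix $S_i=\{s_1,\ldots,s_i\}$ is an isometric subset of $U$ for $i=n,\ldots,1$, with $S_n=S$. I will prove by induction on $i$ that $S_i$ is ample. Along the way I will show that $s_i$ is a corner of $S_i$.

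For the base case, $S_1=\{s_1\}$ is a single vertex. Any minor-subproduct shattered by a singleton must have every box meeting $\{s_1\}$, hence a single box. So it is co-trivial, and its unique copy is $\{s_1\}$ itself, which is strongly shattered. Thus $S_1$ is ample, and $s_1$ trivially lies in a unique maximal box of $\BBox(S_1)$, namely $\{s_1\}$. (If one allows $S_0=\varnothing$, the empty set is also ample vacuously, so the induction could start there.)

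For the inductive step, assume $S_{i-1}$ is ample. We have $s_i\in S_{i-1}^*=U\setminus S_{i-1}$, and $S_{i-1}\cup\{s_i\}=S_i$ is isometric by hypothesis. So Lemma \ref{lem:corners} applies directly with $S:=S_{i-1}$ and $s:=s_i$. It yields that $s_i$ is a corner of $S_i$ and that $S_i$ is ample. Iterating up to $i=n$ gives ampleness of $S=S_n$. The corner statements for all $i$ are exactly the definition of $s_1,\ldots,s_n$ being a corner peeling of $S$.

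There is no real obstacle once Lemma \ref{lem:corners} is available; the only point to check is that its hypotheses match exactly. The required ampleness is that of the smaller set $S_{i-1}$, and the required isometricity is that of the larger set $S_i$. This is precisely what the induction supplies, the latter directly from the dismantling assumption.
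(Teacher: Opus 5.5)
Your proof is correct and is the paper's argument: induction along the dismantling order, applying Lemma~\ref{lem:corners} with $S:=S_{i-1}$ and $s:=s_i$. This yields at each step both that $S_i$ is ample and that $s_i$ is a corner of $S_i$. Your explicit base case, that a singleton (or vacuously the empty set) is ample, fills in a detail the paper leaves implicit.
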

\begin{proof}
    This follows by induction using \cref{lem:corners}. Note that the converse of the statement is not true, not every ample set has an isometric dismantling due to \cite[Theorem 4.5]{ChChMoWa}.
\end{proof}

\subsection{Amalgams and contractibility} One can easily show that  Cartesian products of ample sets are ample. 
Amalgamation is another  operation preserving ampleness. 
To make this definition precise, we adapt the definition of isometric cover from \cite{Ch_Hamming} and AMP-amalgams from \cite{ChKnPh_CUOM} (which specifies the  notion of COM-amalgams from \cite{BaChKn}). 

\begin{definition}[AMP-amalgams]\label{def:amalgam}
A triplet $(S_1,S_0,S_2)$ is called an {\it isometric cover} of a connected set $S\subseteq U$, if the following conditions are satisfied:
\begin{itemize}
\item $S_1$ and $S_2$ are two isometric subsets of $S$;
\item $S_0=S_1\cap S_2\ne \varnothing$; 
\item $S=S_1\cup S_2$ and every edge of $S$ is an edge of $S_1$, of $S_2$, or of both $S_1$ and $S_2$. 
\end{itemize}
A set $S\subseteq U$ is an {\it AMP-amalgam} of two sets $S_1,S_2\subseteq U$ if $(S_1,S_1\cap S_2,S_2)$ is an isometric cover of $S$ and $S_1,S_2,S_0=S_1\cap S_2$ are ample sets of $U$. 
\end{definition}

Notice that $S_0$ is an $(S_1\setminus S_2,S_2\setminus S_1)$-separator in $H(S)$ in the sense that any path in $H(S)$ between a vertex of $S_1\setminus S_2$ and a vertex of $S_2\setminus S_1$ necessarily traverses $S_0$. {An example is shown in \cref{fig:amalgam}.} We continue with the following result, which generalizes a similar result of \cite{ChKnPh_CUOM} in the binary case: 

\begin{proposition} \label{AMPamalgam}  Let $S\subseteq U$ be an AMP-amalgam of two ample sets $S_1$ and $S_2$ of $U$.
If $S$ is isometric, then $S$ is ample. 
\end{proposition}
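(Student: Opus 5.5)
We will verify box-superisometricity of $S$ and conclude by \cref{thm:ample-elementary}(5). Alternatively, we could check condition (7) of \cref{thm:ample-elementary}, that $S\cap[u,v]$ is ample for all $u,v\in S$, and reduce to the binary result of \cite{ChKnPh_CUOM}. The box-gallery route seems more self-contained, so we plan to follow it.

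\textbf{Boxes of $S$ lie in $S_1$ or $S_2$.} Every box $B\subseteq S$ is contained in $S_1$ or in $S_2$. Indeed, suppose $B$ meets both $S_1\setminus S_2$ and $S_2\setminus S_1$. Every two vertices of $B$ differing in one coordinate are adjacent in $H(U)$, and the corresponding edge of $S$ is an edge of $S_1$ or of $S_2$. We argue by induction on $\dim(B)$:
\begin{itemize}
\item an edge of $S$ lies in $S_1$ or in $S_2$ by the cover condition;
\item for a square or a triangle in $B$, take vertices $x\in S_1\setminus S_2$ and $y\in S_2\setminus S_1$. They cannot be adjacent, and a common neighbor $z$ forces $z\in S_0$, which is consistent. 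So this needs care.
\end{itemize}
If this fails in general, we instead use that $S_1\setminus S_2$ and $S_2\setminus S_1$ are separated by $S_0$ in $H(S)$, so no edge joins them. In a box, two such vertices $x,y$ at distance $k\ge 2$ lie in a cube $[x,y]\subseteq B$. Both $S_1\cap[x,y]$ and $S_2\cap[x,y]$ are isometric, and ampleness of $S_0$ in the small cube forces a contradiction via \cref{prop:ample-is-isometric} applied to $S_0\cap[x,y]$. This step is the main obstacle. We would first try the case $k=2$ and then induct on $k$ using weak isometricity.

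\textbf{Galleries.} Take parallel boxes $B',B''\subseteq S$. If both lie in $S_1$ (or both in $S_2$), ampleness of $S_1$ and \cref{thm:ample-elementary}(5) give a geodesic gallery in $S_1\subseteq S$. Otherwise, say $B'\subseteq S_1$ and $B''\subseteq S_2$. Let $M$ be the extended minor-subproduct whose boxes are the copies of $B'$, as in the proof of $(1)\Rightarrow(5)$. We need an element $b\in (S_0)^M$ lying in the interval $[b',b'']$ of $M$; then concatenating geodesic galleries from $B'$ to $F(b)$ in $S_1$ and from $F(b)$ to $B''$ in $S_2$ gives a geodesic gallery. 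To find $b$, we use that $S$ is isometric and the cover property to argue that $S^M$ is connected with $(S^M)=(S_1)^M\cup(S_2)^M$ and $(S_1)^M\cap(S_2)^M=(S_0)^M$, using the box claim above. Thus $(S_1^M,S_0^M,S_2^M)$ is again a cover, with each part ample by \cref{lem:superscriptample}, hence isometric by \cref{prop:ample-is-isometric}.

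\textbf{Reduction to vertices.} This reduces the problem to the vertex case: a shortest path in $S^M$ from $b'$ to $b''$ must cross $(S_0)^M$. We would get this by induction on $|U|$, applying the statement to $S^M$ in $M$ when $M\neq M_\bot$. The vertex case $M=M_\bot$ is the given isometry of $S$ plus the separator property: take a shortest path in $S$ from $B'$ to $B''$; it passes through $S_0$ at some vertex $b$, and $b\in[b',b'']$ since the path is geodesic. Isometry of $S^M$ for the induction is what the superisometric framework supplies. The delicate points are (i) the box claim and (ii) showing $S^M$ is isometric; for (ii) we would try proving it via galleries simultaneously, by induction on $d(b',b'')$.
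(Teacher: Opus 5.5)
The overall route (box-superisometricity via \cref{thm:ample-elementary}(5)) is different from the paper's and can be made to work. As written, though, its two load-bearing steps are open, and the argument you suggest for one of them fails.

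\textbf{The gallery step is the genuine gap.} You obtain $b\in(S_0)^M\cap[b',b'']$ from isometry of $S^M$, and this is circular. Superisometricity and superconnectivity are each equivalent to ampleness by \cref{thm:ampleisometriccommutative}. Applying the proposition inductively to $S^M$ requires isometry of $S^M$ as a hypothesis. Even connectivity of $S^M=S_1^M\cup S_2^M$ is not free: both parts are connected, so it amounts to $S_0^M\neq\varnothing$, which is the crux. Isometry of $S$ alone only puts single vertices of $S_0$ on geodesics, not whole boxes parallel to $B'$.

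The missing idea is to apply shattering$\rightarrow$strong-shattering to $S_0$. Write $B'=V\times t'\subseteq S_1$ and $B''=V\times t''\subseteq S_2$, and let $C=V\times[t',t'']$. Let $M_C\in\MP(C)$ be the mixed minor-subproduct that is trivial on $\supp(V)$ and co-trivial on the remaining factors. Its boxes are the intervals $[v\times t',v\times t'']$ for $v\in V$, whose ends lie in $S_1$ and $S_2$. A geodesic of $S$ between these ends stays in that interval, and it must meet $S_0$ because no edge joins $S_1\setminus S_2$ to $S_2\setminus S_1$. Hence $S_0\cap C$ shatters $M_C$. Since $S_0$ is $\MP^*(U)$-ample (\cref{thm:ample=weaklyample}), it contains a copy $V\times t_0$ with $t_0\in[t',t'']$. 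Concatenating a geodesic gallery from $B'$ to $V\times t_0$ in $S_1$ with one from $V\times t_0$ to $B''$ in $S_2$ gives a gallery of length $d(t',t'')$, i.e.\ a geodesic gallery in $S$.

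\textbf{The box claim.} The claim is true, but your sketch cannot prove it. Any argument using only (weak) isometry of the traces on $[x,y]$ fails. In a $3$-cube $Q=[x,y]\subseteq S$ take $S_0\cap Q=Q\setminus\{x,y\}$, which is an isometric $6$-cycle, together with $S_1\cap Q=Q\setminus\{y\}$ and $S_2\cap Q=Q\setminus\{x\}$. Every property you invoke holds, and there is no contradiction.

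What is needed is ampleness of $S_0\cap B$ itself. It is ample in the box $B$ by \cref{thm:ample=weaklyample}, so its complement $B\setminus S_0$ in $B$ is ample by \cref{thm:ampleisometriccommutative}, hence connected by \cref{prop:ample-is-isometric}. But $B\setminus S_0$ is the disjoint union of $B\cap(S_1\setminus S_2)$ and $B\cap(S_2\setminus S_1)$, with no edges between them, so one of the two is empty.

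\textbf{Comparison with the paper.} The paper avoids all of this by using \cref{thm:ample-elementary}(7). On each interval $[u,v]$ with $u,v\in S$, the trace $S\cap[u,v]$ is isometric and the traces $S_j\cap[u,v]$ are ample. The case $S_0\cap[u,v]=\varnothing$ is trivial, and the binary amalgamation result \cite[Proposition 5]{ChKnPh_CUOM} finishes. With the two repairs above, your route becomes a self-contained alternative that does not rely on that binary result.
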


\begin{proof} By Theorem \ref{thm:ample-elementary}(7), to show that $S$ is ample, it suffices to show that $S'=S\cap [u,v]$ is ample for any $u,v\in S$. 
Since $S$ is isometric, $S'$ is also isometric in $[u,v]$. Since $S_1,S_0,S_2$ are ample sets of $U$ and $[u,v]$ is a full-dimensional subproduct, by Theorem 
\ref{thm:ample=weaklyample}(3), the sets $S'_1=S_1\cap [u,v], 
S'_2=S_2\cap [u,v]$, and $S'_0=S_0\cap [u,v]$ are ample in $[u,v]$. {If $S_0'=\varnothing$, that implies that either $S_1'=\varnothing$ or $S_2'=\varnothing$, since any shortest path between a vertex of $S_1'$ and a vertex of $S_2'$ would pass through a vertex of $S_0'$. In that case, $S'=S_1'$ or $S'=S_2'$, hence $S'$ is ample. Otherwise, since} $(S_1,S_0,S_2)$ is an isometric cover of $S$, $(S'_1,S'_0,S'_2)$ is an isometric 
cover of $S'$. By \cite[Proposition 5]{ChKnPh_CUOM}, $S'$ is an ample subset of $[u,v]$ and thus of $U$. 
\end{proof}

Now, we consider a partition of an ample set $S$ with respect to a factor $U_i$ of $U$. 
\begin{definition}[Sectors, cosectors, boundaries, etc.] 
\label{def:sector} 
For  $a\in U_i$, the \emph{sector}  $S(a)$ consists of all $u\in S$ such that the $i$th coordinate $u_i$ of $u$ is equal to $a$. The \emph{boundary} $\partial S(a)$ of $S(a)$ consists of all $u\in S(a)$ adjacent to a vertex {$v$ with $v_i\neq a$.}
The \emph{cosector} of $S(a)$ is the complement $S^c(a)=S\setminus S(a)$. The  \emph{neighborhood} 
$N(S(a))$ of $S(a)$ consists of all $v\in S^c(a)$ having a neighbor in $S(a)$. The union $S^+(a)=S(a)\cup N(S(a))$ is called an \emph{extended sector}.  Finally, the union $C_i(S)=\bigcup_{a\in U_i} \partial S(a)$ of all boundaries of sectors  is called the $i$-\emph{carrier} of $S$.
\end{definition}

The following result shows that for an ample set $S$ all such sets are also ample: 

\begin{theorem} \label{sector-boundary}  If $S\subseteq U=U_1\times\ldots\times U_m$ is ample, then for any factor $U_i$, the following sets are ample: 
\begin{itemize}
 \item[(1)] the sectors $S(a)$  and  the cosectors $S^c(a)$ for all $a\in U_i$;
 \item[(2)] the boundaries $\partial S(a)$ of all sectors $S(a), a\in U_i$;
 \item[(3)] the neighborhoods $N(S(a))$ of all sectors $S(a), a\in U_i$;
 \item[(4)] the extended sectors $S^+(a)$ for all $a\in U_i$;
 \item[(5)] the carrier $C_i(S)$. 
\end{itemize}
\end{theorem}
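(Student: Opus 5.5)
Throughout we fix the factor $U_i$ and $a\in U_i$, and we reduce each assertion to earlier results: restriction to full-dimensional subproducts (Theorem~\ref{thm:ample=weaklyample}), complementation (Theorem~\ref{thm:ampleisometriccommutative}(5)), and the interval criterion of Theorem~\ref{thm:ample-elementary}(6),(7).

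For (1), the sector is a restriction: $S(a)=S\cap V$ with $V=U_1\times\ldots\times\{a\}\times\ldots\times U_m$. Likewise $S^c(a)=S\cap V'$ with $V'=U_1\times\ldots\times(U_i\setminus\{a\})\times\ldots\times U_m$. If $|U_i|=1$, then $S^c(a)=\varnothing$, which is trivially ample. In all cases both sets are ample by Theorem~\ref{thm:ample=weaklyample}(4).

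For (2) and (3), consider the one-dimensional mixed minor $M=M^i_+$, whose boxes are the $i$-cliques $C(t)=U_i\times t$. Then $\partial S(a)$ consists of those $u\in S(a)$ whose clique satisfies $|C(u)\cap S|\ge 2$. The plan is to express this via elementary minors. For $b\ne a$, let $e_b=\{a,b\}$. Then $\partial S(a)=\bigcup_{b\ne a}\big(S(a)\cap F(S^{e_b})\big)$, and $S(a)\cap F(S^{e_b})$ is isomorphic to the part of $S^{e_b}$ with $i$-coordinate $\{a,b\}$, i.e.\ to the hyperplane $(S(e_b))^{e_b}$, which is ample by Lemma~\ref{lem:hyperplanes-ample}. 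A union of ample sets need not be ample, however, so instead we verify Theorem~\ref{thm:ample-elementary}(7) directly.

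Take $u,v\in\partial S(a)$. The interval $[u,v]$ lies inside the sector hyperplane $u_i=v_i=a$. Let $u'$ be an $S$-neighbor of $u$ with $u'_i=b\ne a$. The key step is to show that $\partial S(a)\cap[u,v]$ is ample in the cube $[u,v]$. We plan to do this by proving that it equals the projection onto the sector of $S\cap I$, where $I=[u,v]\times\{a,b,c,\dots\}$ is the union of the $i$-cliques over $[u,v]$, taking $(S\cap I)$ restricted appropriately. Concretely, $\partial S(a)\cap[u,v]=\{x\in[u,v]: x\in S,\ |C(x)\cap S|\ge2\}=\big((S\cap I)^{M'}\big)$-type set, where $M'$ is the extended minor on $I$ merging $U_i\setminus\{a\}$ into one block. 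In $I$ we then apply the projection/strong-projection results: pass to $S\cap I$, where $a$ remains a singleton and the other block is merged. The set of $x$ with $x\in S$ and some other clique-element in $S$ is $(S\cap I)^{e}$ restricted to ``$\{a\}$ in $S$ and the merged block meets $S$''. This is where the main obstacle lies: we must show that this mixed condition, strong on $a$ and weak on the rest, yields an ample set. We will handle it using commutativity (Theorem~\ref{thm:ampleisometriccommutative}(4)) on the two-block quotient $U_i\to\{\{a\},U_i\setminus\{a\}\}$. After this quotient, $S_{M''}$ is ample by Lemma~\ref{lem:ample-projection} and lives in a product whose $i$-th factor is binary, and there $\partial$ becomes the binary hyperplane $(S_{M''})^{f}$ with $f$ the elementary minor merging the two remaining values. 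This is ample by Lemma~\ref{lem:superscriptample}, and it is isomorphic to $\partial S(a)$ via the sector. This isomorphism argument avoids intervals altogether, so we adopt it as the main line.

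Symmetrically, $N(S(a))$ is the set of $v\in S^c(a)$ whose clique meets $S(a)$. We obtain it as the strong-projection on the $\{U_i\setminus\{a\}\}$-quotient intersected with the projection on $\{a\}$; equivalently, we apply the previous argument to the complement-type quotient where the roles are swapped. On the sector level this gives the set $\{t: t\in S_{M''}\text{-side of }a\}\cap\ldots$. Here $N$ is not isomorphic to a hyperplane but consists of the $v\in S^c(a)$ lying over it, namely $S^c(a)\cap F(\partial\text{-image})$. We plan to prove ampleness through Theorem~\ref{thm:ample-elementary}(6): intervals in $S^c(a)$ lie in fibers over intervals of the base, and the base hyperplane is ample while $S^c(a)$ is ample.

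For (4), $S^+(a)=S(a)\cup N(S(a))$ is an AMP-amalgam of the ample sets $S(a)$ and $N(S(a))\cup\partial S(a)$ glued along $\partial S(a)$. We expect isometricity to follow from isometricity of $S$ together with gallery arguments, and then Proposition~\ref{AMPamalgam} applies. For (5), the carrier is the union over $a$ of the boundaries; it equals $\{u\in S: |C(u)\cap S|\ge2\}$, and it is handled by the same quotient technique with the mixed minor $M^i_+$ and the interval criterion.
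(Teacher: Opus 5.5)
Parts (1) and (2) are fine and agree with the paper. In (2), your final line $(S_{M''})^f$, with $M''$ the $\{\{a\},U_i\setminus\{a\}\}$-quotient and $f$ merging its two $i$-values, is exactly the paper's $(S_{M_1})^{M_2}$.

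The genuine gap is in (3), and your (4) rests on it. The only reason you give is that the cylinder over the boundary image is ample and $S^c(a)$ is ample. That proves nothing, because $N(S(a))$ is the \emph{intersection} of these two sets, and intersections of ample sets need not be ample: already $\{00,01,11\}\cap\{00,10,11\}=\{00,11\}$ in $\{0,1\}^2$ is not ample. When $u,v\in N(S(a))$ lie in one sector $b$, the trace on $[u,v]$ is indeed a binary hyperplane. The real content is the case $u_i=b\neq c=v_i$ with $b,c\neq a$, and your proposal says nothing about it.

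The paper handles this case by box-superisometricity. It projects along $e=\{b,c\}$ and lifts a geodesic gallery of the ample set $S_e$. Each lifted $a$-box lies in $S$, and at each step the $b$-box or the $c$-box does, since $S$ strongly shatters $e$ in the lifted box. It then inducts on the distance.

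Your interval route can also be completed. Restrict to $\{a,b,c\}\times Q$, with $Q$ the interval spanned by the coordinates of $u,v$ other than the $i$-th, and with sectors $A,B,C\subseteq Q$. The trace is then $\{b\}\times(A\cap B)\cup\{c\}\times(A\cap C)$. Apply Theorem~\ref{thm:ample-elementary}(3) with $e=\{b,c\}$:
\begin{itemize}
\item the strong projection $A\cap B\cap C$ is the strong projection of $S$ on the co-trivial $i$-partition;
\item the projection $A\cap(B\cup C)$ is a strong projection of $S_e$;
\item weak isometricity across the two sectors holds because $\{(a,p),(b,p),(a,q),(c,q)\}$ shatters but does not strongly shatter the minor of $\{a,b,c\}\times\{p,q\}$ with $i$-partition $\{\{a\},\{b,c\}\}$.
\end{itemize}

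For (4), besides depending on (3), your amalgam needs two further facts you do not prove. One is ampleness of $N(S(a))\cup\partial S(a)$, which is none of (1)--(3). The other is isometricity of $S^+(a)$, which you only ``expect''. Once (3) is available, the interval criterion is simpler: the only new trace is $\{a\}\times A\cup\{b\}\times(A\cap B)$, which is ample by Theorem~\ref{thm:ample-elementary}(3).

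For (5), ``the same quotient technique with $M^i_+$'' has no content. Projections and strong projections on $M^i_+$ only record whether an $i$-clique meets $S$ or lies in $S$. By contrast, $C_i(S)$ keeps the $i$-coordinate and asks for at least two points of $S$ in the clique. For $u_i=a\neq b=v_i$, its trace on $[u,v]$ projects onto the two-out-of-three set $(A\cap B)\cup(A\cap T)\cup(B\cap T)$, where $T$ is the union of the remaining sectors, and you give no reason this is ample.

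The paper argues directly. Take a geodesic gallery in $S$ between parallel boxes of $C_i(S)$. If $i$ lies in the support of the boxes, every box of the gallery is in $C_i(S)$. Otherwise the $i$-coordinate switches at most once, from $a$ to $b$, and the two boxes at the switch lie in $\partial S(a)$ and $\partial S(b)$. Each half is then rerouted inside these boundaries, which are ample by (2).
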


\begin{proof} To (1): Each sector $S(a)$ is the intersection of the ample set $S$ with the full-dimensional subproduct $U_1\times\ldots U_{i-1}\times \{ a\}\times U_{i+1}\times\ldots\times U_m$, thus $S(a)$ is ample by Theorem 
\ref{thm:ample=weaklyample}(3). By the same result, the cosector $S^c(a)=S\setminus S(a)=\bigcup_{b\in U_i, b\ne a} S(b)$ is ample as the intersection of $S$ with the full-dimensional subproduct $U_1\times\ldots\times U_{i-1}\times (U_i\setminus \{ a\})\times U_{i+1}\times\ldots\times U_m$. 

\smallskip
To (2): To prove that the boundary $\partial S(a)$ is ample, consider the one-dimensional minor-subproducts $M_1,M_2$, whose partitions in the $i$th coordinate are $\{\{a\},U_i\backslash \{a\}\}$ and $\{U_i\}$, respectively. Then $\partial S(a)$ is isomorphic to $(S_{M_1})^{M_2}$, because every element of $(S_{M_1})^{M_2}$ corresponds to an element of $S(a)$ with at least one neighbor that differs in the coordinate $i$. By \cref{lem:superscriptample,lem:ample-projection}, $(S_{M_1})^{M_2}$ is ample, thus $\partial S(a)$ is ample. 

\smallskip
To (3): To prove the ampleness of the  neighborhood $N(S(a))$, by Theorem \ref{thm:ample-elementary}(5), it suffices to show that $N(S(a))$ is box-isometric. Pick any two parallel boxes $B',B''$ in $N(S(a))$. Then there exists a subproduct $V=V_{i_1}\times\ldots\times V_{i_k}$ with support $A$ and two $(X\setminus A)$-tuples $t',t''$ such that $B'=t'\times V$ and $B''=t''\times V$. We distinguish the following cases. 

\begin{case} $i\in A$, say $i_1=i$. 
\end{case}

 Since $B'\subseteq N(S(a))$, then $a\notin V_{i_1}$ and any set of vertices of $B'$ that differs only in coordinate $i$ is adjacent to one and the same vertex of $S(a)$. Then we enlarge the box $B'$ by adding the neighbors in $S(a)$ of all vertices of $B'$, which gives us the box $B'_*=t'\times V'$, where $V'=(V_{i_1}\cup \{ a\})\times\ldots\times V_{i_k}$. This box $B'_*$ contains $B'$ and is contained in $S$. Performing the same operation with $B''$, we will get the box $B''_*=t''\times V'$ containing $B''$ and contained in $S$. Since $B'_*$ and $B''_*$ are parallel boxes of $S$ and $S$ is ample, by Theorem \ref{thm:ample-elementary}(5),  $B'_*$ and $B''_*$ are connected in $S$ by a geodesic gallery $\gamma(B'_*,B''_*)=(B'_*=B_*^0,B_*^1,\ldots,B_*^{k-1},B_*^k=B''_*)$. Removing from each box $B_*^j$ the vertices having $a$ as $i$-coordinate, we will get a box $B^j$ parallel to $B',B''$ such that  $\gamma(B',B'')=(B'=B^0,B^1,\ldots,B^{k-1},B^k=B'')$ is a geodesic gallery between $B',B''$ in $N(S(a))$. This concludes the analysis of Case 1. 

\smallskip
Now, suppose that $i\notin A$. This implies that all vertices of $B'$ have the same $i$-coordinate, say $b$, and all vertices of $B''$ have the same $i$-coordinate, say $c$ (where $b$ and $c$ are both different from $a$). Let $B'_a$ and $B''_a$ be the neighbors in $S(a)$ of all vertices of $B'$ and $B''$, respectively. Since $B'$ and $B''$ are included in $N(S(a))$, $B'_a$ and $B''_a$ are boxes of $S$ parallel to $B'$ and $B''$. Furthermore, the unions $B'_*=B'\cup B'_a$ and $B''_*=B''\cup B''(a)$ are boxes of $S$. {We distinguish two more cases.}

\begin{case} $b=c$. 
\end{case}

Then $B'_*$ and $B''_*$ are parallel boxes of $S$. As in Case 1,  by Theorem \ref{thm:ample-elementary}(5)
the boxes $B'_*$ and $B''_*$ are connected in $S$ by a geodesic gallery $\gamma(B'_*,B''_*)=(B'_*=B_*^0,B_*^1,\ldots,B_*^{k-1},B_*^k=B''_*)$. Removing from each box $B_*^j$ the vertices having $a$ as $i$-coordinate, we will get a box $B^j$ parallel to $B',B''$ such that  $\gamma(B',B'')=(B'=B^0,B^1,\ldots,B^{k-1},B^k=B'')$ is a geodesic gallery between $B',B''$ in $N(S(a))$ and we are done.

\begin{case} $b\ne c$. 
\end{case}

Since ampleness is preserved by taking intersections with full-dimensional subproducts, we can suppose without loss of generality that the factor $U_i$ is equal to $\{ a,b,c\}$. Consider the elementary minor-subproduct defined by $e=\{ b,c\}$. Then the set $S_e$ is ample by Lemma \ref{lem:ample-projection}. The boxes $B'_*$ and $B''_*$ are mapped by the projection function 
to two parallel boxes 
$(B'_*)_e$ and $(B''_*)_e$ of $S_e$ of the same dimension as $B'_*$ and $B''_*$.  By Theorem \ref{thm:ample-elementary}(5) applied to $S_e$, the boxes $(B'_*)_e$ and $(B''_*)_e$ can be connected in $S_e$ by a geodesic gallery 
$\gamma_e=\gamma((B'_*)_e,(B''_*)_e)=((B'_*)_e=B_e^0,B_e^1,\ldots,B_e^{k-1},B_e^k=(B''_*)_e)$. 
Each box $B_e^j$ of $\gamma_e$ has the form $V\times \{ a, \{ b,c\}\}\times t^j$, where $V$ is the same full-dimensional subproduct as in the definition of $B'$ and $B''$, $t^j$ is a $(X\setminus (A \cup \{ i\}))$-tuple, and every two consecutive tuples $t^{j-1}$ and $t^j$ differ in a single coordinate. The preimage in $U$ of the box 
$B_e^j$ is a box $B^j_+$ of the form $V\times \{ a,b,c\}\times t^j$. The box $B^j_+$ can be viewed as the disjoint union of three boxes $B_a^j=V\times \{ a\}\times t^j$ (the $a$-box), $B_b^j=V\times \{ b\}\times t^j$ (the $b$-box), and $B_c^j=V\times \{ c\}\times t^j$ (the $c$-box). Since $B^j_e\subseteq S_e$, this means that $S\cap B^j_{+}$ shatters the elementary subproduct $e$ in $B^j_+$. Since $S$ is ample in every subproduct, it strongly shatters $e$ in $B^j_+$. But there are only two copies of $e$ possible in $B^j_{+}$, namely $B_a^j\cup B_b^j$ and $B_a^j\cup B_c^j$. Thus, for each $j$, $S$ contains the $a$-box $B_a^j$ and must contain the $b$-box $B_b^j$ or the $c$-box $B_c^j$. 

Let $\gamma_+=(B_+^0,B_+^1,\ldots,B_+^{k-1},B_+^k)$. Then $\gamma_+$ is a geodesic gallery in $U$ (not necessarily in $S$) between the parallel boxes $B_+^0$ and $B_+^k$. This geodesic gallery $\gamma_+$ splits into three geodesic galleries $\gamma_+(a)=(B_a^0,B_a^1,\ldots,B_a^{k-1},B_a^k)$ (the $a$-gallery), $\gamma_+(b)=(B_b^0,B_b^1,\ldots,B_b^{k-1},B_b^k)$ (the $b$-gallery), and $\gamma_+(c)=(B_c^0,B_c^1,\ldots,B_c^{k-1},B_c^k)$ (the $c$-gallery). From what we noted above, the $a$-gallery $\gamma_+(a)$ is included in $S$ and for each $j$ the box $B_b^j$ or $B_c^j$ is also included in $S$. Notice also that the box $B_+^0$ contains the box $B'_*$ of $S$, namely, $B'_*$ consists of the $a$-box $B_+^0$ and the $b$-box of $B_b^0$ (which is $B'$). Analogously, the box $B_+^k$ contains the box $B''_*$ of $S$, namely, $B''_*$ consists of the $a$-box $B_a^k$ and the $c$-box  $B_c^k$ (which is $B''$).  

After all this, remember that our final goal is to prove that the parallel boxes $B'$ and $B''$ of $N(S(a))$ can be connected in $N(S(a))$ by a geodesic gallery. We prove this by induction on the distance $d(B',B'')=k+1$. Note that $d(B',B'')=d(B_+^0,B_+^k)+1=k+1$ because $B'$ and $B''$ additionally differ in coordinate $i$. {The basis case $k=0$ is trivial. Suppose then that we have shown the statement for distances up to $k$.} First suppose that the $c$-gallery $\gamma_+(c)$ contains a box $B_c^j$ with $j<k$ included in $S$. Since $B_a^j\subseteq S$, $B_c^j$ belongs to $N(S(a))$. Consequently, $B'=B_b^0$ and $B_c^j$ are two parallel boxes of $N(S(a))$ with $d(B',B_c^j)=j+1<k+1=d(B',B'')$. By induction hypothesis, 
$B'$ and $B_c^j$ can be connected in $N(S(a))$ by a geodesic gallery $\gamma'$. Since $B_c^i$ and $B_c^k=B''$ are two $c$-boxes, by Case 2, $B_c^i$ and $B''$ can be connected in $N(S(a))$ by a geodesic gallery $\gamma''$. Notice that $d(B_c^i,B'')=k-j$. Therefore the union of the geodesic galleries $\gamma'$ and $\gamma''$ is a gallery of length $j+1+k-j=k+1=d(B',B'')$ connecting $B'$ and $B''$ in $N(S(a))$. Since this is a geodesic gallery, we are done. 

Now suppose that $B''$ is the unique box of the $c$-gallery $\gamma_+(c)$  included in $S$. This implies that all boxes $B_b^i$ of the 
$b$-gallery $\gamma_+(b)$, except maybe the last box $B_b^k$, are boxes of  $S$. Since all boxes of the $a$-gallery $\gamma_+(a)$ belong to $S$ and therefore to $S(a)$, all boxes $B_b^i, i=0,\ldots,k-1$ are included in $N(S(a))$. If the box $B_b^k$ belongs to $S$, the same argument implies that $B_b^k$ belongs to $N(S(a))$ and the $b$-gallery $\gamma_+(b)$ followed by the $c$-box $B_c^k=B''$ is a geodesic gallery connecting $B'$ and $B''$ in $N(S(a))$. Now assume that $B_b^k$ is not included in $S$. Consider the parallel boxes $B_b^{k-1}$ and $B_c^k=B''$ of $N(S)$. They have distance 2 and their two  common neighbors are the boxes $B_b^k$ and $B_c^{k-1}$.  Since $S$ is ample, by Theorem \ref{thm:ample-elementary}(5),  $B_b^{k-1}$ and $B_c^k$ are connected in $S$ by a geodesic gallery. Since $B_b^k$ is not included in $S$, this implies that $B_c^{k-1}$ is included in $S$, {contradicting the assumption on $\gamma_+(c)$.}
This finishes the proof that the set $N(S(a))$ is ample. 

\smallskip
To (4): By Theorem \ref{thm:ample-elementary}(5) we only need to show that $S^{+}(a)$ is box-isometric. Consider the boxes $B'=t'\times V,B''=t''\times V$, that are contained in $S^+(a)$, where $V$ has support $A$.

First suppose that $i\in A$.  If $a\in V_i$, then any box parallel to $B',B''$ is contained in $S^+(a)$, since every element of such a box either has coordinate $a$ or has a neighbor with coordinate $a$. In that case, by ampleness of $S$ there is a geodesic gallery from $B'$ to $B''$ contained in $S$, which is then automatically contained in $S^+(a)$. On the other hand, if $a\notin V_i$, that means $B',B''\subseteq N(S(a))$, and we showed in (3) already that there a geodesic gallery between $B',B''$ contained in $N(S(a))$.

If $i\notin A$, then like for assertion (3), all elements of $B'$ have the same $i$-coordinate $b$, and all elements of $B''$ have the same $i$-coordinate $c$.  

Now suppose that  $i\notin A$ and $b=c$. If $b=c=a$, then $B',B''$ are contained in $S(a)$, and they are connected by a geodesic gallery in $S(a)$ by ampleness of $S(a)$. If $b=c\neq a$, then $B',B''$ are contained in $N(S(a))$, and there is a geodesic gallery between them by ampleness of $N(S(a))$.

Finally, suppose that $i\notin A$ and $b\neq c$. Again, if $b$ and $c$ are both unequal to $a$, then $B',B''$ are contained in $N(S(a))$ and existence of a gallery follows immediately. Otherwise, assume $b=a\neq c$. By ampleness of $S$ there exists a geodesic gallery $\gamma(B',B'')=(B'=B^0,B^1,\ldots,B^{k-1},B^k=B'')$. There exists an index $\ell\leq k-1$ such that $B^0,B^1,\ldots,B^{\ell}$ all have coordinate $a$, and $B^{\ell+1},\ldots,B^{k}$ have coordinate $c$. This means that $B^{\ell+1}\subseteq N(S(a))$, since every element has a neighbor in $B^{\ell}\subseteq S(a)$. Since $B^k\subseteq N(S(a))$ and since $N(S(a))$ is ample, there is a geodesic gallery $\gamma(B^{\ell+1},B^k)=(B^{\ell+1},B^{\ell+2}_{*},B^{\ell+3}_{*}\ldots,B^{k-1}_{*},B^{k})$ contained in $N(S(a))$. It follows that $(B'=B^0\ldots,B^{\ell},B^{\ell+1},B^{\ell+2}_{*},\ldots,B^{k-1}_{*},B^k=B'')$ is a geodesic gallery contained in $S^{+}(a)$. So in all cases a geodesic gallery exists, and we conclude that $S^+(a)$ is ample.

\smallskip
To (5): To prove that the $i$-carrier $C_i(S)$ of an ample set $S$ is ample, we proceed as in the proof of (3). By Theorem \ref{thm:ample-elementary}(5), it suffices to show that $C_i(S)$ is box-isometric. Pick any two parallel boxes $B',B''$ in $C_i(S)$. Then there exists a subproduct $V=V_{i_1}\times\ldots\times V_{i_k}$ with support $A$ and two $(X\setminus A)$-tuples $t',t''$ such that $B'=t'\times V$ and $B''=t''\times V$. We can suppose that $|V_{i_1}|>1,\ldots, |V_{i_k}|>1$. Since $B',B''$ are parallel boxes of $S$ and $S$ is ample, by Theorem \ref{thm:ample-elementary}(5)
the boxes $B'$ and $B''$ are connected in $S$ by a geodesic gallery $\gamma(B',B'')=(B'=B_0,B_1,\ldots,B_{k-1},B_k=B'')$. Then there exist $(X\setminus A)$-tuples $t_0=t',t_1,\ldots t_{k-1},t_k=t''$, such that $B_j=t_j\times V$ for all $j$, and $(t_0,t_1,\ldots,t_{k-1},t_k)$ is a $(t',t'')$-geodesic in $U_{j_1}\times\ldots\times U_{j_\ell}$, where $\{ j_1,\ldots,j_{\ell}\}=X\setminus A$. 

First suppose that $i\in A$, say $i_1=i$. Since $|V_{i_1}|>1$, each vertex $u$ of each box $B_j$ of $\gamma(B',B'')$  has a neighbor $v$ in $B_j$ with its first coordinate different from $u$. This implies that all vertices of $B_j$ belong to $C_i(S)$, thus the geodesic gallery $\gamma(B',B'')$ belongs to $C_i(S)$. 

Now, suppose that $i\notin A$. This implies that all vertices of $B'$ have the same $i$-coordinate, say $a$, and all vertices of $B''$ have the same $i$-coordinate, say $b$. Since $\gamma(B',B'')$ is a geodesic gallery, either $a=b$ and all vertices of all boxes of $\gamma(B',B'')$ have the $i$-coordinate $a$ or there exists an index $\ell$, such that all vertices of $B_0,\ldots B_\ell$ have the $i$-coordinate $a$ and all vertices of 
$B_{\ell+1},\ldots,B_k$ have the $i$-coordinate $b$. In the first case, this means that $B',B''$ are contained in $\partial S(a)$. Since we already showed that $\partial S(a)$ is ample, there is a geodesic gallery $\gamma'(B',B'')$ in $\partial S(a)\subseteq C_i(S)$ connecting $B'$ and $B''$ and we are done. In the second case, we deduce that the boxes $B_\ell$ and $B_{\ell+1}$ belong to $C_i(S)$, so we just need to show that there is a geodesic gallery between $B'=B_0 $ and $B_\ell$, and between $B_{\ell+1} $ and $B_k=B''$. That follows from ampleness of $\partial S(a)$ and of $\partial S(b)$, respectively. We conclude that in all cases there is a geodesic gallery, hence the carrier $C_i(S)$ is ample.
\end{proof}

\begin{figure}[htbp]
    \centering
    \includestandalone[width=0.3\linewidth]{img/amalgamation}
    \caption{The set $S\subseteq 
\{a,b,c\}\times \{A,B,C\}$ (colored vertices) is an AMP-amalgam of $S_1$ and $S_2$, where $S_1=S^{+}(C)$ and $S_2=S^{c}(C)$.}
    \label{fig:amalgam}
\end{figure}

A continuous map $F: T\times [0,1]\rightarrow T$ is a \emph{deformation retraction} of a topological space $T$ onto a subspace $A$ if, for every $x$ in $T$ and $a$ in $A$, $F(x,0)=x, F(x,1)\in A,$ and $F(a,1)=a$. A topological space $T$ is \emph{contractible} \cite{Hat} if there is a deformation retraction mapping of $T$ onto a point. The main consequence of Theorem \ref{sector-boundary} is the following result:

\begin{theorem}\label{ample-amalgams-pseudo-boxes} For each ample set 
$S\subseteq U$, the following holds:
\begin{itemize}
    \item[(1)] $S$ can be obtained from the set of its maximal boxes  by a sequence of AMP-amalgams;
    \item[(2)]  the prism complex  $||\BBox(S)||$  of $S$ is contractible. 
\end{itemize}
\end{theorem}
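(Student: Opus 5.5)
We argue by induction on $|S|$ (or on the number of non-trivial factors plus $|S|$). If $S$ is a single box there is nothing to prove for (1), and (2) holds because a prism, being a product of simplices, is contractible. Otherwise some factor $U_i$ and some $a\in U_i$ give a proper split of $S$. We choose $i$ and $a$ so that both the sector $S(a)$ and the cosector $S^c(a)$ are nonempty. This is possible unless all vertices of $S$ agree in every coordinate, and in that case $S$ is a single vertex.

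The decomposition we use is $S_1=S^+(a)$ and $S_2=S^c(a)$, as in \cref{fig:amalgam}. Then $S_0=S_1\cap S_2=N(S(a))$. By \cref{sector-boundary}, all three sets $S_1,S_2,S_0$ are ample, and by \cref{prop:ample-is-isometric} they are isometric. We must also check the remaining conditions for an isometric cover.
\begin{itemize}
\item $S_0\neq\varnothing$: this follows from connectivity of $S$ together with $S(a),S^c(a)\neq\varnothing$.
\item $S_1\cup S_2=S$: this is immediate from the definitions.
\item Every edge of $H(S)$ lies in $S_1$ or in $S_2$: an edge with an endpoint in $S(a)$ lies in $S^+(a)$, and any other edge lies in $S^c(a)$.
\end{itemize}
Hence $S$ is an AMP-amalgam of $S_1$ and $S_2$.

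To continue the induction we need $|S_1|,|S_2|<|S|$. This holds for $S_2$ trivially. It can fail for $S_1$ only when $N(S(a))=S^c(a)$, i.e.\ every vertex outside the sector has a neighbor in it. In that case we try another value $a'$, or another factor, for instance the sector $S(a')$ with $a'$ the coordinate on the other side. If every choice of $(i,a)$ is degenerate in both directions, we want to show that $S$ is a box. Concretely, assume $S$ is not a box. Then for the smallest box containing $S$ there is a vertex of it missing from $S$. Using isometricity and the shattering$\to$strong-shattering principle, we then exhibit a coordinate value whose extended sector misses a vertex. This degenerate case is the main obstacle, and we would first try to handle it by choosing $a$ and $i$ to maximize the distance from a missing vertex.

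For the statement about maximal boxes, note that every maximal box of $\BBox(S)$ lies entirely in $S_1$ or entirely in $S_2$. Indeed, a box meeting $S(a)$ lies in $S^+(a)$, because every vertex of the box either has $i$-coordinate $a$ or is adjacent within the box to one that does. So the maximal boxes of $S$ are maximal boxes of $S_1$ or of $S_2$. Conversely, the boxes appearing in the decomposition of $S_1$ or $S_2$ that are not maximal in $S$ are absorbed when we glue. Applying induction to $S_1$ and $S_2$ yields (1).

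For (2), the prism complex of $S$ is the union of the complexes $\|\BBox(S_1)\|$ and $\|\BBox(S_2)\|$. Their intersection is $\|\BBox(S_0)\|$, because every box of $S$ lies in $S_1$ or $S_2$ and $\BBox(S_1)\cap\BBox(S_2)=\BBox(S_0)$. All three are contractible by induction. Gluing two contractible CW complexes along a contractible subcomplex gives a contractible space: by van Kampen and Mayer–Vietoris the union is simply connected and acyclic, so Whitehead's theorem applies, or alternatively one can use the gluing lemma for homotopy equivalences. Hence $\|\BBox(S)\|$ is contractible.
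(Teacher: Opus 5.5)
Your decomposition $S_1=S^+(a)$, $S_2=S^c(a)$, $S_0=N(S(a))$ is the paper's. So are your verification of the isometric cover, your observation that every box of $S$ lies in $S_1$ or in $S_2$, and your gluing argument for (2).

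\textbf{The gap.} It is the step you flag yourself. The induction needs $|S^+(a)|<|S|$: you must choose $i$ and an $a$ occurring as an $i$-th coordinate of $S$ so that some vertex of $S^c(a)$ has no neighbour in $S(a)$. You do not prove that such a choice exists whenever $S$ is not a box; you only announce a possible attack via isometricity, shattering and maximizing a distance. Without this, the amalgam can be trivial ($S_1=S$, $S_2=S_0$). Then the induction hypothesis does not apply to $S_1$, and neither (1) nor (2) follows.

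\textbf{How the paper closes it.} The paper uses local convexity.
\begin{enumerate}
\item A non-box $S$ is not convex (Lemma~\ref{convex-Hamming}).
\item Since $S$ is connected and $H(U)$ is weakly modular, the Tietze--Nakajima analogue shows $S$ is not locally convex.
\item So there is a path $u\sim v\sim w$ in $S$ with $d(u,w)=2$ whose other common neighbour $x$ is not in $S$.
\item Let $u$ and $v$ differ in coordinate $i$, and set $a=u_i$. Then $w_i=v_i\neq a$, because otherwise $u$ and $w$ would be adjacent.
\item The only neighbour of $w$ with $i$-th coordinate $a$ is $x\notin S$, so $w\notin S^+(a)$.
\end{enumerate}

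\textbf{A more elementary fix.} This needs no ampleness at all. Let $T_i=\{s_i: s\in S\}$ and suppose $S^+(a)=S$ for every $i$ and every $a\in T_i$.
\begin{itemize}
\item Then each $s\in S$ with $s_i\neq a$ has a neighbour in $S(a)$.
\item Since $s_i\neq a$, the only candidate is the tuple obtained from $s$ by replacing its $i$-th coordinate by $a$, so that tuple lies in $S$.
\item Hence $S$ is closed under replacing any coordinate $i$ by any value of $T_i$.
\item Changing coordinates one at a time from any $s\in S$ gives $T_1\times\cdots\times T_m\subseteq S$, so $S$ is a box.
\end{itemize}
So when $S$ is not a box, a non-degenerate $(i,a)$ exists, and the shattering and distance-maximization machinery you planned is unnecessary. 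With this step added, the rest of your proposal goes through as in the paper. Your bookkeeping for maximal boxes is at the same level of detail as the paper's Claim~\ref{claim:box-amalgam}.
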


\begin{proof} To prove (1), we proceed by induction on the size of $S$.  If $S$ is a box, then we are done. Now, suppose that $S$ is not a box. By Lemma \ref{convex-Hamming}, $S$ is not convex in the Hamming graph $H(U)$. Since $S$ is connected, by the analogue of Tietze-Nakajima lemma for weakly modular graphs of \cite{Ch_metric} (Hamming graphs are weakly modular because the class of weakly modular graphs contains the complete graphs and is closed by taking Cartesian products), $S$ is not locally-convex. This means that $S$ contains an induced 3-path $(u,v,w)$ such that $u$ and $v$ have the second common neighbor $x$ which does not belong to $S$. Suppose without loss of generality that $u$ and $v$ differ only in the first coordinate, say $u_1$ is equal to $a$  and $v_1$ is equal to $b$ (and in all other coordinates they are equal). If $v$ and $w$ also differ  in the first coordinate, since $v\sim w$, they are equal in all other coordinates. But this implies that $u$ and $w$ also differ only in the first coordinate, yielding $u\sim w$, a contradiction. This shows that $w$ has $b$ as the first coordinate. 
Consider now the sector $S(a)$ of $S$. Then  $u\in S(a)$, $v,w\in S(b)\subseteq S^c(a)$, and $v\in N(S(a))$. We assert that $w$ does not belong to $N(S(a))$. Indeed, $w$ has a unique neighbor in $H(U)$ with coordinate $a$ and this vertex is $x$. Since $x\notin S$, necessarily $w\notin N(S(a))$. 

By Theorem \ref{sector-boundary}, the following subsets of $S$ are ample: the neighborhood $N(S(a))$ of the sector $S(a)$, the extended sector $S^+(a)=S(a)\cup N(S(a))$, and the cosector $S^c(a)$. By the definition of $N(S(a))$, for any edge $pq$ with $p\in S(a)$ and $q\in S^c(a)$, we necessarily have $q\in N(S(a))$, thus $N(S(a))$ separates the sector $S(a)$ from $S^c(a)\setminus N(S(a))$. By definition of sectors and cosectors, the sets $S^+(a)=S(a)\cup N(S(a))$ and $S^c(a)$ intersect in $N(S(a))$. Since $u\in S(a)$, {$v\in N(S(a))$}, and $w\in S^c(a)\setminus N(S(a))$, we conclude that $S$ is an AMP-amalgam of $S^+(a)$ and $S^c(a)$ along the ample set $N(S(a))$ (see also \cref{fig:amalgam}). Since $S^+(a)$ and $S^c(a)$ are ample sets containing less vertices than $S$, by induction hypothesis,  $S^+(a)$ and $S^c(a)$ are obtained from their maximal boxes by a sequence of AMP-amalgams. {Finally, we need to show that this sequence of amalgams originates from maximal boxes of $S$. We claim that all boxes of $S$ can be found in $S^+(a)$ or $S^c(a)$:} 

\begin{claim}\label{claim:box-amalgam} Each box $B$ of $\BBox(S)$ is either a box of $\BBox(S^+(a)))$ or a box of $\BBox(S^c(a))$ or a box of both 
$\BBox(S^+(a))$ and  $\BBox(S^c(a))$, in which case $B$ is a box of $\BBox(N(S(a)))$.
\end{claim}

\begin{proof} The sets $S(a)$ and $S^c(a)$ are complementary halfspaces of $S$ as the intersection of the isometric set $S$ with hafspaces $U_1\times\ldots U_{i-1}\times \{ a\}\times U_{i+1}\times\ldots\times U_m$ and $U_1\times\ldots\times U_{i-1}\times (U_i\setminus \{ a\})\times U_{i+1}\times\ldots\times U_m$ of $U$. Therefore, either $B$ is included in one of the sets $S(a)$ or $S^c(a)$, or  $S(a)\cap B$ and $S^c(a)\cap B$ define a partition of $B$ into two nonempty convex sets of $B$. Since these convex sets are full-dimensional subproducts of $B$, each vertex of $S^c(a)\cap B$ is adjacent to some vertex of $S(a)\cap B$. Consequently, $S^c(a)\cap B$ is included in the extended sector $S^+(a)$, and therefore $B\subseteq S^+(a)$. 
\end{proof} 
 By Claim \ref{claim:box-amalgam}, each maximal box of $S$ is either a maximal box of the extended sector $S^+(a)$ or of the cosector $S^c(a)$. This finishes the induction proof of the assertion (1) of the theorem.  

We prove the contractibility of the prism complex  $||\BBox(S)||$  of $S$  by induction on the size of $S$. If $S$ is a single box, then $||\BBox(S)||$ is a prism and thus is contractible. Now suppose that $S$ is not a box. By assertion (1),  $S$ is an AMP-amalgam of two smaller ample sets $S_1:=S^+(a)$ and $S_2:=S^c(a)$ along 
an ample set $S_0:=N(S(a))$. By induction assumption, the prism complexes $||\BBox(S_1)||, ||\BBox(S_2)||,$ and $||\BBox(S_0)||$ are contractible. As we noted above, each box of $S$ is either a box of $S_1$ or a box of $S_2$ or of both $S_1$ and $S_2$ (in which case, it is a box of $S_0$). By Whitehead's theorem \cite{Hat} (the gluing lemma \cite[Lemma 10.3]{Bj}),  $||\BBox(S)||$  is contractible. 
\end{proof}

\begin{remark} Let $S_1=S^+(a), S_2=S^c(a)$, and $S_0=N(S(a))$. From Claim \ref{claim:box-amalgam} it follows that for any $i$, the coordinates of the $f$-vectors of $S_1,S_2,$ and $S_0$ satisfy the equality $f_i(S)=f_i(S_1)+f_i(S_2)-f_i(S_0)$. By induction on the size of the sets, we get an alternative proof of the Euler formula $\chi(S)=1$. 
\end{remark}

Our Theorem \ref{ample-amalgams-pseudo-boxes} extends a similar result  of \cite{BaChKn}  in  the particular case of binary ample sets. By \cite{ChChMoWa}, cube complexes of binary ample sets  are collapsible.  Collapsibility is stronger than contractibility but is weaker than corner peeling. Informally, collapsibility provides a deformation  retraction of a cell complex into a point by a sequence of removals of free faces (a free face is a face of the complex belonging to a unique maximal face). Notice also that \cite{ChChMoWa} provides an example of a binary ample set without corners. 

\begin{conjecture}\label{conjecture-collapsibility} The prism complex $||\BBox(S)||$ of each ample set $S\subseteq U$ is collapsible. 
\end{conjecture}

\section{Examples of ample sets}\label{sec:examples} In this section, we present several examples of ample sets of Cartesian products. 

\subsection{Games on graphs} One of the main causes for considering ample sets in Cartesian products is because they arise in strategy sets in games on graphs. There are multiple classes of games on graphs for which one can show ampleness of the set of winning strategies in this context, including parity games and Markov decision processes (see \cite{maat_strategy_2025}), but for brevity we focus only on mean payoff games here.

A mean payoff game is a game played by two players called Maximizer and Minimizer, on a directed graph $G=(V_{\max}\cup V_{\min},E)$. Here the Maximizer owns the vertices of $V_{\max}$, and the Minimizer owns the vertices of $V_{\min}$. A pebble is placed on an initial vertex $v_0$, after which the player owning the initial node may choose an outgoing edge along which to send the pebble. After that, the owner of the next vertex the pebble is on chooses another outgoing edge, and so on, continuing indefinitely (assumed is that every vertex has at least one  outgoing edge). There is a weight function $w:E\to \mathbb{Z}$, and the outcome of the game is the long term average of the edge weights encountered in the course of the game. The Maximizer tries to maximize the long term average weight, while the Minimizer tries to minimize this.

We may assume that the Maximizer uses a positional strategy, meaning that he always picks the same outgoing edge at the same vertex (see \cite{fijalkow_games_2025} for more details). Denoting the set of successors of a vertex $v$ by $N^+(v)$, and assuming $V_{\max}=\{v_1,v_2,\ldots,v_m\}$, any positional strategy can be encoded by a vector $\sigma\in N^+(v_1)\times N^+(v_2)\times\ldots\times N^+(v_m)$, where the $i$-th entry $\sigma_i$ means that the Maximizer sends the pebble to $\sigma_i$ whenever it lands on $v_i$. Notice that the neighborhoods of vertices $v_1,v_2,\ldots,v_m$ may intersect. Let $\Sigma$ be the set of strategies $\sigma$ that guarantee a nonnegative long-term average outcome for every starting vertex. 
\begin{figure}[htbp]
    \centering
    \includestandalone[width=0.6\linewidth]{img/MPGexample2}
    \caption{A mean payoff game, with circles representing Maximizer nodes and squares representing Minimizer nodes. All nonzero edge weights are shown on the edges. The set $\Sigma$ for this game is the ample set from \cref{fig:runningexample}.}
    \label{fig:MPGexample}
\end{figure}
For example, in the mean payoff game in \cref{fig:MPGexample}, we have $U=\{a,b,c\}\times\{A,B\}\times \{0,1\}$. If Maximizer uses the strategy encoded by $(a,A,1)$ then the outcome starting from node $v_2$ is $\frac{3}{4}>0$, since the Minimizer will follow cycle $(v_2,A,v_3,1)$ (they will never go to $v_1$, since that brings them to a cycle with mean payoff $1$). In fact, the outcome is nonnegative for every starting node. In total, there are seven positional Maximizer strategies that guarantee a nonnegative payoff from every starting node, and five strategies that do not (equivalently, they allow the Minimizer to move around a cycle of negative total weight). The set $\Sigma$ is given by the seven-element ample set from our running example (\cref{fig:runningexample}).

\begin{theorem}
    For any mean payoff game, $\Sigma$ forms an ample set. 
\end{theorem}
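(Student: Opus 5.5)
We would reduce the statement to the binary case via \cref{thm:ample-elementary}(7): $\Sigma$ is ample if and only if $\Sigma\cap[\sigma,\tau]$ is ample in the hypercube $[\sigma,\tau]$ for all $\sigma,\tau\in\Sigma$. The interval $[\sigma,\tau]=\{\sigma_1,\tau_1\}\times\ldots\times\{\sigma_m,\tau_m\}$ is itself the set of positional strategies of a restricted mean payoff game. This game $G'$ is obtained from $G$ by deleting, at each Maximizer vertex $v_i$, all outgoing edges except those to $\sigma_i$ and $\tau_i$. Minimizer vertices are left untouched, and every vertex still has an outgoing edge.

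The key observation is that the property ``$\sigma$ guarantees a nonnegative mean payoff from every starting vertex'' depends only on the one-player graph $G_\sigma$. This graph keeps the single chosen edge at every Maximizer vertex and all edges at Minimizer vertices. In particular, $\sigma\in\Sigma$ if and only if $G_\sigma$ contains no cycle of negative total weight, as remarked after the definition of $\Sigma$. Since $G_\sigma$ is the same whether it is computed in $G$ or in $G'$, we get $\Sigma\cap[\sigma,\tau]=\Sigma(G')$, the winning set of $G'$. Every Maximizer vertex of $G'$ has out-degree at most $2$; vertices with out-degree $1$ correspond to trivial factors, which we discard by \cref{lem:trivial-factor}. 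Hence $\Sigma(G')$ is a subset of a binary product.

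Next we invoke the result of \cite{maat_strategy_2025}: for mean payoff games in which every Maximizer vertex has at most two successors, the set of winning strategies is a binary ample set. Identifying $\{\sigma_i,\tau_i\}$ with $\{0,1\}$, this shows that $\Sigma\cap[\sigma,\tau]$ is ample in the classical sense. By \cref{ex:amplebinary} it is then ample in our sense, and \cref{thm:ample-elementary}(7) concludes the proof.

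The main obstacle is checking that the restriction really commutes with the winning condition. This requires the characterization of $\Sigma$ via the absence of negative cycles in $G_\sigma$, which in turn rests on the standard fact (positional determinacy for Minimizer in the one-player game $G_\sigma$) that against a fixed positional Maximizer strategy, Minimizer can force a negative mean exactly when a negative cycle is reachable. Once that characterization is in place, $G_\sigma$ and hence the winning condition for $\sigma$ are visibly identical in $G$ and $G'$.

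If one prefers not to rely on \cite{maat_strategy_2025} as a black box, a fallback is to verify binary ampleness directly. One would use the negative-cycle characterization to show that for any coordinate set $Y$, either $Y$ is strongly shattered by $\Sigma$ or its complement is strongly shattered by $\Sigma^*$. This is Lawrence's lopsidedness principle, which by \cref{lopsider=wample} and \cref{ex:amplebinary} is equivalent to ampleness in binary products. The argument would combine negative cycles across strategies, and we expect it to be the genuinely hard part.
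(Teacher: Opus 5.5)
Your proposal is correct and follows essentially the same route as the paper. Both arguments observe that an interval is the strategy set of a subgame in which every Maximizer vertex has at most two successors, apply the binary result of \cite{maat_strategy_2025}, and conclude via the interval characterization of Theorem~\ref{thm:ample-elementary}. The differences are minor: you use condition (7) (intervals between points of $\Sigma$) instead of (6), and you spell out, via the one-player graph $G_\sigma$, why the winning condition is unchanged under the restriction, which the paper leaves implicit.
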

\begin{proof}
    It was shown in \cite{maat_strategy_2025} that, if $|N^+(v_i)|\leq 2$ for $i=1,2,\ldots, m$, then $\Sigma$ is an ample set. The condition $|N^+(v_i)|\leq 2$ implies that the set of strategies $U$ forms a hypercube. If we remove the requirement $|N^+(v_i)|\leq 2$, then $\Sigma$ is a subset of a Cartesian product. In particular, if we take any interval $[u,v]$, with $u,v\in U$, then this corresponds to the set of strategies of a subgame of the original mean payoff game, which is obtained by deleting the edges $(v_i,v')$ whenever $v'$ does not occur as an $i$-coordinate in $[u,v]$. This subgame has the property that any Maximizer vertex has one or two outgoing edges. By the before mentioned result, the set of strategies with nonnegative outcome in the subgame is ample, which is equivalent to saying that $\Sigma\cap [u,v]$ is ample. Since this holds for any interval, it follows from \cref{thm:ample-elementary} that $\Sigma$ is ample.
\end{proof}

\subsection{Prism-like polyhedra}
Now we describe a broad class of polyhedra whose vertex set can be described by an ample set in a Cartesian product.
\begin{definition}
    Let $P$ be a $d$-dimensional polyhedron (polyhedron meaning the intersection of a finite number of halfspaces in $\mathbb{R}^d$). We call $P$ a \emph{prism-like polyhedron} if there exists an order-preserving injective mapping $f$ from the face lattice of $P$ to the face lattice of a $d$-dimensional prism $P'$ (i.e. $P'$ is a Cartesian product of simplices).

   Moreover, suppose that $P'=\Delta_1\times \Delta_2\times\ldots\times \Delta_m$, where each $\Delta_i$ is a simplex. Let $U_i$ be the set of vertices of $\Delta_i$, then $U=U_1\times U_2\times\ldots\times U_m$ is the set of vertices of $P'$. Note that $f$ preserves any chains from the face lattice, hence it maps vertices to vertices. Thus, let $b(P)$ be the set of vertices $v'$ of $P'$ such that $v'=f(v)$ for some vertex $v$ of $P$ (note that $b(P)$ implicitly depends on the choice of $P'$ and $f$). 
\end{definition}
{For example, any quadrilateral is prism-like, since we can pick $f$ to be a bijection to a product of two line segments. Any simple polyhedral cone is prism-like, where $b(P)$ consists of any single element. The mean payoff game from \cref{fig:MPGexample} can be turned into a linear program (see \cite{maat_strategy_2025}) whose feasible region is a prism-like polyhedron $P$, and where $b(P)$ is the seven-element set from our running example.}

\begin{theorem}\label{thm:simplexproductample}
    If $P$ is a prism-like polyhedron, then for any choice of $b(P)$, the set $b(P)$ is an ample subset of $U$ and allows for a corner peeling.
\end{theorem}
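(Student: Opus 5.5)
The plan is to use \cref{cor:isometricdismantling}. By that corollary it suffices to exhibit an isometric dismantling of $b(P)$, i.e., an ordering $s_1,\ldots,s_n$ such that every prefix $S_i=\{s_1,\ldots,s_i\}$ is isometric in $H(U)$. Such an ordering automatically gives ampleness and a corner peeling. The natural candidate comes from linear programming: pick a generic linear functional $c$ on $\mathbb{R}^d$ that is bounded above on $P$, or more generally one for which every nonempty face has a unique $c$-minimal vertex. Order the vertices of $P$ by increasing $c$-value and push this order through $f$. The prefixes are then the images of the vertex sets of the sublevel sets $\{x\in P: c(x)\le \lambda\}$.

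The key steps, in order, are as follows.
\begin{enumerate}
\item Record what $f$ does to low-dimensional faces. Since $f$ is order-preserving and injective on face lattices of polyhedra of the same dimension $d$, it maps edges of $P$ to edges of $P'$, and more generally faces to faces. In particular, two vertices of $P$ joined by an edge are mapped to adjacent tuples of $U$, since the edges of $P'=\prod\Delta_i$ are exactly the edges of $H(U)$. More strongly, the smallest face of $P$ containing two vertices $v,w$ is sent into a face of $P'$ containing $f(v)$ and $f(w)$, hence into a box containing $[f(v),f(w)]$.
\item Prove that $b(P)$ itself is isometric. Given vertices $v,w$ of $P$, work in the minimal face $F$ of $P$ containing both; this face is again prism-like via the restriction of $f$, with target the face $f(F)$, which is a smaller prism. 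Use the simplex-method idea inside $F$: choose a linear functional maximized on $F$ exactly at $w$, and move from $v$ along an improving edge of $F$. Then argue by induction on $\dim F$, or on $d(f(v),f(w))$, that some improving edge decreases the Hamming distance to $f(w)$. The intended mechanism is this: each coordinate $i$ on which $f(v)$ and $f(w)$ differ corresponds to a facet of the prism containing $f(v)$ but not $f(w)$. The preimage of that facet under the lattice map is a face of $P$ containing $v$ but not $w$, and leaving it along an edge changes exactly the coordinate $i$, towards $f(w)_i$.
\item Apply the same argument to the prefixes. A sublevel set of a generic $c$ gives a prefix $S_i$, and for $v,w\in S_i$ the monotone path toward the lower of the two vertices stays in $S_i$. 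I would choose the functional in step~2 compatibly with $c$, for instance as a perturbation of $c$ restricted to $F$, so that the geodesic path in $b(P)$ never exceeds $\max(c(v),c(w))$. This makes every $S_i$ isometric.
\item Conclude with \cref{cor:isometricdismantling}. Independence from the choice of $P'$ and $f$ is automatic, since the argument only uses the properties of $f$ stated in the definition.
\end{enumerate}

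The main obstacle is step~2 and its monotone refinement in step~3. One must show that a geodesic of $H(U)$ between $f(v)$ and $f(w)$ can be realized by edges of $P$ and, in step~3, inside the sublevel set. Injectivity and order preservation alone do not obviously force the edges of $P$ at $v$ to map onto edges changing the "right" coordinates. I would first try to prove a local lemma: for every vertex $v$ and every coordinate $i$ with $f(v)_i\neq f(w)_i$, where $w$ lies in a common face $F$, there is an edge of $F$ at $v$ whose image changes coordinate $i$ to a value still in the $i$-th factor of the box spanned by $f(F)$. This would be proved by a dimension count. The face $f(F)$ has the same dimension as $F$. Faces of $F$ through $v$ must map injectively into the faces of $f(F)$ through $f(v)$, and comparing the numbers of facets and edges through $v$ and through $f(v)$ should force the required edge to exist. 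If that counting fails for non-simple vertices, I would fall back to perturbing $P$ into a simple polyhedron, arguing that the face-lattice injection survives, or to using the shadow-vertex path between $v$ and $w$.
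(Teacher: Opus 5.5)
Your architecture (an isometric dismantling plus the corollary on isometric dismantlings) matches the paper's. Ordering the vertices by a generic linear functional would be a legitimate, more self-contained substitute for the paper's vertex-removal step, which the paper imports from Lemma~4.8 of the mean payoff game paper. The gap is that the real content, isometricity of $b(P)$ and of every prefix, is never established.

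In Step~2 the ``preimage of a facet'' is not known to be a face. Since $f$ is only an order-preserving injection, you would first have to show that the faces of $P$ mapped into a given face of $P'$ have a largest element. Moreover, the coordinate-by-coordinate version of your local lemma is false for unbounded $P$. Let $P\subset\mathbb{R}^2$ be the epigraph of the convex piecewise-linear function with vertices $v=(0,1)$, $z=(1,0)$, $w=(2,1)$ and slopes $-2,-1,1,2$. Map $v,z,w$ to $00,10,11$, the bounded edges to $\{00,10\},\{10,11\}$, and the rays at $v,w$ to $\{00,01\},\{11,01\}$. Then $P$ is prism-like, yet the only $1$-face at $v$ changing the second coordinate is a ray.

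Step~3 fails as written. Steering toward $w$ needs a functional maximized on $F$ exactly at $w$. A perturbation of $c$ is extremal at the $c$-extreme vertex of $F$, which is in general not $w$, so nothing keeps the path below $\max(c(v),c(w))$. Also, $c$ must be bounded \emph{below}, not above. In the same example $c(s,t)=-t+\varepsilon s$ is bounded above on $P$, and its first two vertices give the prefix $\{00,11\}$, which is disconnected.

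The missing idea is the fact the paper's proof rests on. For vertices $u,w$ of $P$, the vertices that $f$ sends into $I=[f(u),f(w)]$ are exactly the vertices of a single face $F$ of $P$ with $f(F)=I$. This follows from the hypotheses:
\begin{itemize}
\item $f$ preserves ranks, since it sends maximal chains to maximal chains.
\item $P'$ is simple, so each vertex $x$ of $P$ lies in at least $d$ one-dimensional faces (as $P$ is pointed) and at most $d$ (by injectivity). Thus $P$ is simple, and your worry about non-simple vertices is moot.
\item Consequently $f$ restricts to an isomorphism from the faces of $P$ containing $x$ onto the faces of $P'$ containing $f(x)$.
\item For every vertex $x$ with $f(x)\in I$, the preimages of the edges of $I$ at $f(x)$ span a face $F_x$ with $f(F_x)=I$, and injectivity forces all the $F_x$ to coincide.
\end{itemize}
The paper then uses connectivity of face graphs. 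If $S\cap[x,y]$ is connected for all $x,y\in S$, then $x$ has a neighbour in $S\cap[x,y]$ one step closer to $y$, and induction gives a geodesic.

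In your route, replace the steering argument by the following fact. For generic $c$ bounded below on $P$, the vertices of any face with $c\le\lambda$ induce a connected subgraph, by simplex descent to the unique $c$-minimum of that face. Applied to the face corresponding to $[f(u),f(w)]$, this makes $S_i\cap[f(u),f(w)]$ connected whenever $f(u),f(w)\in S_i$. Hence every prefix is isometric and the corollary applies.
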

\begin{proof}
    Analogously to the proof of \cite[Lemma 4.8]{maat_strategy_2025}, it can be shown that if $P$ is prism-like, then there exists another prism-like polyhedron $P''$ such that $b(P'')=b(P)\backslash \{c\}$  for some $c\in b(P)$. Moreover, $b(P)$ is isometric, since every interval of $U$ corresponds to a (possibly empty) face of $P$, and since the vertex-edge graph of any face of a polyhedron is connected (this can be shown similarly to Balinski's theorem). It follows from this that $b(P)$ has an isometric dismantling, thus by \cref{cor:isometricdismantling}, $b(P)$ is ample and has a corner peeling.
\end{proof}

\subsection{Quasi-median graphs} Quasi-median graphs are Hamming analogs of median graphs, which constitute the most important class of graph in metric graph theory and one of the basic examples of binary ample sets. Quasi-median graphs have been introduced in \cite{Mu} and have been investigated in many papers, in particular in \cite{BaMuWi}. 

Three vertices $v_1,v_2,v_3$ of a graph $G$ form a {\it metric triangle}
$v_1v_2v_3$ if the intervals $[v_1,v_2], [v_2,v_3],$ and
$[v_3,v_1]$ pairwise intersect only in the common end-vertices, i.e.,
$[v_i, v_j] \cap [v_i,v_k]= \{v_i\}$ for any $1 \leq i, j, k \leq 3$.
If $d(v_1,v_2)=d(v_2,v_3)=d(v_3,v_1)=k,$ then this metric triangle is
called {\it equilateral} of {\it size} $k.$ A \emph{quasi-median} of a triplet $u_1,u_2,u_3$ is a metric triangle $v_1v_2v_3$ such that $d(u_i,u_j)=d(u_i,v_i)+d(v_i,v_j)+d(v_j,u_j)$ for any $1 \leq i\ne j \leq 3$. 
A quasi-median $v_1v_2v_3$ of $u_1,u_2,u_3$ is called a \emph{median} if $v_1v_2v_3$ is a metric triangle of size 1, i.e., if $v_1=v_2=v_3.$ The \emph{kite} is the graph $K_4^-=K_{1,1,2}$ obtained by gluing two triangles along a common edge. A \emph{median graph} is a graph in which every triplet of vertices has a unique median. 
A \emph{quasi-median graph} is a graph $G$ satisfying the following three conditions:
\begin{itemize}
\item each triplet of vertices of $G$ ha a unique quasi-median and this quasi-median is an equilateral metric triangle;
\item $G$ does not contain induced kites $K_4^-$;
\item the convex hull of any isometric 6-cycle $C_6$ is a 3-cube. 
\end{itemize}
Quasi-median graphs are isometric subgraphs of Hamming graphs, furthermore they are exactly the retracts of Hamming graphs \cite{Wi-qm}. By \cite[Theorem 1]{BaMuWi}, finite quasi-median graphs which are obtained from Hamming graphs (boxes) by a sequence of gated amalgams (which are stronger that ample amalgams). By the same theorem of \cite{BaMuWi}, intervals of quasi-median graphs are median graphs, thus they are ample. Combining this result with the fact that quasi-median graphs are isometric subgraphs of Hamming graphs and our Theorem 
\ref{thm:ample-elementary}(7), we conclude that the vertex-sets of quasi-median graphs are ample:

\begin{proposition}\label{quasi-median} If $G=(V,E)$ is a quasi-median graph isometrically embedded into a Hamming graph $H(U)$, then $V$ is an ample subset of $V.$
\end{proposition}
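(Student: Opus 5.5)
The plan is to apply the interval characterization, \cref{thm:ample-elementary}(7): a set $S\subseteq U$ is ample if and only if $S\cap[u,v]$ is ample in the hypercube $[u,v]$ for all $u,v\in S$. We take $S=V$, identified with its image under the isometric embedding into $H(U)$. The argument has two steps.

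First, we show that for $u,v\in V$ the interval of $G$ coincides with $V\cap[u,v]$, where $[u,v]$ is the interval in $H(U)$. Since $G$ is an isometric subgraph, we have $d_G=d_{H(U)}$ on $V$. Hence a vertex $x\in V$ lies in the $G$-interval $I_G(u,v)$ exactly when $d(u,x)+d(x,v)=d(u,v)$ in $H(U)$, that is, when $x\in[u,v]\cap V$. So the $G$-interval is $V\cap[u,v]$, and the subgraph of $G$ it induces is the subgraph of the hypercube $[u,v]$ induced by $V\cap[u,v]$. Edges agree because $G$ is an induced, isometric subgraph of $H(U)$.

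Second, we use the fact recalled above from \cite[Theorem 1]{BaMuWi} that intervals of quasi-median graphs induce median graphs. The point to verify is that $V\cap[u,v]$, as a subset of the hypercube $[u,v]$, is a median set in the sense used for binary ample sets, i.e. that its induced graph is isometrically embedded in $[u,v]$. Isometricity follows because an interval of a graph is always an isometric (indeed convex) subgraph of that graph, and $G$ is isometric in $H(U)$. Thus $V\cap[u,v]$ is an isometric subgraph of $[u,v]$ that is a median graph. Median graphs isometrically embedded in a hypercube are binary ample sets by \cite{BaChDrKo}. Alternatively, any isometric embedding of a median graph into a hypercube yields an ample set by \cref{ample-embedding} combined with the classical fact that median graphs are ample under their canonical embedding.

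With both steps in hand, \cref{thm:ample-elementary}(7) gives that $V$ is ample in $U$. The only delicate point is the embedding-independence in the second step, namely that ampleness of the median graph does not depend on which isometric hypercube embedding is used. This is exactly what \cref{ample-embedding} provides, so I expect no real obstacle.
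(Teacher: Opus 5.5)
Your proposal follows the paper's own argument: \cref{thm:ample-elementary}(7), the fact from \cite{BaMuWi} that intervals of quasi-median graphs are median graphs, and the isometric embedding of $G$ into $H(U)$. You are more explicit than the paper in requiring $V\cap[u,v]$ to be isometric in the cube $[u,v]$, and that is the right thing to check.

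However, the reason you give for that isometricity is false as stated. In an arbitrary graph an interval need not be convex, nor even isometric. For a counterexample, take two internally disjoint $(u,v)$-geodesics of length $4$ and add a vertex $z$ adjacent to their two middle vertices. Then $z\notin I(u,v)$, and the middle vertices are at distance $2$ in the graph but at distance $4$ inside $I(u,v)$.

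The fact you need follows directly from your setting. For $x,y\in V\cap[u,v]$, isometricity of $G$ in $H(U)$ gives an $(x,y)$-path in $V$ of length $d(x,y)$. This path is a geodesic of $H(U)$, so it lies in the convex box $[u,v]$, and hence inside $V\cap[u,v]$. This holds for any isometric $S\subseteq U$, and it is the same observation the paper uses elsewhere when it says that $S\cap[u,v]$ is isometric in $[u,v]$ whenever $S$ is isometric. With this replacement your proof is complete.
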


\section{VC-dimension(s) versus VC-density} \label{sec:VC-dimension}

In this section, we consider the notion of VC-density of subsets $S$ of Cartesian products $U$, and introduce the notion of $\cM$-density based on shattering of specific subsets $\cM$ of minor-subproducts of $U$. Our goal is to show how to interpret the notions of VC-dimension existing in the literature in terms of $\cM$-density. We show that all these notions of VC-dimension are upper bounded by VC-density of $S$.  Analogously to the dimension of a subprooduct,  for a minor-subproduct $M=M(\Lambda,U)=M_1\times\ldots\times M_m$,  the \emph{dimension} $\dim(M)$ of $M$ is the dimension of $M$ as a product, i.e., $\dim(M)=\sum_{i=1}^m (|M_i|-1)$. 

\subsection{The VC-density} The VC-density of subgraphs of Cartesian products was introduced in \cite{ChLaRa} in order to generalize  the classical inequality $\frac{|E|}{|S|}\le \vcdim(S)\le \log|S|$ established in \cite{HaLiWa} for subgraphs $G=(S,E)$ of hypercubes. The \emph{density} $\dens(G)$ of a graph $G=(S,E)$ is the maximal ratio $|E(G')|/|V(G')|$ over all its subgraphs $G'$. The \emph{(minor) VC-density} $\vcdens(G)$ (or  $\vcdens(S)$) of a subgraph $G=(S,E)$  of a Cartesian product $\Gamma=G_1\times \ldots\times G_m$ of graphs $G_1,\ldots,G_m$ is the largest density $\dens(M)$ of a minor-subproduct $M$ of $\Gamma$  shattered by $G$. 

In the particular case when $\Gamma$ is the Hamming graph $H(U)$, each minor-subproduct $M$ is a Hamming graph itself and by \cite[Lemma 2]{ChLaRa}, a densest subgraph $M'$ of $M$ is again a Hamming graph. Since any Hamming graph is a regular graph, its density coincides with the degree of any of its vertices, which is nothing else than its dimension. Therefore the densest subgraph of a shattered minor-subproduct $M$ is $M$ itself, hence $\vcdens(S)$ is the largest dimension of a minor-subproduct $M$ shattered by $S$. 

The paper \cite{ChLaRa}  provides several properties of the VC-density for subgraphs of Cartesian products of general graphs, which are also interesting in the case of subgraphs of Hamming graphs. Notice also the Conjecture 1 of \cite{ChLaRa} asserting that for any subgraph $G=(S,E)$ of the Cartesian product $\Gamma=G_1\times \ldots\times G_m$ of graphs $G_1,\ldots,G_m$, 
the following inequality holds: $\frac{|E|}{|S|}\le \vcdens(S)$. This conjecture is open for subsets of Cartesian products.

\subsection{The $\cM$-density} 
Let  $\cM\subseteq \MP^{**}(U)$ (recall that   $\MP^{**}(U)$ denotes the set of all minor-subproducts $M\in \MP(V)$ over all subproducts $V$ of $U$).  The \emph{$\cM$-density} $\cM-\dim(S)$ of a set $S\subseteq U$ is the largest dimension $\dim(M)$ of a minor $M\in \cM$ shattered by $S$.  Notice that $\vcdens(S)$ coincides with $\MP^{**}(U)$-density of $S$.  Notice also that, unlike other notions of dimension (see below), 
the notion of $\cM$-density is \emph{bidimensional} because it takes into account not only the number of non-trivial factors in $M\in \cM$ but also their size.   The following monotonicity property of $\cM$-dimension follows from the definition:

\begin{lemma} \label{lem:dim-minor} If $\cM'\subseteq \cM\subseteq \MP^{**}(U)$ and $S\subseteq U$, then $\cM'-\dens(S)\le \cM-\dens(S)$. 
\end{lemma}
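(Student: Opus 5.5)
The statement is a direct unwinding of the definition of $\cM$-density, so I will argue straight from it. Fix $S\subseteq U$ and $\cM'\subseteq\cM\subseteq\MP^{**}(U)$. By definition, $\cM'$-$\dens(S)$ is the maximum of $\dim(M)$ over all $M\in\cM'$ that are shattered by $S$. Likewise, $\cM$-$\dens(S)$ is the maximum of $\dim(M)$ over all $M\in\cM$ shattered by $S$. Since $\cM'\subseteq\cM$, the first index set is contained in the second.

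The first key point is that shattering of a minor-subproduct $M\in\MP^{**}(U)$ by $S$ is a property of the pair $(M,S)$ alone. It is defined via the full-dimensional extension $M'$ as in Section~3, and it does not refer to the ambient family. Hence any $M\in\cM'$ shattered by $S$ is also an element of $\cM$ shattered by $S$.

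The second key point is that $\dim(M)=\sum_i(|M_i|-1)$ is likewise intrinsic to $M$. Therefore the maximum over the smaller set is at most the maximum over the larger one, which gives the claimed inequality.

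The only care needed is the degenerate case in which no member of $\cM'$ is shattered by $S$. I would adopt the convention that a maximum over the empty set is $-\infty$, or $0$ if one prefers. Alternatively, one can note that when $S\neq\varnothing$ the co-trivial minor-subproduct, of dimension $0$, is shattered by $S$ whenever it belongs to the family. Under either convention the inequality persists. I expect no real obstacle here.
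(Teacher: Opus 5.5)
Your proof is correct and is essentially the paper's own: the paper gives no separate argument, only remarking that the lemma follows directly from the definition of $\cM$-density, since the shattered members of $\cM'$ form a subset of the shattered members of $\cM$ and both $\dim(M)$ and shattering depend on $M$ and $S$ alone. Your treatment of the empty-maximum case, using a consistent convention on both sides, is a harmless detail that the paper leaves implicit.
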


For subsets $S$ of binary products $U=\{ a,b\}^X$, the VC-density of $S$ coincides with the classical VC-dimension $\vcdim(S)$. Indeed, a subset $Y\subseteq X$ is shattered by $S$ if and only if $S$ shatters the following minor-subproduct $M$ of $U$: $M=M_1\times\ldots\times M_m=M(\Lambda)$, where $\Lambda=(\alpha_1,\ldots,\alpha_m)$ and each $M_i$ is defined by the partition $\alpha_i=\{ \{ a\}, \{ b\}\}$ if $i\in Y$ and $\alpha_i=\{ \{ a,b\}\}$ if $i\in X\setminus Y$.

\subsection{The Natarajan dimension} The Natarajan dimension was defined in \cite{Na} and thoroughly used in multiclass learnability. 
A \emph{cube} of dimension $d$ is a product $C=C_{i_1}\times\ldots\times C_{i_d}$, where each $C_{i_j}\subseteq U_{i_j}$ has size $2$. Equivalently, a cube is a subproduct of $U$ in which all factors have size $2$. For $S\subseteq U$, a set $Y\subseteq X=\{ 1,\ldots, m\}$ is \emph{Natarajan-shattered} by $S$ if the set $S_Y=\{ s_{|Y}: s\in S\}$ contains a cube of dimension $|Y|$. The \emph{Natarajan dimension} of $S$ is the maximum size of a set $Y\subseteq X$ that is Natarajan-shattered. 

The Natarajan dimension has the following interpretation in terms of minor-subproducts. We consider a cube 
$C=C_{i_1}\times\ldots\times C_{i_d}$ as a subproduct of $U$ and let $M_C=M(\Lambda,C)$, where $\Lambda=(\alpha_{i_1},\ldots,\alpha_{i_k})$ and $\alpha_{i_j}$ is the partition of $C_{i_j}$ into two blocks of size 1 (i.e., $M_C$ is the trivial minor-subproduct of $C$). Then $C$ is Natarajan-shattered by the set $S\subseteq U$ if and only if the minor-subproduct $M_C$ is shattered by $S$. Therefore, if $\cM_N(U)$ denotes the set of all minor-subproducts $M_C$ over all cubes  $C$, then the Natarajan dimension of $S$ coincides with $\cM_N(U)-\dens(S)$.

\subsection{The $\Psi$-dimension} This concept was introduced in \cite{BDCeHaLo} as a generalization of several other concepts of dimension.  Assumed is that $U_1=U_2=\ldots=U_m$. Let $\Psi$ be a class of functions $\overline{\psi}:U\to \{0,1,*\}^m$ of the form $\overline{\psi}(u)=\left(\psi_1(u_1),\psi_2(u_2),\ldots,\psi_m(u_m)\right)$, where $\psi_i:U_i\to\{0,1,*\}$ for $i=1,2,\ldots,m$. A set of indices $Y\subseteq X$ 
is \emph{$\Psi$-shattered} by a set $S\subseteq U$ if there exists a function $\overline{\psi}\in \Psi$ such that
\[
    \{\overline{\psi}(s)_{|Y}:s\in S\}\subseteq \{0,1\}^{|Y|}
\]
The \emph{$\Psi$-dimension} of  $S\subseteq $ is the largest cardinality $|Y|$ of a set $Y\subseteq X$ that is $\Psi$-shattered by $S$. Graph dimension \cite{Na}, Natarajan dimension \cite{Na}, and Pollard dimension \cite{Po} are obtained for particular 
choices of the class of functions $\Psi$, see \cite{BDCeHaLo} for details. 

There is a natural interpretation of $\Psi$-dimension in terms of minor-subproducts. Given  $Y=\{i_1,i_2,\ldots,i_k\}\subseteq X$ and some $\overline{\psi}=(\psi_1,\ldots,\psi_m)$, consider the subproduct $V=\psi_{i_1}^{-1}(\{0,1\})\times\ldots\times \psi_{i_k}^{-1}(\{0,1\})$. Let $M=M(\Lambda,V)$, where $\Lambda=(\alpha_{i_1},\ldots,\alpha_{i_k})$ and $\alpha_{i_j}=\{\psi_{i_j}^{-1}(0),\psi_{i_j}^{-1}(1)\}$ for $j=1,\ldots,k$. Then $S$ $\Psi$-shatters $Y$ by means of $\overline{\psi}$ if and only if the minor-subproduct $M$ is shattered by $S$. Thus the $\Psi$-dimension is the largest $d$ such that there is a subproduct $V$ and a binary minor-subproduct $M=M(\Lambda,V)$, such that $S$ shatters $M$, $M$ has $d$ partitions that are not co-trivial (i.e. have two blocks), and $M$ can be constructed from some element of $\Psi$ as described above. In particular, $\Psi$ corresponds to some set of binary minor-subproducts $\mathcal{M}_\Psi\subseteq \MP^{**}(U)$, and the $\Psi$-dimension of $S$ is equal to ${\cM}_\Psi-\dens(S)$.

\subsection{The Daniely-Shalev-Shwartz dimension} 

{Closely following the definition from the paper \cite{BrCaDiMoYe},} a subset $C\subseteq U_{i_1}\times\ldots\times U_{i_k}$ is called a \emph{pseudo-cube} if for every $u\in C$ and every coordinate $i_j$, there exists $v\in C$ that agrees with $u$ in all coordinates except $i_j$.  
For $S\subseteq U$, a set $Y\subseteq X=\{ 1,\ldots, m\}$ is \emph{DS-shattered} by $S$ if the set $S_Y=\{ s_{|Y}: s\in S\}$ contains a pseudo-cube. The \emph{DS-dimension} of $S$ is the maximum size of a set $Y\subseteq X$ that is DS-shattered by $S$ (DS-dimension was introduced in \cite{DaSS}). Each cube is a pseudo-cube, but the converse is not true, see \cite{BrCaDiMoYe}. By directly adapting the notion of $i$-carrier to arbitrary subsets of $U$, pseudo-cubes can be characterized as follows:

\begin{lemma} \label{pseudo-box} A set $S\subseteq U=U_1\times\ldots\times U_m$ is a pseudo-cube if and only if for each coordinate $i=1,\ldots,m$, $S$ coincides with its $i$-carrier $C_i(S)$. 
\end{lemma}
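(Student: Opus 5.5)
The plan is to unfold both definitions and compare them coordinate by coordinate; I do not expect to invoke any of the earlier ampleness results. The $i$-carrier $C_i(S)=\bigcup_{a\in U_i}\partial S(a)$ is adapted to an arbitrary set $S\subseteq U$ verbatim from Definition \ref{def:sector}. The first step is to make it explicit: a vertex $u\in S$ lies in $C_i(S)$ exactly when $u\in\partial S(u_i)$. The only sector that can contain $u$ is $S(u_i)$, so this is the only boundary that matters. Membership in $\partial S(u_i)$ means that $u$ has a neighbour $v$ in $H(S)$ with $v_i\neq u_i$. The boundary is taken inside $S$, so the adjacent vertex is required to lie in $S$; this is the reading under which the carrier is a subset of $S$ at all.

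The second step is to note what adjacency gives in the Hamming graph. Two adjacent vertices of $H(U)$ differ in exactly one coordinate. So if $v\sim u$ and $v_i\neq u_i$, then $v$ agrees with $u$ in every coordinate except $i$. Hence, for $u\in S$,
\[
u\in C_i(S)\iff \text{there is } v\in S \text{ that agrees with } u \text{ in all coordinates except } i.
\]
In this condition ``except $i$'' forces $v_i\neq u_i$, and therefore $v\neq u$.

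The third step is to compare with the definition of a pseudo-cube. $S$ is a pseudo-cube precisely when the right-hand side of the display holds for every $u\in S$ and every coordinate $i$. Since $C_i(S)\subseteq S$ always holds, the equality $C_i(S)=S$ says exactly that every $u\in S$ satisfies the left-hand side. Quantifying over $i=1,\ldots,m$ then yields the stated equivalence, and both implications follow at once.

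The only point needing care, which is the closest thing to an obstacle here, is convention. One must fix that the boundary is defined relative to $S$, with the neighbour required to lie in $S$. One must also settle the degenerate case where $U_i$ is trivial, or where the definition of pseudo-cube is applied to a product $U_{i_1}\times\ldots\times U_{i_k}$ rather than to $U$. In the trivial case both conditions fail for nonempty $S$, and both hold vacuously for $S=\varnothing$. For a product $U_{i_1}\times\ldots\times U_{i_k}$, one simply relabels the coordinates.
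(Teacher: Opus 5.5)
Your proof is correct and is exactly the definitional unfolding the paper intends; the paper states this lemma without proof, as an immediate consequence of extending the $i$-carrier to arbitrary $S$. One small correction: $\partial S(a)\subseteq S(a)\subseteq S$ holds under either reading of the definition. The real reason the neighbour $v$ must lie in $S$ is that otherwise $C_i(S)=S$ whenever $|U_i|\ge 2$, and the lemma would fail (a single point, for instance, would then count as a pseudo-cube).
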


Now, we prove that pseudo-cubes of a product $U$ shatter binary mixed minor-subproducts from $\BcMP^{**}(U)$ {(defined as in \cref{cor:binample}).}

\begin{lemma} \label{lem:DS-dimension} If $C$ is a pseudo-cube of $U$ and $a^0=(a^0_1,\ldots,a^0_m)$ is any vertex of $C$, then 
$C$ shatters the minor-subproduct $M(a^0)\in \BcMP(U)$. 
Consequently, the DS-dimension of any set $S\subseteq U$ is smaller or equal than the $\BcMP^{**}(U)$-density of $S$.  If $S$ is ample, then the DS-dimension and the $\BcMP^{**}(U)$-density of $S$ coincide. 
\end{lemma}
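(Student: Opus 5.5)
The plan has three parts: the shattering claim for a single pseudo-cube, the resulting inequality for arbitrary $S$, and the reverse inequality for ample $S$.

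\emph{Shattering claim.} Let $C$ be a pseudo-cube and $a^0\in C$. The boxes of $\cB(M(a^0))$ are indexed by subsets $I\subseteq X$. The box $B_I$ consists of all tuples $u$ with $u_i=a^0_i$ for $i\notin I$ and $u_i\neq a^0_i$ for $i\in I$. We must find, for every $I$, an element of $C\cap B_I$. We argue by induction on $|I|$, proving the slightly stronger statement that for every $I$ and every $J\subseteq I$ there is $u\in C$ with $u_i\ne a^0_i$ exactly for $i\in J$. The natural route is a walk from $a^0$ that changes the coordinates of $I$ one at a time, using the pseudo-cube property to flip coordinate $i$. The problem is that flipping coordinate $i$ from a vertex $u$ with $u_i\neq a^0_i$ could return to $a^0_i$, and we also need to leave the other coordinates untouched. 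This is fine: the pseudo-cube axiom gives $v$ agreeing with $u$ outside $i$ and differing in $i$, so from a vertex in $B_J$ we can move coordinate $i\notin J$ away from $a^0_i$. That takes us to $B_{J\cup\{i\}}$, since $v_i\ne u_i=a^0_i$. Starting from $a^0\in B_\varnothing$, induction therefore reaches every $B_I$. So $C$ shatters $M(a^0)$, and this step is easy.

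\emph{Inequality $\mathrm{DS}\le\BcMP^{**}$-density.} Suppose $Y$ is DS-shattered, so $S_Y$ contains a pseudo-cube $C$ in $U_Y=\prod_{i\in Y}U_i$. Apply the claim in $U_Y$ with the subproduct $V=U_Y$ and some $a^0\in C$. Then $C$ shatters $M(a^0)\in\BcMP(V)\subseteq\BcMP^{**}(U)$, which has dimension $|Y|$ (each $U_i$ with $i\in Y$ has size $\ge 2$, since $C$ is a pseudo-cube). Shattering of a minor of a non-full subproduct by $S$ is defined through the extension $M'$ on $V'$, with co-trivial partitions outside $Y$. Each box of $M'$ is the preimage of a box of $M(a^0)$ under restriction to $Y$, and each box of $M(a^0)$ meets $C\subseteq S_Y$. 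Hence $S$ shatters $M'$, which gives the inequality.

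\emph{Equality for ample $S$.} Take $M=M(a)\in\BcMP(V)$ of dimension $d$ shattered by $S$, where $V=\prod_{i\in Y}U_i$ and $Y$ is the set of indices with $|U_i|\ge2$ (we may drop the others). By \cref{thm:ample=weaklyample} ($\MP^{**}$-ampleness), $S$ contains a copy $W$ of the corresponding $M'$. On the coordinates in $Y$, the copy $W$ is a product of 2-element sets $\{a_i,b_i\}$, times a fixed tuple elsewhere. Thus $S_Y$ contains a $d$-cube, which is a pseudo-cube, so $Y$ is DS-shattered and the DS-dimension is at least $d$. The only point requiring care is the reduction from $\MP^{**}$ to the full-dimensional definition; this matches how shattering and strong-shattering were defined for $\MP^{**}(U)$.
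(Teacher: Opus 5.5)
Your proposal is correct and follows the paper's proof. Your induction on $|I|$ is the paper's induction on the distance from $a^0$ to a box, using the same pseudo-cube flip of a coordinate currently equal to $a^0_i$. The two inequalities are obtained as in the paper: lifting $C\subseteq S_Y$ through restriction to $Y$ gives the upper bound, and a copy of $M(a)$ yields a cube in $S_Y$ for ample $S$, where your explicit appeal to $\MP^{**}(U)$-ampleness via \cref{thm:ample=weaklyample} makes precise a step the paper leaves implicit.
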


\begin{proof}
Let $M(a)=M(\Lambda,U)$ and consider the box-partition  ${\mathcal B}(\Lambda,U)$ associated to the generalized partition $\Lambda$. Pick any box $B=B_1\times\ldots\times B_m$ of ${\mathcal B}(\Lambda,U)$. We show that $B\cap C\ne \varnothing$ by induction on the distance $d(\{a^0\},B)$ from $a^0$ to $B$. If $d(\{a^0\},B)=0$ then $B=\{a^0\}$, so $B\cap C=\{ a^0\}$. Now, suppose we have shown that $B\cap C\ne \varnothing$ for all $B\in {\mathcal B}(\Lambda,U)$ with $d(\{a^0\},B)\leq n$, and pick a box $B\in {\mathcal B}(\Lambda,U)$ such that $d(\{a^0\},B)=n+1\geq 1$. Let $b\in B$ be an element closest to $a^0$, and let $i\in X$ such that $b_i\neq a_i^0$. Let $B'=B_1\times \ldots \times B_{i-1}\times \{a_i^0\}\times B_{i+1}\times \ldots\times B_m$, then $B'\in {\mathcal B}(\Lambda,U)$ and $d(\{a^0\},B')=n$. By the induction hypothesis, there exists a vertex $b'\in B'\cap C$. Since $C$ is a pseudo-cube, there is a neighbor $b''$ of $b'$ contained in $C$ and differing from $b'$ only in coordinate $i$. 
Since the coordinate $i$ of $b'$ is $a^0_i$, from the definition of $B'$ it follows that $b''\in B\cap C$. In conclusion, the minor-subproduct $M(a^0)$ is shattered by $C$.

To prove the second assertion, recall that the DS-dimension of $S$ is the dimension $d$ of the largest pseudo-cube $C$ shattered by $S$. By the first assertion, if $C$ is a pseudo-cube of $V=U_{i_1}\times\ldots\times U_{i_d}$, then $C$ shatters a binary minor-subproduct $M(a)=M(\Lambda,V)$ with $a\in V$  of $\BcMP(V)$.  The dimension of this minor-subproduct $M(a)$ is also $d$. Since the set $S$ also shatters $M(a)$, we deduce that the DS-dimension of  $S$ is at most $\BcMP^{**}(U)-\dens(S)$. Additionally, if $S$ is ample, then $S$ contains a copy $R$ of the binary minor-subproduct $M(a)$. Since $M(a)$ is  a pseudo-cube, $R$ is also a pseudo-cube and we conclude that the DS-dimension of $S$ is at least $\BcMP^{**}(U)-\dens(S)$. 
\end{proof}

\begin{remark} There exist examples of sets $S\subseteq U$ with DS-dimension 1 and arbitrary large $\BcMP^{**}(U)$-density. 
\end{remark}

Summarizing, we obtain the following result:

\begin{proposition} Natarajan dimension and $\Psi$-dimension of a set $S\subseteq U$ are equal to $\cM_N(U)$-density and  $\cM_\Psi(U)$-density of $S$, respectively,  and DS-dimension of $S$ does not exceed the $\BcMP^{**}(U)$-density of $S$. Each of these dimensions does not exceed the VC-density of $S$. 
\end{proposition}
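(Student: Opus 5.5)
The statement to prove is the final proposition: Natarajan dimension equals $\cM_N(U)$-density, $\Psi$-dimension equals $\cM_\Psi(U)$-density, DS-dimension is at most $\BcMP^{**}(U)$-density, and all of these are at most the VC-density. The plan is to assemble the translations already made in this section and then apply the monotonicity \cref{lem:dim-minor}.

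\textbf{Natarajan dimension.} Suppose $Y$ is Natarajan-shattered via a cube $C=C_{i_1}\times\ldots\times C_{i_d}\subseteq S_Y$, with $d=|Y|$. Every tuple of $C$ then extends to an element of $S$. Hence $S$ shatters the trivial minor-subproduct $M_C\in\MP(C)$, whose dimension is $\sum_j(|C_{i_j}|-1)=d$.

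Conversely, let $S$ shatter some $M\in\cM_N(U)$ of dimension $d$. By the convention for subproducts, $M=M_C$ for a cube $C$ of dimension $d$ supported on some $Y$ with $|Y|=d$. Shattering gives $C\subseteq S_Y$, so $Y$ is Natarajan-shattered. The two maxima therefore coincide.

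\textbf{$\Psi$-dimension.} The argument is the same. The correspondence $\overline\psi\mapsto M(\Lambda,V)$ described above takes a $\Psi$-shattered $Y$ to a shattered binary minor-subproduct. That minor-subproduct has exactly $|Y|$ two-block factors, each contributing $1$, so its dimension is $|Y|$. The correspondence reverses within $\cM_\Psi$. One point needs checking: $\cM_\Psi$ should contain only those $M$ whose blocks $\psi_{i_j}^{-1}(0)$ and $\psi_{i_j}^{-1}(1)$ are both nonempty. This makes the dimension exactly $|Y|$ and not smaller.

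\textbf{DS-dimension.} This is the first assertion of \cref{lem:DS-dimension}. A pseudo-cube $C\subseteq S_Y$ shatters $M(a)\in\BcMP(V)$ for $V=\prod_{i\in Y}U_i$, and $M(a)$ has dimension $|Y|$. Since $C\subseteq S_Y$, $S$ shatters $M(a)$ as well.

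\textbf{Bound by VC-density.} The families $\cM_N(U)$, $\cM_\Psi(U)$ and $\BcMP^{**}(U)$ are all subsets of $\MP^{**}(U)$. The VC-density equals the $\MP^{**}(U)$-density, as noted in the subsection on $\cM$-density. So \cref{lem:dim-minor} gives all three upper bounds.

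\textbf{Main obstacle.} The only step needing care is the identification of shattering for a minor-subproduct of a non-full-dimensional subproduct $V$ with the containment statement about the trace $S_Y$. I would verify it directly from the definition, which extends $V$ by the co-trivial partitions $\{U_i\}$ outside $Y$. Under that extension, each fiber is $B\times\prod_{i\notin Y}U_i$ for a box $B$ of $\cB(\Lambda,V)$. This fiber meets $S$ if and only if $B$ meets $S_Y$.
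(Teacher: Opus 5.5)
Your proposal is correct and follows the paper's route: the paper gives no separate proof and obtains the proposition by summarizing the preceding translations. These are Natarajan-shattering as shattering of $M_C$, $\Psi$-shattering as shattering of the binary minor-subproduct built from $\overline{\psi}$, and \cref{lem:DS-dimension} for the DS bound, followed by the monotonicity \cref{lem:dim-minor} with $\vcdens(S)$ equal to the $\MP^{**}(U)$-density. Your extra checks are sound: the nonempty blocks $\psi_{i_j}^{-1}(0),\psi_{i_j}^{-1}(1)$, and the fiber computation $B\times\prod_{i\notin Y}U_i$ for non-full-dimensional subproducts. Note that your $\Psi$ step uses the standard direction $\{0,1\}^{|Y|}\subseteq\{\overline{\psi}(s)_{|Y}:s\in S\}$ of $\Psi$-shattering, which is the reverse of the inclusion as printed in the paper's definition.
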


\section{Concluding remarks}

In this paper, we show how ample sets in Cartesian products  naturally arise as a generalization of ample sets in hypercubes. Furthermore, we develop a theory of ample sets  by giving several descriptions, constructions, and examples. This theory has been developed for finite sets. We believe that it is possible to extend the characterizations of ample sets to the infinite case, but we leave working out the details for future work. Let $\Pi_{i\in \Lambda} U_i$ be a Cartesian product of sets $U_i, i\in \Lambda$, where each $U_i$ is a subset of the set $\mathbb N$ of natural numbers, each $U_i$ contains $0$, and $\Lambda$ is an arbitrary set of indices. The elements of  $\Pi_{i\in \Lambda} U_i$ containing a finite number of coordinates different from $0$ are called \emph{tuples} and let $U$ denote the set of tuples of  $\Pi_{i\in \Lambda} U_i$. A  \emph{finite full-dimensional subproduct} of $U$ is any 
subroduct of the form  $V=\Pi_{i\in \Lambda} V_i$, where each $V_i$ is finite and $V_i=\{ 0\}$ for all $i\in \Lambda\setminus \Lambda'$ for a finite set $\Lambda'\subset \Lambda$. We say that a set $S\subseteq U$ is \emph{finitary ample} if $S\cap V$ for each finite full-dimensional subproduct $V$ of $S$. Then our proofs by induction can be performed by applying induction to the sets $S\cap V$ instead of $S$. 


Several important features of the theory of classical ample sets  still lack generalization in the Hamming setting:\emph{realizability, covector and cocircuit characterizations of ampleness, duality, and geometry of ample sets}. Also, it will be interesting to investigate the \emph{structure and the properties of lopsided/weakly ample sets}. 

By \cite{La}, any set $S\subseteq \{ -1,+1\}^E$ that encodes the intersection pattern of a convex set $K\subset {\mathbb R}^E$ with the orthants of ${\mathbb R}^E$ is ample. The ample sets $S\subseteq \{ -1,+1\}^E$ that can be represented in this way are called \emph{realizable}. Not every ample set is realizable \cite{La}, but it was shown in \cite{BaChDrKo_geometry} that any ample set $S\subseteq \{ -1,+1\}^E$ is realizable as the  intersection pattern of an $\ell_1$-Menger-convex set $K$ with the orthants of ${\mathbb R}^E$.  Furthermore, it was shown in  \cite{BaChDrKo_geometry} that a set $S\subseteq \{ -1,+1\}^E$ is ample iff the cube complex $||S||$ of $S$ endowed with the intrinsic $\ell_1$-metric is an isometric subspace of $({\mathbb R}^E,d_{\ell_1})$ (i.e., is  $\ell_1$-Menger-convex). The \emph{covectors} of $S$ are the barycenters of the cubes of the cube complex $||S||$ and the \emph{cocircuits} are the covectors corresponding to maximal cubes of $||S||$. At the difference of the vertices of $S$ which are $\{ -1,+1\}$-vectors, the covectors are $\{ -1,0,+1\}$-vectors. The complement $S^*$ can be viewed as the dual of $S$ (similar to duality in oriented matroids \cite{BjLVStWhZi}) and the baricenters of maximal cubes of $S^*$ are the \emph{circuits} of $S$ \cite{BaChDrKo}. 
 The subsets $J$ of $\{-1,0,+1\}^E$ that are the sets of covectors,   cocircuits, or circuits of an ample set of $\{-1,+1\}^E$ have been characterized in \cite{BaChDrKo_geometry} via a simple signed-circuit axiom (for the case of cocircuits, see also \cite{La}). 

To extend these results to ample sets in Cartesian products, we believe that it is necessary to define a geometric space hosting the Hamming graph $H(U)$ and the prism complex $||S||$ of $S\subseteq U$ (analogously to the solid cube $[-1,+1]^E$ hosting the geometric realization $||S||$ of $S\subseteq \{ -1,+1\}^E$). As such a space one can consider the prism complex $||U||$ of the product $U$ endowed with suitable metrics on factors. This will also require an appropriate encoding of covectors of $S$ (barycenters of prisms of $||S||$), i.e., an extension of the terminology used in the theory of oriented matroids \cite{BjLVStWhZi} and complexes of oriented matroids (COMs) \cite{BaChKn}. Ample sets are basic examples of COMs. Therefore one can ask if there is a \emph{suitable generalization of COMs},  capturing all ample sets of Cartesian products (for a question in the same vein, see \cite{ChGeKn}). 

Originally, classical ample sets  have been defined by the equality $|S|=|\oX(S)|$ in the sandwich lemma $|\uX(S)|\le |S|\le |\oX(S)|$. 
Since $|S|=|\oX(S)|$  implies the equality $|\uX(S)|=|S|$ and $\uX(S)\subseteq \oX(S)$, binary ampleness is equivalent to the equality $\uX(S)=\oX(S)$, which was used to define ample sets in Cartesian products.  It will be interesting and important to see if an analog of the sandwich lemma holds for all subsets of Cartesian products.

\section*{Acknowledgements} The authors would like to acknowledge Kolja Knauer for some useful discussions at the earlier stage of this work. Victor Chepoi was partially supported by the ANR project MIMETIQUE (ANR-25-CE48-4089-01). 

\bibliographystyle{abbrv}
\bibliography{references}

\end{document}

%% file: img/runningexample.tex
\definecolor{SetBlue}{RGB}{0, 90, 169} 
\begin{tikzpicture}[node distance = 3cm,
    def/.style={circle, draw=black!20, fill=black!7, thick, minimum size=12mm}, 
    every path/.style={draw=black,line width=1.3pt,{>=Latex}}, 
      S/.style={circle, draw=black!60, fill=SetBlue!70, thick, minimum size=12mm}, 
      phan/.style={draw=none, fill=none, inner sep=0pt, outer sep=0pt} 
      ] 
    {\huge
    \node[def] (a) at (-1,2) {};
    \node[def] (b) at (5,2) {};
    \node[def] (c) at (11,2) {};
    \node[S, minimum size=10.5mm] (d) at (2,5) {};
    \node[def, minimum size=10.5mm] (e) at (6,5) {};
    \node[S, minimum size=10.5mm] (f) at (10,5) {}; 
    \node[S] (g) at (-1,7) {};
    \node[S] (h) at (5,7) {};
    \node[def] (i) at (11,7) {};
    \node[S, minimum size=10.5mm] (j) at (2,9) {};
    \node[S, minimum size=10.5mm] (k) at (6,9) {};
    \node[S, minimum size=10.5mm] (l) at (10,9) {};
    \draw (a) edge[black] (b);
    \draw (b) edge[black] (c);
    \draw (d) edge[black] (e);
    \draw (e) edge[black] (f);

    \draw (g) edge[black] (h);
    \draw (h) edge[black] (i);
    \draw (j) edge[black] (k);
    \draw (k) edge[black] (l);
    
    \draw (a) edge[black] (d);
    \draw (b) edge[black] (e);
    \draw (c) edge[black] (f);
    \draw (g) edge[black] (j);
    \draw (h) edge[black] (k);
    \draw (i) edge[black] (l);

    \draw (a) edge[black] (g);
    \draw (b) edge[black] (h);
    \draw (c) edge[black] (i);
    \draw (d) edge[black] (j);
    \draw (e) edge[black] (k);
    \draw (f) edge[black] (l);

    \draw (j) edge[gray,dashed,bend left=17] (l);
    \draw (d) edge[gray,dashed,bend right=17] (f);
    \draw (g) edge[gray,dashed,bend left=12] (i);
    \draw (a) edge[gray,dashed,bend right=12] (c);
    }
    {\LARGE
    \node (coorda) at (-2.2,0.2) {$a$};
    \node (coordb) at (4.5,0.2) {$b$};
    \node (coordc) at (11.5,0.2) {$c$};

    \node (coordA) at (-3,2) {$A$};
    \node (coordB) at (-3,7) {$B$};

    \node (coordAA) at (13.2,7) {$0$};
    \node (coordBB) at (11.8,9) {$1$};
    }
\end{tikzpicture}

%% file: img/partitionlattice.tex
\definecolor{SetBlue}{RGB}{0, 90, 169} 
\definecolor{PartitionBlue}{RGB}{200,215,241}
\definecolor{BoxGreen}{RGB}{183,235,127}
\definecolor{PartYellow}{RGB}{220,220,100}
\definecolor{PartRed}{RGB}{230,110,110}
\pgfdeclarelayer{dominos}
\pgfdeclarelayer{parts}
\pgfdeclarelayer{points}
\pgfsetlayers{main,dominos,parts,points}

\begin{tikzpicture}[node distance = 3cm,
    def/.style={circle, draw=black!40, fill=black!7, thick, minimum size=12mm}, 
    every path/.style={draw=black,line width=1.3pt,{>=Latex}}, 
      S/.style={circle, draw=black!60, fill=SetBlue!70, thick, minimum size=12mm}, 
      phan/.style={draw=none, fill=none, inner sep=0pt, outer sep=0pt} 
      ] 

    \newcommand{\domino}[2]
    {
        \node (#1) at #2 {};
        \begin{scope}[shift={#2}]
            \begin{pgfonlayer}{dominos}
                \draw[fill=white, rounded corners=0.7cm] (-5.5,-3.5) rectangle (5.5,3.5);
            \end{pgfonlayer}
            \foreach \x in {-4,0,4}
            {
                \foreach \y in {-2,2}
                {
                    \begin{pgfonlayer}{points}
                        \node[def] at (\x,\y) {};
                    \end{pgfonlayer}
                }
            }
        \end{scope}
    }
    \newcommand{\bigdomino}[2]
    {
        \node (#1) at #2 {};
        \begin{scope}[shift={#2}]
            \begin{pgfonlayer}{dominos}
                \draw[fill=white, rounded corners=0.7cm] (-5.5,-3.5) rectangle (5.5,4.3);
            \end{pgfonlayer}
            \foreach \x in {-4,0,4}
            {
                \foreach \y in {-2,2}
                {
                    \begin{pgfonlayer}{points}
                        \node[def] at (\x,\y) {};
                    \end{pgfonlayer}
                }
            }
        \end{scope}
    }
    \newcommand{\partbox}[4] 
    {
        \begin{pgfonlayer}{parts}
            \begin{scope}[shift=(#1)]
                \draw[rounded corners=5mm, draw=black!50, fill=#4!50] #2 rectangle #3;
            \end{scope}
        \end{pgfonlayer}
    }
    {\huge

    \domino{triv}{(0,0)}
    \partbox{triv}{(-5,-3)}{(-3,-1)}{PartYellow}
    \partbox{triv}{(-5,1)}{(-3,3)}{PartYellow}
    \partbox{triv}{(-1,-3)}{(1,-1)}{PartYellow}
    \partbox{triv}{(-1,1)}{(1,3)}{PartYellow}
    \partbox{triv}{(3,-3)}{(5,-1)}{PartYellow}
    \partbox{triv}{(3,1)}{(5,3)}{PartYellow}

    \domino{el1}{(-22.5,15)}
    \partbox{el1}{(-5,-3)}{(1,-1)}{BoxGreen}
    \partbox{el1}{(-5,1)}{(1,3)}{BoxGreen}
    \partbox{el1}{(3,-3)}{(5,-1)}{PartYellow}
    \partbox{el1}{(3,1)}{(5,3)}{PartYellow}

    \domino{el2}{(-7.5,15)}
    \partbox{el2}{(-5,-3)}{(-3,-1)}{PartYellow}
    \partbox{el2}{(-5,1)}{(-3,3)}{PartYellow}
    \partbox{el2}{(-1,-3)}{(5,-1)}{BoxGreen}
    \partbox{el2}{(-1,1)}{(5,3)}{BoxGreen}

    \domino{el3}{(7.5,15)}
    \partbox{el3}{(-1,-3)}{(1,-1)}{PartYellow}
    \partbox{el3}{(-1,1)}{(1,3)}{PartYellow}
    \begin{pgfonlayer}{parts} 
            \begin{scope}[shift=(el3)]
                \draw[rounded corners=5mm, draw=black!50, fill=BoxGreen!50] (-5,-3) -- (-3,-3) -- (-1,-0.8) -- (1,-0.8) -- (3,-3) -- (5,-3) -- (5,-1) -- (1,-0.2) -- (-1,-0.2) -- (-5,-1) -- cycle; 
                \draw[rounded corners=5mm, draw=black!50, fill=BoxGreen!50] (-5,3) -- (-3,3) -- (-1,0.8) -- (1,0.8) -- (3,3) -- (5,3) -- (5,1) -- (1,0.2) -- (-1,0.2) -- (-5,1) -- cycle; 
            \end{scope}
        \end{pgfonlayer}

    \domino{el4}{(22.5,15)}
    \partbox{el4}{(-5,-3)}{(-3,3)}{BoxGreen}
    \partbox{el4}{(-1,-3)}{(1,3)}{BoxGreen}
    \partbox{el4}{(3,-3)}{(5,3)}{BoxGreen}

    \domino{coel1}{(-22.5,30)}
    \partbox{coel1}{(-5,-3)}{(5,-1)}{PartitionBlue}
    \partbox{coel1}{(-5,1)}{(5,3)}{PartitionBlue}

    \domino{coel2}{(-7.5,30)}
    \partbox{coel2}{(-5,-3)}{(1,3)}{PartitionBlue}
    \partbox{coel2}{(3,-3)}{(5,3)}{BoxGreen}

    \domino{coel3}{(7.5,30)}
    \partbox{coel3}{(-5,-3)}{(-3,3)}{BoxGreen}
     \partbox{coel3}{(-1,-3)}{(5,3)}{PartitionBlue}

    \bigdomino{coel4}{(22.5,30)}
    \partbox{coel4}{(-1,-3)}{(1,3)}{BoxGreen}
    \begin{pgfonlayer}{parts} 
        \begin{scope}[shift=(coel4)]
            \draw[rounded corners=5mm, draw=black!50, fill=PartitionBlue!50] (-5,-3) -- (-3,-3) -- (-3,3.2) -- (3,3.2) -- (3,-3) -- (5,-3) -- (5,3) -- (4,3.8) -- (-4,3.8) -- (-5,3) -- cycle;   
        \end{scope}
    \end{pgfonlayer}

    \domino{cotriv}{(0,45)}
    \partbox{cotriv}{(-5,-3)}{(5,3)}{PartRed}

    \foreach\i in {el1,el2,el3,el4}
    {
        \draw (triv) edge (\i);
    }
    \foreach\i in {el1,el2,el3}
    {
        \draw (\i) edge (coel1);
    }
    \draw (el1) edge (coel2);
    \draw (el2) edge (coel3);
    \draw (el3) edge (coel4);
    \foreach\i in {coel2,coel3,coel4}
    {
        \draw (el4) edge (\i);
    }
    \foreach\i in {coel1,coel2,coel3,coel4}
    {
        \draw (\i) edge (cotriv);
    }


    }
\end{tikzpicture}

%% file: img/boxpartition.tex
\definecolor{SetBlue}{RGB}{0, 90, 169} 
\definecolor{BoxGreen}{RGB}{183,235,127}

\begin{tikzpicture}[node distance = 3cm,
    def/.style={circle, draw=black!20, fill=black!7, thick, minimum size=12mm}, 
    every path/.style={draw=black,line width=1.3pt,{>=Latex}}, 
      S/.style={circle, draw=black!60, fill=SetBlue!70, thick, minimum size=12mm}, 
      phan/.style={draw=none, fill=none, inner sep=0pt, outer sep=0pt} 
      ] 
    {\huge
    \draw[rounded corners=5mm, draw=black!50, fill=BoxGreen!30] (-2,1) rectangle (6,3);
    \draw[rounded corners=5mm, draw=black!50, fill=SetBlue!5] (10,1) rectangle (12,3);
    \draw[rounded corners=5mm, draw=black!50, fill=BoxGreen!30] (1.1,4.1) rectangle (6.9,5.9);
    \draw[rounded corners=5mm, draw=black!50, fill=SetBlue!5] (9.1,4.1) rectangle (10.9,5.9);
    \draw[rounded corners=5mm, draw=black!50, fill=BoxGreen!30] (-2,6) rectangle (5.9,8);
    \draw[rounded corners=5mm, draw=black!50, fill=SetBlue!5] (10.1,6) rectangle (12,8);
    \draw[rounded corners=5mm, draw=black!50, fill=BoxGreen!30] (1.1,8.1) rectangle (6.9,9.9);
    \draw[rounded corners=5mm, draw=black!50, fill=SetBlue!5] (9.1,8.1) rectangle (10.9,9.9);

    \node[def] (a) at (-1,2) {};
    \node[def] (b) at (5,2) {};
    \node[def] (c) at (11,2) {};
    \node[def, minimum size=10.5mm] (d) at (2,5) {};
    \node[def, minimum size=10.5mm] (e) at (6,5) {};
    \node[def, minimum size=10.5mm] (f) at (10,5) {}; 
    \node[def] (g) at (-1,7) {};
    \node[def] (h) at (5,7) {};
    \node[def] (i) at (11,7) {};
    \node[def, minimum size=10.5mm] (j) at (2,9) {};
    \node[def, minimum size=10.5mm] (k) at (6,9) {};
    \node[def, minimum size=10.5mm] (l) at (10,9) {};
    \draw (a) edge[black] (b);
    \draw (b) edge[black] (c);
    \draw (d) edge[black] (e);
    \draw (e) edge[black] (f);

    \draw (g) edge[black] (h);
    \draw (h) edge[black] (i);
    \draw (j) edge[black] (k);
    \draw (k) edge[black] (l);
    
    \draw (a) edge[black] (d);
    \draw (b) edge[black] (e);
    \draw (c) edge[black] (f);
    \draw (g) edge[black] (j);
    \draw (h) edge[black] (k);
    \draw (i) edge[black] (l);

    \draw (a) edge[black] (g);
    \draw (b) edge[black] (h);
    \draw (c) edge[black] (i);
    \draw (d) edge[black] (j);
    \draw (e) edge[black] (k);
    \draw (f) edge[black] (l);

    \draw (j) edge[gray,dashed,bend left=17] (l);
    \draw (d) edge[gray,dashed,bend right=17] (f);
    \draw (g) edge[gray,dashed,bend left=12] (i);
    \draw (a) edge[gray,dashed,bend right=12] (c);

    }
    {\LARGE
    \node (coorda) at (-2.2,0.2) {$a$};
    \node (coordb) at (4.5,0.2) {$b$};
    \node (coordc) at (11.5,0.2) {$c$};

    \node (coordA) at (-3,2) {$A$};
    \node (coordB) at (-3,7) {$B$};

    \node (coordAA) at (13.2,7) {$0$};
    \node (coordBB) at (11.8,9) {$1$};
    }
\end{tikzpicture}

%% file: img/minorsubproduct.tex
\definecolor{SetBlue}{RGB}{0, 90, 169} 
\definecolor{BoxGreen}{RGB}{183,235,127}

\begin{tikzpicture}[node distance = 3cm,
    def/.style={circle, draw=black!20, fill=black!7, thick, minimum size=12mm}, 
    every path/.style={draw=black,line width=1.3pt,{>=Latex}}, 
      S/.style={circle, draw=black!60, fill=SetBlue!70, thick, minimum size=12mm}, 
      phan/.style={draw=none, fill=none, inner sep=0pt, outer sep=0pt} 
      ] 
    {\huge
    
    \node[def, fill=BoxGreen!30] (b) at (5,2) {};
    \node[def, fill=SetBlue!5] (c) at (11,2) {};
    \node[def, minimum size=10.5mm,, fill=BoxGreen!30] (e) at (6,5) {};
    \node[def, minimum size=10.5mm, fill=SetBlue!5] (f) at (10,5) {}; 
    \node[def, fill=BoxGreen!30] (h) at (5,7) {};
    \node[def, fill=SetBlue!5] (i) at (11,7) {};
    \node[def, minimum size=10.5mm, fill=BoxGreen!30] (k) at (6,9) {};
    \node[def, minimum size=10.5mm, fill=SetBlue!5] (l) at (10,9) {};
    \draw (b) edge[black] (c);
    \draw (e) edge[black] (f);

    \draw (h) edge[black] (i);
    \draw (k) edge[black] (l);
    
    \draw (b) edge[black] (e);
    \draw (c) edge[black] (f);
    \draw (h) edge[black] (k);
    \draw (i) edge[black] (l);

    \draw (b) edge[black] (h);
    \draw (c) edge[black] (i);
    \draw (e) edge[black] (k);
    \draw (f) edge[black] (l);

    }
    {\LARGE
    \node (coordb) at (4.5,0.2) {$\{a,b\}$};
    \node (coordc) at (11.5,0.2) {$c$};

    \node (coordA) at (3,2) {$A$};
    \node (coordB) at (3,7) {$B$};

    \node (coordAA) at (13.2,7) {$0$};
    \node (coordBB) at (11.8,9) {$1$};
    }
\end{tikzpicture}

%% file: img/shatterstrongshatter.tex
\definecolor{SetBlue}{RGB}{0, 90, 169} 
\definecolor{BoxGreen}{RGB}{183,235,127}

\begin{tikzpicture}[node distance = 3cm,
    def/.style={circle, draw=black!20, fill=black!7, thick, minimum size=12mm}, 
    every path/.style={draw=black,line width=1.3pt,{>=Latex}}, 
      S/.style={circle, draw=black!60, fill=SetBlue!70, thick, minimum size=12mm}, 
      phan/.style={draw=none, fill=none, inner sep=0pt, outer sep=0pt} 
      ] 
    {\Large
    \draw[rounded corners=5mm, draw=black!50, fill=BoxGreen!30] (-2,1) -- (6,1) -- (7,4) -- (7,5.9) -- (1.1,5.9) -- (-2,3) -- cycle;
    \draw[rounded corners=5mm, draw=black!50, fill=BoxGreen!30] (-2,6) -- (6,6) -- (7,8) -- (7,9.9) -- (1.1,9.9) -- (-2,8) -- cycle;
    \draw[rounded corners=5mm, draw=black!50, fill=BoxGreen!30] (10,1) -- (12,1) -- (12,3) -- (10.9,5.9) -- (9,5.9) -- (9,4) -- cycle;
    \draw[rounded corners=5mm, draw=black!50, fill=BoxGreen!30] (10,6) -- (12,6) -- (12,8) -- (10.9,9.9) -- (9,9.9) -- (9,7.9) -- cycle;  
    
    \node[def] (a) at (-1,2) {};
    \node[def] (b) at (5,2) {};
    \node[def] (c) at (11,2) {};
    \node[S, minimum size=10.5mm] (d) at (2,5) {$*$};
    \node[def, minimum size=10.5mm] (e) at (6,5) {};
    \node[S, minimum size=10.5mm] (f) at (10,5) {$*$}; 
    \node[S] (g) at (-1,7) {};
    \node[S] (h) at (5,7) {};
    \node[def] (i) at (11,7) {};
    \node[S, minimum size=10.5mm] (j) at (2,9) {$*$};
    \node[S, minimum size=10.5mm] (k) at (6,9) {};
    \node[S, minimum size=10.5mm] (l) at (10,9) {$*$};
    \draw (a) edge[black] (b);
    \draw (b) edge[black] (c);
    \draw (d) edge[black] (e);
    \draw (e) edge[black] (f);

    \draw (g) edge[black] (h);
    \draw (h) edge[black] (i);
    \draw (j) edge[black] (k);
    \draw (k) edge[black] (l);
    
    \draw (a) edge[black] (d);
    \draw (b) edge[black] (e);
    \draw (c) edge[black] (f);
    \draw (g) edge[black] (j);
    \draw (h) edge[black] (k);
    \draw (i) edge[black] (l);

    \draw (a) edge[black] (g);
    \draw (b) edge[black] (h);
    \draw (c) edge[black] (i);
    \draw (d) edge[black] (j);
    \draw (e) edge[black] (k);
    \draw (f) edge[black] (l);

    \draw (j) edge[gray,dashed,bend left=17] (l);
    \draw (d) edge[gray,dashed,bend right=17] (f);
    \draw (g) edge[gray,dashed,bend left=12] (i);
    \draw (a) edge[gray,dashed,bend right=12] (c);
    }
    {\LARGE
    \node (coorda) at (-2.2,0.2) {$a$};
    \node (coordb) at (4.5,0.2) {$b$};
    \node (coordc) at (11.5,0.2) {$c$};

    \node (coordA) at (-3,2) {$A$};
    \node (coordB) at (-3,7) {$B$};

    \node (coordAA) at (13.2,7) {$0$};
    \node (coordBB) at (11.8,9) {$1$};
    }
\end{tikzpicture}

%% file: img/amplenotisometric2.tex
\definecolor{SetBlue}{RGB}{0, 90, 169} 
\definecolor{BoxGreen}{RGB}{183,235,127}

\begin{tikzpicture}[node distance = 3cm,
    def/.style={circle, draw=black!20, fill=black!7, thick, minimum size=12mm}, 
    every path/.style={draw=black,line width=1.3pt,{>=Latex}}, 
      S/.style={circle, draw=black!60, fill=SetBlue!70, thick, minimum size=12mm}, 
      phan/.style={draw=none, fill=none, inner sep=0pt, outer sep=0pt} 
      ] 
    {\huge
    \draw[rounded corners=5mm, draw=black!50, fill=BoxGreen!30] (-1,-1) rectangle (1,6);
    \draw[rounded corners=5mm, draw=black!50, fill=BoxGreen!30] (4,-1) rectangle (6,6);
    \draw[rounded corners=5mm, draw=black!50, fill=BoxGreen!30] (9,-1) rectangle (11,6);
    \draw[rounded corners=5mm, draw=black!50, fill=BoxGreen!30] (-1,9) rectangle (1,11);
    \draw[rounded corners=5mm, draw=black!50, fill=BoxGreen!30] (4,9) rectangle (6,11);
    \draw[rounded corners=5mm, draw=black!50, fill=BoxGreen!30] (9,9) rectangle (11,11);
    
    \node[S] (a) at (0,0) {};
    \node[S] (b) at (5,0) {};
    \node[def] (c) at (10,0) {};
    \node[S] (d) at (0,5) {};
    \node[def] (e) at (5,5) {};
    \node[S] (f) at (10,5) {};
    \node[S] (g) at (0,10) {};
    \node[S] (h) at (5,10) {};
    \node[S] (i) at (10,10) {};
        
    \draw (a) edge[black] (b);
    \draw (b) edge[black] (c);
    \draw (d) edge[black] (e);
    \draw (e) edge[black] (f);
    \draw (g) edge[black] (h);
    \draw (h) edge[black] (i);
        
    \draw (a) edge[black] (d);
    \draw (b) edge[black] (e);
    \draw (c) edge[black] (f);
    \draw (g) edge[black] (d);
    \draw (h) edge[black] (e);
    \draw (i) edge[black] (f);

    \draw (a) edge[gray,dashed,bend left=15] (c);
    \draw (d) edge[gray,dashed,bend left=15] (f);
    \draw (g) edge[gray,dashed,bend left=15] (i);
    \draw (a) edge[gray,dashed,bend left=15] (g);
    \draw (b) edge[gray,dashed,bend left=15] (h);
    \draw (c) edge[gray,dashed,bend left=15] (i);
    
    }
    {\LARGE
    \node (coorda) at (0,-2) {$a$};
    \node (coordb) at (5,-2) {$b$};
    \node (coordc) at (10,-2) {$c$};

    \node (coordA) at (-2,0) {$A$};
    \node (coordB) at (-2,5) {$B$};
    \node (coordC) at (-2,10) {$C$};
    }
\end{tikzpicture}

%% file: img/projections1.tex
\definecolor{SetBlue}{RGB}{0, 90, 169} 
\definecolor{BoxGreen}{RGB}{183,235,127}

\begin{tikzpicture}[node distance = 3cm,
    def/.style={circle, draw=black!20, fill=black!7, thick, minimum size=12mm}, 
    every path/.style={draw=black,line width=1.3pt,{>=Latex}}, 
      S/.style={circle, draw=black!60, fill=SetBlue!70, thick, minimum size=12mm}, 
      phan/.style={draw=none, fill=none, inner sep=0pt, outer sep=0pt} 
      ] 
    {\Large
    \draw[rounded corners=5mm, draw=black!50, fill=BoxGreen!30] (-2,1) -- (6,1) -- (7,4) -- (7,5.9) -- (1.1,5.9) -- (-2,3) -- cycle;
    \draw[rounded corners=5mm, draw=black!50, fill=BoxGreen!30] (-2,6) -- (6,6) -- (7,8) -- (7,9.9) -- (1.1,9.9) -- (-2,8) -- cycle;
    \draw[rounded corners=5mm, draw=black!50, fill=BoxGreen!30] (10,1) -- (12,1) -- (12,3) -- (10.9,5.9) -- (9,5.9) -- (9,4) -- cycle;
    \draw[rounded corners=5mm, draw=black!50, fill=BoxGreen!30] (10,6) -- (12,6) -- (12,8) -- (10.9,9.9) -- (9,9.9) -- (9,7.9) -- cycle;  
    
    \node[def] (a) at (-1,2) {};
    \node[def] (b) at (5,2) {};
    \node[def] (c) at (11,2) {};
    \node[S, minimum size=10.5mm] (d) at (2,5) {};
    \node[def, minimum size=10.5mm] (e) at (6,5) {};
    \node[S, minimum size=10.5mm] (f) at (10,5) {}; 
    \node[S] (g) at (-1,7) {};
    \node[S] (h) at (5,7) {};
    \node[def] (i) at (11,7) {};
    \node[S, minimum size=10.5mm] (j) at (2,9) {};
    \node[S, minimum size=10.5mm] (k) at (6,9) {};
    \node[S, minimum size=10.5mm] (l) at (10,9) {};
    \draw (a) edge[black] (b);
    \draw (b) edge[black] (c);
    \draw (d) edge[black] (e);
    \draw (e) edge[black] (f);

    \draw (g) edge[black] (h);
    \draw (h) edge[black] (i);
    \draw (j) edge[black] (k);
    \draw (k) edge[black] (l);
    
    \draw (a) edge[black] (d);
    \draw (b) edge[black] (e);
    \draw (c) edge[black] (f);
    \draw (g) edge[black] (j);
    \draw (h) edge[black] (k);
    \draw (i) edge[black] (l);

    \draw (a) edge[black] (g);
    \draw (b) edge[black] (h);
    \draw (c) edge[black] (i);
    \draw (d) edge[black] (j);
    \draw (e) edge[black] (k);
    \draw (f) edge[black] (l);

     \draw (j) edge[gray,dashed,bend left=17] (l);
    \draw (d) edge[gray,dashed,bend right=17] (f);
    \draw (g) edge[gray,dashed,bend left=12] (i);
    \draw (a) edge[gray,dashed,bend right=12] (c);
    }
    {\LARGE
    \node (coorda) at (-2.2,0.2) {$a$};
    \node (coordb) at (4.5,0.2) {$b$};
    \node (coordc) at (11.5,0.2) {$c$};

    \node (coordA) at (-3,2) {$A$};
    \node (coordB) at (-3,7) {$B$};

    \node (coordAA) at (13.2,7) {$0$};
    \node (coordBB) at (11.8,9) {$1$};
    }
\end{tikzpicture}

%% file: img/projections2.tex
\definecolor{SetBlue}{RGB}{0, 90, 169} 
\definecolor{BoxGreen}{RGB}{183,235,127}

\begin{tikzpicture}[node distance = 3cm,
    def/.style={circle, draw=black!20, fill=black!7, thick, minimum size=12mm}, 
    every path/.style={draw=black,line width=1.3pt,{>=Latex}}, 
      S/.style={circle, draw=black!60, fill=SetBlue!70, thick, minimum size=12mm}, 
      phan/.style={draw=none, fill=none, inner sep=0pt, outer sep=0pt} 
      ] 
    {\Huge
    \node[def] (a) at (-1,2) {};
    \node[def] (b) at (4,2) {};
    \node[S] (g) at (-1,7) {};
    \node[def] (h) at (4,7) {};

    \draw (a) edge[black] (b);
    \draw (g) edge[black] (h);
    \draw (a) edge[black] (g);
    \draw (b) edge[black] (h);

    \node[S] (a2) at (-1,-7) {};
    \node[S] (b2) at (4,-7) {};
    \node[S] (g2) at (-1,-2) {};
    \node[S] (h2) at (4,-2) {};

    \draw (a2) edge[black] (b2);
    \draw (g2) edge[black] (h2);
    \draw (a2) edge[black] (g2);
    \draw (b2) edge[black] (h2);

    \node (upper) at (1.5,4.5) {$S^M$};
    \node (lower) at (1.5,-4.5) {$S_M$};
    }
    {\LARGE
    \node (coorda) at (-1,0.5) {$\{a,b\}$};
    \node (coordb) at (4,0.5) {$c$};
    \node (coordA) at (-2.5,2) {$A$};
    \node (coordB) at (-2.5,7) {$B$};
    
    \node (coorda2) at (-1,-8.5) {$\{a,b\}$};
    \node (coordb2) at (4,-8.5) {$c$};
    \node (coordA2) at (-2.5,-7) {$A$};
    \node (coordB2) at (-2.5,-2) {$B$};
    }
\end{tikzpicture}

%% file: img/thm3proof1.tex
\definecolor{SetBlue}{RGB}{0, 90, 169} 
\definecolor{BoxGreen}{RGB}{183,235,127}
\definecolor{DarkGreen}{RGB}{50,140,50}

\begin{tikzpicture}[node distance = 3cm,
    def/.style={circle, draw=black!20, fill=black!7, thick, minimum size=12mm}, 
    every path/.style={draw=black,line width=1.3pt,{>=Latex}}, 
      S/.style={circle, draw=black!60, fill=SetBlue!70, thick, minimum size=12mm}, 
      phan/.style={draw=none, fill=none, inner sep=0pt, outer sep=0pt} 
      ] 
    {\Large
    \draw[rounded corners=5mm, draw=black!50, fill=BoxGreen!30] (-2,1) rectangle (0,8);
    \draw[rounded corners=5mm, draw=black!50, fill=BoxGreen!30] (1.1,4.1) rectangle (2.9,9.9);
    \draw[rounded corners=5mm, draw=black!50, fill=BoxGreen!30] (5.1,4.1) rectangle (6.9,9.9);
    \draw[rounded corners=5mm, draw=black!50, fill=BoxGreen!30] (3.1,1) rectangle (4.9,8);
    
    \node[S] (a) at (-1,2) {$s_1$};
    \node[def] (b) at (4,2) {$q_1$};
    \node[def, minimum size=10.5mm] (d) at (2,5) {$p_1$};
    \node[S, minimum size=10.5mm] (e) at (6,5) {$t_1$};
    \node[S] (g) at (-1,7) {$s_2$};
    \node[def] (h) at (4,7) {$q_2$};
    \node[def, minimum size=10.5mm] (j) at (2,9) {$p_2$};
    \node[S, minimum size=10.5mm] (k) at (6,9) {$t_2$};
    \draw (a) edge[black] (b);
    \draw (d) edge[black] (e);

    \draw (g) edge[black] (h);
    \draw (j) edge[black] (k);
    
    \draw (a) edge[black] (d);
    \draw (b) edge[black] (e);
    \draw (g) edge[black] (j);
    \draw (h) edge[black] (k);

    \draw (a) edge[black] (g);
    \draw (b) edge[black] (h);
    \draw (d) edge[black] (j);
    \draw (e) edge[black] (k);
    }
    {\LARGE
    \node (coorda) at (-1,0.5) {$a$};
    \node (coordb) at (4,0.5) {$b$};

    \node (coordA) at (-3,2) {$A$};
    \node (coordB) at (-3,7) {$B$};

    \node (coordAA) at (5.4,2) {$0$};
    \node (coordBB) at (7.2,5) {$1$};

    {\color{DarkGreen}
    \node (coords) at (-1,9) {$\begin{array}{c} s\\\downarrow\end{array}$};
    \node (coordt) at (6,11) {$\begin{array}{c} t\\\downarrow\end{array}$};
    }
    }
\end{tikzpicture}

%% file: img/thm3proof2.tex
\definecolor{SetBlue}{RGB}{0, 90, 169} 
\definecolor{BoxGreen}{RGB}{183,235,127}
\definecolor{DarkGreen}{RGB}{50,140,50}

\begin{tikzpicture}[node distance = 3cm,
    def/.style={circle, draw=black!20, fill=black!7, thick, minimum size=12mm}, 
    every path/.style={draw=black,line width=1.3pt,{>=Latex}}, 
      S/.style={circle, draw=black!60, fill=SetBlue!70, thick, minimum size=12mm}, 
      phan/.style={draw=none, fill=none, inner sep=0pt, outer sep=0pt} 
      ] 
    {\Large
    \draw[rounded corners=5mm, draw=black!50, fill=BoxGreen!30] (-2,1) rectangle (0,3);
    \draw[rounded corners=5mm, draw=black!50, fill=BoxGreen!30] (-2,6) rectangle (0,8);
    \draw[rounded corners=5mm, draw=black!50, fill=BoxGreen!30] (1.1,4.1) rectangle (2.9,5.9);
    \draw[rounded corners=5mm, draw=black!50, fill=BoxGreen!30] (1.1,8.1) rectangle (2.9,9.9);

    \draw[rounded corners=5mm, draw=black!50, fill=BoxGreen!30] (3,1) rectangle (12,3);
    \draw[rounded corners=5mm, draw=black!50, fill=BoxGreen!30] (3,6) rectangle (12,8);
    \draw[rounded corners=5mm, draw=black!50, fill=BoxGreen!30] (5.1,4.1) rectangle (10.9,5.9);
    \draw[rounded corners=5mm, draw=black!50, fill=BoxGreen!30] (5.1,8.1) rectangle (10.9,9.9);
    
    \node[S] (a) at (-1,2) {$s_1$};
    \node[def] (b) at (4,2) {$q_1$};
    \node[def] (c) at (11,2) {$q_1'$};
    \node[S, minimum size=10.5mm] (d) at (2,5) {$p_1$};
    \node[S, minimum size=10.5mm] (e) at (6,5) {$t_1$};
    \node[def, minimum size=10.5mm] (f) at (10,5) {$p_1'$}; 
    \node[S] (g) at (-1,7) {$s_2$};
    \node[S] (h) at (4,7) {$q_2$};
    \node[def] (i) at (11,7) {$q_2'$};
    \node[def, minimum size=10.5mm] (j) at (2,9) {$p_2$};
    \node[S, minimum size=10.5mm] (k) at (6,9) {$t_2$};
    \node[def, minimum size=10.5mm] (l) at (10,9) {$p_2'$};
    \draw (a) edge[black] (b);
    \draw (b) edge[black] (c);
    \draw (d) edge[black] (e);
    \draw (e) edge[black] (f);

    \draw (g) edge[black] (h);
    \draw (h) edge[black] (i);
    \draw (j) edge[black] (k);
    \draw (k) edge[black] (l);
    
    \draw (a) edge[black] (d);
    \draw (b) edge[black] (e);
    \draw (c) edge[black] (f);
    \draw (g) edge[black] (j);
    \draw (h) edge[black] (k);
    \draw (i) edge[black] (l);

    \draw (a) edge[black] (g);
    \draw (b) edge[black] (h);
    \draw (c) edge[black] (i);
    \draw (d) edge[black] (j);
    \draw (e) edge[black] (k);
    \draw (f) edge[black] (l);

    \draw (j) edge[gray,dashed,bend left=17] (l);
    \draw (d) edge[gray,dashed,bend right=17] (f);
    \draw (g) edge[gray,dashed,bend left=12] (i);
    \draw (a) edge[gray,dashed,bend right=12] (c);
    }
    {\LARGE
    \node (coorda) at (-1,0.5) {$a$};
    \node (coordb) at (4,0.5) {$b$};
    \node (coordc) at (11.5,0.5) {$c$};

    \node (coordA) at (-3,2) {$A$};
    \node (coordB) at (-3,7) {$B$};

    \node (coordAA) at (13.2,7) {$0$};
    \node (coordBB) at (11.8,9) {$1$};

    }
\end{tikzpicture}

%% file: img/thm3proof3.tex
\definecolor{SetBlue}{RGB}{0, 90, 169} 
\definecolor{BoxGreen}{RGB}{183,235,127}
\definecolor{DarkGreen}{RGB}{50,140,50}

\begin{tikzpicture}[node distance = 3cm,
    def/.style={circle, draw=black!20, fill=black!7, thick, minimum size=12mm}, 
    every path/.style={draw=black,line width=1.3pt,{>=Latex}}, 
      S/.style={circle, draw=black!60, fill=SetBlue!70, thick, minimum size=12mm}, 
      phan/.style={draw=none, fill=none, inner sep=0pt, outer sep=0pt} 
      ] 
    {\Large
    \draw[rounded corners=5mm, draw=black!50, fill=BoxGreen!30] (-6,1) rectangle (1,3);
    \draw[rounded corners=5mm, draw=black!50, fill=BoxGreen!30] (-6,6) rectangle (1,8);
    \draw[rounded corners=5mm, draw=black!50, fill=BoxGreen!30] (-2.9,4.1) rectangle (2.9,5.9);
    \draw[rounded corners=5mm, draw=black!50, fill=BoxGreen!30] (-2.9,8.1) rectangle (2.9,9.9);

    \draw[rounded corners=5mm, draw=black!50, fill=BoxGreen!30] (4,1) rectangle (12,3);
    \draw[rounded corners=5mm, draw=black!50, fill=BoxGreen!30] (4,6) rectangle (12,8);
    \draw[rounded corners=5mm, draw=black!50, fill=BoxGreen!30] (5.1,4.1) rectangle (10.9,5.9);
    \draw[rounded corners=5mm, draw=black!50, fill=BoxGreen!30] (5.1,8.1) rectangle (10.9,9.9);
    
    \node[def] (aa) at (-5,2) {$s_1^+$};
    \node[S] (a) at (0,2) {$s_1$};
    \node[def] (b) at (5,2) {$q_1$};
    \node[def] (c) at (11,2) {$q_1^+$};
    \node[def] (dd) at (-2,5) {$p_1^+$};
    \node[S, minimum size=10.5mm] (d) at (2,5) {$p_1$};
    \node[S, minimum size=10.5mm] (e) at (6,5) {$t_1$};
    \node[def, minimum size=10.5mm] (f) at (10,5) {$t_1^+$}; 
    \node[def] (gg) at (-5,7) {$s_2^+$};
    \node[S] (g) at (0,7) {$s_2$};
    \node[S] (h) at (5,7) {$q_2$};
    \node[def] (i) at (11,7) {$q_2^+$};
    \node[def] (jj) at (-2,9) {$p_2^+$};
    \node[def, minimum size=10.5mm] (j) at (2,9) {$p_2$};
    \node[S, minimum size=10.5mm] (k) at (6,9) {$t_2$};
    \node[def, minimum size=10.5mm] (l) at (10,9) {$t_2^+$};
    \draw (aa) edge[gray, dashed] (a);
    \draw (a) edge[black] (b);
    \draw (b) edge[gray, dashed] (c);
    \draw (dd) edge[gray, dashed] (d);
    \draw (d) edge[black] (e);
    \draw (e) edge[gray, dashed] (f);

    \draw (gg) edge[gray, dashed] (g);
    \draw (g) edge[black] (h);
    \draw (h) edge[gray, dashed] (i);
    \draw (jj) edge[gray, dashed] (j);
    \draw (j) edge[black] (k);
    \draw (k) edge[gray, dashed] (l);

    \draw (aa) edge[black] (dd);
    \draw (a) edge[black] (d);
    \draw (b) edge[black] (e);
    \draw (c) edge[black] (f);
    \draw (gg) edge[black] (jj);
    \draw (g) edge[black] (j);
    \draw (h) edge[black] (k);
    \draw (i) edge[black] (l);

    \draw (aa) edge[black] (gg);
    \draw (a) edge[black] (g);
    \draw (b) edge[black] (h);
    \draw (c) edge[black] (i);
    \draw (dd) edge[black] (jj);
    \draw (d) edge[black] (j);
    \draw (e) edge[black] (k);
    \draw (f) edge[black] (l);

    \draw (jj) edge[black,bend left=14] (l);
    \draw (dd) edge[black,bend right=14] (f);
    \draw (gg) edge[black,bend left=10] (i);
    \draw (aa) edge[black,bend right=10] (c);
    }
    {\LARGE
    \node (coorda) at (0,0.5) {$a$};
    \node (coordb) at (5,0.5) {$b$};

    \node (coordA) at (-7,2) {$A$};
    \node (coordB) at (-7,7) {$B$};

    \node (coordAA) at (13.2,7) {$0$};
    \node (coordBB) at (11.8,9) {$1$};

    }
\end{tikzpicture}

%% file: img/amalgamation.tex
\definecolor{SetBlue}{RGB}{0, 90, 169} 
\definecolor{BoxGreen}{RGB}{183,235,127}

\begin{tikzpicture}[node distance = 3cm,
    def/.style={circle, draw=black!20, fill=black!7, thick, minimum size=12mm}, 
    every path/.style={draw=black,line width=1.3pt,{>=Latex}}, 
      S/.style={circle, draw=black!60, fill=SetBlue!70, thick, minimum size=12mm}, 
      phan/.style={draw=none, fill=none, inner sep=0pt, outer sep=0pt} 
      ] 
    {\huge

    \draw[rounded corners=5mm, draw=black!60, fill=BoxGreen!15] (-1,-1) rectangle (6,11);
    \draw[rounded corners=5mm, draw=black!60,fill=BoxGreen!15] (-1,-1) rectangle (11,6);
    \draw[rounded corners=5mm, draw=black!60, fill=BoxGreen!30] (6,6) -- (-1,6) -- (-1,-1) -- (6,-1) -- (6,6); 
    \node[S] (a) at (0,0) {};
    \node[S] (b) at (5,0) {};
    \node[S] (c) at (10,0) {};
    \node[S] (d) at (0,5) {};
    \node[S] (e) at (5,5) {};
    \node[S] (f) at (10,5) {};
    \node[S] (g) at (0,10) {};
    \node[S] (h) at (5,10) {};
    \node[def] (i) at (10,10) {};
        
    \draw (a) edge[black] (b);
    \draw (b) edge[black] (c);
    \draw (d) edge[black] (e);
    \draw (e) edge[black] (f);
    \draw (g) edge[black] (h);
    \draw (h) edge[black] (i);
        
    \draw (a) edge[black] (d);
    \draw (b) edge[black] (e);
    \draw (c) edge[black] (f);
    \draw (g) edge[black] (d);
    \draw (h) edge[black] (e);
    \draw (i) edge[black] (f);

    \draw (a) edge[gray,dashed,bend left=15] (c);
    \draw (d) edge[gray,dashed,bend left=15] (f);
    \draw (g) edge[gray,dashed,bend left=15] (i);
    \draw (a) edge[gray,dashed,bend left=15] (g);
    \draw (b) edge[gray,dashed,bend left=15] (h);
    \draw (c) edge[gray,dashed,bend left=15] (i);
    
    }
    {\Huge
    \node (coorda) at (0,-2) {$a$};
    \node (coordb) at (5,-2) {$b$};
    \node (coordc) at (10,-2) {$c$};

    \node (coordA) at (-2,0) {$A$};
    \node (coordB) at (-2,5) {$B$};
    \node (coordC) at (-2,10) {$C$};
    \node () at (2.5,7.5) {$S_1$};
    \node () at (7.5,2.5) {$S_2$};
    }
\end{tikzpicture}

%% file: img/MPGexample2.tex
\definecolor{p0Blue}{RGB}{0, 90, 169} 
\definecolor{p1Red}{RGB}{230, 0, 26} 
\begin{tikzpicture}[node distance = 3cm,
    p0/.style={circle, draw=p0Blue!60, fill=p0Blue!40, thick, minimum size=12mm}, 
    gen/.style={circle, draw=black, fill=black!20, thick, minimum size=12mm}, 
    p0small/.style={circle, draw=p0Blue!60, fill=p0Blue!40, thick, minimum size=6mm}, 
    every path/.style={->,draw=p0Blue,line width=1.3pt,{>=Latex}}, 
      p1/.style={rectangle, draw=p1Red!60, fill=p1Red!40, thick, minimum size=11mm}, 
      phan/.style={draw=none, fill=none, inner sep=0pt, outer sep=0pt}] 
    {\Large
    \node[p0] (v1) at (0,-1) {$v_1$};
    \node[p1] (a) at (-2,2) {$a$};
    \node[p1] (b) at (0,2) {$b$};
    \node[p1] (c) at (2,2) {$c$};
    \node[p0] (v2) at (6,0) {$v_2$};
    \node[p1] (A) at (4,2) {$A$};
    \node[p1] (B) at (6,2) {$B$};
    \node[p0] (v3) at (9,6) {$v_3$};
    \node[p1] (n0) at (8,-2) {$0$};
    \node[p1] (n1) at (6,-2) {$1$};

    \draw (v1) edge (a);
    \draw (v1) edge node[pos=.5,color=black] {$-1$} (b);
    \draw (v1) edge node[pos=.5,color=black] {$-3$} (c);
    \draw (v2) edge (A);
    \draw (v2) edge node[pos=.5,right,color=black] {$2$} (B);
    \draw (v3) edge[] (n0);
    \draw (v3) edge[bend left=10] node[pos=.5, left, color=black]{$4$} (n1);

    \draw (a) edge[loop above, color=p1Red] node[pos=.5,above,color=black] {$1$} (a);
    \draw (b) edge[p1Red, bend right=15] (v2);
    \draw (b) edge[p1Red] (v3);
    \draw (c) edge[p1Red] (v3);
    \draw (A) edge[p1Red] node[pos=.3,right,color=black] {$-1$} (v3);
    \draw (A) edge[p1Red, bend left=10] (v1);
    \draw (B) edge[p1Red] (A);
    \draw (n0) edge[p1Red] (v2);
    \draw (n1) edge[p1Red] (v1);
    \draw (n1) edge[p1Red] (v2);
    
}
\end{tikzpicture}